\theoremstyle{definition}
\newtheorem{defi}{Definition}
\theoremstyle{plain}
\newtheorem{thm}[defi]{Theorem}
\newtheorem{cor}[defi]{Corollary}
\newtheorem{lem}[defi]{Lemma}
\newtheorem{mainlem}[defi]{Main Lemma}
\newtheorem{prelem}[defi]{Preliminary Lemma}
\newtheorem{prop}[defi]{Proposition}
\newtheorem{ques}[defi]{Question}
\newtheorem{obs}[defi]{Observation}
\newtheorem{ex}[defi]{Example}
\newcounter{enuroman}
\renewcommand{\theenuroman}{\roman{enuroman}}
\newenvironment{romanenumerate}{\begin{list}{\rm (\theenuroman)}{\usecounter{enuroman}
    \setlength{\labelwidth}{1cm}}}
   {\end{list}}
\newcommand{\forces}{\Vdash}
\newcommand{\re}{{\upharpoonright}}
\newcommand{\ka}{\kappa}
\newcommand{\al}{\alpha}
\newcommand{\be}{\beta}
\newcommand{\ga}{\gamma}
\newcommand{\A}{{\cal A}}
\newcommand{\B}{{\cal B}}
\newcommand{\C}{{\cal C}}
\newcommand{\D}{{\cal D}}
\newcommand{\F}{{\cal F}}
\newcommand{\calF}{{\cal F}}
\newcommand{\G}{{\cal G}}
\newcommand{\I}{{\cal I}}
\newcommand{\J}{{\cal J}}
\newcommand{\M}{{\cal M}}
\newcommand{\N}{{\cal N}}
\newcommand{\NWD}{{\cal{NWD}}}
\renewcommand{\P}{{\cal P}}
\newcommand{\U}{{\cal U}}
\newcommand{\X}{{\cal X}}
\newcommand{\Y}{{\cal Y}}
\renewcommand{\AA}{{\mathbb A}}
\newcommand{\BB}{{\mathbb B}}
\newcommand{\CC}{{\mathbb C}}
\newcommand{\DD}{{\mathbb D}}
\newcommand{\MM}{{\mathbb M}}
\newcommand{\MI}{{\mathbb{MI}}}
\newcommand{\PP}{{\mathbb P}}
\newcommand{\QQ}{{\mathbb Q}}
\renewcommand{\SS}{{\mathbb S}}
\newcommand{\LOCforce}{{\mathbb{LOC}}}
\newcommand{\PLOCforce}{{\mathbb{PLOC}}}
\newcommand{\bb}{{\mathfrak b}}
\newcommand{\cc}{{\mathfrak c}}
\newcommand{\dd}{{\mathfrak d}}
\newcommand{\uu}{{\mathfrak u}}
\newcommand{\xx}{{\mathfrak x}}
\newcommand{\yy}{{\mathfrak y}}
\newcommand{\Cov}{{\mathsf{Cov}}}
\newcommand{\Cof}{{\mathsf{Cof}}}
\newcommand{\Loc}{{\mathsf{Loc}}}
\newcommand{\LOC}{{\mathsf{LOC}}}
\newcommand{\add}{{\mathsf{add}}}
\newcommand{\cov}{{\mathsf{cov}}}
\newcommand{\non}{{\mathsf{non}}}
\newcommand{\cof}{{\mathsf{cof}}}
\newcommand{\Fn}{{\mathsf{Fn}}}
\newcommand{\pLoc}{{\mathsf{pLoc}}}
\newcommand{\pLOC}{{\mathsf{pLOC}}}
\newcommand{\p}{{\mathsf{p}}}
\newcommand{\stem}{{\mathrm{stem}}}
\newcommand{\Split}{{\mathrm{split}}}
\newcommand{\ran}{{\mathrm{ran}}}
\newcommand{\dom}{{\mathrm{dom}}}
\newcommand{\supp}{{\mathrm{supp}}}
\renewcommand{\succ}{{\mathrm{succ}}}
\newcommand{\cf}{{\mathrm{cf}}}
\newcommand{\Match}{{\mathrm{Match}}}
\newcommand{\id}{{\mathrm{id}}}
\newcommand{\CR}{{\mathrm{CR}}}
\newcommand{\IP}{{\mathrm{IP}}}
\newcommand{\one}{{\mathbf{1}}}
\newcommand{\sub}{\subseteq}
\newcommand{\sem}{\setminus}
\newcommand{\twoom}{2^\omega}
\newcommand{\omom}{\omega^\omega}
\newcommand{\ha}{\,{}\hat{}\,}
\newcommand{\la}{\langle}
\newcommand{\ra}{\rangle}
\title{Cicho\'{n}'s Diagram for uncountable cardinals}
\author{J\"org Brendle\thanks{Partially supported by Grants-in-Aid for Scientific Research
   (C) 24540126 and (C) 15K04977, Japan Society for the Promotion of Science, 
   by JSPS and FWF under the Japan-Austria Research Cooperative Program 
   {\em New developments regarding forcing in set theory}, 
   by the IMS program {\em Sets and computations}, National University of Singapore (April 2015), and by
   the Isaac Newton Institute for Mathematical Sciences in the programme {\em Mathematical, Foundational and 
   Computational Aspects of the Higher Infinite} (HIF) funded by EPSRC grant EP/K032208/1.}  \\
   Graduate School of System Informatics \\
   Kobe University \\
   Rokko-dai 1-1, Nada-ku \\
   Kobe 657-8501, Japan \\
   email: {\sf brendle@kobe-u.ac.jp} \\  \\
   and \\ \\
   Andrew Brooke-Taylor\thanks{Supported at the commencement of this research by a JSPS Postdoctoral Fellowship for Foreign Researchers and JSPS Grant-in-Aid no. 23 01765.  Currently supported by EPSRC Early Career Fellowship reference EP/K035703/1.  Some of this work was undertaken as a Visiting Fellow at the Isaac Newton Institute for Mathematical Sciences in the programme {\em Mathematical, Foundational and Computational Aspects of the Higher Infinite} (HIF) funded by EPSRC grant EP/K032208/1.} \\
   School of Mathematics \\
   University of Leeds \\
   Mathematics and Environment Building (84A) \\
   LS2 9JT  Leeds, UK \\
   email: {\sf  A.D.Brooke-Taylor@leeds.ac.uk} \\ \\
   and \\ \\
   Sy-David Friedman\thanks{Partially supported by the FWF through Project P25748 and
   by the IMS program {\em Sets and computations}, National University of Singapore (April 2015).} \\
   Kurt G\"odel Research Center \\
   Universit\"at Wien \\
   W\"ahringer Strasse 25 \\
   1090 Vienna, Austria \\ 
   email: {\sf sdf@logic.univie.ac.at} \\ \\
   and \\ \\
   Diana Montoya\thanks{Partially supported by the FWF through Project P25748 and
   by the IMS program {\em Sets and computations}, National University of Singapore (April 2015).} \\
   Kurt G\"odel Research Center \\
   Universit\"at Wien \\
   W\"ahringer Strasse 25 \\
   1090 Vienna, Austria \\
   email: {\sf dcmontoyaa@gmail.com}
}
\begin{document}
\maketitle

\begin{abstract}
\noindent We develop a version of Cicho\'n's diagram for cardinal invariants on the generalized Cantor space
$2^\kappa$ or the generalized Baire space $\kappa^\kappa$ where $\kappa$ is an uncountable regular cardinal.
For strongly inaccessible $\kappa$, many of the ZFC-results about the order relationship of the cardinal
invariants which hold for $\omega$ generalize; for example we obtain a natural generalization of the
Bartoszy\'nski-Raisonnier-Stern Theorem. We also prove a number of independence results, both with 
$<\kappa$-support iterations and $\kappa$-support iterations and products, showing that we consistently have strict
inequality between some of the cardinal invariants.
\end{abstract}



\section{Introduction}

Cardinal invariants of the continuum are cardinal numbers which are usually defined as the minimal size of a set of reals
with a certain property -- and thus describe the combinatorial structure of the real line -- and which typically take values between
the first uncountable cardinal $\aleph_1$ and the cardinality of the continuum $\cc$. Some of the most important cardinal invariants 
characterize the structure of the $\sigma$-ideals $\M$ and $\N$ of meager and Lebesgue measure zero (null) sets, respectively. The
order relationship between these cardinals, as well as the closely related bounding and dominating numbers $\bb$ and $\dd$, was intensively 
investigated in the 1980's and is usually displayed in {\em Cicho\'n's diagram}. (See Section 2 for definitions and for the diagram.)
The deepest result in this context is the Bartoszy\'nski-Raisonnier-Stern Theorem 
(see~\cite{Ba84}, \cite{RS85}, or~\cite[Theorem 2.3.1]{BJ95}) which says
that if the union of any family of $\kappa$ many null sets is null, then the union of any family of $\kappa$ many meager sets is
meager. In symbols, this is $\add (\N) \leq \add (\M)$, and the same proof yields the dual inequality $\cof (\M) \leq \cof (\N)$. Cicho\'n's
diagram is complete in the sense that any assignment of the cardinals $\aleph_1$ and $\aleph_2$ which does not contradict the
diagram is consistent with ZFC, the axioms of set theory~\cite[Sections 7.5 and 7.6]{BJ95}.

Starting with the 1990's, variations on Cicho\'n's diagram have been investigated. For example, Pawlikowski and Rec{\l}aw
(see~\cite{PR95} or~\cite[Theorem 3.11]{Ba10}) established that the Galois-Tukey morphisms between triples underlying the proofs of the inequalities
between cardinal invariants can all be taken to be continuous maps, thus obtaining a parametrized version of Cicho\'n's diagram
which also makes sense in the context of the continuum hypothesis CH. (See the end of Section 2 for details on
Galois-Tukey connections.) Both the classical Cicho\'n diagram and a Cicho\'n diagram for small
sets of reals are consequences of this parametrized
Cicho\'n diagram. More recently, Ng, Nies, and the first two authors of the present paper~\cite{BBNN15} developed a Cicho\'n diagram
for highness properties in the Turing degrees. 

In this paper, we attempt to generalize Cicho\'n's diagram in another direction: instead of looking at the Cantor space $\twoom$
or the Baire space $\omom$, we consider the {\em generalized Cantor space $2^\kappa$} or the {\em generalized Baire space $\kappa^\kappa$},
where $\kappa$ is a regular uncountable cardinal. Most of the combinatorial cardinal invariants can easily be redefined in this
context. Moreover, the meager ideal $\M$ on $\twoom$ has a natural analogue $\M_\kappa$ on $2^\kappa$ if we equip $2^\kappa$
with the $< \kappa$-box topology and call a subset of $2^\kappa$ {\em $\kappa$-meager} if it is a $\kappa$-union of nowhere
dense sets in this topology. It is unclear, however, how the null ideal $\N$ should be generalized to $2^\kappa$. Fortunately, most of the
cardinals in the classical Cicho\'n diagram have rather simple characterizations 
in purely combinatorial terms~\cite[Chapter 2]{BJ95}, and to obtain a version
of Cicho\'n's diagram for uncountable regular $\kappa$ containing analogues of at least some of the cardinals related to Lebesgue
measure, we generalize these combinatorial characterizations. This allows us to reprove, for example, a version of the
Bartoszy\'nski-Raisonnier-Stern Theorem which holds for strongly inaccessible $\kappa$ (see Theorem~\ref{BRT-thm} and
Corollary~\ref{BRT-cor} in Section 3).

It turns out that for extending some of the inequalities in Cicho\'n's diagram -- and the equalities establishing the combinatorial
characterizations of some of the cardinals -- additional assumptions on $\kappa$ are necessary. First of all, if $2^{< \kappa} > \kappa$,
then some of the cardinals are equal to $\kappa^+$ while $2^{< \kappa}$ is a lower bound for others so that the diagram is somewhat
degenerate. For this reason, we mainly focus on the case $2^{< \kappa} = \kappa$. But even then it is unclear whether all
inequalities generalize; for example, we do not know whether $\add (\M_\kappa) \leq \bb_\kappa$ holds for successor $\kappa$
(Question~\ref{add-cof-ques}). Fortunately, if $\kappa$ additionally is inaccessible, then a close analogue of Cicho\'n's diagram
can be redrawn and, furthermore, combinatorial characterizations of some of the cardinals also generalize.
(See the end of Section 3 for the diagram.)

A much harder problem is the generalization of independence results about the order-relationship between the cardinals.
For cardinal invariants $\xx$ and $\yy$ describing $\twoom$ (or $\omega^\omega$), the consistency of $\xx < \yy$ typically is
shown either by a long finite support iteration (fsi) of ccc forcing over a model of CH, making $\yy = \cc$ large while preserving 
$\xx = \aleph_1$, by a short fsi of ccc forcing over a model of MA $+ \neg$ CH, preserving $\yy = \cc$ and making $\xx$ small,
or by an $\aleph_2$-stage countable support iteration (csi) of proper forcing over a model of CH, making $\yy = \cc = \aleph_2$
and preserving $\xx = \aleph_1$. Assuming $2^{<\kappa} = \kappa$, these methods naturally generalize to $2^\kappa$ (or
$\kappa^\kappa$), the former two to $< \kappa$-support iteration of $\kappa^+$-cc and $<\kappa$-closed forcing, and, under
$2^\kappa = \kappa^+$, the latter to $\kappa$-support iteration of $\kappa^{++}$-cc, $<\kappa$-closed forcing preserving
$\kappa^+$. The main obstacle is that, unlike for $\kappa = \omega$, in neither case do we have {\em preservation results}~\cite[Chapter 6]{BJ95},
proved in an iterative fashion, and saying that $\xx$ stays small (for the first and third type of models) or that $\yy$ stays large
(for the second type of models). The problem is that when attempting to generalize the inductive proofs of such 
preservation results, we run into trouble at limit stages of cofinality $< \kappa$, a case that does not appear for $\kappa =\omega$.
A natural approach for getting around this problem is to show directly that the whole iteration has the necessary preservation
property and, indeed, this sometimes can be done, both for $<\kappa$-support constructions (Theorem~\ref{total-partial-thm} in 
Section 4 where we use the appropriate generalization of centeredness) and for $\kappa$-support constructions
(Main Lemma~\ref{prod-Sacksprop} and Theorem~\ref{Sacks-model} in Section 5 where we show directly that products
(and iterations) of Sacks forcing satisfy the appropriate generalization of the Sacks property). A further problem is that
in some cases (e.g., generalized Hechler forcing, Subsection 4.2), we even don't know whether the single-step
forcing has the property needed for preservation.

As a consequence, we are still far from knowing whether our diagram (for strongly inaccessible $\kappa$) is complete
in the sense that it shows all ZFC-results. Another interesting question we do not consider in our work is the problem of global
consistency results, see e.g.~\cite{CumSh} or~\cite{FT09}.

The paper is organized as follows. In Section 2, we introduce the basic cardinal invariants, present the classical Cicho\'n diagram
and give an outline of the theory of Galois-Tukey connections. In Section 3, we prove all ZFC-results leading to our
version of Cicho\'n's diagram for strongly inaccessible $\kappa$. We try to work with the weakest assumption in each case,
establishing some results for arbitrary regular uncountable $\kappa$ and some for such $\kappa$ with $2^{<\kappa} = \kappa$.
We also include a brief discussion of what happens if $2^{< \kappa} > \kappa$. Section 4 is about iterations with support of
size $< \kappa$ of $\kappa^+$-cc forcing. We discuss the effect of generalizations of Cohen forcing, Hechler forcing, and
localization forcing on the diagram and also present some models for the degenerate case $2^{< \kappa} > \kappa$
(see in particular Theorem~\ref{layers} which answers a question of Matet and Shelah~\cite{MSta}). The final Section 5 deals with 
iterations and products with support of size $\kappa$. We mainly investigate how generalized Sacks forcing and generalized 
Miller forcing change the cardinal invariants.



\section{Preliminaries}

Let $\kappa$ be an uncountable regular cardinal. Endow the space of functions $2^\kappa$ with the topology generated by the sets of the form $[s] = \{ f \in 2^\kappa : f \supseteq s\} $ for $s \in 2^{<\kappa}$.

We define the {\em generalized $\kappa$-meager sets} in $2^\ka$ to be $\kappa$-unions of nowhere dense sets with respect to this topology. Recall that $A \subseteq 2^\ka$ is nowhere dense if for every $s \in 2^{<\ka}$ there exists $t \supseteq s$ such that $[t] \cap A = \emptyset$. It is well known that the Baire category theorem can be lifted to the uncountable case, i.e. the intersection of $\ka$ many open dense sets is dense (see \cite{FrHyKu}). Let $\NWD_\kappa$ denote the ideal of nowhere dense sets and $\M_\kappa$, the $\kappa$-ideal of $\kappa$-meager sets. 
Here an ideal on a set is a {\em $\kappa$-ideal} if it is closed under unions of size $\kappa$.
Our goal is to generalize some of the cardinal invariants in Cicho\'{n}'s Diagram.

\begin{defi}
If $f, g$ are functions in $\kappa^\kappa$, we say that $g$ \emph{eventually dominates} $f$, and write $f <^{\ast} g$, if there exists an $\alpha<\kappa$ such that $f(\beta) < g(\beta)$ holds for all $\beta> \alpha$.
\end{defi}

\begin{defi}
Let $\mathcal{F}$ be a family of functions from $\kappa$ to $\kappa$. 
\begin{itemize}
\item $\mathcal{F}$ is {\em dominating} if for all $g \in \kappa^\kappa$, there exists an $f \in \mathcal{F}$ such that $g <^{\ast} f$.
\item $\mathcal{F}$ is {\em unbounded} if for all $g \in \kappa^\kappa$, there exists an $f \in \mathcal{F}$ such that $f \nless^{\ast} g$.
\end{itemize}
\end{defi}

\begin{defi}[The unbounding and dominating numbers, $\mathfrak{b}_\kappa$ and $\mathfrak{d}_\kappa$]\hfill
\begin{itemize}
\item $\mathfrak{b}_\kappa= \min\{ \lvert \mathcal{F} \lvert: \mathcal{F}$ is an unbounded family of functions in $\kappa^\kappa \}$.
\item $\mathfrak{d}_\kappa= \min\{ \lvert \mathcal{F} \lvert: \mathcal{F}$ is a dominating family of functions in $\kappa^\kappa \}$.
\end{itemize}
\end{defi}

When we refer to the cardinal invariants above in the case $\kappa=\omega$, we will just write $\mathfrak{b}$ and $\mathfrak{d}$.
It is well-known (see~\cite[Lemma 6]{CumSh}) that $\kappa^+ \leq \mathfrak{b}_\kappa = cf(\mathfrak{b}_\kappa) \leq cf(\mathfrak{d}_\kappa)
\leq \mathfrak{d}_\kappa \leq 2^\kappa$.

Say an ideal $\I$ on a set $X$ is {\em nontrivial} if $X\notin \I$ and $\bigcup \I = X$.

\begin{defi}[Cardinal invariants associated to an ideal] 
Let $\mathcal{I}$ be a nontrivial $\ka$-ideal on a set $X$. We define:
\begin{itemize} 
\item The {\em additivity number}: 
$\add(\mathcal{I}) = \min\{ \lvert \mathcal{J} \lvert: \mathcal{J} \subseteq \mathcal{I}\text{ and }\bigcup \mathcal{J} \notin \mathcal{I} \}.$
\item The {\em covering number}:
$ \cov(\mathcal{I})= \min \{\lvert \mathcal{J} \lvert: \mathcal{J} \subseteq \mathcal{I}\text{ and } \bigcup \mathcal{J} = X \}.$
\item The {\em cofinality number}: \\
$\cof(\mathcal{I})= \min \{\lvert \mathcal{J} \lvert: \mathcal{J} \subseteq \mathcal{I}\text{ and for all } I\in \mathcal{I} \text{ there is a }  J \in \mathcal{J} \text{ with } I \subseteq J\}.$
\item The {\em uniformity number}:
$\non(\mathcal{I})= \min \{\lvert Y\lvert: Y \subset X \text{ and } Y \notin \mathcal{I} \}.$
\end{itemize}
\end{defi}

Provable ZFC inequalities in the case $\ka=\omega$ and $\mathcal{I}$ being the $\sigma$-ideals $\M$ and $\N$ of meager and null sets of the Cantor space $2^\omega$ can be summarized in the well known Cicho\'{n} Diagram. Here an arrow ($\rightarrow$) means $\leq$.

\begin{figure}[ht]
\begin{center}
\setlength{\unitlength}{0.2000mm}
\begin{picture}(630.0000,180.0000)(0,10)
\thinlines
\put(535,180){\vector(1,0){50}}
\put(415,180){\vector(1,0){50}}
\put(175,180){\vector(1,0){50}}
\put(295,180){\vector(1,0){50}}
\put(295,100){\vector(1,0){50}}
\put(415,20){\vector(1,0){50}}
\put(295,20){\vector(1,0){50}}
\put(175,20){\vector(1,0){50}}
\put(55,20){\vector(1,0){50}}
\put(140,30){\vector(0,1){140}}
\put(500,30){\vector(0,1){140}}
\put(380,30){\vector(0,1){60}}
\put(380,110){\vector(0,1){60}}
\put(260,110){\vector(0,1){60}}
\put(260,30){\vector(0,1){60}}
\put(470,170){\makebox(60,20){$\cof(\mathcal{N})$}}
\put(470,10){\makebox(60,20){$\non(\mathcal{N})$}}
\put(350,170){\makebox(60,20){$\cof(\mathcal{M})$}}
\put(350,90){\makebox(60,20){$\mathfrak{d}$}}
\put(350,10){\makebox(60,20){$\cov(\mathcal{M})$}}
\put(230,10){\makebox(60,20){$\add(\mathcal{M})$}}
\put(230,90){\makebox(60,20){$\mathfrak{b}$}}
\put(230,170){\makebox(60,20){$\non(\mathcal{M})$}}
\put(110,170){\makebox(60,20){$\cov(\mathcal{N})$}}
\put(110,10){\makebox(60,20){$\add(\mathcal{N})$}}
\put(10,10){\makebox(40,20){$\aleph_1$}}
\put(590,170){\makebox(40,20){$\mathfrak{c}$}}
\end{picture}
\end{center}
\caption{Cicho\'n's diagram}
\label{ODiag}
\end{figure}
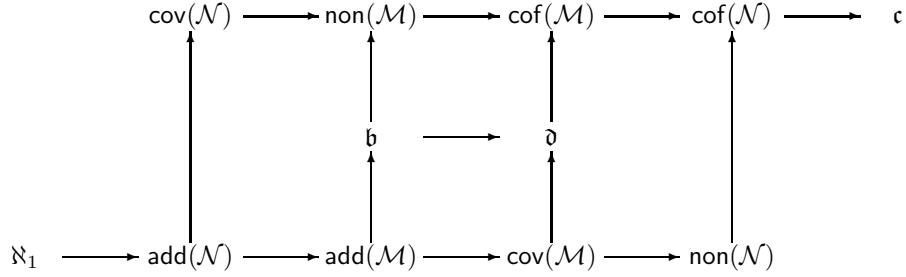

We will recall and use the main results on Galois-Tukey connections (see \cite{Bl10}), to characterize the cardinal invariants defined above.

\begin{defi}
Let $\mathbb{A}= (A_-, A_+, A)$ where $A_-$ and $A_+$ are two sets and $A$ is a binary relation on $A_- \times A_+$. We define the {\em norm} of the triple $\mathbb{A}$, $\| \mathbb{A} \|$, as the smallest cardinality of any subset $Y$ of $A_+$ such that every $x \in A_-$ is related by $A$ to at least one element $y \in Y$.
We also define the {\em dual} of $\mathbb{A}$, $\mathbb{A}^\bot= (A_+, A_-, \neg \check{A}) $ where $(x,y) \in \neg \check{A}$ if and only if $(y,x) \notin A$.  
\end{defi}

Then the cardinal invariants can be seen as norms of some specific triples:

\begin{itemize}
\item If $\mathcal{D}_\kappa= (\kappa^\kappa, \kappa^\kappa, \leq^\ast)$, then $\|\mathcal{D}_\kappa\|= \mathfrak{d}_\kappa$ and $\|\mathcal{D}^\perp_\kappa \|= \mathfrak{b}_\kappa$.
\item If $\Cov(\mathcal{M_\kappa})= (2^\kappa,\mathcal{M}_\kappa,\in)$, then $\| \Cov(\mathcal{M}_\kappa)\|= \cov(\mathcal{M}_\kappa)$ and $\| \Cov(\mathcal{M}_\kappa)^\perp \|$ \\ $= \non(\mathcal{M}_\kappa)$.  
\item If $\Cof(\mathcal{M_\kappa})= (\mathcal{M}_\kappa,\mathcal{M}_\kappa,\subseteq)$, then $\| \Cof(\mathcal{M}_\kappa)\|= \cof(\mathcal{M}_\kappa)$ and $\| \Cof(\mathcal{M}_\kappa)^\perp \|$\\ $= \add(\mathcal{M}_\kappa)$.
\end{itemize}

\begin{defi}  \label{morph-def}
A morphism from $\mathbb{A} = (A_-, A_+, A)$ to  $\mathbb{B}= (B_-, B_+, B)$  is a pair $\Phi= (\Phi_-, \Phi_+)$ of maps satisfying:
\begin{itemize}
\item $\Phi_-: B_- \to A_-$,
\item $\Phi_+: A_+ \to B_+$,
\item for all $b \in B_-$ and $a \in A_+$, if $\Phi_-(b)A a$ then $b B \Phi_+(a)$.\end{itemize}
We write $\mathbb{A} \preceq \mathbb{B}$ if there is a morphism from $\mathbb{A}$ to $\mathbb{B}$. If
$\mathbb{A} \preceq \mathbb{B}$ and $\mathbb{B} \preceq \mathbb{A}$ we write $\mathbb{A} \equiv \mathbb{B}$.
\end{defi}

\begin{obs}
$\mathbb{A} \preceq \mathbb{B}$, then $\| \AA \| \geq \| \BB\|$ and $\| \AA^\bot \| \leq \| \BB ^\bot\|$.
\end{obs}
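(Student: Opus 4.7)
The plan is to unpack the definitions carefully in both directions. For the first inequality, I would start with a witness $Y \subseteq A_+$ of minimum size $\|\mathbb{A}\|$, so that for every $x \in A_-$ there exists $y \in Y$ with $x A y$. The candidate witness for $\|\mathbb{B}\|$ will be the pointwise image $\Phi_+[Y] \subseteq B_+$. Given any $b \in B_-$, I would use $\Phi_-$ to transport it to $\Phi_-(b) \in A_-$, pick a witness $y \in Y$ with $\Phi_-(b) A y$, and then invoke the morphism condition in Definition~\ref{morph-def} to conclude $b B \Phi_+(y)$. Since $|\Phi_+[Y]| \leq |Y| = \|\mathbb{A}\|$, this gives $\|\mathbb{B}\| \leq \|\mathbb{A}\|$.

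For the dual inequality, the cleanest approach is to show that $\mathbb{B}^\bot \preceq \mathbb{A}^\bot$ and then quote the first part. The natural candidate morphism is $(\Phi_+, \Phi_-)$, which has the correct types: $\Phi_+ : A_+ \to B_+$ sends the first coordinate of $\mathbb{B}^\bot$ to that of $\mathbb{A}^\bot$, and $\Phi_-: B_- \to A_-$ sends the second coordinate of $\mathbb{A}^\bot$ to that of $\mathbb{B}^\bot$. The content of the morphism condition for $(\Phi_+, \Phi_-) : \mathbb{B}^\bot \to \mathbb{A}^\bot$ is the implication
\[
\Phi_+(a) \,(\neg\check{B})\, b \ \Longrightarrow\ a \,(\neg\check{A})\, \Phi_-(b),
\]
which unpacks to: if $(b, \Phi_+(a)) \notin B$, then $(\Phi_-(b), a) \notin A$. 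This is precisely the contrapositive of the original morphism condition for $(\Phi_-, \Phi_+)$, so it holds for free. Applying the first part of the observation to $\mathbb{B}^\bot \preceq \mathbb{A}^\bot$ yields $\|\mathbb{B}^\bot\| \geq \|\mathbb{A}^\bot\|$, as desired.

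There is essentially no obstacle here; the whole statement is a bookkeeping exercise in the definitions, and the duality part is a textbook instance of how passing to the dual reverses the direction of Tukey reducibility via contraposition. The only point worth emphasizing in the writeup is the symmetry between the two halves, so that the second inequality is exhibited as a formal consequence of the first once one checks that the morphism condition is self-dual under $(\Phi_-, \Phi_+) \mapsto (\Phi_+, \Phi_-)$.
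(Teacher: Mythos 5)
Your proof is correct, and it is the standard argument for this fact; the paper itself states the observation without proof (it is quoted from the general theory of Galois--Tukey connections in Blass's Handbook chapter). Both halves check out: the image $\Phi_+[Y]$ of a minimal witness is indeed a witness for $\|\mathbb{B}\|$, and $(\Phi_+,\Phi_-)$ is indeed a morphism $\mathbb{B}^\bot \preceq \mathbb{A}^\bot$ by contraposition, so nothing further is needed.
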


\begin{ex}
There are morphisms $\Phi: \Cof(\mathcal{M}_\kappa) \preceq \Cov(\mathcal{M}_\kappa)$ and $\Psi: \Cof(\mathcal{M}_\kappa) \preceq \Cov(\mathcal{M}_\kappa)^\bot$ given by $\Phi=(S, \id)$ and $\Psi =(\id, N)$ where $S(x) = \{x\}$ and for $M \in \mathcal{M}_\kappa$, $N(M)$ is some element of $2^\kappa \setminus M$.
\end{ex}

\begin{cor}
$\add(\mathcal{M}_\kappa) \leq \cov(\mathcal{M}_\kappa) \leq \cof(\mathcal{M}_\kappa)$ and
$\add(\mathcal{M}_\kappa) \leq \non(\mathcal{M}_\kappa) \leq \cof(\mathcal{M}_\kappa)$. 
\end{cor}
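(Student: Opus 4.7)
The plan is to deduce all four inequalities as direct consequences of the preceding example together with the observation relating morphisms to norms. Since the corollary follows formally, there is no real obstacle here; the only ``work'' is matching each of the four inequalities to the appropriate morphism-norm computation, and verifying the identifications between norms and cardinal invariants given in the bulleted list after the definition of $\|\mathbb{A}\|$.

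First I would unpack the two morphisms from the example. From $\Cof(\M_\kappa) \preceq \Cov(\M_\kappa)$, the observation yields $\|\Cof(\M_\kappa)\| \geq \|\Cov(\M_\kappa)\|$ and $\|\Cof(\M_\kappa)^\bot\| \leq \|\Cov(\M_\kappa)^\bot\|$, which translate respectively to $\cof(\M_\kappa) \geq \cov(\M_\kappa)$ and $\add(\M_\kappa) \leq \non(\M_\kappa)$. From $\Cof(\M_\kappa) \preceq \Cov(\M_\kappa)^\bot$, the observation yields $\|\Cof(\M_\kappa)\| \geq \|\Cov(\M_\kappa)^\bot\|$ and $\|\Cof(\M_\kappa)^\bot\| \leq \|\Cov(\M_\kappa)\|$, giving $\cof(\M_\kappa) \geq \non(\M_\kappa)$ and $\add(\M_\kappa) \leq \cov(\M_\kappa)$.

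Combining these four inequalities then produces exactly the two chains in the statement: $\add(\M_\kappa) \leq \cov(\M_\kappa) \leq \cof(\M_\kappa)$ and $\add(\M_\kappa) \leq \non(\M_\kappa) \leq \cof(\M_\kappa)$. If one wanted to be completely self-contained, I would also briefly remark why the morphisms in the example really are morphisms in the sense of Definition~\ref{morph-def}: for $\Phi = (S,\id)$, if $S(x) = \{x\} \subseteq M$ then $x \in M$, so the defining condition holds; for $\Psi = (\id, N)$, if $M \subseteq M'$ then $N(M') \notin M'$ implies $N(M') \notin M$, which is the required relation for $\Cov(\M_\kappa)^\bot$.

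The main (very minor) care is to make sure one reads off the correct norm of the dual triples, since swapping the two coordinates and negating the relation flips which side of the inequality each cardinal lands on; once this bookkeeping is done the proof is a two-line application of the observation. No additional combinatorics about $\M_\kappa$ is needed.
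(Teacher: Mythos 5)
Your proof is correct and is exactly the argument the paper intends: the corollary is stated without proof as an immediate consequence of the preceding Example and Observation, and your unpacking of the four inequalities (including the duality bookkeeping and the verification that $(S,\id)$ and $(\id,N)$ are morphisms) matches that route precisely.
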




\section{$ZFC$-results}

\subsection{Cardinal invariants of the meager ideal}

In this subsection, we generalize results about the central part of Cicho\'n's diagram, namely about the cardinals
related to the meager ideal as well as the bounding and dominating numbers, to uncountable regular $\kappa$.
While our main interest lies in the case $\kappa$ is strongly inaccessible, we try to be as general as possible,
also deal with successor $\kappa$ satisfying $2^{< \kappa} = \kappa$ -- and even with the somewhat degenerate
case when $2^{< \kappa} > \kappa$ (both for successors and (weakly) inaccessibles). In each of these cases
a lot is known, but there are also still many open questions which we list along with the results 
(Questions~\ref{b-non-ques}, \ref{Matet-Shelah-ques}, \ref{succ-b-d-ques}, \ref{add-cof-ques}, and~\ref{cof-ques} in
this subsection, but see also Question~\ref{add-b-ques}).

We start with a characterization of the dominating and bounding numbers, which is just a straightforward generalization of a similar result in the countable case~\cite{Bl10}. Assume $(i_\alpha: \alpha < \kappa)$ is a strictly increasing and continuous sequence of ordinals below $\kappa$ with $i_0=0$. Then $I = (  I_\alpha = [ i_\alpha, i_{\alpha + 1} ) : \alpha < \kappa)$ is an interval partition of $\kappa$. Let $\IP$ be the family of such interval partitions.

\begin{defi}
We say that an interval partition $I= ( I_\alpha =[i_\alpha , i_{\alpha + 1}) : \alpha < \kappa )$ of $\kappa$ {\em dominates} another interval partition $J=(J_\alpha = [j_\alpha, j_{\alpha + 1}): \alpha < \kappa)$ (and write $J \leq^\ast I$) if there is a $\gamma < \kappa$ such that  for all $\alpha > \gamma$  there is some  $\beta  < \kappa$ such that $ J_{\beta} \subseteq I_\alpha$. 
\end{defi}

\begin{prop}{\rm (Similar to~\cite[Theorem 2.10]{Bl10})}   \label{dom-IP}
$\D_\kappa = (\kappa^\kappa , \kappa^\kappa , \leq^* ) \equiv {(\IP , \IP , \leq^*)}$. In particular, if we let $\D'_\kappa =  (\IP , \IP , \leq^*)$, then
$\| \mathcal{D}'_\kappa \| = \mathfrak{d}_\kappa$ and $\| (\mathcal{D}_\kappa')^\bot \| = \mathfrak{b}_\kappa$.
\end{prop}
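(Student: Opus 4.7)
The plan is to prove the Galois–Tukey equivalence $\D_\kappa \equiv \D'_\kappa$ by constructing morphisms in both directions, adapting the classical countable argument \cite[Theorem~2.10]{Bl10} with care taken to handle limit stages $\lambda < \kappa$ in the interval partitions. As a standing convention: for any $f \in \kappa^\kappa$ I will work with the monotone majorant $\bar f(\alpha) := \sup_{\beta \leq \alpha} f(\beta) + \alpha + 1$, which is strictly increasing, takes values $< \kappa$ at each stage by regularity of $\kappa$, and pointwise dominates $f$. Replacing $f$ by $\bar f$ does not change the Galois–Tukey type since the map $f \mapsto \bar f$ is trivially monotone with respect to $\leq^*$.

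For $\D_\kappa \preceq \D'_\kappa$, define $\Phi_+(f) = I^f \in \IP$ by $i^f_0 = 0$, $i^f_{\alpha+1} = \bar f(i^f_\alpha)$, and $i^f_\lambda = \sup_{\alpha<\lambda} i^f_\alpha$ for limit $\lambda$; and define $\Phi_-(J)(\alpha) = j_{\alpha+1}$. To check the morphism condition, suppose $\Phi_-(J) \leq^* f$, so $j_{\alpha+1} \leq f(\alpha)$ for $\alpha \geq \gamma$, and fix a large $\beta$. Let $\alpha^*$ be the least ordinal with $j_{\alpha^*} \geq i^f_\beta$; such an $\alpha^*$ exists by cofinality of $(j_\alpha)$ in $\kappa$, and, by continuity of $(j_\alpha)$, satisfies $j_{\alpha^*} = i^f_\beta$ when $\alpha^*$ is a limit. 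In the successor case $\alpha^* = \alpha+1$, the inequalities $\alpha \leq j_\alpha < i^f_\beta$ give $\alpha + 1 \leq i^f_\beta$, whence $j_{\alpha+2} \leq f(\alpha+1) \leq \bar f(i^f_\beta) = i^f_{\beta+1}$, so $J_{\alpha^*} \subseteq I^f_\beta$. The limit case is analogous, using $\alpha^* \leq j_{\alpha^*} = i^f_\beta$ to bound $\bar f(\alpha^*) \leq i^f_{\beta+1}$.

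For $\D'_\kappa \preceq \D_\kappa$, define $\Phi_-(g) = J^g$ by $j^g_0 = 0$, $j^g_{\alpha+1} = \bar g(j^g_\alpha) + 1$, and $j^g_\lambda = \sup_{\alpha<\lambda} j^g_\alpha$; and define $\Phi_+(I)(\alpha) = i_{\beta(\alpha)+2}$, where $\beta(\alpha)$ is the unique index with $\alpha \in I_{\beta(\alpha)}$. Supposing $\Phi_-(g) \leq^* I$, for all sufficiently large $\beta$ some $J^g_{\alpha(\beta)} \subseteq I_\beta$; from $\bar g(j^g_{\alpha(\beta)}) + 1 = j^g_{\alpha(\beta)+1} \leq i_{\beta+1}$ together with $j^g_{\alpha(\beta)} \geq i_\beta$, monotonicity of $\bar g$ yields $\bar g(i_\beta) < i_{\beta+1}$. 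Then for any large $\alpha$, putting $\beta = \beta(\alpha)$ and applying the previous inequality at $\beta+1$, we get $g(\alpha) \leq \bar g(\alpha) \leq \bar g(i_{\beta+1}) < i_{\beta+2} = \Phi_+(I)(\alpha)$, so $g \leq^* \Phi_+(I)$. The main obstacle, absent in the countable setting, is ensuring the arguments go through uniformly at limit ordinals $\lambda < \kappa$; the continuity built into the interval partitions and the $+2$ shift in $\Phi_+$ for the second direction are tailored precisely to cover the limit case alongside the successor case without further bookkeeping.
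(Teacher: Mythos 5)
Your proof is correct and follows essentially the same route as the paper's: both directions are witnessed by the same two basic constructions (a function $\gamma \mapsto i_{\alpha+2}$ read off an interval partition, and a partition built by closing under a monotone majorant of $f$), with the verification tracking where the $j_\alpha$'s fall among the $i_\beta$'s. The only cosmetic differences are that you make the closure partition explicit via $\bar f$ and use separately tuned maps for each direction, whereas the paper reuses a single pair $(\Phi_-,\Phi_+)$ for both morphisms.
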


\begin{proof}
Define $\Phi_- : \IP \to \kappa^\kappa$ by $\Phi_- (I) (\gamma) = i_{\alpha + 2}$ where $\gamma \in I_\alpha = [ i_\alpha, i _{\alpha + 1})$,
for $I \in \IP$.
Also let $\Phi_+ (f) = (J_\alpha = [ j_\alpha, j_{\alpha + 1} ) : \alpha < \kappa )$ be an interval partition 
such that $\gamma \leq j_\alpha$ implies $f (\gamma)  < j_{\alpha + 1}$, for $f \in \kappa^\kappa$. 
We use these functions for both inequalities.

To show  $(\kappa^\kappa , \kappa^\kappa , \leq^* ) \preceq (\IP , \IP , \leq^*)$, assume $\Phi_- (I) \leq^* f$ for some $f \in \kappa^\kappa$
and $I \in \IP$. Let $\alpha$ be so large that $\Phi_- (I)  (j_\alpha) \leq f (j_\alpha)$. Fix $\beta$ such that $j_\alpha \in I_\beta =
[i_\beta , i_{\beta + 1})$. Then $\Phi_- (I)  (j_\alpha) = i_{\beta + 2} \leq f (j_\alpha) < j_{\alpha + 1}$. This means that the interval
$I_{\beta +1}$ is contained in $J_\alpha$, and $I \leq^* \Phi_+ (f)$ follows.

For $(\IP , \IP, \leq^*) \preceq (\kappa^\kappa , \kappa^\kappa, \leq^*)$, assume $\Phi_+ (f) \leq^* I$. Let $\gamma$ be so large that
all intervals of $I$ beyond and including the one containing $\gamma$ contain an interval of $\Phi_+ (f)$. Suppose that $\gamma$
belongs to $I_\alpha$ and $J_\beta$. Since $\gamma < j_{\beta + 1}$, this means $f (\gamma) < j_{\beta +2 }$ and the latter must
be less or equal than $i_{\alpha + 2} = \Phi_- (I) (\gamma)$ because $I_{\alpha + 1}$ contains an interval of $J$.
Thus $f \leq^* \Phi_- (I)$ follows.
\end{proof}

In order to prove some of the inequalities related to the cardinals associated to the ideal of $\ka$-meager sets we will use the approach of Blass in \cite{Bl10}. 

\begin{defi}
A {\em $\kappa$-chopped function} is a pair $(x,I)$, where $x \in 2^\kappa$ and $I=(I_\alpha = [i_\alpha , i_{\alpha+1}) : \alpha < \kappa )$ is an interval partition of $\kappa$. A real $y \in 2^\kappa$ {\em matches} a $\ka$-chopped function $(x,I)$ if $x\restriction I_\alpha = y \restriction I_\alpha$ for cofinally many  intervals $I_\alpha \in I$.
\end{defi}

In the countable case it is possible to characterize meagerness in terms of chopped reals (see~\cite[5.2]{Bl10}):
in fact, a subset $M$ of $2^\omega$ is meager if and only if there is a $\omega$-chopped function that no member of $M$ matches.

Now, we present some of the results obtained by Blass, Hyttinen, and Zhang in \cite{BHZta} that show that, in general, this is not the case when we go to the uncountable.

\begin{defi}
We say that a subset $M$ of $2^\kappa$ is {\em combinatorially meager} if there is a $\kappa$-chopped function that no member of $M$ matches. We call $\Match (x, I)$ the set of functions in $2^\kappa$ matching the $\ka$-chopped function $(x, I)$. 
\end{defi}

We include some proofs of the following results for the sake of completeness.

\begin{obs}{\rm \cite[4.6]{BHZta}}  \label{meager-chop1}
Every combinatorially meager set is meager. 
\end{obs}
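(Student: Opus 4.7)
The plan is to show that the complement-of-matching set
\[ N = \{y \in 2^\kappa : y \text{ does not match } (x,I)\} \]
is $\kappa$-meager; since $M \subseteq N$ by hypothesis, this suffices. The natural decomposition is by the stage beyond which the agreement fails: for each $\beta < \kappa$ set
\[ A_\beta = \{ y \in 2^\kappa : y \restriction I_\alpha \neq x \restriction I_\alpha \text{ for every } \alpha \geq \beta \}. \]
A real $y$ fails to match $(x,I)$ exactly when the agreement set $\{\alpha : y \restriction I_\alpha = x \restriction I_\alpha\}$ is bounded in $\kappa$, hence $N = \bigcup_{\beta < \kappa} A_\beta$. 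This is manifestly a $\kappa$-sized union, so the whole task reduces to showing that each $A_\beta$ is nowhere dense in the $<\kappa$-box topology.

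For the nowhere density claim, I would argue as follows. Fix $\beta < \kappa$ and an arbitrary basic open $[s]$ with $s \in 2^{<\kappa}$, say $\dom(s) = \gamma < \kappa$. Because $(i_\alpha : \alpha < \kappa)$ is strictly increasing and cofinal in $\kappa$, and $\kappa$ is regular, I can pick $\alpha < \kappa$ with $\alpha \geq \beta$ and $i_\alpha \geq \gamma$. Then define $t \in 2^{i_{\alpha+1}}$ by setting $t \restriction \gamma = s$, $t \restriction [\gamma, i_\alpha)$ arbitrary (say constantly $0$), and $t \restriction I_\alpha = x \restriction I_\alpha$. By construction $t \supseteq s$, and any $y \in [t]$ satisfies $y \restriction I_\alpha = x \restriction I_\alpha$ with $\alpha \geq \beta$; such a $y$ therefore does not lie in $A_\beta$. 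Hence $[t] \cap A_\beta = \emptyset$, proving $A_\beta \in \NWD_\kappa$.

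Putting the two steps together, $N = \bigcup_{\beta < \kappa} A_\beta$ exhibits $N$ as a $\kappa$-union of nowhere dense sets, so $N \in \M_\kappa$; since $M \subseteq N$ and $\M_\kappa$ is a $\kappa$-ideal, $M$ is $\kappa$-meager. The argument never needs more than regularity of $\kappa$ and the existence of cofinally many interval endpoints, so no additional hypothesis on $\kappa$ is required.

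There is essentially no obstacle here: the only delicate point is making sure that the witness $\alpha$ used to block $[s]$ can simultaneously be chosen above $\beta$ and with $i_\alpha \geq \dom(s)$, which is immediate from cofinality of $(i_\alpha)_{\alpha<\kappa}$ in $\kappa$. The converse inclusion $\M_\kappa \subseteq \{\text{combinatorially meager sets}\}$ is the subtle direction (and according to the cited \cite{BHZta} fails in general), but that is not what the observation asserts.
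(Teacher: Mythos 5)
Your proof is correct and follows exactly the paper's argument: the paper likewise decomposes the set of non-matching reals as $\bigcup_{\beta<\kappa}\{y : y\restriction I_\alpha \neq x\restriction I_\alpha \text{ for all } \alpha > \beta\}$ and observes each piece is nowhere dense. You have simply spelled out the nowhere-density verification that the paper leaves as a one-line remark.
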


\begin{proof}
Let $(x,I)$ be a $\kappa$-chopped function with interval partition $I=(I_\alpha : \alpha < \kappa )$.
Note that for every $\beta < \kappa$, the set $\{ y \in 2^\kappa : y \re I_\alpha \neq x \re I_\alpha$ for all $\alpha > \beta \}$
is nowhere dense in $2^\kappa$.
\end{proof}

\begin{prop}{\rm \cite[4.7 and 4.8]{BHZta}}  \label{meager-chop2}
$\kappa$ is strongly inaccessible if and only if every meager set is combinatorially meager. 
\end{prop}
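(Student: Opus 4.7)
The plan is to treat the two directions separately. The forward direction (strongly inaccessible $\kappa$ implies every meager set is combinatorially meager) generalizes the classical countable argument via a bookkeeping of $<\kappa$-size families at each stage of an interval partition recursion; the backward direction requires a specific meager counterexample that exploits the failure of $\kappa$ being strong limit, and this counterexample is where I expect the main obstacle to lie.

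For the forward direction, I take meager $M \subseteq \bigcup_{\alpha < \kappa} F_\alpha$ with each $F_\alpha$ closed nowhere dense, and build $(x, I)$ by recursion on $\alpha < \kappa$. The invariant to maintain at the end of stage $\alpha$ is that for every pair $(s, \beta)$ with $s \in 2^{i_\alpha}$ and $\beta \leq \alpha$, the basic open set $[s \ha (x \restriction I_\alpha)]$ is disjoint from $F_\beta$. Since $\kappa$ is strong limit, $2^{i_\alpha} \cdot |\alpha + 1| < \kappa$, so there are only $\mu < \kappa$ such pairs; I enumerate them as $((s_\xi, \beta_\xi) : \xi < \mu)$ and extend $x$ inside $I_\alpha$ in substages. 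At substage $\xi$, having already extended $x$ to some length $j_\xi < \kappa$, nowhere density of $F_{\beta_\xi}$ supplies an extension of $s_\xi \ha (x \restriction [i_\alpha, j_\xi))$ whose cone misses $F_{\beta_\xi}$, and this determines $x \restriction [j_\xi, j_{\xi+1})$. Regularity of $\kappa$ then gives $i_{\alpha+1} = \sup_\xi j_\xi < \kappa$; limit stages of the outer recursion are handled by suprema. The invariant is preserved because disjointness of a basic open from a fixed set persists under further extension of its defining sequence. Once $(x,I)$ is built, any $y \in F_\beta$ must have $y \restriction I_\alpha \neq x \restriction I_\alpha$ for every $\alpha \geq \beta$, since otherwise $y$ would lie in $[(y \restriction i_\alpha) \ha (x \restriction I_\alpha)] \cap F_\beta$ and contradict the invariant; hence $y$ matches $(x, I)$ on only a bounded set of intervals.

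For the backward direction, assume $\kappa$ is not strongly inaccessible; since $\kappa$ is uncountable regular, this is equivalent to the existence of $\lambda < \kappa$ with $2^\lambda \geq \kappa$. I need to produce a $\kappa$-meager $M$ such that $M \cap \Match(x, I) \neq \emptyset$ for every $\kappa$-chopped function $(x, I)$. The guiding heuristic is that the forward construction breaks down precisely at the first stage $\alpha$ with $i_\alpha \geq \lambda$, where one faces $\geq 2^\lambda \geq \kappa$ many prefix conditions that cannot be dispatched in a $<\kappa$-length substage enumeration; an appropriately wide meager set should therefore be impossible to avoid. Naive candidates such as $\{\bar z : z \in 2^\lambda\}$, where $\bar z$ extends $z$ by zeros on $[\lambda, \kappa)$, fail: one can choose $x$ with $x \restriction I_\alpha \neq 0$ for all large $\alpha$ and thereby simultaneously defeat every $\bar z$. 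A workable counterexample seems to require a diagonal construction in which the nowhere dense components of $M$ jointly exhibit $\geq 2^\lambda \geq \kappa$ many distinct patterns at lengths $\geq \lambda$, chosen so that every candidate $(x,I)$ must agree with some $y \in M$ on cofinally many of its intervals. Carrying out this construction precisely is the technical heart of the proof and where I would focus most effort; once such $M$ is found, its failure to be combinatorially meager is immediate from the matching property.
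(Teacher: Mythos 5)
Your forward direction is correct and is essentially the paper's argument. The paper first replaces the given nowhere dense sets by an increasing sequence $(A_\alpha : \alpha < \kappa)$ and then, at stage $\alpha$ of the interval recursion, chooses a single block $\sigma_\alpha \in 2^{[i_\alpha,i_{\alpha+1})}$ with $[\tau \ha \sigma_\alpha] \cap A_\alpha = \emptyset$ for every $\tau \in 2^{i_\alpha}$, using $2^{i_\alpha} < \kappa$ (strong limit) and regularity exactly as in your substage enumeration. Your variant, which dispatches all pairs $(s,\beta)$ with $\beta \le \alpha$ at stage $\alpha$ instead of first arranging the sequence to be increasing, is an immaterial difference, and your concluding step (a $y \in F_\beta$ can agree with $x$ only on intervals indexed below $\beta$, hence on a bounded set) is the same as the paper's.

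The backward direction, however, is not proved. You correctly reduce it to exhibiting, under the assumption that $2^\lambda \ge \kappa$ for some $\lambda < \kappa$, a $\kappa$-meager set $M$ with $M \cap \Match(x,I) \neq \emptyset$ for every $\kappa$-chopped function $(x,I)$, and you rightly note that the obvious candidates (such as the translates-of-zero set) fail; but you then stop, deferring the ``technical heart'' of the construction. As a proof of the stated biconditional this is a genuine gap: the right-to-left implication is exactly that missing construction, and nothing you have written pins it down. That said, you are in the same position as the paper itself, which proves only the left-to-right direction, explicitly remarks that the converse will not be used, and cites \cite[4.7 and 4.8]{BHZta} for the full equivalence. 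So measured against what the paper actually establishes, your proposal covers the same ground by the same method; measured against the statement as written, the converse still requires the counterexample of Blass, Hyttinen, and Zhang.
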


\begin{proof}
We only show the direction from left to right since we won't use the converse.

Assume $\kappa$ is strongly inaccessible and let $A$ be meager, that is, $A = \bigcup_{\alpha < \kappa} A_\alpha$
where the $A_\alpha$ form an increasing sequence of nowhere dense sets. Since $2^\alpha < \kappa$ for all $\alpha < \kappa$, we can recursively
construct a continuous increasing sequence $(i_\alpha : \alpha < \kappa)$ of ordinals less than $\kappa$ and
a sequence of partial functions $(\sigma_\alpha \in 2^{[i_\alpha , i_{\alpha + 1} ) } : \alpha < \kappa )$ such that
for all $\tau \in 2^{i_\alpha}$, $[\tau \ha \sigma_\alpha] \cap A_\alpha = \emptyset$. If $x$ is the concatenation of the
$\sigma_\alpha$, that is, $x \re [i_\alpha, i_{\alpha + 1} ) = \sigma_\alpha$, and $I$ is the interval partition given
by the $i_\alpha$, then no member of $A$ matches $(x,I)$, and $A$ is combinatorially meager.
\end{proof}

Let $\CR$ denote the set of $\kappa$-chopped functions. 
In view of these results, we can identify $\Cov (\M_\kappa) = (2^\kappa , \M_\kappa, \in)$ with $(2^\kappa, \CR,$ does not match$)$
for strongly inaccessible $\kappa$ -- they are equivalent in the sense of Definition~\ref{morph-def} --
and we will use this for example in Corollary~\ref{non-cov-cor} below.

\begin{defi}\label{nmcv}
Let $f$ and $g$ be two functions from $\kappa$ to $\kappa$. We say that $f$ and $g$ are {\em eventually different}  (and write $f \neq^* g$) if $\lvert\{\alpha < \kappa : f(\alpha)= g(\alpha)\}\lvert < \kappa$. Otherwise we say $f$ and $g$ are {\em cofinally matching}. We define the following two cardinal invariants:
\begin{itemize}
\item $\bb_\kappa (\neq^*)= \min \{\lvert \mathcal{F}\lvert : (\forall g \in \kappa^\kappa) (\exists f \in \mathcal{F})\neg(f \neq^* g)\}$.
\item $\dd_\kappa (\neq^*)= \min \{\lvert \mathcal{F}\lvert : (\forall g \in \kappa^\kappa) (\exists f \in \mathcal{F})(f \neq^* g)\}$.
\end{itemize}
Considering the triple $\mathcal{E}_\kappa=(\kappa^\kappa, \kappa^\kappa, \neq^*)$, we see that $\| \mathcal{E}_\kappa \| = \dd_\kappa (\neq^*)$ and $\| \mathcal{E}^\bot_\kappa \| = \bb_\kappa (\neq^*)$
\end{defi}

In the countable case the cardinal invariants defined above coincide with $\non(\mathcal{M})$ and $\cov(\mathcal{M})$, respectively. See \cite[2.4.A]{BJ95}. In the uncountable we still have the following:

\begin{obs}   \label{ed-obs}
$\mathcal{D}_\kappa \preceq \mathcal{E}_\kappa \preceq \Cov (\M_\kappa)$. In particular, $\bb_\kappa \leq \bb_\kappa (\neq^*) \leq \non (\M_\kappa)$ and $\cov (\M_\kappa) \leq \dd_\kappa (\neq^*) \leq \dd_\kappa$.
\end{obs}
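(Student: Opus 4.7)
The plan is to construct morphisms witnessing $\mathcal{D}_\kappa \preceq \mathcal{E}_\kappa$ and $\mathcal{E}_\kappa \preceq \Cov(\M_\kappa)$; the four cardinal inequalities then drop out of the monotonicity of norms under $\preceq$ noted just before the preceding example.

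For $\mathcal{D}_\kappa \preceq \mathcal{E}_\kappa$, I would take $\Phi_-$ to be the identity on $\kappa^\kappa$ and $\Phi_+(f)(\alpha) := f(\alpha) + 1$. If $g = \Phi_-(g) \leq^* f$, then on a final segment $g(\alpha) \leq f(\alpha) < \Phi_+(f)(\alpha)$, so $g(\alpha) \neq \Phi_+(f)(\alpha)$ on a co-small set of $\alpha$ and $g \neq^* \Phi_+(f)$ follows. This part is routine.

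For $\mathcal{E}_\kappa \preceq \Cov(\M_\kappa)$, the natural first candidate is the inclusion $\Phi_- : 2^\kappa \hookrightarrow \kappa^\kappa$ together with $\Phi_+(f) := \{\, x \in 2^\kappa : x \neq^* f \,\}$, so that the morphism condition is built in by definition. The substantive step is to verify $\Phi_+(f) \in \M_\kappa$. I would write the complement of $\Phi_+(f)$ in $2^\kappa$ as $\bigcap_{\beta < \kappa} U_\beta$, where
\[
U_\beta \;=\; \bigcup_{\alpha \geq \beta,\ f(\alpha) \in \{0,1\}} \{\, x \in 2^\kappa : x(\alpha) = f(\alpha) \,\};
\]
each $U_\beta$ is open, and density is immediate by extending any basic clopen $[s]$ with $\dom(s) = \gamma$ via $s'(\alpha) := f(\alpha)$ at some $\alpha \geq \max(\beta,\gamma)$ with $f(\alpha) \in \{0,1\}$. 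The generalized Baire category theorem cited in the preliminaries then tells us the intersection is comeager, so $\Phi_+(f)$ is meager.

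The main obstacle is the case when $A_f := \{\, \alpha < \kappa : f(\alpha) \in \{0,1\} \,\}$ is bounded in $\kappa$: the $U_\beta$ are then empty for large $\beta$, $\Phi_+(f)$ collapses to all of $2^\kappa$, and the morphism fails. Since this section is developed primarily under the strong inaccessibility of $\kappa$, I would resolve this by replacing the inclusion $\Phi_-$ with a block coding: fix an interval partition $(I_\alpha : \alpha < \kappa)$ with $|I_\alpha|$ cofinal in $\kappa$ and $2^{|I_\alpha|} < \kappa$ throughout, and let $\Phi_-(x)(\alpha)$ be the ordinal $< 2^{|I_\alpha|}$ coding $x \re I_\alpha$. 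For any $f \in \kappa^\kappa$ one has $f(\alpha) < 2^{|I_\alpha|}$ for cofinally many $\alpha$, so the complement of the updated $\Phi_+(f) := \{\, x \in 2^\kappa : \Phi_-(x) \neq^* f \,\}$ is again a $\kappa$-intersection of open dense sets, hence comeager, restoring both the meagerness of $\Phi_+(f)$ and the morphism condition uniformly in $f$.
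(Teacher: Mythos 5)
Your first morphism ($\Phi_-=\id$, $\Phi_+(f)=f+1$) is correct and is exactly the routine argument the paper dismisses as straightforward. The gap is in the second morphism. You correctly diagnose why the naive inclusion $2^\kappa\hookrightarrow\kappa^\kappa$ fails, but your block-coding repair fails for the very same reason: the claim that every $f\in\kappa^\kappa$ satisfies $f(\alpha)<2^{|I_\alpha|}$ for cofinally many $\alpha$ is false. Take $f(\alpha)=2^{|I_\alpha|}$, a legitimate ordinal below $\kappa$ under your inaccessibility hypothesis; then $f(\alpha)$ codes no element of $2^{I_\alpha}$ for any $\alpha$, so $\Phi_-(x)\neq^* f$ holds for \emph{every} $x\in2^\kappa$, the morphism condition forces $\Phi_+(f)\supseteq 2^\kappa$, and no meager value of $\Phi_+(f)$ exists. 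This cannot be rescued by a cleverer $\Phi_+$: as long as $\Phi_-(x)(\alpha)$ depends only on $x\re I_\alpha$, it takes at most $2^{|I_\alpha|}<\kappa$ values, so some $f$ escapes the range of $\Phi_-(\cdot)(\alpha)$ for every $\alpha$. Note also that the Observation carries no hypothesis on $\kappa$ beyond regularity and is invoked later for successor $\kappa$ (e.g., in the proof of Theorem~\ref{layers}), so even a correct argument requiring strong inaccessibility would be too weak.

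The paper's way out is to change the ambient space rather than the coding: it reads $\Cov(\M_\kappa)$ as $(\kappa^\kappa,\M_\kappa,\in)$ with the meager ideal of $\kappa^\kappa$ in the $<\kappa$-box topology, takes $\Phi_-=\id$ and $\Phi_+(f)=\{g\in\kappa^\kappa: g\neq^* f\}$. The complement of $\Phi_+(f)$ is then $\bigcap_{\beta<\kappa}\bigcup_{\alpha\geq\beta}\{g: g(\alpha)=f(\alpha)\}$, an intersection of $\kappa$ many open dense sets (dense because $f(\alpha)$ is always an admissible value in $\kappa^\kappa$, which is precisely what fails in $2^\kappa$), hence comeager by the generalized Baire category theorem, with no assumption on $\kappa$. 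For the version carried out in $2^\kappa$ itself the paper defers to Blass--Hyttinen--Zhang. So to complete your proof you should either pass to $\kappa^\kappa$ as the underlying space (and, if you insist on $2^\kappa$ in the statement, supply the reduction between the two meager ideals) or reproduce the cited argument; the block coding as written does not close the gap.
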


\begin{proof} 
The first $\preceq$ is straightforward. For the second, consider the meager ideal on $\kappa^\kappa$ instead of $2^\kappa$ and let $\Cov (\M_\kappa) = (\kappa^\kappa, \M_\kappa , \in)$. Then $\Phi_- = \id : \kappa^\kappa \to \kappa^\kappa$ and $\Phi_+ : \kappa^\kappa \to \M_\kappa$ given by $\Phi_+ (f) = \{ g \in \kappa^\kappa : g \neq^* f \}$ witness $\mathcal{E}_\kappa \preceq \Cov (\M_\kappa)$.
For an argument with the space $2^\kappa$, see~\cite[Proposition 4.13]{BHZta}.
\end{proof}

The following result is implicit in work of both Landver~\cite{La92} and Blass-Hyttinen-Zhang~\cite{BHZta} (though detailed
proofs are missing from both articles). We include the argument for the sake of completeness. As usual, for cardinals $\lambda, \mu$, and
$\nu$, we let $\Fn (\lambda , \mu , \nu)$ denote the set of partial functions from $\lambda$ to $\mu$ with domain of
size strictly less than $\nu$. 

\begin{thm}   \label{non-cov-char}
Assume $\kappa$ is strongly inaccessible. There are functions 
\[ \Phi_- : \CR \times \IP \to (( \Fn (\kappa,2,\kappa))^{<\kappa} )^\kappa\]
and 
\[ \Phi_+ : \IP \times (( \Fn (\kappa,2,\kappa))^{<\kappa} )^\kappa \to 2^\kappa\] 
such that if $(x,I) \in \CR$, $J \in \IP$, and $y \in (( \Fn (\kappa,2,\kappa))^{<\kappa} )^\kappa$  are such that 
cofinally many $J_\alpha$ contain an interval of $I$ and $\Phi_- ((x,I) , J ) (\beta) = y (\beta)$ for cofinally many $\beta$,
then $\Phi_+ (J,y)$ matches $(x,I)$.
\end{thm}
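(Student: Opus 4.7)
My plan is to define $\Phi_-((x,I),J)$ by recording at each $\beta<\kappa$ the list of restrictions $x\restrict I_\gamma$ for those $I$-intervals sitting inside $J_\beta$, and to define $\Phi_+(J,y)$ by treating each $y(\beta)$ as a list of partial-function instructions on the block $J_\beta$ and reading them back into a single element of $2^\kappa$. The two cofinality hypotheses should then combine to yield cofinally many matching intervals. Strong inaccessibility is used to keep all the bookkeeping sets of size $<\kappa$.

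More precisely, for each $\beta<\kappa$, let $L_\beta=\{\gamma<\kappa:I_\gamma\subseteq J_\beta\}$, a contiguous (possibly empty) set of ordinals, and note that $|L_\beta|\leq|J_\beta|<\kappa$. Enumerate $L_\beta$ increasingly as $(\gamma^\delta_\beta:\delta<\xi_\beta)$ with $\xi_\beta<\kappa$ and set
\[
\Phi_-((x,I),J)(\beta)=\langle\,x\restrict I_{\gamma^\delta_\beta}:\delta<\xi_\beta\,\rangle\in(\Fn(\kappa,2,\kappa))^{<\kappa}.
\]
For $\Phi_+$, define $\Phi_+(J,y)(\eta)$ by taking the unique $\beta$ with $\eta\in J_\beta$, writing $y(\beta)=(s_\delta:\delta<\xi)$, and setting $\Phi_+(J,y)(\eta)=s_\delta(\eta)$ for the least $\delta<\xi$ with $\eta\in\dom(s_\delta)$, defaulting to $0$ otherwise. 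The pointwise key is that whenever $y(\beta)=\Phi_-((x,I),J)(\beta)$ and $\gamma=\gamma^\delta_\beta\in L_\beta$, then $s_\delta=x\restrict I_\gamma$; since the sets $I_{\gamma^{\delta'}_\beta}$ are pairwise disjoint inside $J_\beta$, the least qualifying $\delta'$ at any $\eta\in I_\gamma$ is $\delta$ itself, and so $\Phi_+(J,y)(\eta)=x(\eta)$, yielding $\Phi_+(J,y)\restrict I_\gamma=x\restrict I_\gamma$.

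The hard part will be promoting these pointwise matches into cofinally many matching $I$-intervals. Writing $A=\{\beta<\kappa:L_\beta\neq\emptyset\}$ (cofinal by the first hypothesis) and $B=\{\beta<\kappa:y(\beta)=\Phi_-((x,I),J)(\beta)\}$ (cofinal by the second), the pointwise step contributes matches exactly for $\gamma\in\bigcup_{\beta\in A\cap B}L_\beta$, so the crux is to derive that $A\cap B$ is cofinal from the individual cofinalities of $A$ and $B$. I would address this by refining $\Phi_-$ so that its value on $\beta\notin A$ is a distinguished sentinel (for instance the empty sequence) that decouples $B\setminus A$ from the $A$-level information, and by observing that on $\beta\in A$ the value $\Phi_-((x,I),J)(\beta)$ is a nonempty sequence whose partial functions have domains that are specifically $I$-intervals contained in $J_\beta$ --- a structural feature absent from the sentinel --- so that matches in $B\cap A$ are combinatorially rich enough to force $B\cap A$ cofinal whenever $B$ is. With $A\cap B$ cofinal, the set $\bigcup_{\beta\in A\cap B}L_\beta$ is cofinal in $\kappa$, proving that $\Phi_+(J,y)$ matches $(x,I)$ on cofinally many $I$-intervals as required.
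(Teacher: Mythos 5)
Your construction is sound up to the point you yourself flag as ``the hard part,'' and that is exactly where it breaks. Because your $\Phi_-((x,I),J)(\beta)$ records only \emph{local} data about the single block $J_\beta$, a match of $y$ with $\Phi_-((x,I),J)$ at $\beta$ produces an interval of agreement only when $\beta\in A=\{\beta: L_\beta\neq\emptyset\}$. The hypothesis of the theorem gives you a cofinal set $B$ of matching $\beta$'s, but two cofinal subsets of $\kappa$ can be disjoint, and nothing in the hypothesis lets you steer $B$ into $A$. The sentinel idea does not repair this: the sentinel (say, the empty sequence) is a perfectly legitimate value of $y(\beta)$, so an adversarial $y$ can equal $\Phi_-((x,I),J)$ \emph{exactly} at the positions $\beta\notin A$ and differ from it everywhere on $A$. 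If $A$ and $\kappa\setminus A$ are both cofinal (easily arranged: choose $J$ so that $J_\beta$ engulfs an $I$-interval only at, say, odd indices), the hypotheses of the theorem hold while your $\Phi_+(J,y)$ is never handed a single genuine restriction of $x$, so it need not match $(x,I)$ at all. There is no ``combinatorial richness'' argument available here; the claim that $B$ cofinal forces $A\cap B$ cofinal is simply false.

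The paper's proof avoids this by making $\Phi_-((x,I),J)(\beta)$ a \emph{global} object: it enumerates the good blocks as $J_{\alpha_\gamma}$, fixes $I_{\delta_\gamma}\subseteq J_{\alpha_\gamma}$, and sets $\Phi_-((x,I),J)(\beta)=(x\restriction I_{\delta_\gamma}:\gamma<\omega_{\beta+1})$ --- a sequence of length $\omega_{\beta+1}$ of restrictions living in \emph{distinct} $J$-blocks spread over all of $\kappa$, with no tie between the index $\beta$ and the location of the data. Correspondingly, $\Phi_+(J,y)$ is built recursively: at stage $\beta$, if $y(\beta)$ has the right shape, it copies just \emph{one} of the listed partial functions into a $J$-block not yet used (such a block exists because at most $|\beta|\leq\omega_\beta<\omega_{\beta+1}$ blocks have been consumed). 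Then every sufficiently large matching $\beta$ --- wherever it lies --- contributes a \emph{fresh} interval on which $\Phi_+(J,y)$ agrees with $x$, and cofinally many matching $\beta$'s yield $\kappa$ many distinct such intervals. Decoupling the index at which the match occurs from the interval on which agreement is produced is the essential idea your proposal is missing.
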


\begin{proof} 
Assume $I$ and $J$ are such that for cofinally many $\alpha$, $J_\alpha$ contains an interval of $I$. 
Let $C = \{ \alpha_\gamma : \gamma < \kappa \}$ be the enumeration of these $\alpha$. That is, for 
$\gamma < \kappa$ there is $\delta_\gamma < \kappa$ such that $I_{\delta_\gamma} \sub J_{\alpha_\gamma}$. 
Put 
\[\Phi_- ((x,I),J) (\beta) = ( x \re I_{\delta_\gamma} : \gamma < \omega_{\beta + 1} ) \]
for $\beta < \kappa$. For other $I$ and $J$, the value of $\Phi_- ((x,I),J) (\beta)$ is arbitrary.

$\Phi_+ (J,y)$ will be defined recursively. At each stage at most one interval $J_\alpha$ of $J$ will be considered and
$\Phi_+ (J,y) \re J_\alpha$ will be defined. Suppose we are at stage $\beta < \kappa$. If $y (\beta)$ is a sequence
of length $\omega_{\beta + 1}$ of partial functions all of whose domains are included in distinct $J_\alpha$'s, 
choose such $J_\alpha$ which has not been considered yet (this is possible by $|\beta | \leq \omega_\beta < \omega_{\beta + 1}$).
Then let $\Phi_+ (J,y) \re J_\alpha$ agree with the partial function from $y(\beta)$ whose domain is contained in $J_\alpha$ on its domain.
If $y (\beta)$ is not of this form, do nothing. In the end, extend $\Phi_+ (J,y)$ to a total function in $2^\kappa$ arbitrarily.

Now assume cofinally many $J_\alpha$ contain an interval of $I$ and $\Phi_- ((x,I) , J ) (\beta) = y (\beta)$ for cofinally many $\beta$.
Fix such $\beta$. Then $y (\beta)$ is a sequence of length $\omega_{\beta + 1}$ of partial functions all of whose domains 
are included in distinct $J_\alpha$'s and thus, for some $\gamma$, $\Phi_+ (J,y) \re I_{\delta_\gamma}$ will agree with
$x \re I_{\delta_\gamma}$. For different such $\beta$ we must get distinct $\gamma$, and therefore $\Phi_+ (J,y)$ matches $(x,I)$.
\end{proof}

\begin{cor}   \label{non-cov-cor}
Assume $\kappa$ is strongly inaccessible.
\begin{enumerate}
\item {\rm (Blass, Hyttinen, Zhang~\cite[4.12 and 4.13]{BHZta})} $\non (\M_\kappa) = \bb_\kappa (\neq^*)$.
\item {\rm (Landver~\cite{La92})} $\cov (\M_\kappa) = \dd_\kappa (\neq^*)$.
\end{enumerate}
\end{cor}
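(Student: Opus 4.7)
Observation~\ref{ed-obs} already supplies the easy halves $\bb_\kappa(\neq^*)\le\non(\M_\kappa)$ and $\cov(\M_\kappa)\le\dd_\kappa(\neq^*)$. My plan for the remaining halves is to distill Theorem~\ref{non-cov-char} into a Galois--Tukey morphism $\Cov(\M_\kappa)\preceq\E_\kappa$; by the observation after Definition~\ref{morph-def} this simultaneously delivers $\non(\M_\kappa)\le\bb_\kappa(\neq^*)$ and $\cov(\M_\kappa)\ge\dd_\kappa(\neq^*)$. Since $\kappa$ is strongly inaccessible, $|(\Fn(\kappa,2,\kappa))^{<\kappa}|=\kappa^{<\kappa}=\kappa$ and hence $|X|=\kappa^\kappa$ for $X=((\Fn(\kappa,2,\kappa))^{<\kappa})^\kappa$; fixing a coordinatewise bijection $X\leftrightarrow\kappa^\kappa$ preserves the cofinal-matching relation, so $\bb_\kappa(\neq^*)$ and $\dd_\kappa(\neq^*)$ may freely be computed in $X$.

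To build the morphism I take $J=I$ in Theorem~\ref{non-cov-char}; then the hypothesis ``cofinally many $J_\alpha$ contain an interval of $I$'' is automatic. This gives one leg $\Psi_+(x,I):=\Phi_-((x,I),I)\in X$. For the other leg $\Psi_-:X\to 2^\kappa$ I want a single real that plays the role of $\Phi_+(I,y)$ uniformly in $I$. The plan is to obtain $\Psi_-(y)$ by canonically assembling the partial functions appearing across the $\kappa$-sequence $y=(y(\beta))_{\beta<\kappa}$ into one total function in $2^\kappa$, using a fixed enumeration of those partial functions (there are only $\kappa$ of them, by $\kappa^{<\kappa}=\kappa$) together with a fixed conflict-resolution rule, along the lines of the construction of $\Phi_+(J,y)$ in the proof of Theorem~\ref{non-cov-char} but detached from any specific $J$. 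The morphism condition---$y$ cofinally matches $\Psi_+(x,I)$ implies $\Psi_-(y)$ matches $(x,I)$---is then verified by the same counting as in Theorem~\ref{non-cov-char}: on the cofinal set of $\beta$ with $y(\beta)=(x\re I_\gamma:\gamma<\omega_{\beta+1})$ the partial function $x\re I_\gamma$ appears in $y$ for cofinally many $\gamma$, and the enumeration writes these values into $\Psi_-(y)\re I_\gamma$ on cofinally many intervals.

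With the morphism in hand the two equalities are immediate. For (1), a cofinally matching $\F\subseteq X$ of size $\bb_\kappa(\neq^*)$ gives $\Psi_-[\F]\subseteq 2^\kappa$ of the same size meeting every matching class $\Match(x,I)$, and so, by Proposition~\ref{meager-chop2} (combinatorially meager $\equiv$ meager), non-meager. For (2), a family of $\cov(\M_\kappa)$ chopped reals witnessing the covering (again via Proposition~\ref{meager-chop2}) yields under $\Psi_+$ an eventually different family in $X$ of the same size, by the contrapositive of the morphism condition. I expect the main obstacle to be precisely the uniform-in-$(x,I)$ design of $\Psi_-$: the fixed enumeration and conflict-resolution must be arranged so that the values written from the $(x,I)$-relevant $\beta$'s are not overridden by conflicting values from irrelevant parts of $y$, simultaneously for \emph{every} chopped real $(x,I)$. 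This is exactly the additional work required to promote Theorem~\ref{non-cov-char}, which uses a specific $J$, to a $J$-free morphism, and it is where strong inaccessibility enters most crucially.
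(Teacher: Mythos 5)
Your plan is sound in outline -- the easy halves do follow from Observation~\ref{ed-obs}, and \emph{if} a morphism $\Cov(\M_\kappa)\preceq\E_\kappa$ existed, both remaining inequalities would indeed drop out as you say. But the morphism is exactly the part you do not construct, and the obstacle you flag at the end is not a detail to be arranged later: it is where the argument breaks. The counting in the proof of Theorem~\ref{non-cov-char} is tied to the fixed partition $J$. There, at stage $\beta$ the decoder has committed values on at most $|\beta|\le\omega_\beta$ \emph{intervals of $J$}, is offered $\omega_{\beta+1}$ partial functions living in \emph{distinct intervals of $J$}, and therefore always finds a fresh interval; moreover an interval of $J$, once written, is never touched again, so no conflict ever arises. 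Detached from $J$, this bookkeeping collapses: after $|\beta|$ stages the region already written is a union of $\le|\beta|$ partial functions whose domains are arbitrary sets of size $<\kappa$ (the coordinates $y(\beta')$ with $\beta'$ outside the cofinal matching set are unconstrained garbage), and a single such domain can meet every one of the $\omega_{\beta+1}$ intervals $I_\gamma$ offered at stage $\beta$. So ``skip on conflict'' can starve the $(x,I)$-relevant writes forever, while ``overwrite'' can destroy previously secured agreements needed for a different chopped function $(x',I')$ that the same $y$ also matches. No fixed enumeration/conflict-resolution rule is exhibited that survives this, and ``the same counting as in Theorem~\ref{non-cov-char}'' is not available to justify one.

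The paper avoids the problem by not building a morphism into $\E_\kappa$ at all: the decoding map is allowed to depend on $J$. For part (1) the non-meager witness is the \emph{product} set $\{\Phi_+(J,y): J\in\J,\ y\in\Y\}$, where $\J$ is an unbounded family of interval partitions of size $\bb_\kappa\le\bb_\kappa(\neq^*)$ (so the product still has size $\bb_\kappa(\neq^*)$); given $(x,I)$, one first picks $J\in\J$ with cofinally many $J_\alpha$ containing an interval of $I$, and only then picks $y\in\Y$ matching $\Phi_-((x,I),J)$. For part (2), given $\X\subseteq\CR$ of size $<\dd_\kappa(\neq^*)\le\dd_\kappa$, one first fixes a single $J$ good for all the (fewer than $\dd_\kappa$ many) partitions occurring in $\X$, and then a single $y$, and the uncovered point is $\Phi_+(J,y)$ for that specific $J$. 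This two-step composition uses precisely the inequalities $\bb_\kappa\le\bb_\kappa(\neq^*)$ and $\dd_\kappa(\neq^*)\le\dd_\kappa$ from Observation~\ref{ed-obs} to absorb the extra $\IP$-coordinate, and it proves the cardinal equalities without ever producing a $J$-free decoder. To repair your write-up, either supply a genuine uniform $\Psi_-$ (a strictly stronger statement than the corollary, and not obviously true) or replace the single-morphism step by this product/two-step argument.
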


\begin{proof}
1. In view of Observation~\ref{ed-obs}, 
it suffices to prove $\non (\M_\kappa) \leq \bb_\kappa (\neq^*)$. Let $\Y \subseteq (( \Fn (\kappa,2,\kappa))^{<\kappa} )^\kappa$
be a family of functions of size $\bb_\kappa (\neq^*)$ which is cofinally matching. Also let $\J$ be an unbounded family of
partitions of size $\bb_\kappa \leq \bb_\kappa (\neq^*)$. We claim $\{ \Phi_+ (J,y) : J \in \J$ and $y \in \Y \}$ is nonmeager. 
Indeed, if $(x,I) \in \CR$ and $J \in \J$ is unbounded over the partition given by taking unions of pairs of intervals of $I$,
then cofinally many $J_\alpha$ contain an interval of $I$. If additionally $y \in \Y$ is such that $\Phi_- ((x,I) , J ) (\beta) = y (\beta)$ 
for cofinally many $\beta$, then $\Phi_+ (J,y)$ matches $(x,I)$ and therefore does not belong to the meager set given by $(x,I)$.

2. It suffices to prove $\dd_\kappa (\neq^*) \leq \cov (\M_\kappa)$. Let $\X \subseteq
\CR$ of size $< \dd_\kappa (\neq^*) \leq \dd_\kappa$. First choose $J \in \IP$ such that cofinally many $J_\alpha$ contain
an interval of $I$, for each $I$ such that $(x,I) \in \X$ for some $x \in 2^\kappa$. Next choose $y \in (( \Fn (\kappa,2,\kappa))^{<\kappa} )^\kappa$
such that for all $(x,I) \in \X$, $\Phi_- ((x,I) , J ) (\beta) = y (\beta)$ for cofinally many $\beta$. Then $\Phi_+ (J,y)$ matches $(x,I)$
for all $(x,I) \in \X$, and therefore does not belong to any of the meager sets given by such $(x,I)$.
\end{proof}

Shelah~\cite{Shta} proved the consistency of $\cov (\M_\kappa) < \dd_\kappa$ for supercompact $\kappa$. We do not know
whether the dual inequality is consistently strict.

\begin{ques}   \label{b-non-ques}
Assume $\kappa$ is strongly inaccessible (or even supercompact). Is $\bb_\kappa < \non (\M_\kappa)$ consistent?
\end{ques}

For successor cardinals, the situation is rather different.

\begin{thm} \label{HMS-thm} Assume $\kappa$ is a successor cardinal.
\begin{enumerate}
\item {\rm (Hyttinen~\cite{Hy06})} $\bb_\kappa (\neq^*) = \bb_\kappa$.
\item {\rm (Matet, Shelah~\cite[Theorem 4.6]{MSta})} If $2^{< \kappa} = \kappa$, then $\dd_\kappa (\neq^*) = \dd_\kappa$.
\end{enumerate}
\end{thm}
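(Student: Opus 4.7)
Observation \ref{ed-obs} already supplies $\bb_\kappa \leq \bb_\kappa(\neq^*)$ and $\dd_\kappa(\neq^*) \leq \dd_\kappa$ in ZFC, so in both parts only the reverse inequalities require work, and the successor hypothesis (together with $2^{<\kappa} = \kappa$ in Part 2) must enter precisely there. I write $\kappa = \mu^+$ throughout; the key facts I will use are that every ordinal below $\kappa$ has cardinality at most $\mu$, and that cardinals of the form $\lambda \cdot \mu$ collapse to $\lambda$ whenever $\lambda \geq \kappa$.

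For Part 1, the task is to construct from an unbounded family $\F_0 = \{f_\alpha : \alpha < \bb_\kappa\}$ -- which I may assume consists of strictly increasing continuous functions with $f_\alpha(\beta) > \beta$ -- a family of size $\bb_\kappa$ that cofinally matches every $g \in \kappa^\kappa$. My plan is to enrich $\F_0$ by adjoining, to each $f_\alpha$, a collection of at most $\mu$ perturbations, so the enlarged family still has cardinality $\bb_\kappa \cdot \mu = \bb_\kappa$. Given a target $g$, unboundedness of $\F_0$ supplies some $f_\alpha$ with $f_\alpha(\beta) > g(\beta)$ cofinally often; at each such $\beta$ the value $g(\beta)$ lies strictly below $f_\alpha(\beta)$, and an appropriate perturbation of $f_\alpha$, indexed by an ordinal below $\mu$, should be arranged to equal $g(\beta)$ at cofinally many such $\beta$. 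The main obstacle is the design of the perturbations so that only $\mu$ of them suffice to catch every possible matching pattern of an arbitrary $g$; this is the step where the successor structure is essential and where the analogous claim for $\omega$ (and, conjecturally, for strongly inaccessible $\kappa$, cf.\ Question \ref{b-non-ques}) breaks down.

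For Part 2, the additional hypothesis $2^{<\kappa} = \kappa$ yields $\kappa^\mu = \kappa$, which allows encoding sequences of $\mu$-vectors as elements of $\kappa$. Fix a bijection $\pi: \kappa \to \kappa^\mu$; given an ed-dominating family $\Y$, set $\hat y(\alpha) = \sup_{\xi < \mu} \pi(y(\alpha))(\xi)$, so that $\hat y(\alpha) \geq h(\alpha)$ precisely when the vector $\pi(y(\alpha))$ has some coordinate at least $h(\alpha)$. To exhibit $\{\hat y : y \in \Y\}$ as $\leq^*$-dominating, for each target $h \in \kappa^\kappa$ I construct (using the cardinal arithmetic to canonicalise the data) a function $c_h \in \kappa^\kappa$ such that any $y$ eventually different from $c_h$ satisfies $\hat y \geq^* h$. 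The design of $c_h$ is the main obstacle: since there are many $\mu$-vectors with supremum below $h(\alpha)$, one cannot pin $c_h(\alpha)$ down uniquely from the requirement alone, so a more careful encoding -- possibly combined with taking several functions $\hat y$ per $y$, or with passing through Proposition \ref{dom-IP} and exhibiting a morphism $\mathcal{E}_\kappa \preceq (\IP, \IP, \leq^*)$ directly -- is required. The interval-partition formulation of $\dd_\kappa$ in particular makes the combinatorial content of the ed-notion more transparent and is, I expect, the cleanest route.
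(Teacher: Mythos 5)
First, a point of reference: the paper does not actually prove Theorem~\ref{HMS-thm} --- both parts are quoted from the literature (Hyttinen for (1), Matet--Shelah for (2)) --- so there is no internal argument to measure yours against. Judged on its own terms, your proposal is not yet a proof: in each part you correctly reduce to the nontrivial inequality via Observation~\ref{ed-obs}, but then explicitly defer the central construction (``the main obstacle is the design of the perturbations'', ``the design of $c_h$ is the main obstacle''). Those deferred steps are the entire content of the theorem.

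For Part 1 your plan is the right one and the missing ingredient is short and concrete: writing $\kappa=\mu^+$, fix for each nonzero $\gamma<\kappa$ a surjection $e_\gamma:\mu\to\gamma$, and take as the perturbations of a strictly increasing $f$ the functions $f_\xi(\beta)=e_{f(\beta)}(\xi)$ for $\xi<\mu$. If $f(\beta)>g(\beta)$ on a set $B$ of size $\kappa$, then each $\beta\in B$ yields some $\xi_\beta<\mu$ with $e_{f(\beta)}(\xi_\beta)=g(\beta)$, and regularity of $\kappa=\mu^+>\mu$ gives a single $\xi$ with $f_\xi(\beta)=g(\beta)$ for $\kappa$ many $\beta$; hence $\{f_\xi: f\in\F,\ \xi<\mu\}$ has size $\bb_\kappa\cdot\mu=\bb_\kappa$ and witnesses $\bb_\kappa(\neq^*)$. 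You should supply this; note it needs no cardinal-arithmetic hypothesis, matching the statement. For Part 2 the situation is worse than incomplete: with your transform $\hat y(\alpha)=\sup_{\xi<\mu}\pi(y(\alpha))(\xi)$, no single function $c_h$ built from $h$ alone can do what you ask. The implication you need is, in contrapositive, ``if $\hat y$ fails to dominate $h$, then $y$ agrees with $c_h$ cofinally,'' and this fails outright: there are $|h(\alpha)|^{\mu}$ (typically $\kappa$) many values $y(\alpha)$ whose unfolding $\pi(y(\alpha))$ lies in $h(\alpha)^{\mu}$, so one can choose $y$ taking, at every $\alpha$, such a value different from $c_h(\alpha)$; then $\hat y(\alpha)<h(\alpha)$ everywhere yet $y$ never matches $c_h$. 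Moreover, for the $\dd$-side inequality you cannot repair this by making the $h$-side map multivalued --- the argument from an ed-dominating family requires the implication to hold for some individual candidate $g$ uniformly in $y$ --- whereas only the $y$-side map may be taken $\mu$-valued. So Part 2 requires a genuinely different encoding (this is where the Matet--Shelah argument does real work), and as written your route cannot be completed.
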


\begin{ques}   \label{Matet-Shelah-ques}
{\rm (Matet, Shelah~\cite[Section 4]{MSta}, see also~\cite[Question 3.8, part 1]{KLLSta})}
Is it consistent that $\kappa$ is a successor cardinal and $\dd_\kappa (\neq^*) < \dd_\kappa$?
\end{ques}

\begin{obs}   \label{nwd-kappa}
\begin{romanenumerate}
\item For $\sigma \in 2^{<\kappa}$, $A_\sigma = \{ x \in 2^\kappa : \forall \alpha < \kappa \; ( x \re [\alpha, \alpha + |\sigma| ) \neq \sigma) \}$
   is nowhere dense.
\item {\rm (Landver~\cite[1.3]{La92})} $2^{<\kappa} > \kappa$ implies $\add (\M_\kappa) = \cov(\M_\kappa) = \kappa^+$.
\item {\rm (Blass, Hyttinen, Zhang~\cite[4.15]{BHZta})} $\non (\M_\kappa) \geq 2^{<\kappa}$.
\end{romanenumerate}
\end{obs}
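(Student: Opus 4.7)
The three parts all pivot on the concrete nowhere dense sets $A_\sigma$ from (i), so the plan is to dispatch (i) first and reuse it for (ii) and (iii). For (i), the strategy is direct verification: given any $t \in 2^{<\kappa}$, the concatenation $t' = t\ha\sigma$ satisfies $[t'] \cap A_\sigma = \emptyset$, since every $x \in [t']$ has $x\re[|t|, |t|+|\sigma|) = \sigma$, violating the defining clause of $A_\sigma$ at $\alpha = |t|$. Hence every basic open $[t]$ contains a basic open $[t']$ disjoint from $A_\sigma$, which is exactly nowhere-density in the topology generated by the $[s]$'s.

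For (ii), the additivity bound $\add(\M_\kappa) \geq \kappa^+$ is immediate from $\M_\kappa$ being a $\kappa$-ideal, and $\add \leq \cov$ always holds, so everything reduces to proving $\cov(\M_\kappa) \leq \kappa^+$. Using the hypothesis $2^{<\kappa} > \kappa$, I would fix $\mu < \kappa$ with $2^\mu > \kappa$ and a subfamily $S \sub 2^\mu$ of size $\kappa^+$. For any $x \in 2^\kappa$, the set of consecutive restrictions $\{x\re[\alpha, \alpha+\mu) : \alpha < \kappa\}$ has cardinality at most $\kappa < |S|$, so some $\sigma \in S$ is unrealized and hence $x \in A_\sigma$. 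The family $\{A_\sigma : \sigma \in S\}$ is then a cover of $2^\kappa$ by $\kappa^+$ nowhere dense sets from (i), giving the bound.

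For (iii), the goal is that any $Y \sub 2^\kappa$ with $|Y| < 2^{<\kappa}$ is $\kappa$-meager, which I would handle by splitting on the size of $Y$. If $|Y| < \kappa$ then $Y$ is a $<\kappa$-union of singletons, each nowhere dense in the generated topology (closed with empty interior), so $Y \in \M_\kappa$. If instead $\kappa \leq |Y| < 2^{<\kappa} = \sup_{\mu < \kappa} 2^\mu$, then by definition of the supremum some $\mu < \kappa$ satisfies $2^\mu > |Y|$; the combined collection $\{y\re[\alpha, \alpha+\mu) : y \in Y, \alpha < \kappa\}$ has cardinality at most $|Y|\cdot\kappa = |Y| < 2^\mu$, so some $\sigma \in 2^\mu$ avoids it and witnesses $Y \sub A_\sigma$. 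The whole argument is essentially pigeonhole on consecutive restrictions of length $\mu$, so I do not anticipate a genuine obstacle — the only spot calling for any thought is the case split in (iii), and that is pure bookkeeping.
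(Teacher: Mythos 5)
Your proof is correct and follows essentially the same route as the paper's: all three parts rest on the nowhere dense sets $A_\sigma$ and a pigeonhole count of the at most $|X|\cdot\kappa$ realized length-$\mu$ patterns. Your explicit case split in (iii) for $|Y|<\kappa$ is a point the paper glosses over (its one-line argument tacitly needs $2^\lambda>\kappa$, which your singleton case sidesteps), so if anything your write-up is slightly more careful.
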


\begin{proof}
(i) is immediate. For (ii), let $\lambda < \kappa$ be such that $2^\lambda > \kappa$. Then $2^\kappa = \bigcup \{ A_\sigma :
\sigma \in \Sigma \}$ for any $\Sigma \sub 2^\lambda$ with $|\Sigma| \geq \kappa^+$. 
For (iii), fix $X \sub 2^\kappa$ with $|X| < 2^{<\kappa}$. Let $\lambda < \kappa$ be such that
$|X| < 2^\lambda$. Then $X \sub A_\sigma$ for some $\sigma \in 2^\lambda$.
\end{proof}

As a consequence one obtains that $\dd_\kappa  < \non (\M_\kappa)$ and $\cov (\M_\kappa) <
 \bb_\kappa$ are both consistent with $2^{<\kappa} > \kappa$: for the former, add $\kappa^{++}$ Cohen reals over a model
of GCH. For the latter, first force $2^\kappa = \kappa^{++} = \bb_\kappa$ by adding $\kappa$-Hechler generics 
in a $\kappa^{++}$-length iteration with $<\kappa$-supports over a model of $2^{<\kappa} = \kappa$ and $2^\kappa = \kappa^+$, and then add
$\kappa^+$ Cohen reals. In fact, if $\kappa$ is successor, then by Theorem~\ref{HMS-thm} one also has 
$\bb_\kappa (\neq^*) < \non (\M_\kappa)$ in the first model (see also~\cite[4.16]{BHZta}). We shall see that this is still
true for (weakly) inaccessible $\kappa$ and that one then also gets $\cov (\M_\kappa) < \dd_\kappa (\neq^*)$ in the
second model, see Theorem~\ref{layers}.

For successor $\kappa$ with $2^{<\kappa} = \kappa$ we do not know whether any of the two sets of three cardinals
can be separated.

\begin{ques} {\rm (see also \cite[Section 4]{MSta})}   \label{succ-b-d-ques}
Assume $\kappa = 2^{< \kappa}$ is a successor cardinal. Is $\bb_\kappa = \non (\M_\kappa)$ and $\dd_\kappa = \cov (\M_\kappa)$?
\end{ques}

\begin{prop}
$\Match(x,I) \subseteq \Match(y,J)$ if and only if for all but $<\kappa$-many intervals $I_\alpha \in I$ there exists an interval $J_\beta \in J$ such that $J_\beta \subseteq I_\alpha$ and $x \restriction J_\beta = y\restriction J_\beta$. 
\end{prop}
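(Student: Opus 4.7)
The backward direction is the easier one. The plan is to take any $z \in \Match(x,I)$, so that $A := \{\alpha : z \restriction I_\alpha = x \restriction I_\alpha\}$ is cofinal in $\kappa$, and then transfer this cofinal matching from $I$-intervals to $J$-intervals using the hypothesis. For each $\alpha \in A$ beyond the bounded exceptional set, the hypothesis supplies $J_{\beta(\alpha)} \subseteq I_\alpha$ with $x \restriction J_{\beta(\alpha)} = y \restriction J_{\beta(\alpha)}$, hence $z \restriction J_{\beta(\alpha)} = y \restriction J_{\beta(\alpha)}$. Since distinct $\alpha$'s give disjoint $J_{\beta(\alpha)}$'s, the map $\alpha \mapsto \beta(\alpha)$ is injective and its range is cofinal, so $z \in \Match(y,J)$.

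For the forward direction I will argue by contrapositive. Assume
\[ C \;=\; \{\alpha < \kappa : \text{no } J_\beta \subseteq I_\alpha \text{ satisfies } x \restriction J_\beta = y \restriction J_\beta\} \]
is cofinal in $\kappa$, and construct $z \in \Match(x,I) \setminus \Match(y,J)$. The overall idea is to force $z$ to agree with $x$ on a cofinal family of $I$-intervals indexed by a subset of $C$, and to set $z = 1-y$ pointwise on every other $I$-interval, so that $z$ visibly disagrees with $y$ on all the leftover pieces.

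The subtlety, which I expect to be the main obstacle, is that one cannot simply use all of $C$. If two consecutive indices $\alpha, \alpha+1$ both lie in $C$, a $J_\beta$ could straddle $I_\alpha$ and $I_{\alpha+1}$ with $z = x$ throughout $J_\beta$ while accidentally $x = y$ on $J_\beta$ (the definition of $C$ forbids such agreement only on $J$-intervals contained in a \emph{single} $I_\alpha$). The remedy is to thin $C$ to a cofinal subset $C'$ containing no two consecutive ordinals. Concretely, enumerate $C = \{c_\xi : \xi < \kappa\}$ increasingly and set $C' = \{c_{2\xi} : \xi < \kappa\}$; then $c_{2(\xi+1)} \geq c_{2\xi}+2$, so $\alpha \in C'$ implies $\alpha+1 \notin C'$.

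With $z \restriction I_\alpha$ defined as $x \restriction I_\alpha$ for $\alpha \in C'$ and as $(1-y) \restriction I_\alpha$ for $\alpha \notin C'$, one has $z \in \Match(x,I)$ via $C'$, and it remains to check $z \neq y$ on every $J_\beta$. If $J_\beta \subseteq I_\alpha$, then either $\alpha \in C' \subseteq C$ forces $x \neq y$ on $J_\beta$ and so $z = x \neq y$, or $\alpha \notin C'$ and $z = 1-y \neq y$ directly. Otherwise $J_\beta$ meets $I_{\alpha_1}, \dots, I_{\alpha_2}$ with $\alpha_1 < \alpha_2$; since $\alpha_1$ and $\alpha_1+1$ cannot both lie in $C'$, pick $\alpha^* \in \{\alpha_1, \alpha_1+1\} \setminus C'$, and then $z = 1-y$ on the nonempty set $I_{\alpha^*} \cap J_\beta$, so $z \neq y$ somewhere on $J_\beta$. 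Thus $z \notin \Match(y,J)$, as required.
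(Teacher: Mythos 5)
Your proof is correct and follows essentially the same route as the paper's: the backward direction transfers cofinal agreement on $I$-intervals to the $J$-subintervals supplied by the hypothesis, and the forward direction (by contrapositive) builds a witness that copies $x$ on a cofinal family of ``bad'' intervals and equals $1-y$ everywhere else. The one place you go beyond the paper is the thinning of $C$ to $C'$: the paper's proof uses the full family of bad intervals and does not address the case of a $J_\beta$ that is covered by two or more \emph{consecutive} designated $I$-intervals, on which the witness agrees with $x$ throughout while the hypothesis gives no guarantee that $x$ and $y$ disagree there. Your observation that this case is a genuine obstacle, and your fix (take $C'$ with no two consecutive indices, so every straddling $J_\beta$ meets an interval where $z=1-y$), closes a small gap that the published argument leaves implicit.
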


\begin{proof}
For the implication from left to right we argue by contradiction. Suppose there are $\kappa$ many intervals $(I_{\alpha_\gamma} : \gamma < \kappa)$ in $I$ such that for all $J_\beta \in J$ with $J_\beta \subseteq I_{\alpha_\gamma}$ for some $\gamma$, we have $x \restriction J_\beta \neq y \restriction J_\beta $. Then $x' \in \Match(x, I) \setminus \Match(y, J)$, where $x' : \kappa \to 2$ is defined by:
\begin{equation*}
 x'(\alpha) = \left\{
\begin{array}{c l}
 x(\alpha) & \alpha \in I_{\alpha_\gamma} \text{ for some  } \gamma \\
 1- y(\alpha) & \text{otherwise}. \\
\end{array}
\right.
\end{equation*}
For the other direction let $z \in \Match(x, I)$, then there are $\kappa$ many intervals $(I_{\alpha_\gamma}: \gamma < \kappa)$ in $I$ such that $z \restriction I_{\alpha_\gamma} = x \restriction I_{\alpha_\gamma}$ for $\gamma < \kappa$. Without loss of generality we can suppose that for all $\gamma$ there exists $J_{\beta_\gamma} \in J$ satisfying $J_{\beta_\gamma} \subseteq I_{\alpha_\gamma}$, and so $z \restriction J_{\beta_\gamma} = x \restriction J_{\beta_\gamma}= y \restriction J_{\beta_\gamma}$, thus $z \in \Match(y , J)$.
\end{proof}

\begin{defi}
Let $(x, I)$ and $(y, J)$ be two $\ka$-chopped functions. We say $(x, I)$ {\em engulfs} $(y, J)$ if $\Match(x,I) \subseteq \Match(y,J)$.
\end{defi}

Since complements of sets of the form $\Match (x,I)$ are meager and every meager set is contained in such a complement
(see Observation~\ref{meager-chop1} and Proposition~\ref{meager-chop2}) for strongly inaccessible $\kappa$, 
we see that $\Cof(\mathcal{M}_\kappa)= {(\mathcal{M}_\kappa,
\mathcal{M}_\kappa, \subseteq)} \equiv \Cof'(\mathcal{M}_\kappa)= (\CR, \CR, $ is engulfed by$)$. In particular, $\| \Cof(\mathcal{M}_\kappa) \| 
= \| \Cof'(\mathcal{M}_\kappa)\|$ $= \cof(\mathcal{M}_\kappa) $ and $\| \Cof(\mathcal{M}_\kappa)^\bot \| 
 =\| \Cof'(\mathcal{M}_\kappa)^{\bot}\|  = \add(\mathcal{M}_\kappa)$.

\begin{cor}
Assume $\kappa$ is strongly inaccessible. Then $\Cof (\M_\kappa) \preceq \D_\kappa$.
\end{cor}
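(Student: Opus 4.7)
The plan is to exploit the two equivalences just established. Strong inaccessibility gives $\Cof(\M_\kappa) \equiv \Cof'(\M_\kappa) = (\CR, \CR, \text{is engulfed by})$ (via Observation~\ref{meager-chop1} and Proposition~\ref{meager-chop2}), and Proposition~\ref{dom-IP} gives $\D_\kappa \equiv \D'_\kappa = (\IP, \IP, \leq^*)$. It therefore suffices to produce a morphism $\Cof'(\M_\kappa) \preceq \D'_\kappa$, and I would take this morphism to be the forgetful one.

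Specifically, I would define $\Phi_- : \IP \to \CR$ by $\Phi_-(J) = (\mathbf{0}, J)$, where $\mathbf{0}$ denotes the constant zero element of $2^\kappa$, and $\Phi_+ : \CR \to \IP$ by $\Phi_+((x,I)) = I$. To verify the morphism condition, suppose $\Phi_-(J) = (\mathbf{0}, J)$ is engulfed by $(x, I)$, i.e., $\Match(x,I) \subseteq \Match(\mathbf{0},J)$. By the Proposition characterizing engulfment in terms of interval refinement, for all but $<\kappa$-many intervals $I_\alpha \in I$ there exists $J_\beta \in J$ with $J_\beta \subseteq I_\alpha$ (and $x \restriction J_\beta = \mathbf{0} \restriction J_\beta$, but the matching clause is irrelevant for the conclusion). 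In particular, cofinally many $I_\alpha$ contain an interval of $J$, which is exactly the statement $J \leq^* I = \Phi_+((x,I))$.

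There is essentially no hard step here: the substance has been absorbed into the two preceding equivalences, and the morphism itself is transparent. The only subtlety to keep in mind is that we may discard the matching clause in the engulfment characterization, since bare containment of intervals already witnesses $\leq^*$ on $\IP$. Dually, this morphism also yields $\add(\M_\kappa) \leq \bb_\kappa$ in the strongly inaccessible case, as the norms of the duals satisfy the reverse inequality.
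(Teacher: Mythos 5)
Your proposal is correct and is essentially the paper's own argument: the paper likewise reduces to $\Cof'(\M_\kappa)\preceq\D'_\kappa$ via the engulfment characterization and Proposition~\ref{dom-IP}, noting that engulfment of $(y,J)$ by $(x,I)$ forces $J\leq^* I$ (your choice $y=\mathbf{0}$ just makes the forgetful morphism explicit). The observation that the matching clause can be discarded, and that the $<\kappa$-many exceptional intervals are bounded by regularity of $\kappa$, is exactly the content the paper leaves implicit.
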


\begin{proof}
By the previous proposition, if $(x, I)$ engulfs $(y, J)$ then $I$ dominates $J$. 
Hence $\Cof' (\M_\kappa) \preceq \D'_\kappa$.
By Proposition~\ref{dom-IP} and the previous comment $\Cof (\M_\kappa) \preceq \D_\kappa$ follows.
\end{proof}

\begin{cor}
Assume $\kappa$ is strongly inaccessible.
Then $\add (\M_\kappa) \leq \bb_\kappa$ and $\dd_\kappa \leq \cof (\M_\kappa)$.
\end{cor}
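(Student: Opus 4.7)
The plan is to read this off directly from the preceding corollary, which already establishes the key morphism $\Cof(\M_\kappa) \preceq \D_\kappa$. All that remains is to translate that morphism-level statement into the two cardinal inequalities, using the observation (stated just after Definition~\ref{morph-def}) that $\AA \preceq \BB$ implies $\|\AA\| \geq \|\BB\|$ and $\|\AA^\bot\| \leq \|\BB^\bot\|$.

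More concretely, first I would recall the four norm identifications collected in Section~2: $\|\Cof(\M_\kappa)\| = \cof(\M_\kappa)$ and $\|\Cof(\M_\kappa)^\bot\| = \add(\M_\kappa)$ on the meager side, and $\|\D_\kappa\| = \dd_\kappa$ and $\|\D_\kappa^\bot\| = \bb_\kappa$ on the dominating side. Then I would apply the previous corollary, $\Cof(\M_\kappa) \preceq \D_\kappa$, together with the observation. This yields
\[
\cof(\M_\kappa) = \|\Cof(\M_\kappa)\| \geq \|\D_\kappa\| = \dd_\kappa,
\]
giving $\dd_\kappa \leq \cof(\M_\kappa)$, and dually
\[
\add(\M_\kappa) = \|\Cof(\M_\kappa)^\bot\| \leq \|\D_\kappa^\bot\| = \bb_\kappa,
\]
giving $\add(\M_\kappa) \leq \bb_\kappa$.

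There is essentially no obstacle here: the work has all been done in Proposition~\ref{dom-IP} (identifying $\D_\kappa$ with the interval-partition version $\D'_\kappa$), in Propositions~\ref{meager-chop1}--\ref{meager-chop2} (identifying $\Cof(\M_\kappa)$ with the chopped-function version $\Cof'(\M_\kappa)$ for strongly inaccessible $\kappa$), and in the previous corollary (where engulfment is shown to imply domination of the underlying partitions). The only thing to be careful about is that the strong inaccessibility of $\kappa$ is genuinely used, since it is needed to identify meager sets with complements of match-sets via Proposition~\ref{meager-chop2}; this is exactly where the chain $\Cof(\M_\kappa) \equiv \Cof'(\M_\kappa) \preceq \D'_\kappa \equiv \D_\kappa$ passes through. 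The corollary therefore reduces to a one-line invocation of the previous corollary and the norm-monotonicity observation.
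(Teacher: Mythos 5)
Your proposal is correct and is exactly the argument the paper intends: the corollary is stated without proof precisely because it follows immediately from the preceding corollary $\Cof(\M_\kappa) \preceq \D_\kappa$ together with the norm-monotonicity observation and the identifications $\|\Cof(\M_\kappa)\| = \cof(\M_\kappa)$, $\|\Cof(\M_\kappa)^\bot\| = \add(\M_\kappa)$, $\|\D_\kappa\| = \dd_\kappa$, $\|\D_\kappa^\bot\| = \bb_\kappa$. Your remark about where strong inaccessibility enters (via Proposition~\ref{meager-chop2} in the chain $\Cof(\M_\kappa) \equiv \Cof'(\M_\kappa) \preceq \D'_\kappa \equiv \D_\kappa$) is also accurate.
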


By Observation~\ref{nwd-kappa}, $\add (\M_\kappa) \leq \bb_\kappa$ also holds when $2^{<\kappa} > \kappa$.

\begin{ques}  \label{add-cof-ques}
\begin{enumerate}
\item Assume $\kappa$ is a successor cardinal with $2^{< \kappa} = \kappa$. Does $\add (\M_\kappa) \leq \bb_\kappa$?
\item Assume $\kappa$ is successor or $2^{< \kappa} > \kappa$. Does $\dd_\kappa \leq \cof (\M_\kappa)$?
\end{enumerate}
\end{ques}

Equip $2^\kappa$ with addition $+$ modulo $2$. For $A \sub 2^\kappa$ and $y \in 2^\kappa$, let
$A + y = \{ x + y : x \in A \}$. If also $B \sub 2^\kappa$, put $A + B = \{  x + y : x \in A$ and $y \in B \}$.
Finally identify $\sigma \in 2^{< \kappa}$ with the function in $2^\kappa$ which agrees with
$\sigma$ on its domain and takes value $0$ elsewhere.

\begin{prop}   \label{Truss-kappa}
Assume $2^{<\kappa} = \kappa$.
There are functions $\Phi_+ : 2^\kappa \times \kappa^\kappa \to \M_\kappa $ and
$\Phi_- : 2^\kappa \times \NWD_\kappa \to \kappa^\kappa $ such that
$x \notin B + 2^{< \kappa}$ and $f \geq^* \Phi_- (x,B)$ imply $B \sub \Phi_+ (x,f)$.
\end{prop}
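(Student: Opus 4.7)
The plan is to generalize Truss's classical argument to the setting of uncountable regular $\kappa$ with $2^{<\kappa} = \kappa$. First I would replace $B$ by its closure; it remains nowhere dense, and the hypothesis $x \notin B + 2^{<\kappa}$ is preserved. Using $|2^{<\kappa}| = \kappa$, fix an enumeration $2^{<\kappa} = \{\sigma_\eta : \eta < \kappa\}$. For each $\eta < \kappa$, the hypothesis gives $x + \sigma_\eta \notin B$, and closedness of $B$ yields a least $\gamma_\eta < \kappa$ with $[(x + \sigma_\eta) \re \gamma_\eta] \cap B = \emptyset$.

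I would then set $\Phi_-(x,B)(\alpha) = \sup\{\gamma_\eta : \eta \leq \alpha\}$, which is $<\kappa$ by regularity. For $\Phi_+(x,f)$, I would build a strictly increasing continuous sequence $(\alpha_\xi : \xi < \kappa)$ recursively by $\alpha_0 = 0$, $\alpha_{\xi+1} = \max(f(\alpha_\xi), \alpha_\xi) + 1$, and $\alpha_\xi = \sup_{\eta < \xi} \alpha_\eta$ at limits; let $I^f_\xi = [\alpha_\xi, \alpha_{\xi+1})$ and put
$$\Phi_+(x,f) = \bigcup_{\xi_0 < \kappa} \{y \in 2^\kappa : \forall \xi \geq \xi_0, \; y \re I^f_\xi \neq x \re I^f_\xi\}.$$
Each set in the union is closed, and a direct argument (given $\sigma \in 2^{<\kappa}$, extend it by values of $x$ on some later $I^f_\xi$) shows nowhere-denseness, so $\Phi_+(x,f) \in \M_\kappa$.

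The verification: fix $b \in B$ and $f \geq^* \Phi_-(x,B)$. Assume toward contradiction that $b \re I^f_\xi = x \re I^f_\xi$ for some $\xi$ with $f(\alpha_\xi) \geq \Phi_-(x,B)(\alpha_\xi)$. Setting $\tau = (b + x) \re \alpha_\xi \in 2^{<\kappa}$, the matching gives $b \re \alpha_{\xi+1} = (x + \tau) \re \alpha_{\xi+1}$. If the enumeration is arranged so that $\tau = \sigma_\eta$ for some $\eta \leq \alpha_\xi$, then $\gamma_\eta \leq \Phi_-(x,B)(\alpha_\xi) \leq \alpha_{\xi+1}$, hence $[(x + \sigma_\eta) \re \alpha_{\xi+1}] \cap B = \emptyset$; but $b \in B$ lies in this basic open set, a contradiction. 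Thus only boundedly many $\xi$'s have $b \re I^f_\xi = x \re I^f_\xi$, witnessing $b \in \Phi_+(x,f)$.

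The hard part is ensuring that the index $\eta$ of $\tau = (b + x) \re \alpha_\xi$ satisfies $\eta \leq \alpha_\xi$ for large $\xi$. This requires coordinating the enumeration of $2^{<\kappa}$ with the sequence $(\alpha_\xi)$ so that $\{\sigma_\eta : \eta \leq \alpha_\xi\} \supseteq 2^{\leq \alpha_\xi}$ for all $\xi$ in a cofinal set. For inaccessible $\kappa$, the set $\{\alpha < \kappa : 2^{<\alpha} \leq \alpha\}$ is a club, so one picks the $\alpha_\xi$ from it. For successor $\kappa = \mu^+$ with $2^\mu = \kappa$, one must arrange the enumeration and the recursion more delicately, exploiting the specific cardinal arithmetic; this is the delicate point where uniform treatment of the two cases is not immediate.
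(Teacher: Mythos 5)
Your overall strategy --- translating $x$ by elements of $2^{<\kappa}$, using nowhere-density of (the closure of) $B$ to read off how far along $x+\sigma$ one must look to escape $B$, and letting $f$ calibrate a meager set swallowing $B$ --- is the right one, and your handling of the closure issue is actually more careful than the paper's. But packaging $\Phi_+(x,f)$ as the complement of $\Match(x,I^f)$ creates a gap that you have noticed but understated. In the verification you must find, for the specific string $\tau=(b+x)\re\alpha_\xi\in 2^{\alpha_\xi}$, an index $\eta\leq\alpha_\xi$ with $\sigma_\eta=\tau$; the coordination you propose, $\{\sigma_\eta:\eta\leq\alpha_\xi\}\supseteq 2^{\leq\alpha_\xi}$, is impossible outright, since the left-hand side has cardinality $|\alpha_\xi|$ while the right-hand side contains $2^{\alpha_\xi}$, of cardinality $2^{|\alpha_\xi|}>|\alpha_\xi|$. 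The natural repair is to set $\Phi_-(x,B)(\alpha)=\sup\{\gamma_\sigma:\sigma\in 2^{\leq\alpha}\}$, taking the supremum over strings rather than over indices; this is $<\kappa$ exactly when $2^{|\alpha|}<\kappa$ for all $\alpha$, i.e.\ when $\kappa$ is strongly inaccessible. For successor $\kappa=\mu^+$ with $2^\mu=\kappa$ (and likewise for weakly but not strongly inaccessible $\kappa$ with $2^{<\kappa}=\kappa$) there are $\kappa$ many candidates for $\tau$ at a single level $\alpha_\xi>\mu$, and no value of $\Phi_-(x,B)(\alpha_\xi)$ below $\kappa$, nor any rearrangement of the enumeration, can account for all of them. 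Since the proposition is stated, and later used (Corollary~\ref{Truss-cor}, the Hechler models of Subsection~\ref{Hechler}), for arbitrary regular $\kappa$ with $2^{<\kappa}=\kappa$, this is a genuine gap rather than a deferred technicality.

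The paper's proof avoids the matching combinatorics entirely and needs nothing beyond $2^{<\kappa}=\kappa$. It takes
\[ \Phi_+(x,f)=\bigcup_{\alpha<\kappa}\;\bigcap_{\beta\geq\alpha}\Bigl(2^\kappa\sem\bigl[(\sigma_\beta+x)\re f(\beta)\bigr]\Bigr), \]
which is meager because every $s\in 2^{<\kappa}$ equals $(\sigma_\beta+x)\re\dom(s)$ for cofinally many $\beta$, and it chooses $\Phi_-(x,B)(\beta)$ so that $[(\sigma_\beta+x)\re\Phi_-(x,B)(\beta)]$ misses $B$. Then for $y\in B$ one gets $y\notin[(\sigma_\beta+x)\re f(\beta)]$ for every $\beta$ past the point where $f$ dominates $\Phi_-(x,B)$, so $y\in\Phi_+(x,f)$ immediately: the bad basic open sets are indexed directly by the enumeration of $2^{<\kappa}$, so one never needs to locate $(y+x)\re\alpha$ inside an initial segment of that enumeration. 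If you wish to keep your interval-based construction, restrict it to strongly inaccessible $\kappa$ with the repair above; for the full statement, switch to this direct indexing.
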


\begin{proof}
Let $\{ \sigma_\beta : \beta < \kappa \}$ list $2^{<\kappa}$. Put
\[ \Phi_+ (x,f) = \bigcup_{\alpha < \kappa} \bigcap_{\beta \geq \alpha} 2^\kappa \sem [(\sigma_\beta + x) \re f(\beta) ].\]
This is clearly a meager set.
For $x \notin B + 2^{< \kappa}$ let 
$\Phi_- (x,B) (\alpha)$ be such that $B \cap [ (\sigma_\alpha + x) \re \Phi_- (x,B) (\alpha) ] = \emptyset$.
If $x \in B + 2^{< \kappa}$, the definition of $\Phi_- (x,B)$ is irrelevant.

Now assume $x \notin B + 2^{< \kappa}$ and $f \geq^* \Phi_- (x,B)$. 
Let $y \in B$. Then $y \notin [ (\sigma_\alpha + x) \re \Phi_- (x,B) (\alpha ) ]$ for all $\alpha$.
Since $f \geq^* \Phi_- (x,B)$, there is $\alpha$ such that $y \in 2^\kappa \sem
[(\sigma_\beta + x) \re f (\beta)]$ for all $\beta \geq \alpha$. Thus $y \in \Phi_+(x,f)$
as required.
\end{proof}

\begin{cor}   \label{Truss-cor}
\begin{enumerate} 
\item $\add (\M_\kappa) \geq \min \{ \bb_\kappa , \cov (\M_\kappa) \}$.
\item Assume $2^{<\kappa} = \kappa$. Then $\cof (\M_\kappa) \leq \max \{ \dd_\kappa , \non (\M_\kappa) \}$.
\end{enumerate}
\end{cor}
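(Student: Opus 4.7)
The plan is to deduce both inequalities from Proposition~\ref{Truss-kappa} in the spirit of the classical Truss-style argument; the main subtlety in the generalized setting is that a meager set decomposes into $\kappa$ rather than countably many nowhere dense pieces, so that $\Phi_-$ must be controlled along $\kappa$ coordinates simultaneously.

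For part (1), if $2^{<\kappa} > \kappa$ the inequality is vacuous: $\cov(\M_\kappa) = \kappa^+$ by Observation~\ref{nwd-kappa}(ii), while $\add(\M_\kappa) \geq \kappa^+$ because $\M_\kappa$ is a $\kappa$-ideal. So assume $2^{<\kappa} = \kappa$ and fix $\lambda < \min\{\bb_\kappa, \cov(\M_\kappa)\}$ together with meager sets $\{M_\alpha : \alpha < \lambda\}$. Writing each $M_\alpha$ as a $\kappa$-union of nowhere dense sets yields a family of $\max\{\lambda,\kappa\}$ nowhere dense sets, and since $\kappa$-unions of nowhere dense sets are meager by definition we may assume $\lambda \geq \kappa$ and replace the $M_\alpha$ by nowhere dense sets $B_\alpha$. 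Each $B_\alpha + 2^{<\kappa} = \bigcup_{\sigma \in 2^{<\kappa}}(B_\alpha + \sigma)$ is a $\kappa$-union of nowhere dense sets, hence meager; since $\lambda < \cov(\M_\kappa)$, pick $x \notin \bigcup_{\alpha}(B_\alpha + 2^{<\kappa})$, so $x \notin B_\alpha + 2^{<\kappa}$ for every $\alpha$. The family $\{\Phi_-(x, B_\alpha) : \alpha < \lambda\} \subseteq \kappa^\kappa$ has size $\lambda < \bb_\kappa$, hence is $\leq^*$-bounded by some $f \in \kappa^\kappa$. Proposition~\ref{Truss-kappa} then yields $B_\alpha \subseteq \Phi_+(x, f)$ for every $\alpha$, so $\bigcup_\alpha B_\alpha \subseteq \Phi_+(x, f) \in \M_\kappa$, as required.

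For part (2), pick a non-meager $Y \subseteq 2^\kappa$ with $|Y| = \non(\M_\kappa)$ and a dominating $D \subseteq \kappa^\kappa$ with $|D| = \dd_\kappa$, and consider
\[ \mathcal{F} = \{\Phi_+(x, f) : x \in Y,\, f \in D\} \subseteq \M_\kappa,\]
a family of meager sets of cardinality at most $\max\{\non(\M_\kappa), \dd_\kappa\}$. To show $\mathcal{F}$ is cofinal, fix $M \in \M_\kappa$ and decompose $M = \bigcup_{i<\kappa} B_i$ with each $B_i$ nowhere dense. As in (1), $M + 2^{<\kappa}$ is a $\kappa$-union of nowhere dense sets, hence meager, so by the non-meagerness of $Y$ some $x \in Y$ lies outside it, giving $x \notin B_i + 2^{<\kappa}$ for every $i < \kappa$. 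Since $\bb_\kappa \geq \kappa^+$, the $\kappa$-indexed family $\{\Phi_-(x, B_i) : i < \kappa\} \subseteq \kappa^\kappa$ is $\leq^*$-bounded; applying the dominating property of $D$ to such a bound produces $f \in D$ with $f \geq^* \Phi_-(x, B_i)$ for every $i$. Proposition~\ref{Truss-kappa} then gives $B_i \subseteq \Phi_+(x, f)$ for each $i$, so $M \subseteq \Phi_+(x, f) \in \mathcal{F}$.

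The main conceptual obstacle is packaging $\kappa$ many outputs of $\Phi_-$ into a single dominating function $f$. Two ingredients make this painless: $\bb_\kappa \geq \kappa^+$, which ensures $\kappa$-indexed families in $\kappa^\kappa$ are always $\leq^*$-bounded, and the assumption $2^{<\kappa} = \kappa$, which keeps each thickened set $B + 2^{<\kappa}$ meager. With these in hand the proof is a direct reprise of the classical Bartoszy\'nski--Truss argument, and I do not expect any deeper combinatorics to be required.
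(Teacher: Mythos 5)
Your proof is correct and follows essentially the same route as the paper's: part (1) is dispatched via Observation~\ref{nwd-kappa} when $2^{<\kappa}>\kappa$ and otherwise by choosing $x$ outside $\bigcup\B + 2^{<\kappa}$ and a $\leq^*$-bound $f$ for the $\Phi_-(x,B)$, and part (2) exhibits $\{\Phi_+(x,f): x\in Y,\ f\in D\}$ as a cofinal family, exactly as in the paper. You merely spell out the reduction from meager to nowhere dense sets and the use of $\bb_\kappa\geq\kappa^+$ to bound the $\kappa$-indexed family $\{\Phi_-(x,B_i)\}$, details the paper leaves implicit.
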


\begin{proof} 
1. If $2^{<\kappa} > \kappa$ this is immediate by Observation~\ref{nwd-kappa}. If $2^{<\kappa} =\kappa$ 
use Proposition~\ref{Truss-kappa}: If $\B \sub \NWD_\kappa$ with $|\B| < \min \{ \bb_\kappa , \cov (\M_\kappa) \}$, 
find $x \in 2^\kappa \setminus \bigcup \B + 2^{<\kappa}$ and then $f \in \kappa^\kappa$ with $f \geq^* \Phi_- (x,B)$
for all $B \in \B$. Then $\bigcup \B \sub \Phi_+ (x,f)$.

2. Note that if $\F  \sub \kappa^\kappa$ is dominating and $X \sub 2^\kappa$ is non-meager, then, by Proposition~\ref{Truss-kappa},
$\{ \Phi_+ (x,f) : f \in \F$ and $x \in X \}$ is a cofinal family.
\end{proof}

\begin{ques}   \label{cof-ques}
Does $\cof (\M_\kappa) \leq \max \{ \dd_\kappa , \non (\M_\kappa) \}$ also hold when $2^{< \kappa} > \kappa$?
\end{ques}

A more fundamental question might be whether $\non (\M_\kappa)$ and $\cof (\M_\kappa)$ can be
different for $\kappa$ with $2^{< \kappa} > \kappa$.

\begin{cor}
Assume $\kappa$ is strongly inaccessible.
Then $\add (\M_\kappa) = \min \{ \bb_\kappa , \cov (\M_\kappa) \}$ and 
$\cof (\M_\kappa) = \max \{ \dd_\kappa , \non (\M_\kappa) \}$.
\end{cor}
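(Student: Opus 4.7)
The plan is that this corollary is essentially an assembling statement: all the substantive work has already been done in the preceding results, and the strong inaccessibility hypothesis is exactly what is needed to have all relevant inequalities available simultaneously. Recall that strongly inaccessible $\kappa$ is in particular a strong limit, so $2^{<\kappa} = \kappa$, which unlocks every result in this subsection that carried that hypothesis.

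For the first equation, $\add(\M_\kappa) = \min\{\bb_\kappa,\cov(\M_\kappa)\}$, I would argue as follows. For the $\leq$ direction, the inequality $\add(\M_\kappa) \leq \cov(\M_\kappa)$ is a purely formal consequence of the Galois--Tukey morphism $\Cof(\M_\kappa) \preceq \Cov(\M_\kappa)$ noted in the example following Definition~\ref{morph-def}, and is valid for arbitrary $\kappa$; the inequality $\add(\M_\kappa) \leq \bb_\kappa$ is the corollary immediately following the statement $\Cof(\M_\kappa) \preceq \D_\kappa$, which used strong inaccessibility. Together they give $\add(\M_\kappa) \leq \min\{\bb_\kappa,\cov(\M_\kappa)\}$. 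For the $\geq$ direction, I would invoke Corollary~\ref{Truss-cor}(1), which gives $\add(\M_\kappa) \geq \min\{\bb_\kappa,\cov(\M_\kappa)\}$ directly (this part of Corollary~\ref{Truss-cor} is stated unconditionally, but in the strongly inaccessible case the substance comes from Proposition~\ref{Truss-kappa} applied with $2^{<\kappa} = \kappa$).

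For the second equation, $\cof(\M_\kappa) = \max\{\dd_\kappa,\non(\M_\kappa)\}$, I would proceed dually. The inequality $\non(\M_\kappa) \leq \cof(\M_\kappa)$ is again immediate from the Galois--Tukey setup in the example before the corollary on $\add/\cov/\non/\cof$ relations. The inequality $\dd_\kappa \leq \cof(\M_\kappa)$ is the second half of the corollary after $\Cof(\M_\kappa) \preceq \D_\kappa$, available because $\kappa$ is strongly inaccessible. These combine to give $\cof(\M_\kappa) \geq \max\{\dd_\kappa,\non(\M_\kappa)\}$. The reverse inequality is precisely Corollary~\ref{Truss-cor}(2), whose hypothesis $2^{<\kappa} = \kappa$ is automatic here.

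I do not foresee a real obstacle: the proof is three lines citing Corollary~\ref{Truss-cor} in both directions and observing that strong inaccessibility supplies both the $2^{<\kappa} = \kappa$ needed for part~(2) and the $\add(\M_\kappa) \leq \bb_\kappa$ and $\dd_\kappa \leq \cof(\M_\kappa)$ bounds needed to close the loop. The only thing worth flagging is that the upper bound $\add(\M_\kappa) \leq \bb_\kappa$ is the one inequality we cannot (yet) establish for successor $\kappa$ with $2^{<\kappa}=\kappa$ (Question~\ref{add-cof-ques}), which is exactly why the clean equality statement is restricted to the strongly inaccessible case.
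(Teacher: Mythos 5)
Your proposal is correct and is exactly the argument the paper intends: the corollary is stated without proof because it follows immediately from combining $\add(\M_\kappa)\leq\cov(\M_\kappa)$ and $\non(\M_\kappa)\leq\cof(\M_\kappa)$ (the Galois--Tukey example in Section~2), the bounds $\add(\M_\kappa)\leq\bb_\kappa$ and $\dd_\kappa\leq\cof(\M_\kappa)$ from the corollary to $\Cof(\M_\kappa)\preceq\D_\kappa$ (where strong inaccessibility is used), and the two halves of Corollary~\ref{Truss-cor} (whose hypothesis $2^{<\kappa}=\kappa$ holds since $\kappa$ is a strong limit). Your remark that the restriction to inaccessibles is forced by the open status of $\add(\M_\kappa)\leq\bb_\kappa$ for successors is also accurate.
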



\subsection{Slaloms}

In this subsection, $\kappa$ is always a -- possibly weakly -- inaccessible cardinal.

The classical Cicho\'n diagram also contains cardinal invariants related to measure. While
there are various attempts to generalize the ideal of null sets when $\kappa$ is an inaccessible cardinal (see e.g. \cite{ShIn} and \cite{FrLa}),
we shall not pursue this but rather consider generalizations of cardinal invariants which are combinatorial characterizations
of the measure invariants in the countable case, similar to $\bb_\kappa (\neq^*)$ and $\dd_\kappa (\neq^*)$ for the category invariants.

\begin{defi}
A function $\varphi$ with $\dom (\varphi) = \kappa$ and $\varphi (\alpha) \in [\kappa]^{|\alpha|}$ for $\alpha < \kappa$ is called
a {\em slalom}. By $\Loc_\kappa$ we denote the collection of all slaloms. More generally, if $h \in \kappa^\kappa$ is a function
with $\lim_{\alpha \to \kappa} h(\alpha) = \kappa$, and $\varphi (\alpha) \in [\kappa]^{| h (\alpha) |}$ for all $\alpha < \kappa$,
then $\varphi$ is an {\em $h$-slalom}. $\Loc_h$ is the set of $h$-slaloms. So $\Loc_\kappa = \Loc_\id$. A slalom $\varphi$
{\em localizes} a function $f \in \kappa^\kappa$ ($f \in^* \varphi$ in symbols) if $| \{ \alpha < \kappa : f(\alpha) \notin \varphi (\alpha) \} | < \kappa$. 
We define:
\begin{itemize}
\item $\bb_h (\in^*)= \min \{\lvert \mathcal{F}\lvert : \F \subseteq \kappa^\kappa$ and 
   $(\forall \varphi \in \Loc_h) (\exists f \in \mathcal{F})\neg(f \in^* \varphi)\}$.
\item $\dd_h (\in^*)= \min \{\lvert \Phi \lvert : \Phi \subseteq \Loc_h$ and $(\forall f \in \kappa^\kappa) (\exists \varphi \in \Phi)(f \in^* \varphi)\}$.
\end{itemize}
Considering the triple $\LOC_h = (\kappa^\kappa, \Loc_h , \in^*)$, we see that $|| \LOC_h || = \dd_h (\in^*)$ and $|| \LOC_h^\bot || = \bb_h (\in^*)$.
\end{defi}

In the countable case these cardinals coincide with $\add (\N)$ and $\cof (\N)$, respectively, for arbitrary functions $h \in \omega^\omega$
such that $\lim h(n) = \infty$~\cite[2.3]{BJ95}. In particular, all $\bb_h (\in^*)$ are equal, and so are all $\dd_h (\in^*)$. We shall prove
below this is false for strongly inaccessible $\kappa$, see Theorem~\ref{Sacks-model}.

\begin{defi}
Again let $h \in \kappa^\kappa$ with $\lim_{\alpha \to \kappa} h(\alpha) = \kappa$. A function $\varphi$ is a {\em partial
$h$-slalom} if $\dom (\varphi) \subseteq \kappa$, $|\dom (\varphi) | = \kappa$ and $\varphi (\alpha) \in [\kappa]^{|h(\alpha)|}$ for $\alpha 
\in \dom (\varphi)$. $\pLoc_h$ is the set of partial $h$-slaloms. For $h = \id$, write $\pLoc_\kappa = \pLoc_\id$. 
$\varphi$ {\em localizes} $f \in \kappa^\kappa$ ($f \in^* \varphi$ in symbols) if $| \{ \alpha \in \dom (\varphi) : 
f(\alpha) \notin \varphi (\alpha) \} | < \kappa$. The corresponding cardinals are:
\begin{itemize}
\item $\bb_h (\p\in^*)= \min \{\lvert \mathcal{F}\lvert : \F \subseteq \kappa^\kappa$ and 
   $(\forall \varphi \in \pLoc_h) (\exists f \in \mathcal{F})\neg(f \in^* \varphi)\}$.
\item $\dd_h (\p\in^*)= \min \{\lvert \Phi \lvert : \Phi \subseteq \pLoc_h$ and $(\forall f \in \kappa^\kappa) (\exists \varphi \in \Phi)(f \in^* \varphi)\}$.
\end{itemize}
If $\pLOC_h = (\kappa^\kappa, \pLoc_h , \in^*)$, we see that $|| \pLOC_h || = \dd_h (\p\in^*)$ and $|| \pLOC_h^\bot || = \bb_h (\p\in^*)$.
\end{defi}

Formally, we could also have defined $\LOC_\kappa$ and $\pLOC_\kappa$ for successors. However, it is easy to see
that in this case $\LOC_\kappa \equiv \pLOC_\kappa \equiv \D_\kappa$ so that the resulting cardinals are equal to
$\bb_\kappa$ and $\dd_\kappa$, respectively, and thus not interesting. For (weakly) inaccessible $\kappa$ we still have:

\begin{obs}
$\LOC_h \preceq \pLOC_h \preceq \D_\kappa$. So $\bb_h (\in^*) \leq \bb_h (\p\in^*) \leq \bb_\kappa$ and 
$\dd_\kappa \leq \dd_h (\p\in^*) \leq \dd_h (\in^*)$.
\end{obs}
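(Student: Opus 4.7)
The plan is to exhibit two explicit Galois-Tukey morphisms, one for each ``$\preceq$''; the four cardinal inequalities then follow immediately from the observation after Definition~\ref{morph-def} that $\AA \preceq \BB$ implies $\|\AA\| \geq \|\BB\|$ and $\|\AA^\bot\| \leq \|\BB^\bot\|$. Throughout I use that $\kappa$ is regular (part of being weakly inaccessible), hence that suprema of sets of size $<\kappa$ of ordinals $<\kappa$ are themselves $<\kappa$.

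For $\LOC_h \preceq \pLOC_h$, both maps can be taken to be the identity: every total $h$-slalom $\varphi$ is also a partial $h$-slalom with $\dom(\varphi) = \kappa$, and under this identification the localization relation $f \in^* \varphi$ reads identically, since $\{\alpha < \kappa : f(\alpha) \notin \varphi(\alpha)\}$ and $\{\alpha \in \dom(\varphi) : f(\alpha) \notin \varphi(\alpha)\}$ literally coincide.

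For $\pLOC_h \preceq \D_\kappa$ I would monotonize on the $\Phi_-$-side and take suprema on the $\Phi_+$-side. Set
\[ \Phi_-(g)(\alpha) = \sup\{g(\gamma) : \gamma \leq \alpha\}, \]
which is $<\kappa$ by regularity, so $\Phi_-(g) \in \kappa^\kappa$. For $\varphi \in \pLoc_h$, the assumption $|\dom(\varphi)| = \kappa$ combined with regularity forces $\dom(\varphi)$ to be cofinal in $\kappa$, so for each $\alpha < \kappa$ I may set $\beta_\alpha = \min(\dom(\varphi) \cap [\alpha,\kappa))$ and
\[ \Phi_+(\varphi)(\alpha) = \sup \varphi(\beta_\alpha) + 1, \]
again $<\kappa$ since $|\varphi(\beta_\alpha)| = |h(\beta_\alpha)| < \kappa$. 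To verify the morphism condition, suppose $\Phi_-(g) \in^* \varphi$, put $E = \{\alpha \in \dom(\varphi) : \Phi_-(g)(\alpha) \notin \varphi(\alpha)\}$, and let $\gamma_0 = \sup E + 1 < \kappa$. For every $\alpha \geq \gamma_0$, the inequality $\beta_\alpha \geq \alpha > \sup E$ forces $\beta_\alpha \in \dom(\varphi) \setminus E$, so $\Phi_-(g)(\beta_\alpha) \in \varphi(\beta_\alpha)$, and since $\Phi_-(g)$ is non-decreasing,
\[ g(\alpha) \leq \Phi_-(g)(\alpha) \leq \Phi_-(g)(\beta_\alpha) \leq \sup \varphi(\beta_\alpha) < \Phi_+(\varphi)(\alpha), \]
so $g \leq^* \Phi_+(\varphi)$, as required.

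The only point requiring care is coping with gaps in $\dom(\varphi)$: a partial slalom controls $g$ only at points of its own domain, whereas dominating $g$ in $\leq^*$ requires control at every $\alpha$. The monotonization on the $\Phi_-$-side and the ``next point of $\dom(\varphi)$'' device on the $\Phi_+$-side are precisely what bridge these, and regularity of $\kappa$ is what keeps both suprema below $\kappa$.
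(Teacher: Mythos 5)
Your proof is correct. The paper states this Observation without proof, and your argument -- the identity/inclusion maps for $\LOC_h \preceq \pLOC_h$, and for $\pLOC_h \preceq \D_\kappa$ the monotonization of $g$ together with dominating $\sup\varphi(\beta_\alpha)+1$ at the next point $\beta_\alpha$ of $\dom(\varphi)$, with regularity of $\kappa$ keeping all suprema below $\kappa$ -- is exactly the intended routine verification, correctly oriented with respect to Definition~\ref{morph-def}.
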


We shall prove that the inequalities regarding the two different kinds of localization cardinals 
can be consistently strict like in the countable case, see Theorem~\ref{total-partial-thm}.

Again, in the countable case, the partial localization cardinals do not depend on the function $h$~\cite{evpr2}. This is still true for 
(weakly) inaccessible $\kappa$.

\begin{obs} 
Let $g$ and $h$ with $\lim_{\alpha \to \kappa} g(\alpha) = \lim_{\alpha \to \kappa} h(\alpha) = \kappa$. Then
$\pLOC_g \equiv \pLOC_h$.
\end{obs}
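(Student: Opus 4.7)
The plan is to reduce to the pointwise situation $h(\alpha) \leq g(\alpha)$ and then produce morphisms in both directions. The crucial observation is that the partiality of the slaloms lets us re-index their domains freely, which avoids any need to code finite chunks of $f$ into single ordinals of $\kappa$ and hence any cardinal arithmetic beyond the hypothesis that $\lim g = \lim h = \kappa$.

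First, I would set $k(\alpha) = \min(g(\alpha), h(\alpha))$. Then $\lim k = \kappa$ while $k \leq g$ and $k \leq h$ pointwise, so by transitivity of $\equiv$ it suffices to prove $\pLOC_g \equiv \pLOC_h$ under the additional hypothesis $h \leq g$ pointwise, which I now assume.

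For $\pLOC_h \preceq \pLOC_g$ (the easy direction), take $\Phi_- = \id$ and, for each $\varphi \in \pLoc_h$, let $\Phi_+(\varphi)$ have the same domain as $\varphi$ with $\Phi_+(\varphi)(\alpha) \supseteq \varphi(\alpha)$ of cardinality $|g(\alpha)|$ obtained by arbitrary padding. Then $f \in^* \varphi$ trivially implies $f \in^* \Phi_+(\varphi)$.

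For $\pLOC_g \preceq \pLOC_h$ (the harder direction), I use $\lim h = \kappa$ to recursively build a strictly increasing sequence $(\beta_\alpha : \alpha < \kappa)$ of ordinals below $\kappa$ with $h(\beta_\alpha) \geq g(\alpha)$ for every $\alpha$: at stage $\alpha$ there is a $\delta < \kappa$ such that $h(\beta) \geq g(\alpha)$ for all $\beta \geq \delta$, and by regularity of $\kappa$ I can pick $\beta_\alpha$ above both $\delta$ and $\sup_{\gamma < \alpha} \beta_\gamma$. Then define $\Phi_-(f)(\alpha) = f(\beta_\alpha)$. For $\psi \in \pLoc_g$ with $\dom \psi = D$, let $\Phi_+(\psi)$ have domain $\{\beta_\alpha : \alpha \in D\}$, and at each $\beta_\alpha$ pad $\psi(\alpha)$ to a set $A_\alpha \supseteq \psi(\alpha)$ of cardinality $|h(\beta_\alpha)|$ (which exists since $|\psi(\alpha)| = |g(\alpha)| \leq |h(\beta_\alpha)|$), setting $\Phi_+(\psi)(\beta_\alpha) = A_\alpha$. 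Now $\Phi_-(f) \in^* \psi$ means that $\{\alpha \in D : f(\beta_\alpha) \notin \psi(\alpha)\}$ has size less than $\kappa$, hence so does $\{\beta_\alpha \in \dom \Phi_+(\psi) : f(\beta_\alpha) \notin \Phi_+(\psi)(\beta_\alpha)\}$, yielding $f \in^* \Phi_+(\psi)$.

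The main obstacle one might worry about is that a naive compression --- coding $f$ restricted to a chunk of $\kappa$ into a single ordinal --- would require $\kappa^{<\kappa} = \kappa$, which can fail when $\kappa$ is merely weakly inaccessible. Working with partial slaloms lets us spread the slalom's information across a sparse cofinal set of indices instead, avoiding any coding entirely.
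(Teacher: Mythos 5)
Your proof is correct and is essentially the paper's argument: both re-index along a strictly increasing sequence $(\beta_\alpha : \alpha < \kappa)$ with $h(\beta_\alpha) \geq g(\alpha)$, pull $f$ back by $\Phi_-(f)(\alpha) = f(\beta_\alpha)$, and transplant the slalom onto the sparse domain $\{\beta_\alpha : \alpha \in \dom(\psi)\}$. The only differences are cosmetic: your preliminary reduction to $h \leq g$ via $\min(g,h)$ is unnecessary (the paper simply notes that, by symmetry, proving $\pLOC_g \preceq \pLOC_h$ for arbitrary $g,h$ suffices), and your explicit padding of $\psi(\alpha)$ up to cardinality exactly $|h(\beta_\alpha)|$ tidies a point the paper glosses over when it writes $[\kappa]^{|g(\gamma)|} \subseteq [\kappa]^{|h(\alpha_\gamma)|}$.
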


\begin{proof}
It suffices to show that $\pLOC_g \preceq \pLOC_h$. Choose strictly increasing $\alpha_\gamma$ for $\gamma < \kappa$ with 
$h(\alpha_\gamma) \geq g(\gamma)$. Given $f \in \kappa^\kappa$, let $\Phi_- (f) (\gamma) = f (\alpha_\gamma)$ for all $\gamma
< \kappa$. If $\varphi \in \pLoc_g$, let $\dom ( \Phi_+ (\varphi)) = \{ \alpha _\gamma : \gamma \in \dom (\varphi) \}$
and $\Phi_+ (\varphi) (\alpha_\gamma) = \varphi (\gamma) \in [\kappa]^{| g (\gamma)|} \subseteq [\kappa]^{|h (\alpha_\gamma)|}$
for $\gamma \in \dom (\varphi)$. If $\Phi_- (f) \in^* \varphi$, then $f \in^* \Phi_+ (\varphi)$ because $f (\alpha_\gamma) =
\Phi_- (f) (\gamma) \in \varphi (\gamma) = \Phi_+ (\varphi) (\alpha_\gamma)$ holds for all large enough $\gamma \in \dom (\varphi)$.
\end{proof}

\begin{cor} 
Assume $\kappa$ is (weakly) inaccessible. For any function $h \in \kappa^\kappa$ with $\lim_{\alpha \to \kappa} h(\alpha) = \kappa$,
$\bb_h (\p \in^*) = \bb_\kappa (\p \in^*)$ and $\dd_h (\p \in^*) = \dd_\kappa (\p \in^*)$.
\end{cor}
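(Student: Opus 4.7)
The plan is to apply the preceding Observation directly, taking $g = \id$, the identity function on $\kappa$. Since $\id(\alpha) = \alpha$, we obviously have $\lim_{\alpha \to \kappa} \id(\alpha) = \kappa$ (the inaccessibility of $\kappa$ is not even needed for this particular verification, though it is part of the standing hypothesis for this subsection). Thus the Observation applies with this choice of $g$ and the given $h$, yielding $\pLOC_\kappa = \pLOC_\id \equiv \pLOC_h$.

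Next, I would invoke the general principle recorded earlier in the paper (immediately after Definition~\ref{morph-def}) that $\AA \preceq \BB$ implies $\|\AA\| \geq \|\BB\|$ and $\|\AA^\bot\| \leq \|\BB^\bot\|$; in particular, $\AA \equiv \BB$ forces $\|\AA\| = \|\BB\|$ and $\|\AA^\bot\| = \|\BB^\bot\|$. Applying this to the equivalence $\pLOC_\kappa \equiv \pLOC_h$ just established, we obtain $\|\pLOC_\kappa\| = \|\pLOC_h\|$ and $\|\pLOC_\kappa^\bot\| = \|\pLOC_h^\bot\|$.

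Finally, by the definitions set out in the preceding subsection, $\|\pLOC_h\| = \dd_h(\p\in^*)$ and $\|\pLOC_h^\bot\| = \bb_h(\p\in^*)$, and the analogous identities hold with the index $\kappa$ in place of $h$. Reading the two norm equalities through these identifications gives exactly the two conclusions $\bb_h(\p\in^*) = \bb_\kappa(\p\in^*)$ and $\dd_h(\p\in^*) = \dd_\kappa(\p\in^*)$.

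There is no real obstacle here: the corollary is a formal specialization of the Observation combined with the norm-invariance of Galois--Tukey equivalence. The only point of substance is already encapsulated in the Observation, namely the construction of the morphism via the strictly increasing sequence $\alpha_\gamma$ with $h(\alpha_\gamma) \geq g(\gamma)$, whose existence for any inaccessible $\kappa$ (and any $h$ tending to $\kappa$) is what makes the reduction work.
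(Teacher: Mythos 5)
Your proposal is correct and matches the paper's intent exactly: the paper states this corollary immediately after the Observation with no separate proof, precisely because it is the formal specialization you describe (take $g=\id$ in the Observation, then read off the norm equalities from $\pLOC_\kappa \equiv \pLOC_h$ via the fact that $\equiv$ preserves $\|\cdot\|$ and $\|\cdot^\bot\|$). Nothing is missing.
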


For the remainder of this subsection, we assume that $\kappa$ is a strongly inaccessible cardinal.

The classical Bartoszy\'nski-Raisonnier-Stern Theorem~\cite[Theorem 2.3.1]{BJ95} says that $\Cof (\N) \preceq \Cof (\M)$. 
This is proved by showing $\Cof (\N) \equiv \LOC_\omega$ and $\LOC_\omega \preceq \Cof (\M)$.
In fact, an analysis of the proof (see e.g.~\cite[2.5]{evpr2}) shows that $\pLOC_\omega \preceq \Cof (\M)$.
This is the version of the Bartoszy\'nski-Raisonnier-Stern Theorem we shall generalize to inaccessible $\kappa$.

\begin{mainlem}
Assume $\kappa$ is strongly inaccessible. 
Let $X \sub 2^\kappa$ be a non-empty open set and let $\lambda < \kappa$. 
Then there is a family $\Y$ of open subsets of $X$ such that
\begin{romanenumerate}
\item $| \Y | \leq \kappa$,
\item every dense open subset of $2^\kappa$ contains a member of $\Y$,
\item $\bigcap \Y' \neq\emptyset$ for any $\Y' \sub \Y$ with $|\Y' | \leq \lambda$.
\end{romanenumerate}
\end{mainlem}

\begin{proof} Let $\{ X_\alpha : \alpha < \kappa \}$ list all $<\kappa$-unions of basic open sets
(i.e., sets of the form $[\sigma]$ for $\sigma \in 2^{<\kappa}$) of the relative topology of $X$.
Say $X_\alpha = \bigcup \{ [\sigma] : \sigma \in \Sigma_\alpha \}$ where $\Sigma_\alpha \sub 2^{<\kappa}$
with $|\Sigma_\alpha| < \kappa$. For simplicity assume $X = 2^\kappa$.

For $\beta < \kappa$ put
\[ A_\beta = \{ \alpha :  \forall \sigma \in 2^\beta \; \exists \tau \in 2^{<\kappa} \; (\tau \supseteq \sigma \land
\tau \in \Sigma_\alpha ) \}. \]
Since $|2^\beta| <\kappa$ (by inaccessibility of $\kappa$), $A_\beta$ is non-empty.
Also the $A_\beta$ form a decreasing chain of subsets of $\kappa$. 

Next let
\[ \Y = \{ \bigcup_{\zeta < \lambda^+} X_{\alpha_\zeta} : \alpha _0 \in \kappa \mbox{ and } \alpha_\zeta \in A_{\beta_\zeta}
\mbox{ for } \zeta > 0 \mbox{ where } \beta_\zeta = \bigcup_{\xi < \zeta} \bigcup_{\sigma \in \Sigma_{\alpha_\xi}} \dom (\sigma) \}. \]
Since $\lambda^+ < \kappa$ we see that $\Y$ has size at most $\kappa$. We need to check the other two
properties.

(ii) Let $D \sub 2^\kappa$ be open dense. First note that $ \{ \alpha \in A_\beta : X_\alpha \sub D \} \neq \emptyset$
for all $\beta < \kappa$. To see this fix $\beta$. For all $\sigma \in 2^\beta$ find $\tau_\sigma \supseteq \sigma$
with $[\tau_\sigma] \sub D$ by the density of $D$. Next let $\alpha$ be such that $\{ \tau_\sigma : \sigma \in
2^\beta \} = \Sigma_\alpha$. Then $X_\alpha \sub D$ and $\alpha \in A_\beta$ follow.

This means we can recursively construct $\alpha_\zeta$, $\zeta < \lambda^+$, such that $\alpha_\zeta \in A_{\beta_\zeta}$
and $X_{\alpha_\zeta} \sub D$. Clearly $\bigcup_{\zeta < \lambda^+} X_{\alpha_\zeta} \in \Y$
is a subset of $D$.

(iii) Assume $Y_\delta \in \Y$, $\delta < \lambda$, are given.
Say $Y_\delta = \bigcup_{\zeta < \lambda^+} X_{\alpha(\delta,\zeta)}$ where $\alpha(\delta,0) \in \kappa$
and $\alpha (\delta , \zeta) \in A_{\beta (\delta , \zeta)}$ for $\zeta > 0$ with $\beta(\delta,\zeta) = \bigcup_{\xi < \zeta}
\bigcup_{\sigma \in \Sigma_{\alpha (\delta,\xi)}} \dom (\sigma)$. 

Recursively define a partial injective function $\eta : \lambda^+ \to \lambda$ by:
\[\begin{array}{rcl}
\eta (0) & = & \min \{ \delta : \forall \epsilon < \lambda \;( \beta(\delta,1) \leq \beta(\epsilon,1) )\} \\
\eta (\zeta) & = & \min \{ \delta \notin \{ \eta (\xi) : \xi < \zeta \} : \forall \epsilon \notin \{ \eta (\xi) :
   \xi  < \zeta \} \; (\beta(\delta,\zeta+1) \leq \beta (\epsilon,\zeta+1) ) \} \\
\end{array} \]
for $\zeta > 0$ as long as $\{ \eta (\xi) : \xi < \zeta \} \subsetneq \lambda$. Let $\lambda_0$ be minimal
such that $\eta (\lambda_0)$ is undefined. Then $\lambda \leq \lambda_0 < \lambda^+$ and $\eta : \lambda_0 \to \lambda$
is a bijection. We claim that the intersection $\bigcap_{\zeta<\lambda_0} X_{\alpha (\eta(\zeta),\zeta)}$ is non-empty.
This finishes the proof because $\bigcap_{\zeta<\lambda_0} X_{\alpha (\eta(\zeta),\zeta)} \sub
\bigcap_{\zeta < \lambda_0} Y_{\eta(\zeta)} = \bigcap_{\delta<\lambda} Y_\delta$.

To prove the claim, recursively construct $( \sigma_\zeta \in 2^{<\kappa}: \zeta < \lambda_0 )$ such that
\begin{itemize}
\item $\sigma_0 = \la\ra$, $\sigma_\xi \sub \sigma_\zeta$ for $\xi < \zeta$,
\item $\sigma_\zeta = \bigcup_{\xi < \zeta } \sigma_\xi$ for limit $\zeta$,
\item $\sigma_{\zeta + 1} \in \Sigma_{\alpha(\eta (\zeta) , \zeta )}$,
\item $\dom (\sigma_\zeta) \sub \bigcup_{\xi < \zeta} \beta (\eta (\xi), \xi + 1)$.
\end{itemize}
Once this is done we let $\sigma = \bigcup_{\zeta< \lambda_0} \sigma_\zeta$. Then clearly $[\sigma] \sub
\bigcap_{\zeta<\lambda_0} X_{\alpha (\eta (\zeta) , \zeta)}$, and the claim is established.

Before starting the recursion, note that for $\xi < \zeta$ we have
\[ \beta(\eta (\xi) , \xi +1) \leq \beta (\eta (\zeta), \xi +1) \leq \beta (\eta(\zeta),\zeta) \leq \beta(\eta(\zeta),\zeta+1) \]
where the first inequality holds by the definition of $\eta$ and the other two, by the monotonicity of the
function $\beta(\eta(\zeta) , \cdot)$. 

For $\zeta = 0$ put $\sigma_0 = \la\ra$. There is nothing to verify. For $\zeta = 1$, let $\sigma_1 \in \Sigma_{\alpha(\eta
(0),0)}$. So $\dom (\sigma_1) \sub \beta(\eta (0),1)$ by definition of $\Y$. 

In general assume $\sigma_\zeta$ has been defined as required. In particular $\dom (\sigma_\zeta) \sub \bigcup_{\xi<\zeta}
\beta (\eta(\xi),\xi + 1) \sub \beta (\eta (\zeta) , \zeta)$. By definition of $\Y$ we know $\alpha (\eta(\zeta) , \zeta) \in
A_{\beta (\eta (\zeta),\zeta)}$. Hence, by definition of $A_{\beta (\eta (\zeta),\zeta)}$, we can find 
$\sigma_{\zeta +1} \in \Sigma_{\alpha (\eta (\zeta) , \zeta ) }$ with $\sigma_{\zeta + 1} \supseteq \sigma_\zeta$.
So $\dom (\sigma_{\zeta +1} ) \sub \beta (\eta(\zeta),\zeta +1)$ by definition of $\Y$.

For limit $\zeta$, let $\sigma_\zeta = \bigcup_{\xi < \zeta} \sigma_\xi$. Then $\dom (\sigma_\zeta) =
\bigcup_{\xi < \zeta} \dom (\sigma_\xi) \sub \bigcup_{\xi < \zeta} \beta(\eta(\xi),$ $\xi + 1)$ as required.
This completes the recursive construction, and the proof of the lemma.
\end{proof}

\begin{thm}   \label{BRT-thm}
Assume $\kappa$ is strongly inaccessible. Then $\pLOC_\kappa \preceq \Cof (\M_\kappa)$. That is,
there are functions $\Phi_- : \M_\kappa \to \kappa^\kappa$ and $\Phi_+ : \pLoc_\kappa \to \M_\kappa$ 
such that $\Phi_- (A)  \in^* \varphi$ implies $A \sub \Phi_+(\varphi)$ for $A \in \M_\kappa$ and $\varphi \in \pLoc_\kappa$.
\end{thm}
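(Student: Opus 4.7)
The plan is to use the Main Lemma to build, at each level $\alpha < \kappa$, a $\kappa$-sized coding family $\Y_\alpha$ of open subsets of $2^\kappa$, and then to define $\Phi_-$ by selecting a member of $\Y_\alpha$ contained in the $\alpha$-th dense open in a decomposition of $A$'s complement, and $\Phi_+$ by taking $|\alpha|$-fold intersections dictated by the slalom. Concretely, for each $\alpha < \kappa$ apply the Main Lemma to $X = 2^\kappa$ with $\lambda = |\alpha|^+ < \kappa$ to obtain $\Y_\alpha = \{Y_{\alpha,\beta} : \beta < \kappa\}$, enumerated in order type $\kappa$.

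Given $A \in \M_\kappa$, write $A = \bigcup_{\alpha<\kappa} F_\alpha$ with $(F_\alpha)$ an increasing sequence of closed nowhere dense sets, so that $D_\alpha := 2^\kappa \setminus \overline{F_\alpha}$ is a decreasing sequence of dense open sets with $A \sub 2^\kappa \setminus \bigcap_\alpha D_\alpha$. By clause (ii) of the Main Lemma, for each $\alpha$ there is $\beta_\alpha < \kappa$ with $Y_{\alpha,\beta_\alpha} \sub D_\alpha$; set $\Phi_-(A)(\alpha) = \beta_\alpha$. Given $\varphi \in \pLoc_\kappa$, for each $\alpha \in \dom(\varphi)$ define the open set $U_\alpha = \bigcap_{\beta \in \varphi(\alpha)} Y_{\alpha,\beta}$, which is non-empty by clause (iii) since $|\varphi(\alpha)| = |\alpha| < \lambda$, and set $\Phi_+(\varphi) = 2^\kappa \setminus \bigcap_{\alpha \in \dom(\varphi)} U_\alpha$.

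For the inclusion, assume $\Phi_-(A) \in^* \varphi$ and let $S = \{\alpha \in \dom(\varphi) : \beta_\alpha \in \varphi(\alpha)\}$, so that $|\dom(\varphi) \sem S| < \kappa$. Since $|\dom(\varphi)| = \kappa$ and $\kappa$ is regular, $|S| = \kappa$ and $S$ is cofinal in $\kappa$. For each $\alpha \in S$, $U_\alpha \sub Y_{\alpha,\beta_\alpha} \sub D_\alpha$. Given $x \in A$, pick $\alpha_0$ with $x \notin D_\alpha$ for all $\alpha \geq \alpha_0$ (using that $(D_\alpha)$ is decreasing); for any $\alpha \in S$ with $\alpha \geq \alpha_0$, $x \notin D_\alpha \supseteq U_\alpha$, so $x \in 2^\kappa \setminus U_\alpha \sub \Phi_+(\varphi)$.

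The main obstacle is verifying that $\Phi_+(\varphi)$ is actually $\kappa$-meager, equivalently that $\bigcap_{\alpha \in \dom(\varphi)} U_\alpha$ is comeager in $2^\kappa$. Clause (iii) of the Main Lemma guarantees $U_\alpha$ is non-empty but not that it is dense, and the natural candidate decomposition $\Phi_+(\varphi) = \bigcup_{\alpha \in \dom(\varphi)} (2^\kappa \setminus U_\alpha)$ is a $\kappa$-union of nowhere dense sets only if each $U_\alpha$ is dense open. Overcoming this will likely require extracting further structure from the proof of the Main Lemma, either by showing that members of $\Y_\alpha$ can be arranged to be dense in $2^\kappa$ below arbitrarily high levels (so that their $|\alpha|$-fold intersection remains dense via the $\kappa$-Baire property), or by modifying $\Phi_-$ and $\Phi_+$ to apply the Main Lemma separately to each basic open $X = [s]$ (enumerating pairs $(\alpha,s)$ by ordinals below $\kappa$) and assembling the pieces at a fixed level into a bona fide dense open set.
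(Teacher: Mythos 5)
Your construction is sound exactly up to the point you flag yourself, and that point is a genuine gap rather than a routine verification: nothing in your definition of $\Phi_+$ makes $\Phi_+(\varphi)$ $\kappa$-meager. Clause (iii) of the Main Lemma only yields that $U_\alpha=\bigcap_{\beta\in\varphi(\alpha)}Y_{\alpha,\beta}$ is non-empty, and a single non-dense $U_\alpha$ already puts a non-empty open set inside $2^\kappa\setminus U_\alpha\subseteq\Phi_+(\varphi)$. Moreover, the first repair you suggest is provably impossible: if all members of a family $\Y$ of size $\leq\kappa$ satisfying clause (ii) were dense open, then $\bigcap\Y$ would be dense by the generalized Baire category theorem, while clause (ii) gives $D\supseteq\bigcap\Y$ for every dense open $D$, hence $\bigcap\Y\subseteq\bigcap\{D : D\mbox{ dense open}\}=\emptyset$, a contradiction. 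So density cannot be built into the individual $Y$'s; it has to be recovered by spreading the construction over a basis.

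That is your second suggested repair, and it is what the paper actually does. The Main Lemma is applied once for each pair $\alpha<\beta$, with $X=X_\alpha$ a member of a fixed basis and $\lambda=|\beta|$, yielding families $\Y_{\alpha,\beta}=\{Y_{\alpha,\beta,\gamma}:\gamma<\kappa\}$ of open subsets of $X_\alpha$. Consequently $\Phi_-(A)(\beta)$ is not a single index but a sequence in $\kappa^\beta$ (identifying $\kappa$ with $\kappa^{<\kappa}$), recording for each $\alpha<\beta$ some $\gamma$ with $A_\beta\cap Y_{\alpha,\beta,\gamma}=\emptyset$; and $\Phi_+(\varphi)$ is the complement of $\bigcap_{\delta<\kappa}\bigcup_{\beta\geq\delta,\,\beta\in\dom(\varphi)}\bigcup_{\alpha<\beta}\bigcap_{\sigma\in\varphi(\beta)}Y_{\alpha,\beta,\sigma(\alpha)}$. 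Density of the inner union is now automatic: given any basic set $X_\alpha$, pick $\beta\in\dom(\varphi)$ with $\beta>\alpha$ and $\beta\geq\delta$; clause (iii) (with $\lambda=|\beta|\geq|\varphi(\beta)|$) puts a non-empty open subset of $X_\alpha$ into the union. The matching argument then runs as in your third paragraph. As written, your proposal does not constitute a proof; assembling the relativized families into a single slalom coordinate per level $\beta$ is the essential missing step.
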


\begin{proof}
Identifying $\kappa$ with $\kappa^{<\kappa}$ we can work with the space $\{ f \in (\kappa^{<\kappa})^\kappa
: f (\beta) \in \kappa^\beta$ for all $\beta\}$ instead of $\kappa^\kappa$.
Let $\lambda_\beta = | \beta |$ for $\beta < \kappa$.
Assume $\{ X_\alpha : \alpha < \kappa \}$ is a basis for the topology on $2^\kappa$.
Let $\Y_{\alpha,\beta} = \{ Y_{\alpha, \beta,\gamma } : \gamma < \kappa \}$ be a family 
as in the preceding lemma with $X = X_\alpha$ and $\lambda = \lambda_\beta$
where $\alpha < \beta$. 

Given $A \in \M_\kappa$, say $A = \bigcup_{\alpha < \kappa} A_\alpha$, where all $A_\alpha$ are
nowhere dense and $A _\alpha \sub A_\beta$ for $\alpha < \beta$, we define $\Phi_- (A) \in (\kappa^{<\kappa})^\kappa$
with $\Phi_- (A) (\beta) \in \kappa^\beta$ by stipulating that $A_\beta \cap Y_{\alpha,\beta,\Phi_- (A)(\beta)(\alpha)} = \emptyset$
for all $\alpha < \beta$. By property (ii) of the preceding lemma, we can indeed find $\Phi_- (A) (\beta) (\alpha) \in \kappa$
for each $\alpha < \beta$ (for fixed $\beta$). 

Given $\varphi \in \pLoc_\kappa$ with $\varphi (\beta) \in [ \kappa^\beta ]^{\lambda_\beta}$ for all $\beta \in \dom (\varphi)$,
put
\[ \Phi_+(\varphi) = 2^\kappa \sem \bigcap_{\delta < \kappa} \bigcup_{\beta \geq \delta, \beta \in \dom (\varphi) }
\left( \bigcup_{\alpha < \beta} \bigcap_{\sigma \in \varphi (\beta) } Y_{\alpha, \beta , \sigma (\alpha) } \right). \]
By property (iii) of the preceding lemma, intersections of the form $\bigcap_{\sigma \in \varphi (\beta) } Y_{\alpha, \beta , \sigma (\alpha) }$
are non-empty, and this implies that sets of the form $$\bigcup_{\beta \geq \delta, \beta \in \dom (\varphi) }
\left( \bigcup_{\alpha < \beta} \bigcap_{\sigma \in \varphi (\beta) } Y_{\alpha, \beta , \sigma (\alpha) } \right)$$
are open dense. Hence $\Phi_+(\varphi)$ is indeed meager.

Now assume that $\Phi_- (A)  \in^* \varphi$, that is, $\Phi_- (A) (\beta) \in \varphi (\beta)$ for almost all $\beta \in \dom (\varphi)$.
Say $\beta_0$ is such that $\Phi_- (A) (\beta) \in \varphi (\beta)$ for all $\beta \geq \beta_0$ in $\dom(\varphi)$. 
Let $x \in A$. We need to show that $x$ belongs to $\Phi_+(\varphi)$. Let $\delta < \kappa$ be
such that $x \in A_\delta$. We may assume $\delta \geq \beta_0$. Clearly $x \in A_\beta$ for all $\beta \geq \delta$. Fix $\beta
\geq\delta$ in $\dom(\varphi)$. Then $x \notin Y_{\alpha,\beta,\Phi_- (A) (\beta)(\alpha) }$ for all $\alpha < \beta$. In particular,
$x \notin \bigcup_{\alpha < \beta} \bigcap_{\sigma \in \varphi(\beta)} Y_{\alpha,\beta,\sigma(\alpha) }$. 
Since this holds for all $\beta \geq \delta$ with $\beta \in \dom (\varphi)$ we see that indeed $x \in \Phi_+(\varphi)$.
This completes the proof.
\end{proof}

\begin{cor}   \label{BRT-cor}
Assume $\kappa$ is strongly inaccessible.
Then $\bb_\kappa (\p \in^*) \leq \add (\M_\kappa)$ and $\cof (\M_\kappa) \leq \dd_\kappa (\p \in^*)$.
\end{cor}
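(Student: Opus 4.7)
The plan is essentially to invoke the general machinery of Galois--Tukey connections applied to the morphism constructed in Theorem~\ref{BRT-thm}, so the corollary becomes a two-line deduction rather than a fresh argument.

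First I would recall that by the definitions, $\|\pLOC_\kappa\| = \dd_\kappa(\p\in^*)$ and $\|\pLOC_\kappa^\bot\| = \bb_\kappa(\p\in^*)$, while $\|\Cof(\M_\kappa)\| = \cof(\M_\kappa)$ and $\|\Cof(\M_\kappa)^\bot\| = \add(\M_\kappa)$. The latter dual identity uses that $\M_\kappa$ is a $\kappa$-ideal closed under subsets: a family $\J \subseteq \M_\kappa$ witnesses the dual norm precisely when no single meager set contains every member of $\J$, which is equivalent to $\bigcup \J \notin \M_\kappa$.

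Next I would apply the basic observation recorded earlier in the paper: whenever $\mathbb{A} \preceq \mathbb{B}$, one has $\|\mathbb{A}\| \geq \|\mathbb{B}\|$ and $\|\mathbb{A}^\bot\| \leq \|\mathbb{B}^\bot\|$. Theorem~\ref{BRT-thm} gives us exactly $\pLOC_\kappa \preceq \Cof(\M_\kappa)$ via the morphism $(\Phi_-, \Phi_+)$ constructed there. Plugging in the two readings of the norm, this yields at once $\dd_\kappa(\p\in^*) \geq \cof(\M_\kappa)$ and $\bb_\kappa(\p\in^*) \leq \add(\M_\kappa)$, which are the two inequalities asserted.

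There is no real obstacle here, since all the hard work was absorbed into the main lemma and Theorem~\ref{BRT-thm}; the only thing one must do carefully is verify that the pair $(\Phi_-, \Phi_+)$ from the theorem meets the formal definition of a morphism from $\pLOC_\kappa$ to $\Cof(\M_\kappa)$, i.e.\ that $\Phi_- : \M_\kappa \to \kappa^\kappa$ and $\Phi_+ : \pLoc_\kappa \to \M_\kappa$ satisfy $\Phi_-(A) \in^* \varphi \Rightarrow A \subseteq \Phi_+(\varphi)$, which is exactly the conclusion of Theorem~\ref{BRT-thm}.
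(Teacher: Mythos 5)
Your proposal is correct and is exactly the intended derivation: the paper states Corollary~\ref{BRT-cor} with no separate proof precisely because Theorem~\ref{BRT-thm} already exhibits the pair $(\Phi_-,\Phi_+)$ as a morphism witnessing $\pLOC_\kappa \preceq \Cof(\M_\kappa)$, and the two inequalities then follow from the observation that $\AA \preceq \BB$ gives $\|\AA\| \geq \|\BB\|$ and $\|\AA^\bot\| \leq \|\BB^\bot\|$ together with the norm identifications $\|\pLOC_\kappa\| = \dd_\kappa(\p\in^*)$, $\|\pLOC_\kappa^\bot\| = \bb_\kappa(\p\in^*)$, $\|\Cof(\M_\kappa)\| = \cof(\M_\kappa)$, $\|\Cof(\M_\kappa)^\bot\| = \add(\M_\kappa)$. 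Your remark verifying that $(\Phi_-,\Phi_+)$ meets the formal definition of a morphism, and your justification of the dual-norm identity for $\add(\M_\kappa)$ via downward closure of the ideal, are both accurate.
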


We shall see in Proposition~\ref{layers2} that $2^{<\kappa} = \kappa$ is indeed needed for this result,
for consistently there is a weakly inaccessible cardinal $\kappa$ for which $\add (\M_\kappa) < \bb_\kappa (\p\in^*) =
\dd_\kappa (\p\in^*) < \cof (\M_\kappa)$.

\begin{ques}   \label{partial-add-question}
Assume $\kappa$ is strongly inaccessible.
Are $\bb_\kappa (\p \in^*) < \add (\M_\kappa)$ and $\cof (\M_\kappa) < \dd_\kappa (\p \in^*)$ consistent?
\end{ques}

For $\kappa = \omega$, models for the two inequalities are the Hechler and the dual Hechler models, 
respectively~\cite[Theorem A]{evpr} (see also the discussion in Subsection~\ref{Hechler}).

The cardinals we discussed in this section can be displayed in the following diagram.

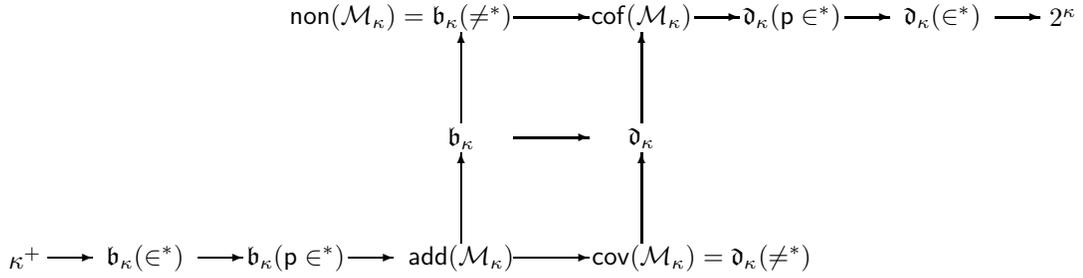
\begin{figure}[ht]
\begin{center}
\setlength{\unitlength}{0.2000mm}
\begin{picture}(800.0000,180.0000)(50,10)
\thinlines
\put(675,180){\vector(1,0){30}}
\put(575,180){\vector(1,0){30}}
\put(475,180){\vector(1,0){30}}
\put(355,180){\vector(1,0){50}}
\put(355,100){\vector(1,0){50}}
\put(245,20){\vector(1,0){30}}
\put(355,20){\vector(1,0){50}}
\put(145,20){\vector(1,0){30}}
\put(45,20){\vector(1,0){30}}
\put(440,30){\vector(0,1){60}}
\put(440,110){\vector(0,1){60}}
\put(320,110){\vector(0,1){60}}
\put(320,30){\vector(0,1){60}}
\put(610,170){\makebox(60,20){$\dd_\kappa (\in^*)$}}
\put(510,170){\makebox(60,20){$\dd_\kappa (\p\in^*)$}}
\put(410,170){\makebox(60,20){$\cof(\mathcal{M}_\kappa)$}}
\put(410,90){\makebox(60,20){$\mathfrak{d}_\kappa$}}
\put(430,10){\makebox(100,20){$\cov(\mathcal{M}_\kappa) = \dd_\kappa (\neq^*)$}}
\put(290,10){\makebox(60,20){$\add(\mathcal{M}_\kappa)$}}
\put(290,90){\makebox(60,20){$\mathfrak{b}_\kappa$}}
\put(230,170){\makebox(100,20){$\non(\mathcal{M}_\kappa) = \bb_\kappa (\neq^*)$}}
\put(190,10){\makebox(40,20){$\bb_\kappa (\p\in^*)$}}
\put(90,10){\makebox(40,20){$\bb_\kappa (\in^*)$}}
\put(10,10){\makebox(40,20){$\kappa^+$}}
\put(700,170){\makebox(40,20){$2^\kappa$}}
\end{picture}
\end{center}
\caption{Cicho\'n's diagram for strongly inaccessible $\kappa$}
\label{ODiag}
\end{figure}



\section{Iterations with support of size $<\kappa$}

In this section, we will look at iterations with support of size $< \kappa$ of $\kappa^+$-cc and $<\kappa$-closed forcing and their effect
on the cardinals introduced and studied in the previous section. Many constructions are a natural generalization
of models obtained by finite support iteration of ccc forcing for the case $\kappa = \omega$. However, unlike 
the countable case, we do not have a powerful theory of preservation theorems for iterations and therefore can
compute all cardinals only in a few cases (e.g. Propositions~\ref{Cohen-model} and~\ref{slalom-model} and 
Theorem~\ref{total-partial-thm} below), and many questions about consistency remain open.
Like in the countable case, we have {\em duality}, that is, if a long iteration of a forcing over a model of GCH increases a cardinal 
$|| \AA ||$ and makes it equal to $2^\kappa$, 
then a short iteration of the same forcing over a model of $2^{<\kappa} = \kappa$ with large $2^\kappa$
makes the dual cardinal $|| \AA^\bot ||$ of size $\kappa^+ < 2^\kappa$ (see Proposition~\ref{slalom-model} and 
Theorem~\ref{total-partial-thm} below).

We will use the following strengthening of the $\lambda^+$-cc.

\begin{defi}
Let $\kappa,\lambda$ be cardinals and $\PP$ a p.o. A set $P \sub \PP$ is called {\em $<\kappa$-centered} if any $< \kappa$ many
conditions in $P$ have a lower bound in $\PP$. So $P$ is $<\omega$-centered if it is {\em centered} in the usual sense.
$\PP$ is {\em $(\lambda , < \kappa)$-centered} if $\PP = \bigcup_{\alpha<\lambda} P_\alpha$ where all $P_\alpha$
are $< \kappa$-centered. If $\kappa = \lambda$, we say that $\PP$ is {\em $\kappa$-centered}.
Thus $\PP$ is {\em $\sigma$-centered} if it is $\omega$-centered.
\end{defi}

The following basic facts are well-known.

\begin{lem}   \label{basic}
Assume $\lambda \geq \kappa^+$ is a regular cardinal, and $( \PP_\alpha , \dot \QQ_\alpha : \alpha < \lambda)$ is an iteration
with supports of size $< \kappa$ such that all $\PP_\alpha$, $\alpha \leq \lambda$, are $\kappa^+$-cc. Also assume that
$\mu < \lambda$ and $A \sub \mu$ belongs to the $\PP_\lambda$-generic extension. Then there is $\alpha < \lambda$ such that
$A$ belongs to the $\PP_\alpha$-generic extension.
\end{lem}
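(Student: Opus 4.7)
The plan is to use a standard nice-name argument, exploiting the $\kappa^+$-cc of $\PP_\lambda$ together with the regularity of $\lambda$. Fix a $\PP_\lambda$-name $\dot A$ for $A$, and for each $\xi < \mu$, choose a maximal antichain $\A_\xi \subseteq \PP_\lambda$ of conditions deciding the statement $\check\xi \in \dot A$. By the $\kappa^+$-cc hypothesis, $|\A_\xi| \leq \kappa$ for every $\xi$.

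Next I would observe that any condition $p \in \PP_\lambda$ has support of size strictly less than $\kappa$, and, because $\cf(\lambda) = \lambda \geq \kappa^+ > \kappa$, this support is bounded below $\lambda$. Consequently the union
\[ S \;=\; \bigcup_{\xi < \mu} \, \bigcup_{p \in \A_\xi} \supp(p) \]
has cardinality at most $\max(\mu,\kappa) \cdot \kappa = \max(\mu,\kappa) < \lambda$. Applying the regularity of $\lambda$ once more, I can pick $\alpha < \lambda$ with $S \subseteq \alpha$, so that every condition appearing in any $\A_\xi$ lives in $\PP_\alpha$ (viewed as the complete suborder of $\PP_\lambda$ consisting of conditions whose support is contained in $\alpha$).

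Finally I would set
\[ \dot A' \;=\; \{ (\check\xi , p) : \xi < \mu,\ p \in \A_\xi,\ p \forces \check\xi \in \dot A \}, \]
which is then a $\PP_\alpha$-name, and verify that $\dot A'[G \cap \PP_\alpha] = A$: since $\A_\xi$ is a maximal antichain in $\PP_\lambda$ consisting of conditions from $\PP_\alpha$, it is also maximal in $\PP_\alpha$, so for each $\xi < \mu$ exactly one $p \in \A_\xi$ lies in $G \cap \PP_\alpha$, and that $p$ decides whether $\xi \in A$ correctly.

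The only genuinely delicate point is the identification of a $<\kappa$-support condition of $\PP_\lambda$ whose support is contained in $\alpha$ with a condition of $\PP_\alpha$; this is a routine feature of $<\kappa$-support iterations of $\kappa^+$-cc forcings, since at stages of cofinality $\geq \kappa$ (in particular at $\lambda$) the iteration is a direct limit. Beyond that, there is no real obstacle; the argument is exactly the $\kappa^+$-cc, $<\kappa$-support generalization of the classical fact that in a finite support ccc iteration of length at least $\omega_1$, every subset of a small ordinal appearing in the final extension already appears at some proper initial stage.
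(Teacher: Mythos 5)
Your argument is correct: the nice-name computation using the $\kappa^+$-cc bound on the deciding antichains, the boundedness of supports (direct limit at $\lambda$ since $\cf(\lambda)=\lambda>\kappa$), and the regularity of $\lambda$ to bound $S=\bigcup_{\xi<\mu}\bigcup_{p\in\A_\xi}\supp(p)$ below some $\alpha<\lambda$ is exactly the standard proof of this fact. The paper itself states the lemma as ``well-known'' and gives no proof, so there is nothing to compare against; your writeup supplies precisely the expected argument, including the one point that deserves a remark, namely that a maximal antichain of $\PP_\lambda$ lying in the complete suborder $\PP_\alpha$ remains maximal there.
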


\begin{lem}     \label{basic2}
Assume $\lambda \geq \kappa^+$ is a regular cardinal, and $( \PP_\alpha , \dot \QQ_\alpha : \alpha < \lambda)$ is an iteration
with supports of size $< \kappa$ such that all $\dot \QQ_\alpha$, $\alpha < \lambda$, are forced to be nontrivial and $< \kappa$-closed.
Then $\PP_\lambda$ adds a generic for $\Fn (\lambda, 2 , \kappa)$, the forcing for adding $\lambda$ many $\kappa$-Cohen
functions (see below).
\end{lem}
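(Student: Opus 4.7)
The plan is to extract a single bit from each stage $\dot\QQ_\alpha$ of the iteration and then show that the resulting $\lambda$-sequence of bits is $V$-generic for $\Fn(\lambda,2,\kappa)$, viewed as the $<\kappa$-support product of $\lambda$ copies of the two-element forcing. First I would observe that $\PP_\lambda$ is itself $<\kappa$-closed: given a decreasing sequence $(p_\xi : \xi < \delta)$ with $\delta < \kappa$, the union of supports has size $<\kappa$ by regularity of $\kappa$, and coordinatewise lower bounds are built by recursion on $\alpha$ using the $<\kappa$-closure of each $\dot\QQ_\alpha$.

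For each $\alpha < \lambda$, using nontriviality of $\dot\QQ_\alpha$ (and passing to the Boolean completion of $\dot\QQ_\alpha$ if necessary), fix $\PP_\alpha$-names $\dot a_\alpha^0$ and $\dot a_\alpha^1$ forming a two-element maximal antichain of $\dot\QQ_\alpha$. Define the $\PP_\lambda$-name $\dot X \in 2^\lambda$ by stipulating $\dot X(\alpha) = i$ iff $\dot a_\alpha^i$ lies in the generic at stage $\alpha$. The filter on $\Fn(\lambda,2,\kappa)$ generated by $\dot X$ consists of those $f$ with $\dot X(\alpha) = f(\alpha)$ for all $\alpha \in \dom f$, and I must show this filter meets every $D \in V$ which is dense in $\Fn(\lambda,2,\kappa)$.

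So fix such a $D$ and a condition $p \in \PP_\lambda$, set $S = \supp(p) \in [\lambda]^{<\kappa}$, and choose some enumeration $(\alpha_\xi : \xi < |S|)$ of $S$. Using the $<\kappa$-closure of $\PP_\lambda$, recursively build a decreasing sequence $(p_\xi : \xi \leq |S|)$ with $p_0 = p$: at successor step $\xi+1$, first strengthen $p_\xi \re \alpha_\xi$ inside $\PP_{\alpha_\xi}$ so as to decide which of $\dot a_{\alpha_\xi}^0, \dot a_{\alpha_\xi}^1$ is compatible with $p_\xi(\alpha_\xi)$, getting some value $v_\xi \in V$, and then refine $p_\xi(\alpha_\xi)$ to lie below $\dot a_{\alpha_\xi}^{v_\xi}$; at limit stages take coordinatewise lower bounds in $\PP_\lambda$. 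Then $p^* := p_{|S|}$ forces $f_0 \subseteq \dot X$, where $f_0 : S \to 2$ is given by $f_0(\alpha_\xi) = v_\xi$. By density of $D$ pick $f \in D$ extending $f_0$, and define $q \leq p^*$ with $\supp(q) = S \cup \dom f$ by setting $q(\alpha) = \dot a_\alpha^{f(\alpha)}$ for $\alpha \in \dom f \setminus S$. Then $q \leq p$ and $q \Vdash f \subseteq \dot X$, as required.

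The main obstacle is the coordinatewise strengthening in the third paragraph, where one must decide the value $v_\xi$ in the ground model $V$ rather than merely in the intermediate extension $V^{\PP_{\alpha_\xi}}$; this is handled by the standard iterative use of closure, exploiting both the $<\kappa$-closure of the head of the iteration to take lower bounds at limit $\xi$ and the fact that $|S| < \kappa$ so that the whole construction finishes in fewer than $\kappa$ steps. Everything else is routine bookkeeping on supports of size $<\kappa$.
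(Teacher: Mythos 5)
The paper itself gives no proof of Lemma~\ref{basic2} (it is cited as a well-known fact), so there is nothing to compare against line by line; your strategy — extracting one bit per coordinate from a two-element maximal antichain in (the completion of) each $\dot\QQ_\alpha$ and verifying genericity of the resulting $X\in 2^\lambda$ by a density argument — is the natural and essentially standard one, and most of it is fine: the $<\kappa$-closure of $\PP_\lambda$, the decision of the bits along $\supp(p)$ by a recursion of length $<\kappa$, and the final extension outside the support are all correct in principle.

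There is, however, one genuine gap in the density argument. Deciding the bit at coordinate $\alpha_\xi$ requires strengthening $p_\xi\re\alpha_\xi$ (to decide, in $V$, which $\dot a^i_{\alpha_\xi}$ the condition can be pushed below), and this strengthening will in general enlarge the support. Hence $\supp(p^*)$ may properly contain $S=\supp(p)$, while you have only decided the bits on $S$. When you then pick $f\in D$ extending $f_0$ and set $q(\alpha)=\dot a^{f(\alpha)}_\alpha$ for $\alpha\in\dom f\setminus S$, such an $\alpha$ may lie in $\supp(p^*)$, where $p^*(\alpha)$ is a nontrivial condition possibly incompatible with $\dot a^{f(\alpha)}_\alpha$; then $q$ is not below $p^*$ (and no common extension need exist), so $q\Vdash f\subseteq\dot X$ fails. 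The standard repair is the $\omega$-step bootstrap the paper itself uses in Preliminary Lemma~\ref{prelim}: build $p=p^0\geq p^1\geq\cdots$ where $p^{n+1}$ decides the bits on all of $\supp(p^n)$, take a lower bound $p^\omega$ (using $<\kappa$-closure and regularity of $\kappa$ to keep the support small), and observe that $\supp(p^\omega)\subseteq\bigcup_n\supp(p^n)$ is exactly a set on which all bits have been decided; only then invoke density of $D$ below the resulting $f_0$ and extend on $\dom f\setminus\supp(p^\omega)$, where you are genuinely free. A second, purely cosmetic point: since the $\dot a^i_\alpha$ live in the Boolean completion, $q(\alpha)$ should be taken as a name for an element of $\dot\QQ_\alpha$ lying below $\dot a^{f(\alpha)}_\alpha$, which exists by density of $\dot\QQ_\alpha$ in its completion.
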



\subsection{Generalized Cohen forcing}

Generalized Cohen forcing is the most basic forcing notion for blowing up $2^\kappa$ and goes back Cohen's work on
the consistency of the failure of GCH. Assume $\kappa = 2^{< \kappa}$ is a regular uncountable cardinal.

\begin{defi}
Define $\CC_\kappa = \Fn (\kappa, 2 , \kappa)$, the partial functions from $\kappa$ to $2$ with domain of
size less than $\kappa$, ordered by reverse inclusion, i.e., $s \leq t$ if $s \supseteq t$ for
$s,t \in \CC_\kappa$.  More generally, for a set $A$ of ordinals, let $\CC_\kappa^A = \Fn (A \times \kappa,
2, \kappa)$ with the same ordering.
\end{defi}

$\CC_\kappa$ generically adds a {\em $\kappa$-Cohen function} $c = c_G$ given by $c = \bigcup G$ where $G$
is the $\CC_\kappa$-generic filter. Similarly, for each $\gamma \in A$, $\CC_\kappa^A$ adds  a $\kappa$-Cohen function
$c_\gamma$ given by $c_\gamma = \bigcup \{ s \re (\{ \gamma \} \times \kappa): s \in G \}$.

It is well-known and easy to see that $\CC_\kappa^A$ is $<\kappa$-closed and has the $\kappa^+$-cc
(the latter uses of course $2^{<\kappa} = \kappa$). We also note that for an ordinal $\mu$,
$\CC_\kappa^\mu$ is forcing equivalent to the $\mu$-stage iteration of $\CC_\kappa$ with
supports of size $< \kappa$. The values of the cardinal invariants in the $\kappa$-Cohen model
are easy to compute and probably known. We include the argument for the sake of completeness.

Given $f : 2^{<\kappa} \to 2^{<\kappa}$ such that $\sigma \sub f (\sigma)$ for all $\sigma \in 2^{< \kappa}$,
let $A_f = \{ x \in 2^\kappa : f (\sigma) \not\sub x$ for all $\sigma \in 2^{< \kappa } \}$. Then $A_f$ is closed
nowhere dense and for every nowhere dense set $B$ there is $f$ such that $B \sub A_f$. Thus it suffices to
consider nowhere dense sets of the form $A_f$.

\begin{prop}   \label{Cohen-model}
Let $\kappa = 2^{<\kappa}$ be regular uncountable. Also let  $\lambda > \kappa^+$ with $\lambda^\kappa = \lambda$.
Then, in the $\CC_\kappa^\lambda$-generic extension, $\non (\M_\kappa) = \kappa^+ < \cov (\M_\kappa) =
2^\kappa = \lambda$ holds.
\end{prop}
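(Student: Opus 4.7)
The plan is to establish the three equalities $2^\kappa = \lambda$, $\cov(\M_\kappa) = \lambda$, and $\non(\M_\kappa) = \kappa^+$ in the extension, using only that $\CC_\kappa^\lambda$ is $<\kappa$-closed and $\kappa^+$-cc, plus the product structure which lets us view $G$ as $(c_\alpha : \alpha < \lambda)$, a sequence of mutually generic $\kappa$-Cohen functions.

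First, I would verify $2^\kappa = \lambda$. Since the $c_\alpha$ for $\alpha < \lambda$ are pairwise distinct elements of $2^\kappa$ in $V[G]$, one has $2^\kappa \geq \lambda$. For the upper bound, use the standard nice-names count: a nice name for a subset of $\kappa$ consists of $\kappa$ many antichains of $\CC_\kappa^\lambda$, each of size at most $\kappa$ by the $\kappa^+$-cc, and each condition in an antichain has size $<\kappa$ and is chosen from a set of size $\lambda$. The total count is $(\lambda^\kappa)^\kappa = \lambda^\kappa = \lambda$, whence $2^\kappa \leq \lambda$.

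The key structural fact, which I would prove next as a lemma, is that every $\kappa$-meager set $M \in V[G]$ is already coded in $V[G \restriction B]$ for some $B \subseteq \lambda$ with $|B| \leq \kappa$. Indeed, by the remark preceding the proposition, it suffices to consider nowhere dense sets of the form $A_f$ for $f : 2^{<\kappa} \to 2^{<\kappa}$, and such an $f$ is essentially a function from $\kappa$ to $\kappa$ (using $2^{<\kappa} = \kappa$). By the $\kappa^+$-cc, each value $f(\sigma)$ has a nice name supported on $\leq\kappa$ coordinates; taking the union over $\sigma \in 2^{<\kappa}$ still yields $|B_f| \leq \kappa$. A $\kappa$-meager set is a $\kappa$-sized union of such nowhere dense sets, and hence has a joint support $B$ of size $\leq \kappa$.

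For $\cov(\M_\kappa) \geq \lambda$, given any family $\mathcal{M}$ of $\kappa$-meager sets with $|\mathcal{M}| < \lambda$, let $B = \bigcup_{M \in \mathcal{M}} B_M$ where $|B_M| \leq \kappa$ is a support of $M$. Then $|B| \leq \max(|\mathcal{M}|, \kappa) < \lambda$, so we may pick $\gamma \in \lambda \setminus B$. The function $c_\gamma$ is $\kappa$-Cohen over $V[G \restriction B]$, hence avoids every nowhere dense set coded there, and therefore $c_\gamma \notin M$ for every $M \in \mathcal{M}$. Thus $\mathcal{M}$ does not cover $2^\kappa$. The upper bound $\cov(\M_\kappa) \leq 2^\kappa = \lambda$ is automatic.

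For $\non(\M_\kappa) \leq \kappa^+$, I would show that $X = \{c_\alpha : \alpha < \kappa^+\}$ is non-meager. If $M \in V[G]$ is $\kappa$-meager with support $B$ of size $\leq \kappa$, then $|\kappa^+ \setminus B| = \kappa^+$, and for every $\alpha \in \kappa^+ \setminus B$ the real $c_\alpha$ is $\kappa$-Cohen over $V[G \restriction B]$, hence $c_\alpha \notin M$. Therefore $|X \cap M| \leq \kappa < |X|$, so $X \not\subseteq M$. Combined with $\non(\M_\kappa) \geq \kappa^+$, which is automatic since $\M_\kappa$ is a $\kappa$-ideal containing singletons, this gives $\non(\M_\kappa) = \kappa^+$. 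The main delicate point throughout is the support lemma, and specifically verifying that a $\kappa$-meager set with support $B$ has each of its defining nowhere dense components in $V[G \restriction B]$ (so that the genericity of $c_\alpha$ over $V[G \restriction B]$ genuinely rules out membership in the whole union).
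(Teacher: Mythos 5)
Your proof is correct and follows essentially the same route as the paper's: reduce meager sets to $\kappa$-unions of sets of the form $A_f$, use the $\kappa^+$-cc to find supports $B_f$ of size $\leq\kappa$, and observe that any $c_\gamma$ with $\gamma$ outside the relevant supports avoids the corresponding meager sets, which yields both $\cov(\M_\kappa)\geq\lambda$ and the non-meagerness of $\{c_\alpha:\alpha<\kappa^+\}$. The only cosmetic differences are that you spell out the nice-name count for $2^\kappa=\lambda$ (which the paper dismisses as well-known) and phrase the key avoidance step via mutual genericity over $V[G\restriction B]$, where the paper instead writes out the equivalent explicit density argument in the full product.
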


\begin{proof}
$2^\kappa = \lambda$ is well-known.

Let $\dot f: 2^{< \kappa} \to 2^{<\kappa}$ be a $\CC_\kappa^\lambda$-name for a function with 
$\sigma \sub \dot f (\sigma)$ for all $\sigma \in 2^{< \kappa}$. By the $\kappa^+$-cc there is
$B_{\dot f} \sub \lambda$ of size at most $\kappa$ such that $\dot f$ is already added by
$\CC_\kappa^{B_{\dot f}}$ (see also Lemma~\ref{basic}). Furthermore, if $\beta \notin B_{\dot f}$, then $\dot c_\beta$
is forced not to belong to $A_{\dot f}$. 

(To see this, let $s \in \CC_\kappa^\lambda$. Take
$\sigma \in 2^{< \kappa}$ such that $s (\beta , \cdot) \sub \sigma$. Strengthen $ s \re B_{\dot f}
\times \kappa \in \CC_\kappa^{B_{\dot f}}$ to $t_0 \in \CC_\kappa^{B_{\dot f}}$ such that
$t_0$ decides $\dot f (\sigma)$, say $t_0 \forces \dot f (\sigma) = \tau$. Extend $s (\beta , \cdot)$ to 
$t_1 = \tau$. Define $t \leq s$ by $t \re B_{\dot f} \times \kappa  = t_0$, $t \re \{ \beta \} \times \kappa = t_1$,
and $t \re ( \lambda \sem ( B_{\dot f} \cup \{ \beta \} ))\times \kappa = s \re ( \lambda \sem ( B_{\dot f} \cup \{ \beta \} ))\times \kappa$.
Clearly $t$ forces $\dot c_\beta \notin A_{\dot f}$.)

Now, if $\mu < \lambda$, and $\dot f_\gamma$, $\gamma < \mu$, are such names,
then, for $\beta \in \lambda \sem \bigcup_{\gamma < \mu} B_{\dot f_\gamma}$, 
$\dot c_\beta$ will witness that $\bigcup A_{\dot f_\gamma} \neq 2^\kappa$,
and $\cov (\M_\kappa) \geq \lambda$ follows.

To see $\non (\M_\kappa) \leq \kappa^+$, we show that $\dot C = \{ \dot c_\beta : \beta < \kappa^+ \}$ is a non-meager
set in the generic extension. Indeed, if $\dot f_\gamma$, $\gamma < \kappa$, are names as before
and $\beta \in \kappa^+ \sem \bigcup_{\gamma < \kappa} B_{\dot f_\gamma}$, then
$\dot c_\beta$ will witness that $\dot C$ is not contained the union of the $A_{\dot f_\gamma}$.
\end{proof}

Notice that by the results in Section 3, we know all cardinals in this model. Namely,
$\bb_h (\in^*) = \bb_\kappa (\p \in^*) = \add (\M_\kappa) = \bb_\kappa = \bb_\kappa (\neq^*) = \non (\M_\kappa) = \kappa^+$
and $\dd_h (\in^*) = \dd_\kappa (\p \in^*) = \cof (\M_\kappa) = \dd_\kappa = \dd_\kappa (\neq^*) = \cov (\M_\kappa) = \lambda =
2^\kappa$.


\subsection{Generalized Hechler forcing}   \label{Hechler}

The generalization of Hechler forcing was first studied by Cummings and Shelah~\cite{CumSh}. 
Assume $\kappa = 2^{< \kappa}$ is a regular uncountable cardinal.

\begin{defi}
{\em Generalized Hechler forcing} $\DD_\kappa$ is defined as follows:
\begin{itemize}
\item conditions are of the form $(s,f)$ where $s \in \kappa^{< \kappa}$, $f \in \kappa^\kappa$ and $s \subseteq f$;
\item the order is given by $(t,g) \leq (s,f)$ if $t \supseteq s$ and $g$ dominates $f$ everywhere, that is, $f (\alpha) \leq
   g(\alpha)$ for all $\alpha < \kappa$.
\end{itemize}
\end{defi}

$\mathbb{D}_\kappa$ generically adds a function $d = d_G$ from $\kappa$ to $\kappa$, called {\em $\kappa$-Hechler function}
and given by $d = \bigcup \{s: $  $\exists f \in \kappa^\kappa 
\; ( (s,f) \in G) \}$, where $G$ is a $\mathbb{D}_\kappa$-generic filter. Clearly $d$ eventually dominates all functions in 
$\kappa^\kappa \cap V$~\cite[Lemma 7]{CumSh}. 

Also it is easy to see that $\mathbb{D}_\kappa$ adds a $\kappa$-Cohen function $c_d \in 2^\ka$, defined by $c_d(\al) = d(\al)\mod 2$ for $\alpha < \kappa$.

Finally $\mathbb{D}_\kappa$ is $\kappa$-centered, and thus has the $\kappa^+$-cc, and also is $< \kappa$-closed~\cite[Lemma 7]{CumSh}.
Let $\lambda \geq \kappa^+$ be a regular cardinal.
As in the countable case, iterate $\mathbb{D}_\kappa$ with supports of size $< \kappa$ for $\lambda$ many steps.
The iteration $\PP_\lambda$ still is $\kappa^+$-cc and (trivially) $< \kappa$-closed. To see the former, first note that conditions $p$ whose 
first coordinates are ground model objects (that is, for all $\alpha \in \supp (p)$, there are $s_\alpha \in \kappa^{< \kappa}$
and a $\PP_\alpha$-name for a function $\dot f_\alpha$ such that $p \re \alpha \forces_\alpha p (\alpha) = (s_\alpha , \dot f_\alpha)$)
are dense and then use a $\Delta$-system argument. Density of such $p$ is a standard argument (see 
Preliminary Lemma~\ref{prelim} below for a similar proof), and we omit the details. In fact, by Lemma~\ref{iteration-centered},
any iteration of $\DD_\kappa$ of length $< (2^\kappa)^+$ still is $\kappa$-centered.

First assume GCH and $\lambda > \kappa^+$. By Lemma~\ref{basic}, all new functions added by $\PP_\lambda$ in $\kappa^\kappa$ already
lie in an intermediate extension. Hence, we are adding $\lambda$ many $\kappa$-Hechler functions that witness 
$\mathfrak{b}_\kappa\geq \lambda$ and since we are also adding $\kappa$-Cohen functions (see also Lemma~\ref{basic2}) we obtain 
$\cov(\mathcal{M}_\kappa)\geq \lambda$. Thus, using the relation $\add(\mathcal{M}_\kappa) \geq
\min \{ \cov(\mathcal{M}_\kappa), \mathfrak{b}_\kappa\}$ (see Corollary~\ref{Truss-cor}),
we conclude that in the generic extension $\add(\mathcal{M}_\kappa) = 2^\kappa = \lambda$.
In case $\kappa$ additionally is inaccessible, we also know that $\bb_\kappa (\in^*) = \kappa^+$ 
by the arguments of Subsection~\ref{total-versus-partial}.
However, we do not know the value of $\bb_\kappa (\p \in^*)$ in the extension. In the classical Hechler
model for $\kappa = \omega$, the corresponding cardinal stays small~\cite[Theorem A]{evpr}.
If this was true for inaccessible $\kappa$, Question~\ref{partial-add-question} would have a positive answer.

Now assume that $2^\kappa \geq \kappa^{++}$ and let $\lambda = \kappa^+$. 
Then the $\lambda$ many $\kappa$-Hechler functions will witness $\dd_\kappa = \kappa^+$, and
the $\kappa$-Cohen functions, $\non (\M_\kappa) = \kappa^+$. Hence $\cof (\M_\kappa) = \kappa^+ < 2^\kappa$ follows
by Corollary~\ref{Truss-cor}. 
In case $\kappa$ additionally is inaccessible, the arguments of Subsection~\ref{total-versus-partial} also
give us $\dd_\kappa (\in^*)  = 2^\kappa$, but we do not know the value of $\dd_\kappa (\p \in^*)$.

The second model is dual to the first in the same way there is duality of models in case $\kappa = \omega$.


We now use generalized Hechler forcing to create a model where $2^{< \kappa} > \kappa$ and the middle part of Cicho\'n's diagram
diagram is split horizontally into three levels.

\begin{thm}   \label{layers}
Assume GCH and let $\kappa$ be a regular uncountable cardinal. There is a cofinality-preserving generic extension in which
$\add (\M_\kappa) = \cov (\M_\kappa) = \kappa^+$, $\bb_\kappa = \bb_\kappa (\neq^*) = \dd_\kappa = \dd_\kappa (\neq^*) =
\kappa^{++}$, and $\non (\M_\kappa) = \cof (\M_\kappa) = 2^\omega = 2^\kappa = \kappa^{+++}$.
\end{thm}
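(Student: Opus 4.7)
The plan is a two-step construction over a GCH ground model $V$. Let $\PP$ be the $<\kappa$-support iteration of generalised Hechler forcing $\DD_\kappa$ of length $\kappa^{++}$, and set $V_1 = V^{\PP}$. Since $\DD_\kappa$ is $<\kappa$-closed and $\kappa$-centered, a standard $\Delta$-system argument shows $\PP$ is $\kappa^+$-cc and $<\kappa$-closed, so cardinals and cofinalities are preserved, no bounded subsets of $\kappa$ are added, and $V_1 \models 2^{<\kappa} = \kappa$ and $2^\kappa = \kappa^{++}$. In $V_1$, the Hechler generics $\{d_\alpha : \alpha < \kappa^{++}\}$ form a scale giving $\bb_\kappa = \dd_\kappa = \kappa^{++}$; each yields a $\kappa$-Cohen function so $\cov(\M_\kappa) = \kappa^{++}$; a standard Cohen-iteration argument (any $<\kappa^{++}$-sized subset of $2^\kappa$ lies in some intermediate $V_\beta$ and is covered by a nowhere-dense set built from a later $\kappa$-Cohen generic) gives $\non(\M_\kappa) = \kappa^{++}$; and Observation~\ref{ed-obs} sandwiches $\bb_\kappa(\neq^*) = \dd_\kappa(\neq^*) = \kappa^{++}$. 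Now in $V_1$ let $\QQ = \Fn(\kappa^{+++}, 2)$ and set $V_2 = V_1^{\QQ}$; since $\QQ$ is ccc, cardinals are preserved, and cardinal arithmetic delivers $V_2 \models 2^\omega = 2^{<\kappa} = 2^\kappa = \kappa^{+++}$.

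Observation~\ref{nwd-kappa}(ii)--(iii) immediately yields $\add(\M_\kappa) = \cov(\M_\kappa) = \kappa^+$ and $\non(\M_\kappa) = \cof(\M_\kappa) = \kappa^{+++}$ in $V_2$. For $\bb_\kappa$ and $\dd_\kappa$ I invoke the $\sigma$-centeredness of $\QQ = \bigcup_{n<\omega} Q_n$: any $\QQ$-name $\dot f$ for a function in $\kappa^\kappa$ is pointwise dominated by the $V_1$-function $\hat g(\alpha) = \sup_{n<\omega}(g_n(\alpha) + 1)$, where $g_n(\alpha)$ is the unique value (by compatibility within $Q_n$) decided by some condition of $Q_n$ if any; regularity of $\kappa$ gives $\hat g(\alpha) < \kappa$. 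Hence $V_1$-dominating and $V_1$-unbounded families persist, and $\bb_\kappa = \dd_\kappa = \kappa^{++}$ in $V_2$. The same argument shows the Hechler generics still witness $\dd_\kappa(\neq^*) \leq \kappa^{++}$: for $g \in V_2 \cap \kappa^\kappa$, pick $h \in V_1$ with $g \leq^* h$, locate $h \in V_\beta$ for some $\beta < \kappa^{++}$ by the $\kappa^+$-cc of $\PP$ (cf.\ Lemma~\ref{basic}), and note that for any $\alpha \geq \beta$ the generic $d_\alpha$ strictly dominates $h+1$ eventually, so $d_\alpha(\gamma) > g(\gamma)$ eventually and hence $d_\alpha \neq^* g$. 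The lower bound $\bb_\kappa(\neq^*) \geq \bb_\kappa = \kappa^{++}$ is immediate.

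It remains to establish $\bb_\kappa(\neq^*) \leq \kappa^{++}$ and $\dd_\kappa(\neq^*) \geq \kappa^{++}$ in $V_2$. For the former, I plan to show that a $V_1$-non-meager set $X \subseteq \kappa^\kappa$ of size $\kappa^{++}$ (which exists since $\non(\M_\kappa) = \kappa^{++}$ in $V_1$) remains a matching family in $V_2$. Given $g \in V_2$ with name $\dot g$, define $\tilde g \in V_1$ by $\tilde g(\alpha) = \{v < \kappa : \exists p \in \QQ,\, p \forces \dot g(\alpha) = v\}$; by ccc each $\tilde g(\alpha)$ is countable and $g(\alpha) \in \tilde g(\alpha)$. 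The set $\{y \in \kappa^\kappa : y(\alpha) \notin \tilde g(\alpha)$ for almost all $\alpha\}$ is meager in $V_1$ (each level set is nowhere dense since $\tilde g(\alpha) \neq \emptyset$ allows any $\sigma \in \kappa^{<\kappa}$ to be extended into $\tilde g$), so non-meagerness of $X$ gives $f \in X$ with $f(\alpha) \in \tilde g(\alpha)$ cofinally. A density argument in $\QQ$ (for each $\beta < \kappa$, the set of $p$ such that $p \forces \exists \alpha > \beta,\, \dot g(\alpha) = f(\alpha)$ is dense) then shows $\{\alpha : g(\alpha) = f(\alpha)\}$ is cofinal, so $f$ matches $g$ and $X$ witnesses $\bb_\kappa(\neq^*) \leq \kappa^{++}$. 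For $\dd_\kappa(\neq^*) \geq \kappa^{++}$, I factor $\QQ \cong \Fn(A,2) \times \Fn(\kappa^{+++} \setminus A, 2)$ with $|A| = \kappa^+$, note that any family $\F \in V_2$ of size $\kappa^+$ lies in $V_1[G_A]$ by ccc, and use the remaining Cohen generics to construct $g \in V_2$ cofinally matching every $f \in \F$ by an analogous density argument.

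The principal obstacle is the density step in the last two arguments: the name $\dot g(\alpha)$ may use countably many Cohen coordinates overlapping $\dom(p)$, so one cannot naively glue $p$ with an arbitrary witness forcing $\dot g(\alpha) = f(\alpha)$. The resolution combines the $\sigma$-centered structure of $\QQ$ with a cofinal choice of $\alpha$: since $\dom(p)$ is finite while the countable set $\tilde g(\alpha)$ recruits new Cohen coordinates as $\alpha$ varies, for cofinally many $\alpha$ with $f(\alpha) \in \tilde g(\alpha)$ one can find $p_n \in Q_n$ forcing $\dot g(\alpha) = g_n(\alpha) = f(\alpha)$ whose stem is compatible with $p$, exploiting the flexibility of centered pieces outside their defining stems.
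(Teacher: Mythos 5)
Your global architecture is exactly the paper's (a $\kappa^{++}$-length $<\kappa$-support Hechler iteration followed by $\kappa^{+++}$ classical Cohen reals), and your treatment of $\add(\M_\kappa)$, $\cov(\M_\kappa)$, $\non(\M_\kappa)$, $\cof(\M_\kappa)$, $\bb_\kappa$ and $\dd_\kappa$ matches the paper's. The gap is precisely where you flag it: the passage from ``$f(\alpha)\in\tilde g(\alpha)$ for cofinally many $\alpha$'' to ``$f$ cofinally matches $g$'' is false, and your proposed repair does not close it. The set $\tilde g(\alpha)$ collects values forced by \emph{arbitrary} conditions, whereas what you need is a value forced by a condition compatible with a \emph{given} $p$; since a name $\dot g$ may depend on a single fixed Cohen coordinate for all $\alpha$ (e.g.\ $\dot g(\alpha)=h_{c(0)}(\alpha)$ with $h_0(\alpha)\neq h_1(\alpha)$ everywhere), the claim that ``$\tilde g(\alpha)$ recruits new Cohen coordinates as $\alpha$ varies'' fails outright: below $p=\{(0,0)\}$ the name is fully decided to be $h_0$, yet your chosen $f$ may hit $\tilde g(\alpha)$ only via $h_1(\alpha)$. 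The symmetric difficulty afflicts your $\dd_\kappa(\neq^*)\geq\kappa^{++}$ argument, where the ``remaining Cohen generics'' are only elements of $2^\omega$ and do not by themselves produce a function in $\kappa^\kappa$ cofinally matching a prescribed $\kappa^+$-sized family.

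The missing idea is the paper's block/antichain trick, which is designed exactly to defeat this compatibility problem. For a name $\dot f$ one chooses, by ccc, a ground-model $f$, a partition of $\kappa$ into \emph{countable} intervals $I_\alpha$, and maximal antichains $\{p_{\alpha,\gamma}:\gamma\in I_\alpha\}$ with $p_{\alpha,\gamma}\forces\dot f(\gamma)=f(\gamma)$. One then works in the space of block functions $\bar f(\alpha)=f\re J_\alpha$, where the $J_\alpha$ are \emph{uncountable} blocks fixed in advance (for $\kappa=\omega_1$ one first dominates the partitions $I^\beta$, using $\bb_\kappa=\kappa^{++}$), and applies $\dd_\kappa(\neq^*)=\kappa^{++}$ (resp.\ $\bb_\kappa(\neq^*)\le\kappa^{++}$) of the intermediate model to these block functions. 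Because each $J_\alpha$ contains an entire interval $I_{\alpha'}$, and $\{p_{\alpha',\gamma}:\gamma\in I_{\alpha'}\}$ is a \emph{maximal} antichain, any condition $p$ is compatible with $p_{\alpha',\gamma}$ for \emph{some} $\gamma\in I_{\alpha'}$ --- and agreement of the block functions on $J_\alpha$ guarantees $g(\gamma)=f(\gamma)$ at that very $\gamma$. In other words, one must let the coordinate $\gamma$ at which matching is forced range over an interval rather than be fixed in advance; no argument that works pointwise with the set of possible values $\tilde g(\alpha)$ can succeed. You would need to import this interval argument (including the separate treatment of $\kappa=\omega_1$ versus $\kappa\geq\omega_2$) to complete the proof.
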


In case $\kappa$ is strongly inaccessible in the ground model -- and weakly inaccessible in the extension -- the
consistency of $ \cov (\M_\kappa) <  \dd_\kappa (\neq^*)$ answers
a question of Matet and Shelah~\cite[Section 4]{MSta} (see also~\cite[Question 3.8, part 2]{KLLSta}).

\begin{proof}
First add $\kappa^{++}$ many $\kappa$-Hechler functions in an iteration with supports of size $< \kappa$. By an earlier comment,
$\add (\M_\kappa) = \bb_\kappa = \dd_\kappa = \cof (\M_\kappa) = \kappa^{++}  = 2^\kappa$ holds in the generic extension. A fortiori, all cardinals
mentioned in the statement of the theorem will be equal to $\kappa^{++}$. Also, $2^{< \kappa} = \kappa$ still holds.

Assume $\kappa \geq \omega_2$. 
Partition $\kappa$ into intervals $J_\alpha$, $\alpha < \kappa$, such that each $J_\alpha$ has size at least $\omega_1$ and $< \kappa$.
Consider the space $\bar \X$ of functions $\bar f$ such that $\dom (\bar f) = \kappa$ and $\bar f (\alpha) \in \kappa^{J_\alpha}$
for all $\alpha < \kappa$. Since $|\kappa^{J_\alpha}| = \kappa^{< \kappa} = \kappa$, $\bar \X$ can be identified with $\kappa^\kappa$.
In particular, since $\dd_\kappa (\neq^*) = \kappa^{++}$, we see that whenever $\bar \F \sub \bar \X$ is of size $\leq \kappa^+$,
then there is $\bar g \in \bar \X$ such that for all $\bar f \in \bar \F$, $\bar g (\alpha) = \bar f (\alpha)$ holds for cofinally many
$\alpha < \kappa$. 

The case $\kappa = \omega_1$ is a little more complicated. We consider all possible interval partitions $J$ such that all $J_\alpha$
are countable. Clearly, there are $\omega_2$ of them. We then let $\bar \X^J$ as above. 

Now add $\kappa^{+++}$ Cohen reals. By Observation~\ref{nwd-kappa} we see that $\add (\M_\kappa) = \cov (\M_\kappa) =
\kappa^+$ and $\non (\M_\kappa) = \cof (\M_\kappa) = 2^\omega = 2^\kappa = \kappa^{+++}$. Also, by the ccc-ness, every new
function in $\kappa^\kappa$ is bounded by a function from the intermediate extension. This means that $\bb_\kappa$ and $\dd_\kappa$
are preserved, and their values are still $\kappa^{++}$ in the generic extension.

The main part of the argument is to show that $\bb_\kappa (\neq^*) = \dd_\kappa (\neq^*) = \kappa^{++}$ in the final extension.
By Observation~\ref{ed-obs}, it suffices to prove $\dd_\kappa (\neq^*) \geq \kappa^{++}$ and $\bb_\kappa (\neq^*) \leq \kappa^{++}$.
Work in the intermediate extension.

For the former, let $\dot \F = \{ \dot f_\beta : \beta < \kappa^+ \}$ be a family of names for functions in $\kappa^\kappa$.
For each $\beta < \kappa^+$, recursively produce a function $f_\beta \in \kappa^\kappa$, 
an interval partition $I^\beta = ( I^\beta_\alpha = [ i_\alpha^\beta ,
i_{\alpha + 1}^\beta ) : \alpha < \kappa )$, and, for each $\alpha < \kappa$, maximal antichains $\{ p^\beta_{\alpha,\gamma} :
\gamma \in I^\beta_\alpha \}$ such that all $I^\beta_\alpha$ are countable and $p_{\alpha,\gamma}^\beta \forces
\dot f_\beta (\gamma) = f_\beta (\gamma)$.  This is clearly possible by the ccc. 
 
If $\kappa \geq \omega_2$, simply let $\bar f_\beta$ be the function defined by $\bar f_\beta (\alpha) = f_\beta \re J_\alpha$
for all $\alpha < \kappa$. By the above, there is $\bar g \in \bar \X$ such that for all $\beta < \kappa^+$, $\bar g (\alpha) = \bar f_\beta (\alpha)$ 
holds for cofinally many $\alpha < \kappa$. Define $g \in \kappa^\kappa$ by $g (\gamma) = \bar g (\alpha) (\gamma)$ if 
$\gamma \in J_\alpha$, for $\alpha < \kappa$. We claim that $g$ is forced to agree with all $\dot f_\beta$ cofinally often.
To see this, fix $\beta < \kappa^+$. Also fix some $\gamma_0$. Let $\alpha \geq \gamma_0$ be such that
$\bar g (\alpha) = \bar f_\beta (\alpha)$. Notice that $J_\alpha$
contains one of the intervals $I^\beta_{\alpha '}$ because $J_\alpha$ is uncountable and the intervals of $I^\beta$ are countable.
Let $p$ be an arbitrary condition. There is $\gamma \in I^\beta_{\alpha '}$ such that $p^\beta_{\alpha ' ,\gamma}$ is
compatible with $p$. Let $q$ be a common extension. Clearly 
\[ q \forces \dot f_\beta (\gamma) = f_\beta (\gamma) = \bar f_\beta (\alpha) (\gamma) = \bar g (\alpha) (\gamma) = g (\gamma),\]
as required.

If $\kappa = \omega_1$, first choose an interval partition $J$ dominating all the interval partitions $I^\beta$, $\beta < \kappa^+$.
This is possible because $\bb_\kappa = \kappa^{++}$, by Proposition~\ref{dom-IP}. Then redo the argument of the preceding
paragraph with this $J$ and the space $\bar \X^J$.

The proof of $\bb_\kappa (\neq^*) \leq \kappa^{++}$ is simpler. Let $\G$ be $\kappa^\kappa$ of the intermediate
extension. Clearly $|\G| = \kappa^{++}$. It suffices to prove $\G$ is a witness for $\bb_\kappa (\neq^*)$ in the final extension.
Let $\dot f$ be a name for a function in $\kappa^\kappa$.
Again use the ccc to recursively produce a function $f \in \kappa^\kappa$, an interval partition $I = ( I_\alpha 
 : \alpha < \kappa )$, and, for each $\alpha < \kappa$, maximal antichains $\{ p_{\alpha,\gamma} :
\gamma \in I_\alpha \}$ such that all $I_\alpha$ are countable and $p_{\alpha,\gamma} \forces
\dot f (\gamma) = f (\gamma)$. Clearly $f \in \G$ and $f$ is forced to agree with $\dot f$ cofinally often.
\end{proof}


\subsection{Generalized localization forcing}   \label{gen-loc}

For this subsection, assume that $\kappa$ is strongly inaccessible.

\begin{defi}
The {\em generalized localization forcing} $\LOCforce_\kappa$ is defined as follows:
\begin{itemize}
\item conditions are of the form $p = (\sigma^p, F^p) = (\sigma, F)$ such that for some ordinal $\gamma = \gamma^p< \kappa$, 
   $\dom (\sigma) = \gamma$, 
   $\sigma (\alpha) \in [\kappa]^{|\alpha|}$ for all $\alpha \in \gamma$, $\dom (F) = \kappa$, and $F(\alpha) \in [\kappa]^{\leq |\gamma|}$
   for $\alpha < \kappa$;
\item the order is given by $q = (\sigma^q,F^q) \leq p = (\sigma^p,F^p)$ if $\sigma^q$ end-extends $\sigma^p$ (i.e., $\sigma^p 
   \sub \sigma^q$), $F^q (\alpha )
   \supseteq F^p (\alpha)$ for all $\alpha \in \kappa$, and $F^p (\alpha) \sub \sigma^q (\alpha)$ for all $\alpha \in \gamma^q \sem \gamma^p$.
\end{itemize}
\end{defi}

$\LOCforce_\kappa$ generically adds a slalom $\varphi = \varphi_G \in \Loc_\kappa$
given by $\varphi = \bigcup \{\sigma: $  $\exists F 
\; ( (\sigma,F) \in G) \}$, where $G$ is a $\LOCforce_\kappa$-generic filter. Clearly $\varphi$ localizes all functions in 
$\kappa^\kappa \cap V$. See~\cite[p. 106]{BJ95} for localization forcing $\LOCforce$ on $\omega$.

\begin{lem}
$\LOCforce_\kappa$ is $\kappa^+$-cc and $<\kappa$-closed.
\end{lem}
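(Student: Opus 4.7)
I would verify the two properties by direct union constructions, with the $\kappa^+$-cc relying on a pigeonhole that uses $\kappa^{<\kappa} = \kappa$ (a consequence of strong inaccessibility).

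For $<\kappa$-closure, suppose $(p_\xi : \xi < \delta)$ is a descending sequence with $\delta < \kappa$. Set $\gamma^* = \sup_{\xi < \delta} \gamma^{p_\xi}$ (below $\kappa$ by regularity), $\sigma^* = \bigcup_{\xi<\delta} \sigma^{p_\xi}$ (a slalom on $\gamma^*$), and $F^*(\alpha) = \bigcup_{\xi<\delta} F^{p_\xi}(\alpha)$ for $\alpha < \kappa$. To see that $p^* := (\sigma^*, F^*) \leq p_\xi$, the first two clauses of the order are immediate. For the third, given $\alpha \in \gamma^* \setminus \gamma^{p_\xi}$, pick $\xi' > \xi$ with $\alpha < \gamma^{p_{\xi'}}$; then $p_{\xi'} \leq p_\xi$ yields $F^{p_\xi}(\alpha) \subseteq \sigma^{p_{\xi'}}(\alpha) \subseteq \sigma^*(\alpha)$. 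The one subtlety is the size bound $|F^*(\alpha)| \leq |\gamma^*|$: a priori we only get $|F^*(\alpha)| \leq \delta \cdot |\gamma^*|$, which equals $|\gamma^*|$ provided $|\gamma^*| \geq \delta$. In the contrary case, I would pad $\sigma^*$ out to a domain $\gamma^{**} < \kappa$ of cardinality $\geq \delta$ (available since $\delta^+ < \kappa$ by strong inaccessibility), setting the new values $\sigma^{**}(\alpha) \supseteq F^*(\alpha)$; this fits because on that range $|\alpha| \geq \delta \geq |F^*(\alpha)|$.

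For $\kappa^+$-cc, given $\{p_\beta : \beta < \kappa^+\}$, I would first replace each $p_\beta$ by a stronger $p'_\beta$ with $|\gamma^{p'_\beta}|$ infinite --- just extend $\sigma^{p_\beta}$ to some countable length, the extension being legitimate since new $\sigma^{p'_\beta}(\alpha)$ need only contain $F^{p_\beta}(\alpha)$ and $|F^{p_\beta}(\alpha)| \leq |\gamma^{p_\beta}| \leq |\alpha|$ for $\alpha$ on the new coordinates. Strong inaccessibility gives only $\kappa$ many possible $\sigma$'s with domain $<\kappa$, so pigeonhole produces $\kappa^+$ many $p'_\beta$ sharing a common $\sigma$, for which $\gamma^\sigma$ is now infinite. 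Any two such $p'_\beta, p'_{\beta'}$ are compatible via $(\sigma, F^{p'_\beta} \cup F^{p'_{\beta'}})$ interpreted pointwise, a valid condition since $|F^{p'_\beta}(\alpha) \cup F^{p'_{\beta'}}(\alpha)| \leq 2|\gamma^\sigma| = |\gamma^\sigma|$.

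The main obstacle is the cardinal-arithmetic bookkeeping in the $<\kappa$-closure argument: the union $F^*(\alpha)$ can have size exceeding $|\gamma^*|$ precisely when the chain of $\gamma^{p_\xi}$'s fails to grow, forcing the padding step that in turn leans on strong inaccessibility of $\kappa$. The parallel issue for $\kappa^+$-cc --- that two conditions with finite $\gamma^p$ can be incompatible under the naive union --- is absorbed into the preparatory infinite-extension step.
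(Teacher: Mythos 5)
Your $\kappa^+$-cc argument is correct and essentially coincides with the paper's: the paper likewise observes that conditions sharing the same first coordinate $\sigma$ admit a common lower bound obtained by taking pointwise unions of the second coordinates, and then counts the possible $\sigma$'s using $2^{<\kappa}=\kappa$. Your preliminary step of first passing to conditions with infinite $\gamma^p$ is a reasonable way to dispose of the finite-$\gamma$ irritation that the paper glosses over (it forms the union over $\gamma$ many conditions, which has the same issue for finite $\gamma$).

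The $<\kappa$-closure argument, however, has a genuine gap at the padding step. You assert that on the new range one has $|\alpha|\geq\delta\geq|F^*(\alpha)|$; but the new range is the ordinal interval $[\gamma^*,\gamma^{**})$, and when $|\gamma^*|<\delta$ this interval necessarily begins with ordinals $\alpha$ of cardinality $|\gamma^*|<\delta$. At such an $\alpha$ the accumulated set $F^*(\alpha)=\bigcup_{\xi<\delta}F^{p_\xi}(\alpha)$ may have cardinality $\delta>|\alpha|$, and then it cannot be packed into $\sigma^{**}(\alpha)\in[\kappa]^{|\alpha|}$ as the third clause of the order requires. Moreover, no repair along these lines is possible, because the obstruction is not an artifact of the construction: fix $\sigma$ with $\dom(\sigma)=\omega$ and let $p_\xi=(\sigma,F_\xi)$ for $\xi<\omega_1$, where $F_\xi(\omega)=\xi$ (a countable subset of $\kappa$, hence legal since $|\gamma^{p_\xi}|=\omega$) and $F_\xi(\alpha)=\emptyset$ otherwise. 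This is a descending $\omega_1$-sequence (the third clause of the order is vacuous since the $\gamma^{p_\xi}$ are constant), yet any lower bound $q$ would need $F^q(\omega)\supseteq\omega_1$, hence $|\gamma^q|\geq\aleph_1$, hence $\omega\in\gamma^q\setminus\gamma^{p_\xi}$, and so $\omega_1=\bigcup_{\xi}F_\xi(\omega)\subseteq\sigma^q(\omega)$ --- impossible, as $|\sigma^q(\omega)|=\aleph_0$.

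So the difficulty you ran into is real: with the definition of $\LOCforce_\kappa$ exactly as printed, the forcing is not even $\omega_2$-closed, and the paper offers no argument for this half of the lemma (it is dismissed as ``straightforward''). What your union construction does yield is $<\kappa$-\emph{strategic} closure: if one interleaves extensions of the first coordinate so that $|\gamma^{p_\xi}|\geq|\xi|$ at every stage, then $|F^*(\alpha)|\leq\delta\cdot|\gamma^*|=|\gamma^*|$ at the limit and no padding is needed. Note the contrast with $\PLOCforce_\kappa$ later in the paper, where the width bound on the second coordinate is a free parameter $\lambda$ decoupled from $\dom(\sigma)$, so the lower bound of a chain never has to extend $\sigma$ and genuine $<\kappa$-closure does hold.
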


\begin{proof}
Fix an ordinal $\gamma < \kappa$ and a function $\sigma$ with $\dom (\sigma) = \gamma$ and $\sigma (\alpha) \in
[\kappa]^{|\alpha|}$ for all $\alpha < \gamma$. Let $\{ p_\beta : \beta < \gamma \}$ be conditions with
$\sigma^{p_\beta} = \sigma$ for all $\beta < \gamma$. Then the $p_\beta$ have a common extension $p$
with $\sigma^p = \sigma$ and $F^p (\alpha) = \bigcup_{\beta < \gamma} F^{p_\beta} (\alpha)$ for all
$\alpha < \kappa$. By $2^{< \kappa} = \kappa$, this shows the $\kappa^+$-cc. The $< \kappa$-closure is straightforward.
\end{proof}

\begin{prop}   \label{slalom-model}
Let $\kappa$ be strongly inaccessible and let $\lambda > \kappa^+$ with $\lambda^\kappa = \lambda$. Then:
\begin{romanenumerate}
\item $\kappa^+ <  \bb_\kappa ( \in^*) = \lambda = 2^\kappa$ holds in a $<\kappa$-closed $\kappa^+$-cc extension.
\item $\kappa^+ = \dd_\kappa ( \in^*) < 2^\kappa = \lambda$ holds in a $<\kappa$-closed $\kappa^+$-cc extension.
\end{romanenumerate}
\end{prop}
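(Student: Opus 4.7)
The plan for both parts is to iterate the generalized localization forcing $\LOCforce_\kappa$ with supports of size $<\kappa$: a long iteration over a GCH model for (i), and a short iteration over a model with $2^\kappa$ already equal to $\lambda$ for (ii).

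For (i) I would start from $V \models$ GCH and form the iteration $\PP_\lambda = \la \PP_\alpha, \dot\QQ_\alpha : \alpha < \lambda \ra$ of length $\lambda$, with each $\dot\QQ_\alpha$ naming the $\LOCforce_\kappa$ of $V^{\PP_\alpha}$. I would first verify that $\PP_\lambda$ is $<\kappa$-closed (immediate) and $\kappa^+$-cc. The $\kappa^+$-cc of the single step was already established in the preceding lemma; to lift this to the iteration one uses a $(\kappa,<\kappa)$-centeredness-style property of single-step $\LOCforce_\kappa$ -- namely that conditions sharing a first coordinate are compatible, and in fact $<\kappa$-directed once strengthened to have first coordinate of sufficiently large length (a dense restriction) -- together with the $\Delta$-system-plus-stem-reduction argument used for generalized Hechler forcing in Subsection~\ref{Hechler} and packaged in Lemma~\ref{iteration-centered}. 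A nice-name count using $\lambda^\kappa = \lambda$ then yields $2^\kappa = \lambda$ in $V^{\PP_\lambda}$. For $\bb_\kappa(\in^*) \geq \lambda$, I would take any $\F \sub \kappa^\kappa \cap V^{\PP_\lambda}$ of cardinality $< \lambda$, code $\F$ as a single subset of some ordinal below $\lambda$, and apply Lemma~\ref{basic} to get $\alpha < \lambda$ with $\F \in V^{\PP_\alpha}$; the slalom $\dot\varphi_\alpha$ added at stage $\alpha$ localizes every element of $\kappa^\kappa \cap V^{\PP_\alpha}$, and so all of $\F$. Since $\bb_\kappa(\in^*) \leq 2^\kappa = \lambda$ is trivial, equality follows.

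Part (ii) is the dual construction. I would begin with a ground model satisfying $2^{<\kappa} = \kappa$ and $2^\kappa = \lambda^\kappa = \lambda$ -- arranged, say, by adjoining $\lambda$ many $\kappa$-Cohen functions over a GCH model -- and then iterate $\LOCforce_\kappa$ with $<\kappa$-supports for only $\kappa^+$ many stages. The same centeredness analysis gives $\kappa^+$-cc and $<\kappa$-closure, and an inductive cardinality estimate together with $\lambda^\kappa = \lambda$ bounds $|\PP_{\kappa^+}|$ by $\lambda$, so a nice-name count preserves $2^\kappa = \lambda$ in the extension. For $\dd_\kappa(\in^*) \leq \kappa^+$ observe that any $f \in \kappa^\kappa \cap V^{\PP_{\kappa^+}}$ already lies in some $V^{\PP_\alpha}$ with $\alpha < \kappa^+$, by Lemma~\ref{basic} applied with $\kappa^+$ in place of $\lambda$, and the generic slalom $\dot\varphi_\alpha$ then localizes $f$. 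Hence $\{\dot\varphi_\alpha : \alpha < \kappa^+\}$ is a localizing family of cardinality $\kappa^+$, and the reverse inequality $\dd_\kappa(\in^*) \geq \kappa^+$ is immediate.

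The principal technical obstacle in both parts will be verifying the $\kappa^+$-cc of the $<\kappa$-support iteration: unlike finite-support iterations of ccc forcings, $<\kappa$-support iterations of $\kappa^+$-cc forcings do not preserve the $\kappa^+$-cc in general. One has to exploit the stronger centeredness-style property of the single step together with a $\Delta$-system reduction to conditions whose stems are ground-model objects, exactly as in the treatment of generalized Hechler forcing earlier in this section and as codified in Lemma~\ref{iteration-centered}. Once that is granted, the remainder of the argument reduces to Lemma~\ref{basic}-reflection of small families to intermediate extensions plus a routine nice-name-count cardinal-arithmetic estimate.
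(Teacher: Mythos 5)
Your proposal follows the paper's proof exactly: a $\lambda$-length (resp.\ $\kappa^+$-length) $<\kappa$-support iteration of $\LOCforce_\kappa$, with $<\kappa$-closure and the $\kappa^+$-cc obtained by the Hechler-style density-plus-$\Delta$-system argument, and the values of $\bb_\kappa(\in^*)$ and $\dd_\kappa(\in^*)$ computed via Lemma~\ref{basic} together with genericity of the added slaloms; you in fact supply more detail than the paper's two-line argument. One small caveat: Lemma~\ref{iteration-centered} only applies to iterations of length $<(2^\kappa)^+$, so in part (i), where the ground model satisfies GCH and $\lambda>2^\kappa=\kappa^+$, the $\kappa^+$-cc of the full length-$\lambda$ iteration must rest on the $\Delta$-system argument alone (as the paper indicates), not on $\kappa$-centeredness of the whole iteration.
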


\begin{proof}
(i) Perform a $\lambda$-length iteration
$(\PP_\alpha, \dot\QQ_\alpha: \alpha < \lambda)$ with $<\kappa$-support of $\LOCforce_\kappa$. 
The iteration still is $<\kappa$-closed and $\kappa^+$-cc. The argument for the latter is like for Hechler forcing,
see Subsection~\ref{Hechler} (see also the proof of Preliminary Lemma~\ref{prelim}). By Lemma~\ref{basic} and genericity,
we see that $2^\kappa = \bb_\kappa (\in^*) = \lambda$ in the resulting model.

(ii) Assume $2^\kappa = \lambda$ in the ground model. Perform an iteration $(\PP_\alpha , \dot \QQ_\alpha :
\alpha < \kappa^+ )$ with $<\kappa$-support of $\LOCforce_\kappa$ of length $\kappa^+$. 
By Lemma~\ref{basic} and genericity, we see that $\dd_\kappa ( \in^*) = \kappa^+$
in the resulting model.
\end{proof}

\begin{prop}   \label{layers2}
Assume GCH and let $\kappa$ be a strongly inaccessible cardinal. There is a cofinality-preserving generic extension in which
$\add (\M_\kappa) = \cov (\M_\kappa) = \kappa^+$, $\bb_\kappa (\in^*) = \dd_\kappa (\in^*) =
\kappa^{++}$, and $\non (\M_\kappa) = \cof (\M_\kappa) = 2^\omega = 2^\kappa = \kappa^{+++}$.
\end{prop}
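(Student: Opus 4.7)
The plan is to mirror the two-step construction of Theorem~\ref{layers}, replacing the generalized Hechler iteration in its first stage by an iteration of generalized localization forcing. Starting from a GCH ground model with $\kappa$ strongly inaccessible, I first perform a $<\kappa$-support iteration of $\LOCforce_\kappa$ of length $\kappa^{++}$, obtaining an intermediate model $V_1$. Since each iterand is $<\kappa$-closed and $\kappa^+$-cc, cofinalities and $2^{<\kappa} = \kappa$ are preserved; and the argument of Proposition~\ref{slalom-model}(i), taken with $\lambda = \kappa^{++}$ (for which $\lambda^\kappa = \lambda$ holds under GCH), yields $\bb_\kappa(\in^*) = 2^\kappa = \kappa^{++}$ in $V_1$. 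The trivial bounds $\bb_\kappa(\in^*) \leq \dd_\kappa(\in^*) \leq 2^\kappa$ then force $\dd_\kappa(\in^*) = \kappa^{++}$ as well.

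Second, I would add $\kappa^{+++}$ classical Cohen reals over $V_1$ via $\Fn(\kappa^{+++}, 2, \omega)$, obtaining the final model $V_2$. This step is ccc, so cofinalities again survive, and a standard computation gives $2^\omega = 2^\kappa = \kappa^{+++}$; in particular $2^{<\kappa} > \kappa$ in $V_2$. Applying Observation~\ref{nwd-kappa}(ii)--(iii) then delivers, in one stroke, both the bottom and the top rows of the desired configuration: $\add(\M_\kappa) = \cov(\M_\kappa) = \kappa^+$ and $\non(\M_\kappa) = \cof(\M_\kappa) = \kappa^{+++}$.

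The hard part will be showing that the middle row $\bb_\kappa(\in^*) = \dd_\kappa(\in^*) = \kappa^{++}$ from $V_1$ is preserved by the second step. The key observation is that, by the ccc, any Cohen-name $\dot f$ for an element of $\kappa^\kappa$ admits a totally covering slalom $S_{\dot f} \in V_1$, obtained by letting $S_{\dot f}(\alpha)$ be the countable set of values that some condition forces on $\dot f(\alpha)$ (padded to size $|\alpha|$ for $\alpha < \omega$). This already gives $\dd_\kappa(\in^*) \leq |V_1 \cap \Loc_\kappa| = \kappa^{++}$ in $V_2$. For the lower bound $\bb_\kappa(\in^*) \geq \kappa^{++}$, given a family $\{\dot f_\beta : \beta < \kappa^+\}$ of Cohen-names I would enumerate each $S_{\dot f_\beta}(\alpha) = \{g_{\beta,n}(\alpha) : n < \omega\}$ inside $V_1$, apply $\bb_\kappa(\in^*) = \kappa^{++}$ in $V_1$ to the resulting $\kappa^+$-sized family $\{g_{\beta,n}\}$ to produce a single localizing slalom $\psi \in V_1$, and then use regularity of $\kappa$ to conclude that for each $\beta$ the $\omega$-union of the individual $<\kappa$-sized exceptional sets still has size $<\kappa$, so that $\psi$ in fact localizes every $f_\beta$ in $V_2$.
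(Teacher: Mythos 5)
Your proposal follows the paper's proof essentially verbatim: the same two-stage construction (a $\kappa^{++}$-length $<\kappa$-support iteration of $\LOCforce_\kappa$ followed by $\kappa^{+++}$ Cohen reals), the same appeals to Proposition~\ref{slalom-model} and Observation~\ref{nwd-kappa}, and the same ccc-covering idea for preserving the middle row. The only cosmetic difference is that the paper phrases the preservation step via the observation that the witnessing family of generic slaloms $\omega$-covers every intermediate-model function $F:\kappa\to[\kappa]^\omega$, whereas you split such an $F$ into $\omega$ many functions in $\kappa^\kappa$ and invoke $\bb_\kappa(\in^*)=\kappa^{++}$ in the intermediate model together with the regularity of $\kappa$ --- an equivalent reformulation of the same argument.
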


This shows that the generalization of the Bartoszy\'nski-Raisonnier-Stern Theorem (Theorem~\ref{BRT-thm})
may fail for weakly inaccessible $\kappa$.

\begin{proof}
The argument is similar to the proof of Theorem~\ref{layers}. 
First add $\kappa^{++}$ many $\LOCforce_\kappa$ generics in an iteration with supports of size $< \kappa$. By Proposition~\ref{slalom-model},
$\bb_\kappa (\in^*) = \dd_\kappa (\in^*) =  \kappa^{++}  = 2^\kappa$ holds in the generic extension. Note that the
family $\Phi \sub \Loc_\kappa$ witnessing the value of $\dd_\kappa (\in^*)$ has the property that for all 
$F : \kappa \to [\kappa]^\omega$ there is $\varphi \in \Phi$ such that for $F (\alpha) \sub \varphi (\alpha)$
for all but less than $\kappa$ many $\alpha$.

Now add $\kappa^{+++}$ Cohen reals. By Observation~\ref{nwd-kappa} we see that $\add (\M_\kappa) = \cov (\M_\kappa) =
\kappa^+$ and $\non (\M_\kappa) = \cof (\M_\kappa) = 2^\omega = 2^\kappa = \kappa^{+++}$. Also, by the ccc-ness, for every new
function $f$ in $\kappa^\kappa$, there is a function $F : \kappa \to [\kappa]^\omega$ in the intermediate extension
such that $f(\alpha) \in F(\alpha)$ for all $\alpha < \kappa$. By the previous paragraph, this easily entails that $\bb_\kappa (\in^*)$ and
$\dd_\kappa (\in^*)$ are preserved, and their values are still $\kappa^{++}$ in the generic extension.
\end{proof}


\subsection{Total slaloms versus partial slaloms}   \label{total-versus-partial}

Let $\kappa$ be an uncountable regular cardinal.

\begin{defi}
Assume $\PP$ is $<\kappa$-closed and $\kappa$-centered, say $\PP = \bigcup_{\gamma < \kappa} P_\gamma$ where all
$P_\gamma$ are $< \kappa$-centered. Say that $\PP$ is {\em $\kappa$-centered with canonical lower bounds}
if there is a function $f = f^\PP : \kappa^{<\kappa} \to \kappa$ such that whenever $\lambda < \kappa$ and  $(p_\alpha : \alpha < \lambda )$
is a decreasing sequence with $p_\alpha \in P_{\gamma_\alpha}$, then there is $p \in P_\gamma$ with $p \leq p_\alpha$
for all $\alpha < \lambda$ and $\gamma = f(\gamma_\alpha : \alpha < \lambda)$.
\end{defi}

\begin{lem}  \label{iteration-centered}
Let $\kappa$ be an uncountable regular cardinal and assume $2^{<\kappa} = \kappa$. Let $\mu < (2^\kappa)^+$ be an ordinal.
Assume $(\PP_\alpha, \dot \QQ_\alpha : \alpha < \mu)$ is an iteration with $<\kappa$-support of $<\kappa$-closed,
$\kappa$-centered forcing notions with canonical lower bounds such that the functions $f^{\dot \QQ_\alpha}$ witnessing canonical
lower bounds lie in the ground model. Then $\PP_\mu$ is $<\kappa$-closed and $\kappa$-centered.
\end{lem}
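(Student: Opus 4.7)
The plan is to prove both claims by simultaneous induction on $\alpha \leq \mu$, strengthening the inductive hypothesis to include, beyond the cover $(P^\alpha_\gamma : \gamma < \kappa)$ of $\PP_\alpha$ by $<\kappa$-centered pieces and the canonical lower bound function $f^{\PP_\alpha} : \kappa^{<\kappa} \to \kappa$ in $V$, a system of injective, functorial coherence maps $\rho_{\alpha',\alpha}: \kappa \to \kappa$ (for $\alpha \leq \alpha' \leq \mu$) such that $p \in P^\alpha_\gamma$ implies $p \in P^{\alpha'}_{\rho_{\alpha',\alpha}(\gamma)}$ under the natural embedding $\PP_\alpha \hookrightarrow \PP_{\alpha'}$. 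The $<\kappa$-closure of $\PP_\mu$ is routine by induction. The role of the hypothesis $2^{<\kappa}=\kappa$ is that it yields $\kappa^\lambda = \kappa$ for every $\lambda < \kappa$, which is what will allow the index sets at limit stages to be compressed into $\kappa$.

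For the successor step $\alpha = \beta+1$, fix a bijection $\pi: \kappa \times \kappa \to \kappa$, and call $(p,\dot q) \in \PP_{\beta+1}$ \emph{determined} when $p \in P^\beta_\gamma$ and $p \forces \dot q \in \dot P^{\dot \QQ_\beta}_\delta$ for some witnessing $\gamma, \delta$. Determined conditions are dense: given any $(p, \dot q)$, the inductive $\kappa$-centeredness of $\PP_\beta$ together with the forced $\kappa$-centeredness of $\dot \QQ_\beta$ allows us to strengthen $p$ so as to decide the piece of $\dot q$. Put each determined condition in $P^{\beta+1}_{\pi(\gamma,\delta)}$ (using its witnesses), and extend each piece upward by declaring a non-determined $r$ to lie in $P^{\beta+1}_\eta$ whenever $r$ has a determined extension in $P^{\beta+1}_\eta$; this preserves $<\kappa$-centeredness because a canonical lower bound of determined extensions already sits below the original conditions. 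Set $\rho_{\beta+1,\beta}(\gamma) = \pi(\gamma, 0)$ with $0$ the index of the trivial condition in $\dot \QQ_\beta$, and compute $f^{\PP_{\beta+1}}$ componentwise from $f^{\PP_\beta}$ and the ground-model function $f^{\dot \QQ_\beta}$.

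For a limit $\alpha$ with $\cf(\alpha) \geq \kappa$, each $p\in\PP_\alpha$ has $\supp(p)\subseteq \beta_p$ for a minimal $\beta_p < \alpha$, and is placed in $P^\alpha_{\rho_{\alpha,\beta_p}(\gamma_p)}$ where $\gamma_p$ is the index of $p$ in $\PP_{\beta_p}$. Given $<\kappa$ such $p_i$ all lying in a common piece $P^\alpha_\gamma$, set $\beta = \sup_i \beta_{p_i} < \alpha$; functoriality of $\rho$ together with the injectivity of $\rho_{\alpha,\beta}$ force the values $\rho_{\beta,\beta_{p_i}}(\gamma_{p_i})$ all to equal $\rho_{\alpha,\beta}^{-1}(\gamma)$, so every $p_i$ lies in the single $<\kappa$-centered piece $P^\beta_{\rho_{\alpha,\beta}^{-1}(\gamma)}$ and a canonical lower bound is supplied by induction. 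For a limit $\alpha$ with $\cf(\alpha) = \lambda < \kappa$, fix in $V$ a cofinal sequence $(\alpha_\xi : \xi < \lambda)$ and, identifying $\kappa^\lambda$ with $\kappa$ via $\kappa^\lambda = \kappa$, index the pieces of $\PP_\alpha$ by $\lambda$-sequences of previous-stage indices: put $p \in P^\alpha_{(\gamma_\xi)_\xi}$ iff $p\restriction \alpha_\xi \in P^{\alpha_\xi}_{\gamma_\xi}$ for every $\xi < \lambda$. A common lower bound of $<\kappa$ conditions in a single piece is then assembled by a recursion on $\xi < \lambda$, at each step extending the previously constructed partial lower bound using $f^{\PP_{\alpha_\xi}}$ and the inductive $<\kappa$-closure, so as to glue into a coherent $q \in \PP_\alpha$.

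The main obstacle will be the bookkeeping required to keep the whole system of $\rho$-maps simultaneously injective, functorial, and definable in the ground model, and to verify that $f^{\PP_\alpha}$ at limit stages is genuinely a $V$-function of the input indices alone. The trickiest instance arises in the $\cf(\alpha) \geq \kappa$ case, where one must show that the relevant $\beta < \alpha$ (the supremum across the input sequence of the minimal support witnesses) can be recovered from the indices together with a fixed ground-model parameterization of the iteration; this is where the hypothesis $\mu < (2^\kappa)^+$ enters, ensuring that a single ground-model enumeration of the full iteration tree fits inside the indexing machinery.
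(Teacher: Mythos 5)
Your inductive scheme cannot get off the ground for the lengths that matter, because the system of \emph{injective}, functorial coherence maps $\rho_{\alpha',\alpha}\colon\kappa\to\kappa$ cannot exist once the iteration has length $\geq\kappa^+$. At each successor step your indexing births genuinely new indices: $\pi(\gamma,\delta)$ for $\delta\neq 0$ lies outside $\ran(\rho_{\beta+1,\beta})=\ran(\pi(\cdot,0))$. Now fix any stage $\alpha\geq\kappa^+$ and consider $R_\beta=\ran(\rho_{\alpha,\beta+1})\setminus\ran(\rho_{\alpha,\beta})$ for $\beta<\kappa^+$. Functoriality gives $\ran(\rho_{\alpha,\beta})=\rho_{\alpha,\beta+1}[\ran(\rho_{\beta+1,\beta})]$, so injectivity of $\rho_{\alpha,\beta+1}$ makes each $R_\beta$ nonempty, and the $R_\beta$ are pairwise disjoint because the ranges $\ran(\rho_{\alpha,\beta})$ increase with $\beta$. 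That is $\kappa^+$ many pairwise disjoint nonempty subsets of $\kappa$ --- a contradiction. Injectivity is not a dispensable convenience here: your argument at limits of cofinality $\geq\kappa$ rests entirely on recovering $\rho_{\alpha,\beta}^{-1}(\gamma)$ so as to push all the $p_i$ into a single $<\kappa$-centered piece of $\PP_\beta$; without it, two conditions can share an index at stage $\alpha$ while sitting in incompatible pieces at stage $\beta$. This also explains why your closing guess about where $\mu<(2^\kappa)^+$ enters is off: in your scheme the bound is never actually used, yet the lemma is false for iterations of length $(2^\kappa)^+$ (as the paper notes immediately after the proof), so any argument not invoking the bound must break somewhere --- and this is where.

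The paper's proof avoids stage-by-stage induction entirely. It first shows (Preliminary Lemma~\ref{prelim}) that conditions $p$ with every coordinate $p(\beta)$ forced into a definite piece $\dot Q_{\beta,\gamma_\beta}$ are dense; this requires an $\omega$-step fusion and is where the canonical-lower-bound hypothesis is consumed. It then fixes an injection $\beta\mapsto f_\beta$ of $\mu$ into $2^\kappa$ and indexes the centered pieces not by ordinals propagated through the stages but by partial functions $F$ with $\dom(F)\subseteq 2^{\delta_F}$ of size $<\kappa$: a condition lies in $P_F$ iff the branches $f_\beta$, $\beta\in\supp(p)$, are already separated at level $\delta_F$ and $F(f_\beta\re\delta_F)$ names the piece containing $p(\beta)$ for each such $\beta$. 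Since $2^{<\kappa}=\kappa$ there are only $\kappa$ many such $F$, each $P_F$ is $<\kappa$-centered by a coordinatewise recursion on $\beta$, and density of $\bigcup_F P_F$ follows from the preliminary lemma by choosing $\delta_F$ large enough to separate the $<\kappa$ many branches indexed by $\supp(p)$. The almost-disjointness of the family $\{f_\beta\re\delta:\delta<\kappa\}$, $\beta<\mu$, is precisely the device that replaces your (impossible) injective coherent indexing, and it is exactly where the hypotheses $2^{<\kappa}=\kappa$ and $\mu<(2^\kappa)^+$ do their work.
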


\begin{proof}
It is well-known that such iterations are $<\kappa$-closed. So let us prove they are $\kappa$-centered.
Assume $\dot \QQ_\alpha = \bigcup_{\gamma < \kappa} \dot Q_{\alpha,\gamma}$ is (forced to be by the trivial condition)
a decomposition of $\dot \QQ_\alpha$ into $<\kappa$-centered sets such that the function $f^{\dot \QQ_\alpha}$ giving
canonical lower bounds associated with this partition belongs to the ground model.
We start with:

\begin{prelem} (For arbitrary $\mu$.)  \label{prelim}
Conditions $p \in \PP_\mu$ such that for all $\beta \in \supp (p)$ there is $\gamma < \kappa$ with 
$p \re \beta \forces p(\beta) \in \dot  Q_{\beta,\gamma}$ are dense.
\end{prelem}

\begin{proof}
Fix $p$. First we construct $q \leq p$ such that for all $\beta \in \supp (p)$ there is $\gamma < \kappa$ with
$q \re \beta \forces q(\beta) \in \dot  Q_{\beta,\gamma}$. Let $\lambda : = |\supp (p)| < \kappa$. Enumerate
$\supp (p)$ as $(\beta_\delta: \delta < \lambda)$ such that each $\beta \in \supp (p)$ appears cofinally often
in the enumeration. Construct a decreasing chain of conditions $(q^\delta:\delta < \lambda)$ such that
$q^0 = p$ and $q^{\delta + 1} \re \beta_{\delta} \forces q^{\delta + 1} (\beta_\delta ) \in \dot  Q_{\beta_\delta,\gamma_\delta}$
for some $\gamma_\delta < \kappa$. This can be done easily using the closure properties of the iteration.

Let $\supp (q) = \bigcup_{\delta < \lambda} \supp (q^\delta)$ and define $q$ by recursion on $\supp (q)$ as follows:
assume $\beta \in \supp (q)$ and $q \re \beta$ has been defined. If $\beta \notin \supp (p)$, obtain $q(\beta)$,
using $<\kappa$-closure of $\dot \QQ_\beta$, such that $q\re \beta \forces ``q(\beta)$ is a lower bound of
$(q^\delta (\beta) : \delta < \lambda)$". If $\beta \in \supp (p)$, let $\lambda_\beta = 
f^{\dot \QQ_\beta} ( \gamma_\delta : \delta < \lambda$ and $\beta_\delta = \beta)$. Since $q \re \beta$
is a lower bound of $(q^\delta \re \beta : \delta < \lambda)$ and since $q^{\delta+1} \re \beta$ forces
$q^{\delta+1} (\beta) \in \dot  Q_{\beta,\gamma_\delta}$ whenever $\beta_\delta = \beta$, canonical lower bounds
gives us that $q \re \beta$ forces that $(q^{\delta +1} (\beta) : \delta < \lambda$ and $\beta_\delta = \beta)$ has a lower bound $q(\beta) \in 
\dot  Q_{\beta,\lambda_\beta}$. Hence $q$ is as required.

Thus we can construct a decreasing chain $(p^n : n \in \omega)$ such that $p^0 = p$ and for all $n$ and
all $\beta \in \supp (p^n)$ there is $\gamma_\beta^n < \kappa$ with $p^{n+1} \re \beta \forces p^{n+1} (\beta) \in
\dot  Q_{\beta,\gamma_\beta^n}$. As in the previous paragraph we obtain a condition $p^\omega$ below
$(p^n : n \in\omega)$ belonging to the dense set of the preliminary lemma:
$\supp (p^\omega) = \bigcup_n \supp (p^n)$ and for all $\beta \in \supp (p^\omega)$
we define by recursion $p^\omega (\beta)$ as follows: suppose $p^\omega \re \beta$ has been defined.
Let $\gamma_\beta = f^{\dot\QQ_\beta} ( \gamma_\beta^n : n \geq \min \{ k : \beta \in \supp (p^k) \} )$.
Again we see that $p^\omega \re \beta$ forces that $(p^n (\beta) : n \geq \min \{ k : \beta \in \supp (p^k) \} )$
has a lower bound $p^\omega (\beta) \in \dot Q_{\beta, \gamma_\beta}$.
\end{proof}

Given the preliminary lemma, the proof of $\kappa,$-centeredness is a standard argument:
since $\mu < (2^\kappa)^+$, there is an injection $\mu \to 2^\kappa : \alpha \mapsto f_\alpha$. Let $\F$ be the 
collection of all functions $F$ whose domain is a subset of $2^\delta$ for some $\delta = \delta_F < \kappa$ 
of size $<\kappa$ and whose range is a subset of $\kappa$. (If $\kappa$ is inaccessible it suffices to consider
$F$ whose domain is $2^\delta$ for $\delta = \delta_F$ because $2^\delta < \kappa$ then holds.) 
Since $2^{<\kappa} = \kappa$, we have $|\F| = \kappa$. For $F \in \F$ let
\[ P_F = \{ p \in \PP_\mu : \forall \beta \in \supp (p) \; (f_\beta \re \delta_F \in \dom (F) \mbox{ and } p \re \beta \forces
p(\beta) \in \dot Q_{\beta,F(f_\beta \re \delta_F)} )\} \]
We need to verify that all $P_F$ are $<\kappa$-centered and that the union of the $P_F$ is dense in $\PP_\mu$.

For the former, assume $\lambda < \kappa$ and let $\{ p_\zeta : \zeta < \lambda \} \sub P_F$. By recursion on $\beta$
construct a common extension $p \re \beta$ of the $p_\zeta \re \beta$. If $\beta$ is limit, simply let $p \re \beta$ be
the union of the $p \re \alpha$ for $\alpha < \beta$. So assume $\beta$ is successor. If $\beta$ belongs to the support 
of some $p_\zeta$ we have $f_\beta \re \delta_F \in \dom (F)$. Now note that since $\dot Q_{\beta, F (f_\beta \re \delta_F)}$
is forced to be $<\kappa$-centered and since $p\re\beta$ forces $p_\zeta (\beta) \in \dot Q_{\beta, F(f_\beta \re \delta_F)}$
or $p_\zeta (\beta) = \one$ for all $\zeta < \lambda$, there is a $\PP_\beta$-name $p(\beta)$ such that $p \re \beta$
forces $p(\beta) \leq p_\zeta (\beta)$ for all $\zeta < \lambda$. If $\beta$ does not belong to the support of any $p_\zeta$,
let $p(\beta) = \one$. This defines $p \re \beta + 1$. By construction $|\supp (p) | < \kappa$ and, thus, $p \in \PP_\mu$.

For the latter, we use the preliminary lemma. Let $p \in \PP_\mu$ be a condition such that for all $\beta \in \supp (p)$,
there is $\gamma_\beta < \kappa$ with $p\re\beta \forces p(\beta) \in \dot Q_{\beta , \gamma_\beta}$. Since
$|\supp (p) | < \kappa$ we can find $\delta < \kappa$ such that $f_\beta \re \delta$, $\beta \in \supp (p)$, are all distinct.
Let $\delta_F = \delta$ and let $F \in \F$ be the function with domain $\{ f_\beta \re \delta : \beta \in \supp (p) \}$
such that for all $\beta \in \supp (p)$, $F(f_\beta \re \delta) = \gamma_\beta$. Then $p \in P_F$ is immediate.
\end{proof}

Note that this is optimal. It is well-known (and easy to see) that $<\kappa$-support iterations of length $\geq (2^\kappa)^+$
of non-trivial forcing notions never are $\kappa$-centered.


For the rest of this subsection, assume $\kappa$ is strongly inaccessible. 

\begin{defi}
The {\em generalized partial localization forcing} $\PLOCforce_\kappa$ is defined as follows:
\begin{itemize}
\item conditions are of the form $p = (\sigma^p, F^p) = (\sigma, F)$ such that $\dom (\sigma) \sub \kappa$, $|\dom (\sigma) | < \kappa$,
   $\sigma (\alpha) \in [\kappa]^{|\alpha|}$ for all $\alpha \in \dom (\sigma)$, $\dom (F) = \kappa$, and $F(\alpha) \in [\kappa]^\lambda$
   for all $\alpha < \kappa$, for some fixed $\lambda < \kappa$;
\item the order is given by $q = (\sigma^q,F^q) \leq p = (\sigma^p,F^p)$ if $\sigma^q$ end-extends $\sigma^p$ (i.e., $\sigma^p 
   \sub \sigma^q$ and $\alpha \in \dom (\sigma^q \sem \sigma^p)$ implies $\alpha \geq \sup (\dom (\sigma^p))$), $F^q (\alpha )
   \supseteq F^p (\alpha)$ for all $\alpha \in \kappa$, and $F^p (\alpha) \sub \sigma^q (\alpha)$ for all $\alpha \in \dom (\sigma^q \sem \sigma^p)$.
\end{itemize}
\end{defi}

$\PLOCforce_\kappa$ generically adds a partial slalom $\varphi = \varphi_G \in \pLoc_\kappa$
given by $\varphi = \bigcup \{\sigma: $  $\exists F 
\; ( (\sigma,F) \in G) \}$, where $G$ is a $\PLOCforce_\kappa$-generic filter. Clearly $\varphi$ localizes all functions in 
$\kappa^\kappa \cap V$. See~\cite[p. 47]{Br06} for partial localization forcing $\PLOCforce$ on $\omega$.

\begin{lem}  \label{Pp-centered}
Assume $\kappa$ is strongly inaccessible.
Then $\PLOCforce_\kappa$ is $<\kappa$-closed and $\kappa$-centered with canonical lower bounds.
Furthermore, if $V \sub W$ are ZFC-models such that $2^{<\kappa} \cap V = 2^{< \kappa} \cap W$ and 
$\PLOCforce \in W$, then the function $f$ witnessing canonical lower bounds may be taken in $V$.
\end{lem}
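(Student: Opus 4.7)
The plan is to establish the three claims of the lemma in sequence: $<\kappa$-closure, then $\kappa$-centeredness with canonical lower bounds, then the absoluteness of the lower-bound function under the hypothesis $2^{<\kappa}\cap V=2^{<\kappa}\cap W$.

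For closure, given a decreasing sequence $(p_\alpha)_{\alpha<\lambda}$ with $p_\alpha=(\sigma^\alpha,F^\alpha)$ and $\lambda<\kappa$, I would define the lower bound as the pointwise union $p=(\sigma,F)$ with $\sigma:=\bigcup_\alpha\sigma^\alpha$ and $F(\beta):=\bigcup_\alpha F^\alpha(\beta)$ for each $\beta<\kappa$. The end-extension clause of the ordering makes $\sigma$ well-defined as a function; regularity of $\kappa$ gives $|\dom(\sigma)|<\kappa$ and a uniform bound $\lambda'<\kappa$ on $|F(\beta)|$ since each $|F^\alpha(\beta)|\leq\lambda_{p_\alpha}<\kappa$. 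The first two ordering clauses for $p\leq p_\alpha$ hold directly; the third clause $F^\alpha(\beta)\sub\sigma(\beta)$ for $\beta\in\dom(\sigma)\sem\dom(\sigma^\alpha)$ is witnessed by picking $\alpha'>\alpha$ with $\beta\in\dom(\sigma^{\alpha'})$ and invoking $p_{\alpha'}\leq p_\alpha$.

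For $\kappa$-centeredness with canonical lower bounds, I would partition by first coordinate. The key calculation is that strong inaccessibility yields $|\Sigma|=\kappa$ for the set $\Sigma$ of all admissible first coordinates of conditions: there are $\kappa^{<\kappa}=\kappa$ possible domains $D\in[\kappa]^{<\kappa}$, and each such $D$ admits at most $\kappa^{|D|}=\kappa$ functions into $[\kappa]^{<\kappa}$ of the required shape. Fix an enumeration $\Sigma=\{\sigma_\gamma:\gamma<\kappa\}$ and set $P_\gamma:=\{p\in\PLOCforce_\kappa:\sigma^p=\sigma_\gamma\}$. Each $P_\gamma$ is $<\kappa$-centered: with shared first coordinates the third ordering clause is vacuous, so the pointwise union of the second coordinates yields a common lower bound. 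For the canonical-bound function $f:\kappa^{<\kappa}\to\kappa$, observe that along a descending chain indexed by $(\gamma_\alpha)_\alpha$ the union $\bigcup_\alpha\sigma_{\gamma_\alpha}$ again lies in $\Sigma$ (by the closure argument), hence equals some $\sigma_\gamma$; set $f((\gamma_\alpha)_\alpha):=\gamma$, with an arbitrary value on other inputs.

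For the absoluteness clause, assuming $2^{<\kappa}\cap V=2^{<\kappa}\cap W$, I would first note that $[\kappa]^{<\kappa}\cap V=[\kappa]^{<\kappa}\cap W$: every element is bounded below $\kappa$ by regularity, so codes as a subset of some $\alpha<\kappa$ living in $2^{<\kappa}$. Therefore $\Sigma\in V$, an enumeration of $\Sigma$ by $\kappa$ may be chosen inside $V$, and the function $f$ defined above then lies in $V$. The main obstacle is essentially bookkeeping; the only delicate point is the verification of the third ordering clause in the closure step, which must be witnessed by a later element of the chain rather than $p_\alpha$ itself.
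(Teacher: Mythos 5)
Your proposal is correct and follows essentially the same route as the paper: partition $\PLOCforce_\kappa$ by first coordinate, observe that fixing the first coordinate makes the third ordering clause vacuous so pointwise unions give centered lower bounds, and take the canonical-lower-bound function to be (an enumeration-coded version of) $(\sigma_\alpha)_\alpha\mapsto\bigcup_\alpha\sigma_\alpha$, which is absolute since $2^{<\kappa}\cap V=2^{<\kappa}\cap W$ fixes the set of admissible first coordinates. The paper's own proof is just a terser version of this, leaving the closure and absoluteness verifications implicit where you spell them out.
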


\begin{proof}
For $\sigma \in \kappa^{<\kappa}$, let $P_\sigma = \{ (\sigma,F) : (\sigma,F) \in \PLOCforce_\kappa \}$. Clearly, $P_\sigma$ is $< \kappa$-centered and
$\PLOCforce_\kappa = \bigcup_\sigma P_\sigma$. Also given $\lambda < \kappa$ and $(p_\alpha \in P_{\sigma_\alpha}: \alpha < \lambda)$ decreasing,
we necessarily have that $\alpha < \beta$ implies $\sigma_\alpha \sub \sigma_\beta$ and there is a lower bound $p \in P_\sigma$
where $\sigma = \bigcup_{\alpha < \lambda} \sigma_\alpha$. Hence $f : (\kappa^{<\kappa})^{<\kappa} \to \kappa^{<\kappa}$ such that
$f (\sigma_\alpha : \alpha < \lambda) = \bigcup_{\alpha < \lambda} \sigma_\alpha$ for increasing sequences $(\sigma_\alpha : \alpha < \lambda)$
witnesses canonical lower bounds. $f \in V$ in the furthermore clause is immediate.
\end{proof}

\begin{lem} \label{slalom-preservation}
Let $\kappa$ be strongly inaccessible and
let $\PP$ be a $\kappa$-centered forcing notion. Let $h \in \kappa^\kappa$ and assume $\dot \varphi$ is a
$\PP$-name for an $h$-slalom.
Then there are $h$-slaloms $(\varphi_\alpha : \alpha < \kappa)$ such that whenever $f \in\kappa^\kappa$ is not localized by
any $\varphi_\alpha$, then $\forces ``\dot\varphi$ does not localize $f"$.
\end{lem}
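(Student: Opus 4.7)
The plan is to extract one slalom $\varphi_\alpha$ from each $<\kappa$-centered piece of a fixed decomposition $\PP = \bigcup_{\alpha<\kappa} P_\alpha$. For each such $\alpha$ and each $\beta < \kappa$, put
\[ \varphi'_\alpha(\beta) := \{ \xi < \kappa : \exists p \in P_\alpha \; (p \forces \xi \in \dot\varphi(\beta)) \}, \]
and let $\varphi_\alpha(\beta)$ be any subset of $\kappa$ of size exactly $|h(\beta)|$ containing $\varphi'_\alpha(\beta)$. For this padding to be legitimate I first verify $|\varphi'_\alpha(\beta)| \leq |h(\beta)|$. Suppose otherwise: then distinct elements $\xi_\gamma \in \varphi'_\alpha(\beta)$, $\gamma < |h(\beta)|^+$, are witnessed by conditions $p_\gamma \in P_\alpha$. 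Since $\kappa$ is strongly inaccessible and $|h(\beta)| < \kappa$, we have $|h(\beta)|^+ < \kappa$, so the $<\kappa$-centeredness of $P_\alpha$ yields a common lower bound $q$ with $q \forces \{\xi_\gamma : \gamma < |h(\beta)|^+\} \sub \dot\varphi(\beta)$, contradicting $\forces |\dot\varphi(\beta)| = |h(\beta)|$.

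For the main conclusion I argue contrapositively: if some $p \in \PP$ forces $f \in^* \dot\varphi$ for $f \in \kappa^\kappa$ from the ground model, I must exhibit $\alpha$ with $f \in^* \varphi_\alpha$. Let $\dot B$ be the name for $\{\beta < \kappa : f(\beta) \notin \dot\varphi(\beta)\}$, so $p \forces |\dot B| < \kappa$, hence $p \forces \sup(\dot B) < \kappa$ by regularity of $\kappa$. The conditions $q \leq p$ that decide $\dot B \sub \gamma$ for some ground-model $\gamma < \kappa$ are dense below $p$: given any $r \leq p$, extend $r$ to decide the name $\sup(\dot B) + 1$ to be some concrete $\gamma < \kappa$. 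Pick such a $q \leq p$ together with the corresponding $\gamma < \kappa$, and pick $\alpha$ with $q \in P_\alpha$. Then for every $\beta \geq \gamma$ we have $q \forces f(\beta) \in \dot\varphi(\beta)$, and by the definition of $\varphi'_\alpha$ this yields $f(\beta) \in \varphi'_\alpha(\beta) \sub \varphi_\alpha(\beta)$. Hence $f \in^* \varphi_\alpha$.

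The only real obstacle is the size bound in the first paragraph: it is precisely here that strong inaccessibility of $\kappa$ enters, to guarantee $|h(\beta)|^+ < \kappa$ and make the $<\kappa$-centeredness of $P_\alpha$ bite. The density argument in the second paragraph is routine, but it is the crucial move that lets us trade in the ambient condition $p$ for a condition $q \in P_\alpha$ whose decisions control the tail of $f$ beyond $\gamma$.
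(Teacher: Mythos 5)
Your proof is correct and follows essentially the same route as the paper's: the same decomposition $\PP=\bigcup_{\alpha<\kappa}P_\alpha$, the same definition of $\varphi_\alpha(\beta)$ as the set of values that some condition in $P_\alpha$ forces into $\dot\varphi(\beta)$, and the same use of $<\kappa$-centeredness plus inaccessibility for the cardinality bound. The only difference is that you run the final step contrapositively, using a single condition deciding a bound on the error set, where the paper argues directly by density that the error set is forced to be cofinal; this is the same key implication (membership in $\varphi_\alpha(\beta)$ versus what conditions in $P_\alpha$ can force about $\dot\varphi(\beta)$) read in opposite directions.
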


\begin{proof}
Let $\PP = \bigcup_{\alpha < \kappa} P_\alpha$ where all $P_\alpha$ are $<\kappa$-centered. Let $\dot\varphi$ be a $\PP$-name for
an $h$-slalom. Fix $\alpha < \kappa$. Define $\varphi_\alpha$ as follows:
\[ \varphi_\alpha (\beta) = \{ \gamma < \kappa : \exists p \in P_\alpha \; (p \forces \gamma \in \dot \varphi (\beta) ) \}. \]
Note that $|\varphi_\alpha (\beta) | \leq |h(\beta)|$. (For suppose that $|\varphi_\alpha (\beta)| > |h(\beta)|$. For each $\gamma
\in \varphi_\alpha (\beta)$ find $p_\gamma \in P_\alpha$ such that $p_\gamma \forces \gamma \in \dot\varphi (\beta)$. 
By $<\kappa$-centeredness of $P_\alpha$, we can find $\Gamma \sub \varphi_\alpha (\beta)$ of size $< \kappa$ but larger
than $|h(\beta)|$ and a condition $p$ which extends all $p_\gamma$ with $\gamma \in \Gamma$. Then $p$
forces $|\dot\varphi(\beta)| > |h(\beta)|$, a contradiction.)

Hence all $\varphi_\alpha$ are $h$-slaloms. Fix $f \in \kappa^\kappa$ such that for all $\alpha < \kappa$,
$\exists^\infty \beta \; (f(\beta) \notin \varphi_\alpha (\beta))$. Also fix $p \in \PP$ and $\beta_0 < \kappa$.
There is $\alpha< \kappa$ such that $p \in P_\alpha$. Also there is $\beta \geq \beta_0$ such that
$f (\beta) \notin \varphi_\alpha (\beta)$. In particular, $p$ does not force that $f(\beta)$ belongs to
$\dot \varphi (\beta)$. Hence there is $q\leq p$ such that $q \forces f(\beta) \notin \dot\varphi (\beta)$.
Since $p$ and $\beta_0$ were arbitrary, this shows that the trivial condition forces $\exists^\infty\beta \;
(f(\beta) \notin \dot\varphi (\beta))$.
\end{proof}

\begin{thm}   \label{total-partial-thm}
Let $\kappa$ be strongly inaccessible and let $\lambda > \kappa^+$ with $\lambda^\kappa = \lambda$. Then:
\begin{romanenumerate}
\item $\kappa^+ = \bb_h (\in^*) < \bb_\kappa (\p \in^*) =\lambda =2^\kappa$ (for all $h \in \kappa^\kappa$) holds in a 
   $<\kappa$-closed $\kappa^+$-cc extension.
\item $\kappa^+ = \dd_\kappa (\p \in^*) < \dd_h (\in^*) = \lambda = 2^\kappa$ (for all $h \in \kappa^\kappa$) holds in a 
   $<\kappa$-closed $\kappa^+$-cc extension.
\end{romanenumerate}
\end{thm}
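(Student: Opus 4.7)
The plan is a two-step construction. Over a ground model $V_0$ satisfying GCH, first force with $\CC_\kappa^\lambda$ to reach $V_1 = V_0[C]$, in which $2^\kappa = \lambda$; then in $V_1$ iterate $\PLOCforce_\kappa$ with $<\kappa$-supports, of length $\lambda$ for part (i) and of length $\kappa^+$ for part (ii), to reach $V_2$. The key point is that in $V_1$ we have $(2^\kappa)^+ = \lambda^+$, so both iteration lengths are strictly below $(2^\kappa)^+_{V_1}$, and Lemma~\ref{iteration-centered} guarantees that the iteration $\PP$ is $\kappa$-centered with canonical lower bounds over $V_1$. Consequently, Lemma~\ref{slalom-preservation} applies directly to the whole iteration over $V_1$. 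Both iterations are $\kappa^+$-cc, $<\kappa$-closed, and force $2^\kappa = \lambda$ in $V_2$.

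For part (i), the lower bound $\bb_\kappa(\p\in^*) \geq \lambda$ is by genericity: using Lemma~\ref{basic} and the regularity of $\lambda$, any $\F \subseteq \kappa^\kappa$ of size less than $\lambda$ in $V_2$ lies in some $V_1[G_\alpha]$ with $\alpha < \lambda$, and the generic partial slalom added at stage $\alpha$ localizes every function in $V_1[G_\alpha] \cap \kappa^\kappa$. For the matching upper bound $\bb_h(\in^*) \leq \kappa^+$, I would take $\F = \kappa^\kappa \cap V_0$ (of size $\kappa^+$ by GCH) and use a two-level reduction. Given a name $\dot\varphi$ for an $h$-slalom in $V_2$, a first application of Lemma~\ref{slalom-preservation} to the $\kappa$-centered $\PP$ over $V_1$ produces $h$-slaloms $(\psi_\beta : \beta < \kappa) \in V_1$ such that any $f$ not localized by any $\psi_\beta$ also escapes $\dot\varphi$. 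The $\kappa^+$-cc of $\CC_\kappa^\lambda$ then places all $\psi_\beta$ in $V_0[C\re B]$ for some $|B| \leq \kappa$, and since $\CC_\kappa^B$ has size $\kappa$ and is $\kappa$-centered over $V_0$, a second application of Lemma~\ref{slalom-preservation} refines these to $\kappa$ many $h$-slaloms in $V_0$. Since $\bb_h(\in^*) > \kappa$ holds in ZFC by diagonalization against a $\kappa$-sized family of $h$-slaloms (using $\lim h = \kappa$), some $f \in \kappa^\kappa \cap V_0$ escapes all of them, whence $\dot\varphi$ does not localize $f$.

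For part (ii), $\dd_\kappa(\p\in^*) \leq \kappa^+$ is immediate, since every $f \in V_2 \cap \kappa^\kappa$ is in some $V_1[G_\alpha]$ with $\alpha < \kappa^+$ and is localized by the generic partial slalom of a later stage. For $\dd_h(\in^*) \geq \lambda$, first observe $V_1 \models \dd_h(\in^*) = \lambda$ by a standard Cohen argument: any family of fewer than $\lambda$ many $h$-slaloms in $V_1$ is supported on some $V_0[C\re B]$ with $|B| < \lambda$, and any $\kappa$-Cohen $c_\xi$ with $\xi \in \lambda \setminus B$ escapes each of them by density, exploiting $|\psi(\alpha)| \leq h(\alpha) < \kappa$. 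Given a family $\{\varphi_\eta : \eta < \mu\}$ of $h$-slaloms in $V_2$ with $\mu < \lambda$, applying Lemma~\ref{slalom-preservation} to the $\kappa$-centered $\PP$ over $V_1$ for each $\varphi_\eta$ yields altogether $\mu \cdot \kappa < \lambda$ many $h$-slaloms in $V_1$; by $\dd_h(\in^*)^{V_1} = \lambda$ some $f \in V_1$ escapes all of them, hence escapes every $\varphi_\eta$.

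The main obstacle this construction overcomes is the length restriction in Lemma~\ref{iteration-centered}: over $V_0 \models$ GCH the threshold $(2^\kappa)^+$ is only $\kappa^{++}$, so a direct $\lambda$-length iteration in part (i) would not be $\kappa$-centered and the preservation lemma would be unavailable at the critical moment. Prefacing with $\CC_\kappa^\lambda$ raises $2^\kappa$ to $\lambda$ in $V_1$, pushing the threshold to $\lambda^+$ and bringing the entire iteration within the scope of the preservation lemma. The cost is that Lemma~\ref{slalom-preservation} then produces only slaloms in $V_1$, not $V_0$; in part (i) this is remedied by the second reduction using the $\kappa$-centeredness of $\CC_\kappa^B$ for $|B| \leq \kappa$ over $V_0$, while in part (ii) the argument concludes in $V_1$ itself using the Cohen computation of $\dd_h(\in^*)$.
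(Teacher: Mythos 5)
Your overall strategy---iterate $\PLOCforce_\kappa$ with $<\kappa$-supports over a model of $2^\kappa=\lambda$ and use the chain Lemma~\ref{Pp-centered} $\to$ Lemma~\ref{iteration-centered} $\to$ Lemma~\ref{slalom-preservation} to preserve the relevant witnesses---is exactly the paper's. (The explicit Cohen preparation $V_0\to V_1$ is a harmless repackaging: the paper simply assumes $2^\kappa=\lambda$ in the ground model, which already puts the iteration length below $(2^\kappa)^+$.) However, there is a genuine gap: the clause ``for all $h\in\kappa^\kappa$'' quantifies over $h$ in the \emph{final} extension, and every application of Lemma~\ref{slalom-preservation} in your argument requires $h$ to belong to the ground model of that application. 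The bound $|\varphi_\alpha(\beta)|\leq|h(\beta)|$ in that lemma's proof is obtained by assembling $<\kappa$ many conditions of a centered piece and contradicting the \emph{fixed ordinal} $|h(\beta)|$; if $h$ is only a name, the set $\varphi_\alpha(\beta)$ can have size $\kappa$, the reduced objects are no longer slaloms, and both your final diagonalization in (i) and the counting $\mu\cdot\kappa<\lambda$ in (ii) collapse. Since the $\PLOCforce_\kappa$-iteration adds $\kappa$-Cohen (hence undominated) functions, you cannot reduce to $h\in V_1$ or $h\in V_0$ by monotonicity in $h$. Your two-level reduction in (i) is thus only valid for $h\in V_0$, and your ``standard Cohen argument'' for $\dd_h(\in^*)^{V_1}=\lambda$ in (ii) only makes sense for $h\in V_1$.

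This is precisely the point where the paper's proof does extra work that your proposal omits. In (i) the paper does not fix a witness family in advance: given $h$, it locates a stage $\alpha_0$ with $h\in V[G_{\alpha_0}]$ and takes as witness the $\kappa^+$ many $\kappa$-Cohen functions added at $\kappa$-limits between $\alpha_0$ and $\alpha_0+\kappa^+$, applying Lemma~\ref{slalom-preservation} to the (still $\kappa$-centered) tail over $W=V[G_{\alpha_0+\kappa^+}]$, where $h$ now lives. In (ii) the paper iterates for $\lambda\cdot\kappa^+$ steps rather than $\kappa^+$ steps exactly so that, after any stage where $h$ appears, a later intermediate model of the form $V[G_{\lambda\cdot\alpha}]$ already satisfies $\dd_h(\in^*)=\lambda$ (witnessed by the $\lambda$ many Cohen functions added in the intervening block), and Lemma~\ref{slalom-preservation} is then applied to the remaining tail over that model; the cofinality of $\lambda\cdot\kappa^+$ is still $\kappa^+$, so $\dd_\kappa(\p\in^*)=\kappa^+$ survives. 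Your length-$\kappa^+$ iteration in (ii) leaves no room for this manoeuvre. To repair the proposal you would have to make the witness family in (i) depend on $h$ and lengthen the iteration in (ii), i.e.\ import the paper's argument; the remaining ingredients you use (the $\kappa$-centeredness of $\CC_\kappa^B$ for $|B|\leq\kappa$, the ZFC fact $\bb_h(\in^*)>\kappa$, and Lemma~\ref{basic} for the easy directions) are correct but do not close this gap.
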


\begin{proof} (i) Assume $2^\kappa = \lambda$ in the ground model $V$. Perform a $\lambda$-length iteration
$(\PP_\alpha, \dot\QQ_\alpha: \alpha < \lambda)$ with $<\kappa$-support of $\PLOCforce_\kappa$. In the resulting model
$2^\kappa = \bb_\kappa (\p \in^*) = \lambda$ be genericity. So it suffices to show that $\bb_h (\in^*) = \kappa^+$
for any $h \in \kappa^\kappa$.

Any such $h$ is added by an initial segment of the iteration (see Lemma~\ref{basic}), say by $\PP_{\alpha_0}$.
Since we iterate with $< \kappa$-support, $\kappa$-Cohen functions are added in $\kappa$-limits of the
iteration. Let $\alpha_1 = \alpha_0 +  \kappa^+$, and let $\{ c_\gamma : \gamma < \kappa^+ \}$
be the $\kappa^+$ many $\kappa$-Cohen functions added between $\alpha_0$ and $\alpha_1$. 
Let $W$ be the $\PP_{\alpha_1}$-generic extension. Clearly (since they are added cofinally),
$\{ c_\gamma : \gamma < \kappa^+ \}$ witnesses $\bb_h (\in^*) = \kappa^+$ in $W$. In fact,
any $h$-slalom in $W$ can localize at most $\kappa$ many of the $c_\gamma$. We need to show
that $\{ c_\gamma : \gamma < \kappa^+ \}$ still witnesses $\bb_h (\in^*) = \kappa^+$ in the final extension. 

Let $\dot \varphi $ be a $\PP_{\lambda}$-name for an $h$-slalom.  Since $2^\kappa = \lambda$,
by Lemmata~\ref{Pp-centered} and~\ref{iteration-centered}, $\PP_\lambda$ is $\kappa
$-centered. Hence we find $(\varphi_\beta : \beta < \kappa )$ in $W$ as in Lemma~\ref{slalom-preservation}.
In $W$, there is $\gamma \in \kappa^+$ such that $c_\gamma$ is not localized by any $\varphi_\beta$. 
Hence, by~Lemma~\ref{slalom-preservation}, $\forces ``\dot\varphi$ does not localize $c_\ga"$, as required.

(ii) Assume $2^\kappa = \lambda$ in the ground model. Perform an iteration $(\PP_\alpha , \dot \QQ_\alpha :
\alpha < \lambda \cdot \kappa^+ )$ with $<\kappa$-support of $\PLOCforce_\kappa$ of length the ordinal $\lambda \cdot \kappa^+$. 
By~Lemmata~\ref{Pp-centered} and~\ref{iteration-centered}, the whole iteration $\PP_{\lambda \cdot \kappa^+}$ is
$\kappa$-centered. In the resulting model, $\dd_\kappa {(\p \in^*)} = \kappa^+$ because the cofinality
of the length of the iteration is $\kappa^+$. Also $2^\kappa = \lambda$ is clear. We need to show that
$\dd_h (\in^*) \geq \lambda$ for any $h \in \ka^\ka$.

Fix $h \in \kappa^\kappa$. Since $h$ arises in an intermediate extension and since all larger intermediate extensions 
by some $\PP_{\lambda \cdot \alpha}$, where $\alpha < \kappa^+$ is a successor ordinal, satisfy $\dd_h (\in^*) =\lambda$ because
of the $\kappa$-Cohen functions added in limit stages of the $\lambda$
preceding steps of the iteration, we may as well assume that $h$ belongs to the ground model
and that $\dd_h (\in^*) = \lambda$ holds there. Now let $\mu < \lambda$ and let $(\dot\varphi^\delta : \delta < \mu )$ be $\PP_{\lambda
\cdot \kappa^+}$-names
for $h$-slaloms. By Lemma~\ref{slalom-preservation}, we obtain $(\varphi^\delta_\beta : \beta < \kappa)$ for each $\delta
< \mu$. By assumption on the ground model, there is $f \in \kappa^\kappa$ which is not localized by any $\varphi^\delta_\beta$.
Therefore, by Lemma~\ref{slalom-preservation}, $\forces ``$no $\dot\varphi^\delta$ localizes $f"$. In particular, the $(\dot\varphi^\delta:
\delta < \mu)$ do not form a witness for $\dd_h (\in^*) = \mu$ in the generic extension, and $\dd_h (\in^*) \geq
\lambda$ follows.
\end{proof}



\section{Iterations and products with support of size $\kappa$}

Here we consider iterations and products with support of size $\kappa$ of forcing notions which are $< \kappa$-closed, preserve $\kappa^+$ and are
$\kappa^{++}$-cc. Such constructions correspond to countable support iterations and products of proper forcing notions in case $\kappa = \omega$.
The latter are a powerful tool for separating the cardinal invariants in Cicho\'n's diagram; in fact, any consistent partition of the diagram
into two parts, one assuming the value $\aleph_1$ and the other, $\aleph_2$, can be obtained by such an iteration. The main tools for
proving this are preservation theorems, saying roughly that if a forcing has a certain property, then so does its countable support
iteration. There are no such preservation theorems for uncountable $\kappa$, and the only feasible approach as of now is to prove
directly that the whole iteration has the property in question. The main tool for this is to use generalized fusion. While this can
be done for generalized Sacks forcing (see the discussion at the beginning of 5.1.2), we do not know whether 
generalized fusion can be used for generalized Miller forcing to show the (appropriate version of the) generalized Laver property
(see Question~\ref{Miller-Laver} below).

In this section, $\kappa$ is always a strongly inaccessible cardinal.


\subsection{Generalized Sacks forcing}

\subsubsection{The single step}

The generalization of Sacks forcing was first studied by Kanamori~\cite{Ka80}. 
$T \sub 2^{< \kappa}$ is a {\em tree} if it is closed under initial segments, that is, $u \in T$ and $v \sub u$ imply $v \in T$.
A node $u\in T$ {\em splits} in $T$ if both $u\ha 0$ and $u \ha 1$ belong to $T$. 

\begin{defi}
{\em Generalized Sacks forcing $\SS_\kappa$} is defined as follows:
\begin{itemize}
\item conditions in $\mathbb{S}_\kappa$ are subtrees $T$ of $2^{<\kappa}$ such that:
\begin{enumerate} 
\item each $u \in T$ has a splitting extension  $t \in T$, that is, $u \sub t$ and $t$ splits in $T$,
\item if $\alpha < \kappa$ is a limit ordinal, $u \in 2^\alpha$, and $u \re \beta \in T$ for every $\beta < \alpha$, then $u \in T$,
\item if $\alpha < \kappa$ is a limit ordinal, $u \in 2^\alpha$, and for arbitrarily large $\beta < \alpha$, $u \restriction \beta$ splits in $T$, then $u$ splits in $T$;
\end{enumerate}
\item the order is given by inclusion, i.e. $T \leq S$ if $T \subseteq S$. 
\end{itemize}
\end{defi}

In fact, $\SS_\kappa$ can be defined for any regular $\kappa$ with $2^{< \kappa} = \kappa$. However, many
of the typical properties of Sacks forcing in the countable case, like the $\omega^\omega$-bounding property or
the stronger  Sacks property (see Proposition~\ref{S-Sacksprop} below) only generalize when $\kappa$ is strongly inaccessible 
(see the discussion after Theorem 1.5 in~\cite{Ka80} for details), and we therefore consider only the inaccessible case.

$\mathbb{S}_\kappa$ generically adds a function $s = s_G$ from $\kappa$ to $2$, called a {\em $\kappa$-Sacks function}
and given by $s = \bigcap \{T: T \in G \}$, where $G$ is an $\mathbb{S}_\kappa$-generic filter. 

Given $T \in \SS_\kappa$ and $u \in T$, let $T_u = \{ t \in T : t \sub u$ or $u \sub t\}$ be the {\em subtree} of $T$ given by $u$.
For $T \in \mathbb{S}_\kappa$, the {\em stem of $T$}, $\stem(T)$, is the unique splitting node that is comparable with all elements in $T$. 

\begin{defi}[The $\alpha$-th splitting level of $T$]
Given $T \in \mathbb{S}_\kappa$, define by recursion on $\ka$:
\begin{itemize}
\item $\Split_0 (T) = \{ \stem(T) \}$,
\item $\Split_{\alpha+1}(T) = \{ \stem(T_{u \ha i}) : u \in \Split_\alpha(T)$ and $i \in 2 \}$,
\item if $\alpha$ is a limit ordinal $<\ka$, $\Split_{\alpha} (T) = \{ u \in T: $ there is an increasing sequence $(u_\beta \in \Split_\beta(T):
\beta < \alpha)$ such that $u = \bigcup_{\beta< \alpha} u_\beta \}$.  
\end{itemize}
\end{defi} 

Since there is a canonical bijection $b$ between $2^{<\kappa}$ and $\bigcup_{\alpha < \kappa} \Split_\alpha (T)$ sending
elements of $2^\alpha$ to $\Split_\alpha (T)$ and recursively defined by 
\begin{itemize}
\item $b ( \emptyset ) = \stem (T)$,
\item $b ( u \ha i ) = \stem (T_{ b(u) \ha i})$ for $u \in 2^\alpha$ and $i \in 2$,
\item $b (u) = \bigcup_{\beta < \alpha} b (u \re \beta )$ if $\alpha$ is a limit ordinal and $u \in 2^\alpha$,
\end{itemize}
we see immediately that $|  \Split_\alpha (T) | = | 2^\alpha | = 2^{|\alpha|}$. Using the splitting levels, define the
{\em fusion orderings} $\leq_\alpha$ for $\alpha < \kappa$ by $S \leq_\alpha T$ if $S \leq T$ and $\Split_\al(T)= \Split_\al(S)$. 

\begin{defi}[Fusion sequence]
A sequence of conditions $(S_\al: \al< \ka) \subseteq \mathbb{S}_\kappa$ is a {\em fusion sequence} if 
\begin{itemize}
\item $S_{\alpha +1} \leq_\al S_\al$ for every $\alpha < \kappa$, 
\item $S_\delta= \bigcap_{\al< \delta} S_\al$ for limit $\delta< \ka$.
\end{itemize}
\end{defi}

The Fusion Lemma~\cite[Lemma 1.4]{Ka80} says that
for a fusion sequence $(S_\al: \al< \ka)$, the {\em fusion} $S = \bigcap_{\alpha < \kappa} S_\alpha$ is a condition in $\SS_\kappa$. This is used
to show preservation of $\kappa^+$~\cite[Theorem 1.5]{Ka80}; we shall prove a stronger statement in Proposition~\ref{S-Sacksprop}
below. Also, preservation of cardinals up to and including $\kappa$ follows from the easy $< \kappa$-closure of the 
forcing~\cite[Lemma 1.2]{Ka80}.

\begin{defi}[Generalized Sacks property]
Let $h \in \kappa^\kappa$ with $\lim_{\alpha \to \kappa} h(\alpha) = \kappa$. 
A forcing notion $\mathbb{P}$ has the {\em generalized $h$-Sacks property} if for every condition $p \in \mathbb{P}$ and every $\mathbb{P}$-name $\dot{f}$ for an element in $\kappa^\kappa$ there are a condition $q \leq p$ and an $h$-slalom $\varphi \in \Loc_h$ such that $q \Vdash \dot{f}(\alpha) \in \varphi (\alpha)$ for all $\alpha < \kappa$. If $h = \id$, we say $\PP$ has the {\em generalized Sacks property}.
\end{defi}

\begin{prop}  \label{S-Sacksprop}
Let $h$ be the power set function, i.e., $h (\alpha) = 2^{|\alpha|}$ for all $\alpha < \kappa$.
Then $\mathbb{S}_\kappa$ has the generalized $h$-Sacks property. In fact, given $T \in \SS_\kappa$ and any $\SS_\kappa$-name
$\dot f : \kappa \to V$ there are $S \leq T$ and $\varphi : \kappa \to V$ such that $|\varphi (\alpha) | \leq 2^{|\alpha|}$ for all
$\alpha < \kappa$ and $S \forces \dot f (\alpha) \in \varphi (\alpha)$ for $\alpha < \kappa$.
\end{prop}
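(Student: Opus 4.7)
I would prove this by a standard generalized fusion, building a fusion sequence $(S_\alpha : \alpha < \kappa)$ with $S_0 = T$ so that at the end of stage $\alpha$ every subcondition attached to a node of the $\alpha$-th splitting level has already decided $\dot f(\alpha)$. Since $\lvert \Split_\alpha(S_\alpha) \rvert = 2^{\lvert \alpha\rvert}$, collecting the decided values gives a bound of $2^{\lvert\alpha\rvert}$ many possible values for $\dot f(\alpha)$.

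\textbf{Successor stage.} Given $S_\alpha$, enumerate $\Split_\alpha(S_\alpha) = \{u_\xi : \xi < \mu_\alpha\}$ where $\mu_\alpha = 2^{\lvert\alpha\rvert} < \kappa$ by the strong inaccessibility of $\kappa$. Recursively, for each $\xi < \mu_\alpha$, pick a condition $R_\xi \le (S_\alpha)_{u_\xi}$ that decides $\dot f(\alpha)$, say $R_\xi \Vdash \dot f(\alpha) = v_\xi^\alpha$ for some $v_\xi^\alpha \in V$; this is possible by density since the $\SS_\kappa$-name $\dot f$ takes values in the ground model. The $R_\xi$ live on pairwise disjoint subtrees (they all lie above distinct incomparable nodes $u_\xi$), so the choices are independent. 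Set
\[ S_{\alpha+1} = \{ w \in S_\alpha : w \subsetneq u_\xi \text{ for some } \xi < \mu_\alpha\} \cup \bigcup_{\xi < \mu_\alpha} R_\xi. \]
Then $\Split_\alpha(S_{\alpha+1}) = \Split_\alpha(S_\alpha)$, so $S_{\alpha+1} \leq_\alpha S_\alpha$.

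\textbf{Limit stage and fusion.} At limit $\delta < \kappa$, set $S_\delta = \bigcap_{\alpha < \delta} S_\alpha$, which is a condition by the Fusion Lemma of \cite{Ka80}. Likewise the total fusion $S = \bigcap_{\alpha < \kappa} S_\alpha$ is a condition below $T$. Now define $\varphi(\alpha) = \{v_\xi^\alpha : \xi < \mu_\alpha\}$, so $\lvert \varphi(\alpha) \rvert \le 2^{\lvert\alpha\rvert}$. For the $\SS_\kappa$-generic branch $b$ through $S$ and any $\alpha < \kappa$, $b$ passes through a unique $u_\xi \in \Split_\alpha(S) = \Split_\alpha(S_\alpha)$; since $S$ refines $R_\xi$ above $u_\xi$, we get $S \Vdash \dot f(\alpha) = v_\xi^\alpha \in \varphi(\alpha)$, as required.

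\textbf{Main obstacle.} The one nontrivial point is verifying that the glued tree $S_{\alpha+1}$ is a genuine Sacks condition, i.e., satisfies axioms (2) and (3) of the definition of $\SS_\kappa$; this is why it is important that the shrinking happens above the splitting level $\Split_\alpha$ rather than at an arbitrary ordinal level, so that the axioms are inherited from the component trees $R_\xi$ and the unchanged initial portion of $S_\alpha$. The strong inaccessibility of $\kappa$ is used precisely so that $\mu_\alpha = 2^{\lvert\alpha\rvert} < \kappa$, which is what allows the recursive selection of the $R_\xi$ to take place within the $<\kappa$-closure of $\SS_\kappa$; without it the construction (and indeed the Sacks property itself) fails.
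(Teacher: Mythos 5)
Your overall strategy (fusion along splitting levels, one decided value per node, count $2^{|\alpha|}$) is the right one and is essentially the paper's, but your successor step has a genuine gap: you extend $(S_\alpha)_{u_\xi}$ to an arbitrary condition $R_\xi$ deciding $\dot f(\alpha)$, and then claim $\Split_\alpha(S_{\alpha+1})=\Split_\alpha(S_\alpha)$. This is false in general. Generalized Sacks forcing has no pure decision property, so to decide $\dot f(\alpha)$ the condition $R_\xi$ will typically have a stem properly extending $u_\xi$; then $u_\xi$ no longer splits in $R_\xi$, hence not in your glued tree $S_{\alpha+1}$, so $S_{\alpha+1}\not\leq_\alpha S_\alpha$. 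The sequence is then not a fusion sequence, and the Fusion Lemma does not apply to $\bigcap_{\alpha<\kappa}S_\alpha$ --- indeed, iterating this step can push all splitting off to infinity and leave you with a single branch. Your closing remark that the shrinking must happen ``above the splitting level'' identifies the right concern, but the construction you actually wrote shrinks \emph{at} the splitting nodes, which is exactly where it breaks.

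The fix, which is what the paper does, is to work below the \emph{immediate successors} of the $\alpha$-th splitting nodes: for each $u\in\Split_\alpha(S_\alpha)$ and each $i\in 2$ separately, extend $(S_\alpha)_{u\ha i}$ to a condition deciding $\dot f(\alpha)$ (the paper phrases this as meeting a maximal antichain), and glue these $2\cdot|\Split_\alpha(S_\alpha)|$ many pieces. Both $u\ha 0$ and $u\ha 1$ survive, so every $u\in\Split_\alpha(S_\alpha)$ still splits and $S_{\alpha+1}\leq_\alpha S_\alpha$ holds; the new stems above the $u\ha i$ become $\Split_{\alpha+1}(S_{\alpha+1})$. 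The count becomes $2^{|\alpha+1|}=2^{|\alpha|}$ for infinite $\alpha$, with a harmless adjustment at finite stages. With that one change your argument goes through and coincides with the paper's proof.
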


Note that this immediately implies preservation of $\kappa^+$.

\begin{proof}
Clearly it suffices to show that given $T \in \SS_\kappa$ and a sequence $( \A_\alpha : \alpha < \kappa )$ of maximal antichains in $\SS_\kappa$,
there are $S \leq T$ and $\B_\alpha \sub \A_\alpha$ such that for all $\alpha < \kappa$, $|\B_\alpha | \leq 2^{|\alpha|}$ and 
$\B_\alpha$ is predense below $S$. More explicitly, we recursively construct a fusion sequence of conditions $(T_\alpha: \alpha < \kappa)$ 
and sets $\B_\alpha \subseteq \mathcal{A}_\alpha$ satisfying: 
\begin{itemize}
 \item $\B_\alpha $ is predense below $T_{\alpha + 1}$,
 \item $ \lvert \B_\alpha \lvert \leq 2^{ \lvert \alpha + 1 \lvert}$.
\end{itemize}
For the basic step, let $T_0 = T$. 

For the successor step, suppose that $T_\alpha$ has already been constructed. Fix $u \in \Split_\alpha(T_\alpha)$ and $i \in 2$, and consider the subtree $(T_\alpha)_{u \ha i}$. Since $\mathcal{A}_{\alpha}$ is a maximal antichain there exists a condition $U_{u\ha i} \in \mathcal{A}_{\alpha}$ compatible with $(T_\alpha)_{u \ha i}$. Let $(T_{\alpha+1})_{u \ha i}$ be a common extension. Put $T_{\alpha+1}= \bigcup \{ (T_{\alpha + 1})_{u \ha i} : u \in \Split_\alpha(T_\alpha)$ and $i \in 2\}$. Thus $T_{\alpha+1} \leq_\alpha T_\alpha$ holds and we can define $\B_{\alpha}=\{ U_{u \ha i} : u \in \Split_\alpha (T_\alpha)$ and $i \in 2\}$. Clearly this set has size at most $2^{|\alpha+1|}$ and by construction it is predense below $T_{\alpha+1}$.

For the limit step, suppose we already have constructed $(T_\alpha: \alpha < \delta)$ where $\delta$ is a limit ordinal. We let $T_\delta = \bigcap_{\alpha< \delta} T_\alpha$.  

Take the fusion $S = \bigcap_{\alpha < \kappa} T_\alpha$ of the sequence $(T_\alpha: \alpha < \kappa)$. Then $S \leq_{\alpha} T_\alpha$ for all $\alpha$ which implies that for all $\alpha$, $\B_\alpha$ is predense below $S$. Adjusting the construction for the finite ordinals, if necessary, and noting that $|\alpha + 1 | = |\alpha|$ for infinite ordinals $\alpha$, we see that $| \B_\alpha | \leq 2^{ |\alpha|}$ for all $\alpha < \kappa$, as required.
\end{proof}

\begin{prop}  \label{S-notSacksprop}
$\mathbb{S}_\kappa$ does not have the generalized Sacks property. In fact, $\SS_\kappa$ adds a function $f \in \kappa^\kappa$
such that for all $\varphi \in \Loc_\kappa$ from the ground model, $f (\alpha) \notin \varphi (\alpha)$ for cofinally
many $\alpha$.
\end{prop}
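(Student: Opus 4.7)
The plan is to exhibit an explicit $\SS_\kappa$-name $\dot f$ and show, via a density argument, that no ground-model $\kappa$-slalom can localize $\dot f$ on a co-bounded set. Using strong inaccessibility of $\kappa$, fix in $V$ bijections $c_\alpha : 2^\alpha \to 2^{|\alpha|}$ for each $\alpha < \kappa$ (possible because $2^{|\alpha|} < \kappa$). Let $\dot s \in 2^\kappa$ denote the $\kappa$-Sacks generic, and set $\dot f(\alpha) := c_\alpha(\dot s \re \alpha)$. It then suffices to establish: for every $T_0 \in \SS_\kappa$, every $\varphi \in \Loc_\kappa \cap V$, and every $\alpha_0 < \kappa$, there exist $\gamma \geq \alpha_0$ and $S \leq T_0$ with $S \forces \dot f(\gamma) \notin \varphi(\gamma)$. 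By density this forces $\{\gamma < \kappa : \dot f(\gamma) \notin \varphi(\gamma)\}$ to be cofinal in $\kappa$, and in particular rules out the generalized Sacks property for $h = \id$.

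The key auxiliary object is the function $L : \kappa \to \kappa$ defined by $L(\gamma) := \sup\{|u| : u \in \Split_\gamma(T_0)\}$. I claim $L$ is normal. Strict monotonicity at successor stages is immediate from the definition of $\Split_{\gamma+1}(T_0)$, whose elements $\stem((T_0)_{u\ha i})$ properly extend $u \in \Split_\gamma(T_0)$. Continuity at a limit $\gamma$ is granted by the limit clause in the definition of $\Split_\gamma(T_0)$: its elements are the unions of coherent sequences of earlier splitting nodes, and greedy choices yield $L(\gamma) = \sup_{\beta<\gamma} L(\beta)$. Consequently the fixed-point set $C := \{\gamma < \kappa : L(\gamma) = \gamma\}$ is a club in $\kappa$.

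Now pick any $\gamma \in C$ with $\gamma \geq \max(\alpha_0, \omega)$. Each $u \in \Split_\gamma(T_0)$ then satisfies $|u| \leq \gamma$, and by conditions (1) and (2) in the definition of $\SS_\kappa$, every node of $T_0$ extends within $T_0$ to arbitrary length below $\kappa$; in particular, each such $u$ extends to some $u' \in T_0 \cap 2^\gamma$. The set $\Split_\gamma(T_0)$ is an antichain of cardinality $2^{|\gamma|}$ via the canonical bijection $b$ noted in the text, so distinct $u$'s give distinct $u'$'s, and
\[
|T_0 \cap 2^\gamma| \;\geq\; 2^{|\gamma|} \;>\; |\gamma| \;\geq\; |\varphi(\gamma)| \;\geq\; |c_\gamma^{-1}[\varphi(\gamma)]|.
\]
Therefore there is $u' \in (T_0 \cap 2^\gamma) \setminus c_\gamma^{-1}[\varphi(\gamma)]$, and the condition $S := (T_0)_{u'}$ forces $\dot s \re \gamma = u'$, whence $S \forces \dot f(\gamma) = c_\gamma(u') \notin \varphi(\gamma)$, completing the density step.

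The hard part, I expect, is guaranteeing the cardinality inequality $|T_0 \cap 2^\gamma| > |\gamma|$ on a cofinal set of $\gamma$, since an arbitrary condition $T_0$ could have splitting concentrated at very high levels; the normal-function trick is what locates the club of levels at which $T_0$ actually exposes its inherent $2^{|\gamma|}$ many branches. This is the precise gap between the $2^{|\alpha|}$-Sacks property established in Proposition~\ref{S-Sacksprop} and the failure of the identity-Sacks property asserted here.
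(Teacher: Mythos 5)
Your proof is correct and follows essentially the same route as the paper's: code initial segments of the generic branch as ordinals, locate a cofinal (indeed club) set of levels $\gamma$ at which $T_0\cap 2^\gamma$ has $2^{|\gamma|}>|\gamma|$ elements (the paper does this by taking the supremum of an $\omega$-chain $\alpha_n$ with $\Split_{\alpha_n}(T)\subseteq 2^{\leq\alpha_{n+1}}$, which is exactly your closure-point construction for $L$), and then pick a node avoiding the slalom. The only cosmetic differences are your level-wise bijections $c_\alpha$ versus the paper's single bijection $g:\kappa\to 2^{<\kappa}$, and a harmless overstatement that $L$ is strictly increasing at successors (it need not be, but continuity together with $L(\gamma)\geq\gamma$ already gives the club of closure points you use).
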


\begin{proof}
Let $g$ be a bijection between $\kappa$ and $2^{< \kappa}$. Let $\dot s \in 2^\kappa$ be the name for the generic
$\kappa$-Sacks function. Define the name $\dot f \in \kappa^\kappa$ by $\dot f (\alpha) =  g^{-1} ( \dot s \re \alpha)$ for $\alpha < \kappa$.
Take a slalom $\varphi$ from the ground model and an arbitrary condition $T \in \SS_\kappa$. Fix a cardinal $\alpha_0 < \kappa$.
We need to find $\alpha \geq \alpha_0$ and $S \leq T$ such that $S \forces \dot f (\alpha ) \notin \varphi (\alpha)$.

To this end, recursively construct a strictly increasing sequence of cardinals $(\alpha_n < \kappa : n \geq 1)$ such that 
$\alpha_1 > \alpha_0$ and $\Split_{\alpha_n} (T) \sub 2^{ \leq \alpha_{n+1}}$ for every $n\in\omega$.
Put $\alpha = \sup \{ \alpha_n : n \in \omega \}$. Then $\Split_\alpha (T) = T \cap 2^\alpha$.
In particular $| T \cap 2^\alpha | = | \Split_\alpha (T) | = 2^{ |\alpha |} > |\alpha|$.
Hence we can find $u \in T \cap 2^{\alpha}$ such that
$g^{-1} (u) \notin \varphi (\alpha)$. Since $S = T_u$ forces $\dot s \re \alpha = u$, it also forces $\dot f (\alpha)
\notin \varphi (\alpha)$, as required. 
\end{proof}

Note that the argument in this proof shows that the set $\{ \alpha < \kappa : \Split_\alpha (T) = T \cap 2^\alpha \}$
contains a club. In particular, Proposition~\ref{S-notSacksprop} still holds with the identity $\id$ replaced by any 
$h \in \kappa^\kappa$ such that $\{ \alpha < \kappa : h(\alpha ) < 2^{|\alpha|} \}$ is stationary.


\subsubsection{The product (iteration)}

Propositions~\ref{S-Sacksprop} and~\ref{S-notSacksprop} suggest that by adding many $\kappa$-Sacks functions, 
either by a product or by an iteration, we should be
able to distinguish between the two localization cardinals $\dd_\kappa (\in^*)$ and $\dd_h (\in^*)$ where $h$ is the power
set function. We shall now see that this is indeed the case and, in fact, both the product and the iteration work. Since the
result is slightly more general for the former, allowing for arbitrary values of $2^\kappa$, we shall concentrate on the
product and simply mention that iterating generalized Sacks forcing for $\kappa^{++}$ steps  with supports of size $\kappa$
over a model of GCH yields a model for $\dd_h (\in^*) = \kappa^+ < \dd_\kappa (\in^*) = \kappa^{++} = 2^\kappa$.
See~\cite[Sections 2 and 6]{Ka80} and~\cite{DF10} for such iterations.

\begin{defi}
For a set $A$ of ordinals, $\SS_\kappa^A$ is the {\em $\kappa$-support product} of $\SS_\kappa$ with index set $A$, that is,
$\SS_\kappa^A$ consists of all functions $p : A \to \SS$ such that $\supp (p) = \{ \beta \in A : p (\beta) \neq 2^{<\kappa} \}$
has size at most $\kappa$. $\SS_\kappa^A$ is ordered coordinatewise: $q \leq p$ if $q(\beta) \leq p (\beta)$ for all $\beta \in A$
(this clearly implies $\supp (q) \supseteq \supp (p)$).
\end{defi}

For each $\beta \in A$, $\SS_\kappa^A$ adds a function $s_\beta$ which is $\SS_\kappa$-generic over the ground model.
If $\delta < \kappa$ is a limit ordinal and $(p_\alpha : \alpha < \delta )$ is a decreasing chain of conditions in $\SS_\kappa^A$,
$p= \bigwedge_{\alpha < \delta} p_\alpha \in \SS_\kappa^A$ is defined by $p(\beta) = \bigcap_{\alpha < \delta} p_\alpha (\beta)$. 
In particular, $\SS_\kappa^A$ is $<\kappa$-closed. Furthermore,
if $F \sub A$ has size less than $\kappa$ and $\alpha < \kappa$, we define the {\em fusion ordering} 
$\leq_{F,\alpha}$ by $q \leq_{F,\alpha} p$ if $q \leq p$ and $q (\beta) \leq_\alpha p (\beta)$ for all $\beta \in F$. 

\begin{defi}[Generalized fusion sequence]
A sequence $(p_\al, F_\al : \al< \ka)$ is a {\em generalized fusion sequence} if $p_\al \in \SS_\kappa^A$, $F_\alpha \in [A]^{< \kappa}$, and
\begin{itemize}
\item $p_{\alpha + 1} \leq_{F_\alpha, \alpha} p_\alpha$ for $\alpha < \kappa$, and $p_\delta = \bigwedge_{\alpha<\delta} p_\alpha$ for limit ordinals $\delta$,
\item $F_\alpha \sub F_{\alpha + 1}$ for $\alpha < \kappa$, $F_\delta = \bigcup_{\alpha < \delta} F_\alpha$ for limit ordinals $\delta$, and 
   $\bigcup_{\alpha < \kappa} F_\alpha = \bigcup_{\alpha < \kappa} \supp (p_\alpha)$.
\end{itemize}
\end{defi}

The Generalized Fusion Lemma~\cite[Lemma 1.9 and Section 5]{Ka80} says that the {\em fusion} $p = \bigwedge_{\alpha < \kappa}
p_\alpha$ of a generalized fusion sequence $(p_\al, F_\al : \al< \ka)$ is a condition in $\SS_\kappa^A$. 
This is used to show preservation of $\kappa^+$~\cite[Theorem 5.2]{Ka80}. If $2^\kappa = \kappa^+$ a straightforward
$\Delta$-system argument yields the $\kappa^{++}$-cc~\cite[Theorem 5.3]{Ka80} and, thus, preservation of cardinals $\geq \kappa^{++}$.

\begin{mainlem}   \label{prod-Sacksprop}
Let $h$ be the power set function, i.e., $h (\alpha) = 2^{|\alpha|}$ for all $\alpha < \kappa$.
Then $\SS^A_\kappa$ has the generalized $h$-Sacks property. In fact, given $p \in \SS_\kappa^A$ and any $\SS_\kappa^A$-name
$\dot f : \kappa \to V$ there are $q \leq p$ and $\varphi : \kappa \to V$ such that $|\varphi (\alpha) | \leq 2^{|\alpha|}$ for all
$\alpha < \kappa$ and $q \forces \dot f (\alpha) \in \varphi (\alpha)$ for $\alpha < \kappa$.
\end{mainlem}

Note that this also implies preservation of $\kappa^+$.

\begin{proof}
This is an elaboration of the proof of Proposition~\ref{S-Sacksprop}.  Let $p \in \SS^A_\kappa$ and let
$( \A_\alpha : \alpha < \kappa )$ be a sequence of maximal antichains in $\SS^A_\kappa$. 
We recursively construct a generalized fusion sequence $(p_\al, F_\al : \al< \ka)$ and sets
$\B_\alpha \subseteq \mathcal{A}_\alpha$ satisfying: 
\begin{itemize}
 \item $|F_\alpha| \leq |\alpha|$,
 \item $\B_\alpha $ is predense below $p_{\alpha + 1}$,
 \item $ \lvert \B_\alpha \lvert \leq 2^{ \lvert \alpha \lvert}$ for infinite $\alpha$.
\end{itemize}
For the basic step, let $p_0 = p$ and $F_0 = \emptyset$. 

For the successor step, suppose that $p_\alpha$ and $F_\alpha$ have already been constructed. 
Let $\bar U = \{ \bar u = ( u_\beta : \beta \in F_\alpha ) : u_\beta \in \Split_\alpha (p_\alpha (\beta))$ for all $\beta \in F_\alpha\}$.
Since $| \Split_\alpha (p_\alpha (\beta)) | = 2^{|\alpha|}$ for each $\beta$ and $|F_\alpha| \leq |\alpha|$, we
see that $|\bar U | = (2^{|\alpha|})^{|\alpha|} = 2^{|\alpha|}$ for infinite $\alpha$. Thus let
$( (\bar u_\gamma , g_\gamma) : \gamma < 2^{|\alpha|} )$ enumerate all pairs $(\bar u , g) \in \bar U \times 2^{F_\alpha}$.
Recursively construct a decreasing chain $( q_\alpha^\gamma :  \gamma < 2^{|\alpha|} )$ of conditions and
a sequence $(r_\alpha^\gamma : \gamma < 2^{|\alpha|})$ of elements of $\A_\alpha$ such that
\begin{itemize}
\item $q_\alpha^0 = p_\alpha$, $q_\alpha^\delta \leq_{F_\alpha ,\alpha} q_\alpha^\gamma$ for all $\gamma \leq \delta$,
\item $(q_\alpha^{\gamma + 1})_{(\bar u_\gamma , g_\gamma)} \leq r_\alpha^\gamma$,
\item $q_\alpha^\delta = \bigwedge_{\gamma <\delta} q_\alpha^\gamma$ for limit ordinals $\delta$.
\end{itemize}
Here, for $(\bar u, g) \in \bar U \times 2^{F_\alpha}$, $q_{(\bar u , g)}$ is defined by:
\[ q_{(\bar u , g)} (\beta)=
\begin{cases}
(q (\beta))_{ u_\beta \ha g(\beta)} & \text{if } \beta \in F_\alpha \\
q  (\beta)  & \text{otherwise.}
\end{cases}
\] 
Since the basic step and the limit step are straightforward, it suffices to do the successor step of this recursion. 
Assume $q_\alpha^\gamma$ has been produced. Since $\mathcal{A}_{\alpha}$ is a maximal antichain 
there exists a condition $r_\alpha^\gamma \in \mathcal{A}_{\alpha}$ compatible with $(q_\alpha^{\gamma })_{(\bar u_\gamma , g_\gamma)}$. 
Let $q'$ be a common extension. Define $q_{\alpha}^{\gamma + 1}$ such that
\[ q_\alpha^{\gamma + 1} (\beta) =
\begin{cases}
q' (\beta) \cup \bigcup \{ (q_\alpha^\gamma (\beta) )_{u \ha i} : u \in \Split_\alpha (p_\alpha (\beta)) \sem \{ ( u_\gamma)_\beta  \} & \\
  \hfill  \text{ or } i \neq g_\gamma (\beta) \} & \text{if } \beta \in F_\alpha \\
q'  (\beta)  & \text{otherwise.}
\end{cases}
\] 
Clearly $q_\alpha^{\gamma + 1} \leq_{F_\alpha ,\alpha} q_\alpha^\gamma$.
This completes the recursive construction. Let $p_{\alpha + 1} = \bigwedge_{\gamma < 2^{|\alpha|} } q_\alpha^\gamma$
and $\B_\alpha = \{ r_\alpha^\gamma : \gamma < 2^{|\alpha|} \}$. Clearly,  $p_{\alpha+1} \leq_{F_\alpha, \alpha} p_\alpha$ and $\B_{\alpha}$ 
has size at most $2^{|\alpha|}$ and is predense below $p_{\alpha+1}$. Finally define $F_{\alpha + 1}$ by adding
a single element to $F_\alpha$ and by guaranteeing via a book-keeping argument that the union of the $F_\alpha$
will agree with the union of the $\supp (p_\alpha)$.

For the limit step, suppose we already have constructed $(p_\alpha , F_\alpha: \alpha < \delta)$ where $\delta$ is a limit ordinal. We let $p_\delta = \bigwedge_{\alpha< \delta} p_\alpha$ and $F_\delta = \bigcup_{\alpha < \delta} F_\alpha$.

Take the fusion $q = \bigwedge_{\alpha < \kappa} p_\alpha$ of the sequence $(p_\alpha , F_\alpha : \alpha < \kappa)$. Then $q \leq p_\alpha$ 
for all $\alpha$ which implies that for all $\alpha$, $\B_\alpha$ is predense below $q$. 
Adjusting the construction for the finite ordinals, if necessary, we may 
assume that $| \B_\alpha | \leq 2^{ |\alpha|}$ for all $\alpha < \kappa$, as required.
\end{proof}

\begin{thm}   \label{Sacks-model}
Assume GCH and let $\lambda > \kappa^+$ be a cardinal with $\cf (\lambda) > \kappa$. 
Forcing with $\SS_\kappa^\lambda$ yields a generic extension in which  
$\dd_\kappa (\p\in^*) = \dd_h (\in^*) = \kappa^+$ and $\dd_\kappa (\in^*) = 2^\kappa = \lambda$.
\end{thm}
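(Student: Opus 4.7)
The forcing $\SS_\kappa^\lambda$ is $<\kappa$-closed, $\kappa^{++}$-cc under GCH (by a standard $\Delta$-system argument using $2^\kappa=\kappa^+$), and preserves $\kappa^+$ by Main Lemma~\ref{prod-Sacksprop}; hence all cardinals are preserved. The equality $2^\kappa=\lambda$ then follows by a nice-name count. For the upper bound $\dd_h(\in^*)\leq\kappa^+$, apply Main Lemma~\ref{prod-Sacksprop}: every $f\in\kappa^\kappa$ in the extension is localized by some $h$-slalom from $V$; but under GCH and strong inaccessibility of $\kappa$ we have $|[\kappa]^{2^{|\alpha|}}|=\kappa$ for each $\alpha<\kappa$, so there are only $2^\kappa=\kappa^+$ many $h$-slaloms in $V$, witnessing $\dd_h(\in^*)\leq\kappa^+$. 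Combined with the trivial bound $\dd_h(\in^*)\geq\dd_\kappa\geq\kappa^+$ this gives equality. For $\dd_\kappa(\p\in^*)$ the chain
\[ \dd_\kappa(\p\in^*)\;=\;\dd_h(\p\in^*)\;\leq\;\dd_h(\in^*)\;\leq\;\kappa^+ \]
(using the $h$-independence Corollary in Section~3.2 and $\pLoc_h\supseteq\Loc_h$) completes the upper estimate, while the lower bound again comes from $\dd_\kappa\geq\kappa^+$.

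The substantive part is $\dd_\kappa(\in^*)\geq\lambda$. Suppose $\{\dot\varphi_\delta:\delta<\mu\}$ is a family of $\SS_\kappa^\lambda$-names for $\id$-slaloms with $\mu<\lambda$. By the $\kappa^{++}$-cc and the shape of an $\id$-slalom (for each $\alpha<\kappa$, the value $\varphi_\delta(\alpha)$ is coded by a subset of $\kappa$, hence by a nice name using at most $\kappa\cdot\kappa^+=\kappa^+$ conditions, each of $\kappa$-support), each $\dot\varphi_\delta$ is in fact a name in a subproduct $\SS_\kappa^{A_\delta}$ with $A_\delta\subseteq\lambda$ and $|A_\delta|\leq\kappa^+$. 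Set $A=\bigcup_{\delta<\mu}A_\delta$; since $\cf(\lambda)>\kappa\geq\kappa^+$ and $\mu<\lambda$, we have $|A|<\lambda$, so we may choose $\beta\in\lambda\setminus A$. Factoring $\SS_\kappa^\lambda\cong\SS_\kappa^A\times\SS_\kappa^{\{\beta\}}\times\SS_\kappa^{\lambda\setminus(A\cup\{\beta\})}$, the coordinate-$\beta$ real $s_\beta$ is $\SS_\kappa$-generic over the intermediate model $V[G_A]$, which still satisfies that $\kappa$ is strongly inaccessible (cardinals and $2^{<\kappa}=\kappa$ are preserved by the $<\kappa$-closed, $\kappa^{++}$-cc factor $\SS_\kappa^A$). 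Fix a bijection $g\colon\kappa\to 2^{<\kappa}$ in $V$ and define $f_\beta(\alpha)=g^{-1}(s_\beta\restriction\alpha)$. Applying Proposition~\ref{S-notSacksprop} inside $V[G_A]$ to the $\SS_\kappa$-generic $s_\beta$, the function $f_\beta$ cofinally escapes every $V[G_A]$-slalom; in particular $f_\beta\not\in^*\varphi_\delta$ for every $\delta<\mu$. Since the remaining factor $\SS_\kappa^{\lambda\setminus(A\cup\{\beta\})}$ over $V[G_A][s_\beta]$ cannot change this conclusion (the $\varphi_\delta$ and $f_\beta$ are already there), the family $\{\varphi_\delta:\delta<\mu\}$ fails to localize $f_\beta$ in $V[G]$, so $\dd_\kappa(\in^*)\geq\lambda$. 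The upper bound $\dd_\kappa(\in^*)\leq 2^\kappa=\lambda$ being trivial, we conclude $\dd_\kappa(\in^*)=\lambda$.

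The main obstacle is the ``capture lemma'' that each slalom name lives in a subproduct of size $\leq\kappa^+$; this rests on the combination of $\kappa^{++}$-cc with the fact that a slalom is coded by $\kappa$ many bounded pieces, each of which is a name for a subset of $\kappa$ and therefore supported by an antichain of size $<\kappa^{++}$. Once this, the product-factoring and the single-step result Proposition~\ref{S-notSacksprop} do the rest; no new fusion argument is needed for the lower bound since Main Lemma~\ref{prod-Sacksprop} has already handled $\kappa^+$-preservation and the $h$-Sacks property at the product level.
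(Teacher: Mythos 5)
Your proposal is correct in substance and its skeleton coincides with the paper's: the upper bounds come from Main Lemma~\ref{prod-Sacksprop} together with a count of ground-model $h$-slaloms (of which there are $2^\kappa=\kappa^+$ under GCH), and the lower bound comes from capturing each slalom name in a subproduct of size $\le\kappa^+$ via the $\kappa^{++}$-cc and then letting a fresh coordinate $\beta$ escape. Where you genuinely diverge is in how the escape is executed. The paper deliberately does \emph{not} quote Proposition~\ref{S-notSacksprop} over an intermediate model (it remarks that the bound follows ``almost, but not quite'' from that proposition); instead it stays inside the full product: given $p$ and $\gamma$, it builds a $<\kappa$-decreasing chain in the $A_\gamma$-coordinates deciding $\dot\varphi_\gamma\re\alpha$ for all $\alpha$, reads off a ground-model slalom $\varphi$, runs the splitting-level computation of Proposition~\ref{S-notSacksprop} on $p(\beta)$ against this fixed $\varphi$, and glues the two pieces into a single condition $q$. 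Your route through the product lemma ($s_\beta$ generic over $V[G_A]$) is cleaner and equally valid, but it rests on two observations you only partly supply: first, $\kappa$ remains strongly inaccessible in $V[G_A]$ because $\SS_\kappa^A$ is $<\kappa$-closed (you note this); second, $s_\beta$ is generic for $\SS_\kappa$ \emph{as computed in $V$}, which may be a proper, non-dense subforcing of $\SS_\kappa$ computed in $V[G_A]$, so one cannot literally apply Proposition~\ref{S-notSacksprop} inside $V[G_A]$ --- one must rerun its density argument for the $V$-version of the forcing against slaloms of $V[G_A]$, which does work verbatim since the proof uses only $|T\cap 2^\alpha|=2^{|\alpha|}>|\alpha|$ and $2^{<\kappa}=\kappa$. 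Two further small points: the inequality ``$\cf(\lambda)>\kappa\ge\kappa^+$'' is a slip --- $|A|<\lambda$ already follows from $\mu,\kappa^+<\lambda$ and $\lambda$ being a cardinal; and the ``nice-name count'' for $2^\kappa=\lambda$ only gives the bound $\lambda^{\kappa^+}$, which under GCH equals $\lambda$ only when $\cf(\lambda)>\kappa^+$ --- in the case $\cf(\lambda)=\kappa^+$ the paper invokes the generalized $h$-Sacks property of Main Lemma~\ref{prod-Sacksprop} to close exactly this gap, so it should not be waved away.
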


\begin{proof}
$\dd_h (\in^*) = \kappa^+$ follows immediately from Main Lemma~\ref{prod-Sacksprop}. By earlier results, this also implies
that all other cardinals we have considered are equal to $\kappa^+$. Furthermore, $2^\kappa = \lambda$ is an easy
consequence of GCH and $\cf (\lambda) > \kappa$, using Main Lemma~\ref{prod-Sacksprop} in the $\cf (\lambda) =
\kappa^+$ case. Hence we are left with showing $\dd_\kappa (\in^*) \geq \lambda$.
This follows almost, but -- since we are dealing with a product and not an iteration -- not quite, from Proposition~\ref{S-notSacksprop}.

Assume $\mu < \lambda$, and let $\{ \dot \varphi_\gamma : \gamma < \mu \}$ be $\SS_\kappa^\lambda$-names for
slaloms in $\Loc_\kappa$. By the $\kappa^{++}$-cc, we may find sets $A_\gamma \sub \lambda$, $\gamma < \mu$,
of size at most $\kappa^+$ such that $\dot \varphi_\gamma$ is added by the subforcing $\SS_\kappa^{A_\gamma}$.
Fix $\beta \in \lambda \sem \bigcup_{\gamma < \mu} A_\gamma$. Let $\dot s_\beta \in 2^\kappa$ be the name for the generic
$\kappa$-Sacks function added in coordinate $\beta$. Define the name $\dot f \in \kappa^\kappa$ by 
$\dot f (\alpha) =  g^{-1} ( \dot s_\beta \re \alpha)$ for $\alpha < \kappa$ where $g$ is again a bijection between $\kappa$ and $2^{< \kappa}$,
like in Proposition~\ref{S-notSacksprop}. We will prove that $\dot f$ is forced not to be localized by any $\dot\varphi_\gamma$.
This is clearly sufficient.

Fix $\gamma < \mu$. Also fix $p \in \SS_\kappa^\lambda$  and a cardinal $\alpha_0 < \kappa$. 
We need to find $\alpha \geq \alpha_0$ and $q \leq p$ such that $q \forces \dot f (\alpha ) \notin \dot \varphi_\gamma (\alpha)$.
Let $p^0 = p \re A_\gamma \in \SS_\kappa^{A_\gamma}$. Construct a decreasing chain $(p^0_\alpha : \alpha < \kappa )$ 
of conditions in $\SS_\kappa^{A_\gamma}$ with $p^0_0 = p^0$ and a 
slalom $\varphi \in \Loc_\kappa$ such that $p^0_\alpha \forces \dot \varphi_\gamma \re \alpha = \varphi \re \alpha$.

Next, as in the proof of Proposition~\ref{S-notSacksprop}, recursively construct a strictly increasing sequence of cardinals 
$(\alpha_n < \kappa : n \geq 1)$ such that $\alpha_1 > \alpha_0$ and $\Split_{\alpha_n} (p (\beta)) \sub 2^{\leq \alpha_{n+1}}$
for every $n \in \omega$.
Put $\alpha = \sup \{ \alpha_n : n \in \omega \}$.
Again we see that $|p(\beta) \cap 2^{\alpha}| = | \Split_\alpha (p(\beta)) | = 2^{|\alpha|} > \alpha$. 
Hence we can find $u \in p(\beta) \cap 2^{\alpha}$ such that
$g^{-1} (u) \notin \varphi (\alpha)$. Now define a condition $q$ by 
\begin{itemize}
\item $q(\beta) = (p(\beta))_u$,
\item $q \re A_\gamma = p^0_{\alpha + 1}$,
\item $q (\delta) = p (\delta)$ for $\delta \notin A_\gamma \cup \{ \beta \}$.
\end{itemize}
Clearly $q$ forces $\dot s_\beta \re \alpha = u$ and, thus, $\dot f (\alpha) = g^{-1} (u) \notin \varphi (\alpha) = \dot \varphi_\gamma (\alpha)$,
as required. 
\end{proof}

A detailed analysis of the two preceding proofs yields that for any function $h :\kappa \to \kappa$ with 
$\lim_{\alpha \to \kappa} h(\alpha) = \kappa$ that is above the power set function on a club (that is,
$\{ \alpha < \kappa : h(\alpha) \geq 2^{|\alpha|} \}$ contains a club), the $\kappa$-support product of
generalized Sacks forcing satisfies the $h$-Sacks property and, thus, $\dd_h (\in^*) = \kappa^+$ 
in the generic extension, while for any such function $h$ that is strictly below the power set function
on a stationary set, $\dd_h (\in^*)$ is large in the extension (see the comment after Proposition~\ref{S-notSacksprop}).
For example, for any continuous function $h:\kappa\to\kappa$, $\mathfrak{d}_h(\in^*)$ is large in the extension, as $h$ is the identity on a club.

Our results suggest some questions.

\begin{ques}
Is $\bb_\kappa (\in^*) < \bb_h (\in^*)$ consistent where $h$ is the power set function?
\end{ques}

This would be the consistency result dual to the one of Theorem~\ref{Sacks-model}. A natural approach
would be to iterate an appropriate version of the generalized localization forcing of Subsection~\ref{gen-loc}
with supports of size less than $\kappa$. However, we do not have the preservation results necessary to
show that $\bb_\kappa (\in^*)$ stays small. If this indeed could be done the dual model should provide an alternative
proof of Theorem~\ref{Sacks-model}.

\begin{ques}
Is it consistent that three cardinals of the form $\dd_h (\in^*)$ for different $h \in \kappa^\kappa$ 
simultaneously assume distinct values?
\end{ques}

A still simpler question might be:

\begin{ques}
Is it consistent that for some function $g$ strictly dominating the power set function $h$,
$\dd_g (\in^*) < \dd_h (\in^*)$ is consistent?
\end{ques}


\subsubsection{Generalized Silver forcing}

We finally note that instead of generalized Sacks forcing we may consider generalized Silver forcing
(see e.g.~\cite[Example 3.2]{FKK16} for this forcing notion) and add many $\kappa$-Silver functions
either with a $\kappa$-support product or a $\kappa$-support iteration. It is not difficult to see that 
this has the same effect on our cardinals, that is, $\dd_\kappa (\in^*)$ is increased to $2^\kappa$
and $\dd_h (\in^*)$ stays at $\kappa^+$ where $h$ is the power set function.


\subsection{Generalized Miller forcing}

\subsubsection{The single step}

Versions of generalized Miller forcing were investigated by Friedman, Honz\'ik, and Zdomskyy
in~\cite{FZ10} and~\cite{FHZ13}. Assume $\F$ is a normal and $\kappa$-complete non-principal filter
on $\kappa$. If $T \sub \kappa^{<\kappa}$ is a tree and $u \in T$, $u$ {\em $\F$-splits} (or simply {\em splits})
in $T$ if $\succ_T (u) = \{ \beta < \kappa : u \ha \beta \in T\}$ belongs to $\F$.

\begin{defi}
{\em Generalized Miller forcing $\MI_\kappa^\F$ with the filter $\F$} is defined as follows:
\begin{itemize}
\item conditions in $\MI_\kappa^\F$ are subtrees $T$ of $\kappa^{<\kappa}$ consisting of strictly increasing sequences such that:
\begin{enumerate} 
\item each $u \in T$ has a splitting extension $t \in T$, that is, $u \sub t$ and $t$ $\F$-splits in $T$ and, moreover, if $u$ does not
   $\F$-split, then $|\succ_T (u)| = 1$,
\item if $\alpha < \kappa$ is a limit ordinal, $u \in \kappa^\alpha$, and $u \re \beta \in T$ for every $\beta < \alpha$, then $u \in T$,
\item if $\alpha < \kappa$ is a limit ordinal, $u \in \kappa^\alpha$, and for arbitrarily large $\beta < \alpha$, $u \restriction \beta$ $\F$-splits in $T$,    
   then $u$ $\F$-splits in $T$;
\end{enumerate}
\item the order is given by inclusion, i.e. $T \leq S$ if $T \subseteq S$. 
\end{itemize}
\end{defi}

$\MI_\kappa^\F$ generically adds a function $m = m_G$ from $\kappa$ to $\kappa$, called a {\em $\kappa$-Miller function}
and given by $m = \bigcap \{T: T \in G \}$, where $G$ is a $\MI_\kappa^\F$-generic filter. A straightforward
genericity argument shows that $m$ is unbounded over the ground model functions.

Since the filter $\F$ is $\kappa$-complete, it is easy to see that $\MI_\kappa^\F$ is $< \kappa$-closed.
As for generalized Sacks forcing,
if $T \in \MI_\kappa^\F$ and $u \in T$, let $T_u = \{ t \in T : t \sub u$ or $u \sub t\}$.
For $T \in \MI_\kappa^\F$, $\stem(T)$ is the unique splitting node that is comparable with all elements in $T$. 
Also let $\Split (T)$ be the set of all $u \in T$ which split in $T$. Given $u \in \kappa^{< \kappa}$
let $\ell (u)$ denote the {\em length} of $u$, i.e. the unique $\alpha$ such that $u \in \kappa^\alpha$.

\begin{defi}[The $\alpha$-th splitting level of $T$, see~\cite{FZ10}]
Let $\Split_\alpha (T)$ be the set of all $u \in \Split (T)$ such that 
\begin{itemize}
\item $\{ v \in \Split (T) : v \subsetneq u \}$ has order type at most $\alpha$,
\item for all $v \subsetneq u$ in $\Split (T)$, $u (\ell (v) ) \cap \succ_T (v)$ has order type at most $\alpha$. 
\end{itemize}
\end{defi}

Thus $\Split_0 (T) = \{ \stem (T) \}$ and we note for later use that
$|\Split_\alpha (T) | \leq | (\alpha + 1)^{ \alpha + 1 } |$. Also note that $\Split_\alpha (T) \subset \Split_\beta (T)$
for $\alpha < \beta$ and that $\Split (T)$ is the increasing union of the $\Split_\alpha (T)$.
Using again the splitting levels $\Split_\alpha (T)$, define the
{\em fusion orderings} $\leq_\alpha$ for $\alpha < \kappa$ by $S \leq_\alpha T$ if $S \leq T$ and $\Split_\al(T)= \Split_\al(S)$.

\begin{defi}[Fusion sequence]
A sequence of conditions $(S_\al: \al< \ka) \subseteq \MI_\kappa^\F$ is a {\em fusion sequence} if 
\begin{itemize}
\item $S_{\alpha +1} \leq_\al S_\al$ for every $\alpha < \kappa$, 
\item $S_\delta= \bigcap_{\al< \delta} S_\al$  for limit $\delta< \ka$. 
\end{itemize}
\end{defi}

The Fusion Lemma~\cite[Lemma 2.3]{FZ10} says that
for a fusion sequence $(S_\al: \al< \ka)$, the {\em fusion} $S = \bigcap_{\alpha < \kappa} S_\alpha$ is a condition in $\MI_\kappa^\F$. 
The proof uses $\kappa$-completeness and normality of the filter $\F$.

For $\kappa = \omega$, classical Miller forcing does not add Cohen reals. However,
for inaccessible $\kappa$, forcings of the type $\MI_\kappa^\F$ may add $\kappa$-Cohen
functions.

\begin{prop}
$\kappa$-Miller forcing $\MI_\kappa^\C$ with the club filter $\mathcal{C}$ adds a $\kappa$-Cohen function.
\end{prop}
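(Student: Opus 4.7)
By Solovay's theorem, fix a partition $\kappa = S_0 \sqcup S_1$ into two disjoint stationary subsets; hence both $S_0$ and $S_1$ meet every club of $\kappa$ in a stationary set. Let $L \subseteq \kappa$ be the club of limit ordinals and fix in $V$ a bijection $e \colon \kappa \to L$. Define the $\MI_\kappa^\mathcal{C}$-name $\dot c \in 2^\kappa$ by $\dot c(\gamma) = 0$ iff $\dot m(e(\gamma)) \in S_0$, where $\dot m$ names the generic $\kappa$-Miller function. The plan is to show the trivial condition forces $\dot c$ to be $\kappa$-Cohen generic over $V$.

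Given dense open $D \subseteq \Fn(\kappa,2,\kappa)$ in $V$ and $T \in \MI_\kappa^\mathcal{C}$, the task is to produce $T' \leq T$ and $s \in D$ with $T' \Vdash s \subseteq \dot c$. Observe first that along every branch of any condition in $\MI_\kappa^\mathcal{C}$ the set of splitting levels forms a club of $\kappa$: it is cofinal by axiom~(1) (every node has a splitting extension) and closed by axiom~(3) (cofinally-many splittings below a limit force splitting at the limit). Exploiting this together with the normality of the club filter $\mathcal{C}$, I carry out a fusion refinement $T \mapsto T^{\ast} \leq T$ that densifies the splittings so that every limit ordinal $\alpha \geq \ell(\stem(T^{\ast}))$ becomes a splitting level along every branch of $T^{\ast}$; this is done in a $\kappa$-step fusion where at each successor stage one shortens the gap until the next splitting so no gap straddles a limit ordinal, and diagonal intersections of clubs (by normality of $\mathcal{C}$) coordinate the branchwise clubs into a single ground-model club of splitting levels.

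With $T^{\ast}$ in hand, use the density of $D$ to pick $s \in D$ extending the partial function $\sigma := \langle [\stem(T^{\ast})(e(\gamma)) \in S_0] : e(\gamma) < \ell(\stem(T^{\ast})) \rangle$. Then build $u \in T^{\ast}$ extending $\stem(T^{\ast})$ by transfinite recursion through $\dom(s)$ in increasing order: at each $\gamma \in \dom(s)$ with $e(\gamma) \geq \ell(\stem(T^{\ast}))$, the node $u \re e(\gamma)$ lies in $\Split(T^{\ast})$ since $e(\gamma) \in L$, and one chooses $u(e(\gamma)) \in \succ_{T^{\ast}}(u \re e(\gamma)) \cap S_{s(\gamma)}$, which is nonempty as a club-meets-stationary intersection. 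Setting $T' := T^{\ast}_u$ gives $T' \Vdash u \subseteq \dot m$, whence $T' \Vdash s \subseteq \dot c$, completing the verification.

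The main technical obstacle is the fusion refinement $T \mapsto T^\ast$: while each branch of $T$ has its own splitting club, aligning them into a common ground-model club that dominates $L$ along every branch requires a careful $\kappa$-length fusion using normality of $\mathcal{C}$ at limit stages, and it is this uniform alignment -- rather than any single-branch property -- that is what ultimately lets the club-filter Miller forcing project onto $\kappa$-Cohen forcing, in contrast to Miller forcings based on, say, a normal ultrafilter.
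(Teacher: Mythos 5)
Your plan founders on the ``fusion refinement'' $T \mapsto T^\ast$. Passing to a stronger condition in $\MI_\kappa^\C$ can only \emph{remove} splitting: if $u$ does not split in $T$ then $|\succ_T(u)|=1$, so in any subtree $S \leq T$ containing $u$ we have $\succ_S(u) \subseteq \succ_T(u)$ and $u$ still fails to split. Hence the gap between a node and its next splitting extension can never be shortened by strengthening, and no fusion (nor any use of normality of $\C$) can turn every limit level into a splitting level. The set of splitting levels along a branch is indeed a club, as you say, but it is a club handed to you by $T$, not one you get to prescribe; a condition may have consecutive splitting levels at distance $\omega^2$, say, so that almost no level of the form $e(\gamma)$ splits. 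This is not a repairable technicality: the name $\dot c$ you define is genuinely not forced to be $\kappa$-Cohen. For a concrete counterexample, take $e$ to enumerate the limit ordinals, build $T$ whose splitting levels along every branch are exactly $\{\omega^2\cdot(1+\alpha) : \alpha<\kappa\}$ and whose unique successors at all non-splitting nodes lie in $S_0$. Since $e(\gamma)$ and $e(\gamma+1)=e(\gamma)+\omega$ can never both be splitting levels, $T$ forces that $\dot c(\gamma)=0$ or $\dot c(\gamma+1)=0$ for every $\gamma$, so $\dot c$ avoids the dense set $\{s : \exists\gamma\,(s(\gamma)=s(\gamma+1)=1)\}$.

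The paper's proof avoids this trap by not tying one Cohen bit to one Miller level. It partitions $\kappa$ into $\kappa$ many stationary sets $(S_\alpha : \alpha<\kappa)$ and, via a bijection $\psi : \kappa \to 2^{<\kappa}$, reads off from each single value $m(\alpha)$ an entire block $\psi(f(m(\alpha))) \in 2^{<\kappa}$ of the Cohen function, where $f(\beta)$ is the index of the stationary set containing $\beta$; the Cohen real is the concatenation of these blocks. Then one splitting node suffices: given $T$ with stem $\sigma$, the already-decided part $v$ is extended to $v\ha w \in D$, and since $\succ_T(\sigma)$ is a club it meets $S_{\psi^{-1}(w)}$, so a single one-step extension of the stem realizes the whole block $w$. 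The values contributed at non-splitting levels are simply absorbed as part of the decided initial segment, so no control over where $T$ splits is ever needed. If you want to salvage your two-stationary-set idea you would have to let the positions at which bits are read depend on the (generically determined) splitting structure rather than on a fixed ground-model bijection $e$, at which point you are essentially forced into something like the paper's block-coding argument.
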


\begin{proof}
In the ground model $V$, let $(S_\alpha: \alpha< \kappa)$ be a partition of $\kappa$ into stationary sets. Define $f \in \kappa^\kappa$ by letting $f(\beta)$ be the unique $\alpha$ such that $\beta \in S_\alpha$. Also, let  $\psi$ be a bijection between $\ka$ and $2^{<\ka}$. Finally, let $\dot m$ be the $\kappa$-Miller function added by $\mathbb{MI}_\kappa^\mathcal{C}$.
 
In the generic extension, define the function $g^* : \kappa \to 2^{<\kappa}$ as the composition $g^\ast= \psi \circ f \circ m$.  We claim that the function $g: \ka \to 2$ that concatenates the values of $g^\ast$, that is,
\[ g= g^\ast(0) \ha g^\ast(1) \ha \ldots \ha g^\ast(\al) \ha ... \] 
is $\kappa$-Cohen generic over the ground model $V$. To show this, take $T \in \mathbb{MI}_\kappa^\mathcal{C}$ and $\mathcal{D} \subseteq \mathbb{C}_{\kappa}$ open dense. It is enough to prove that we can find $u \in \mathcal{D}$ and $S \leq T$ such that $S \Vdash u \subseteq \dot{g}$.

Put $\sigma = \stem(T)$, consider the composition $v^\ast = \psi \circ f \circ \sigma \in (2^{<\kappa})^{<\kappa}$, and let $v$ be its concatenation. Then $v \in \mathbb{C}_\ka$ and so there exists $u \in \D$ such that $u \supseteq v$. Note that $T \Vdash \dot m \restriction {\ell( \sigma )} = 
\sigma$, and hence $T \forces \dot g \re \ell (v) = v$.
Define a sequence $w \in 2^{<\kappa}$ by $w (\alpha) = u (\ell (v) + \alpha)$ for all $\alpha$ with $\ell (v) + \alpha < \ell (u)$. 
(That is, $u = v \ha w$ is the concatenation of $v$ and $w$.) Since $\succ_T (\sigma)$ is a club set, there is $\beta \in \succ_T (\sigma) \cap S_{\psi^{-1} (w)}$. Let $\tau = \sigma \ha \beta$ and $S = T_\tau$. Then $\ell (\tau) = \ell (\sigma) + 1$, $\tau ( \ell (\sigma)) = \beta$, 
$\psi (f ( \tau  ( \ell (\sigma)) )) = w$, and letting $u^* = \psi\circ f \circ \tau \in (2^{<\kappa})^{<\kappa}$, we see that its concatenation is exactly $u = v \ha w$. Hence $S \forces u \sub \dot g$, as required.
\end{proof}


For the subsequent discussion, assume that $\U$ is a $\kappa$-complete normal ultrafilter on a measurable cardinal $\kappa$.

\begin{prop}[Pure decision property]   \label{pure-dec}
Let $\varphi$ be a sentence of the forcing language. Assume $T \in \mathbb{MI}_\kappa^\mathcal{U}$. Then there is $S \leq T$ with the same stem such that $S$ decides $\varphi$, that is, $S \Vdash \varphi$ or $S \Vdash \neg \varphi$.
\end{prop}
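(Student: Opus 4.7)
The plan is to argue by contradiction: assume no $S \leq T$ with $\stem(S) = \stem(T) =: \sigma$ decides $\varphi$, and construct via fusion a condition $S \leq T$ in which the non-decidability property is propagated to every splitting node, then derive a contradiction from density.

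The main tool is a combining lemma: whenever $T' \in \MI_\kappa^\U$ has stem $u'$, $A \in \U$ with $A \sub \succ_{T'}(u')$, and for each $\beta \in A$ there is a subtree $S_\beta \leq T'_{u' \ha \beta}$ with $\stem(S_\beta) = \stem(T'_{u' \ha \beta})$ satisfying $S_\beta \Vdash \varphi$, then the amalgamation
\[ S := \{ t \in T' : t \sub u' \text{ or } \exists \beta \in A \, (u' \ha \beta \sub t \text{ and } t \in S_\beta) \} \]
is a condition in $\MI_\kappa^\U$ with $\stem(S) = u'$ and $S \Vdash \varphi$; the verification is routine ($\succ_S(u') = A \in \U$ makes $u'$ the first splitting node, closure under limits is inherited from the $S_\beta$, and below each $u' \ha \beta$ the tree $S$ coincides with $S_\beta$), and an analogous statement holds for $\neg\varphi$. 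Taking the contrapositive and exploiting the ultrafilter property of $\U$ gives the key dichotomy: whenever $T'$ has no same-stem subtree deciding $\varphi$, the set $B_{T',u'} := \{\beta \in \succ_{T'}(u') : T'_{u' \ha \beta}$ admits no same-stem subtree deciding $\varphi\}$ lies in $\U$, because otherwise its complement lies in $\U$ and can be split into a $\varphi$-deciding piece and a $\neg\varphi$-deciding piece (one of which remains in $\U$), on which the combining lemma would manufacture a same-stem subtree of $T'$ deciding $\varphi$.

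Using this dichotomy, I would build a fusion sequence $(T_\alpha : \alpha < \kappa)$ with $T_0 = T$ by, at each successor stage $\alpha + 1$, replacing $\succ_{T_\alpha}(u)$ by a $\U$-subset of $B_{T_\alpha, u}$ for each $u \in \Split_\alpha(T_\alpha)$ (legitimate inductively, since the non-decidability property is inherited from stage to stage by choice of successors in the bad set), and taking intersections at limits. The Fusion Lemma~\cite[Lemma 2.3]{FZ10} (using $\kappa$-completeness and normality of $\U$) ensures that $S := \bigcap_{\alpha < \kappa} T_\alpha$ is a condition in $\MI_\kappa^\U$, and by construction every splitting node $u$ of $S$ satisfies that no subtree of $T_u$ with stem $u$ decides $\varphi$ (for $u = \sigma$ this is the standing hypothesis; for deeper splitting nodes it is how they were chosen). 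To conclude, by density some $S' \leq S$ decides $\varphi$; its stem $u^* := \stem(S')$ has $\succ_{S'}(u^*) \in \U$ and $\succ_{S'}(u^*) \sub \succ_S(u^*)$, so $u^*$ is a splitting node of $S$ and in particular $u^* \supseteq \sigma$. But then $S' \leq T_{u^*}$ has stem $u^*$ and decides $\varphi$, contradicting the non-decidability at $u^*$ established by the fusion. The principal technical difficulty is the bookkeeping in the fusion step that guarantees the non-decidability property descends to every new splitting node, closely mirroring the generalized Sacks fusion scheme of Section 5.1.2.
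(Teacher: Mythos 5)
Your argument is workable in outline, but it takes a far longer route than necessary, and the one step you flag as "the principal technical difficulty" is also the one place where your description, as written, does not quite go through. The paper's proof is a single amalgamation at the stem with no contradiction and no fusion: since $\sigma = \stem(T)$ splits, $\succ_T(\sigma) \in \U$; for each $i \in \succ_T(\sigma)$ one finds, by density, \emph{some} $S_i \leq T_{\sigma \ha i}$ deciding $\varphi$ -- with no constraint whatsoever on $\stem(S_i)$; the ultrafilter then gives a set $X^* \in \U$ on which the decision is uniform, and $S = \bigcup_{i \in X^*} S_i$ is a condition with stem $\sigma$ deciding $\varphi$. The point you missed is precisely this: the amalgamated tree has stem $\sigma$ regardless of the stems of the pieces $S_i$, so there is no need to propagate a "same-stem non-decidability" property anywhere. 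Indeed, your own combining lemma, applied once at $\sigma$ to arbitrary-stem deciding subtrees supplied by density, already refutes your standing hypothesis and finishes the proof; the insistence on $\stem(S_\beta) = \stem(T'_{u'\ha\beta})$ is what forces you into the fusion.

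Beyond being overkill, the fusion step has a concrete problem. You propose, at stage $\alpha+1$, to replace $\succ_{T_\alpha}(u)$ by a $\U$-subset of $B_{T_\alpha,u}$ for every $u \in \Split_\alpha(T_\alpha)$. But $T_{\alpha+1} \leq_\alpha T_\alpha$ requires $\Split_\alpha(T_{\alpha+1}) = \Split_\alpha(T_\alpha)$, and with the paper's definition of the Miller splitting levels this protects, for each such $u$, those successors $\beta$ for which some node above $u \ha \beta$ already lies in $\Split_\alpha(T_\alpha)$. If one of these protected $\beta$ fails to belong to $B_{T_\alpha,u}$ you may neither discard it (that breaks $\leq_\alpha$) nor keep it (that breaks the certification you need at the final density step). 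Moreover, since $\Split_\alpha(T) \subset \Split_\beta(T)$ for $\alpha < \beta$, every splitting node is revisited at $\kappa$ many stages, so "replacing $\succ(u)$ by a $\U$-subset" each time would intersect $\kappa$ many $\U$-sets at a single node, which $\kappa$-completeness alone does not control. The repair is to shrink $\succ(u)$ to a subset of the bad set exactly once, at the stage where $u$ first enters the splitting hierarchy and before any of its successors become protected, and never touch it again -- but at that point you should notice that the entire machinery is unnecessary and the direct one-step argument suffices.
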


\begin{proof}
Letting $T$ and $\varphi$ be as in the statement of the proposition and putting $\sigma= \stem(T)$, there is a set $X \in \mathcal{U}$ such that $\sigma \ha i \in T$ for all $i \in X$. Now, given $i \in X$ find $S_i \leq T_{\sigma\ha i}$ such that $S_i$ decides $\varphi$.
Since $\mathcal{U}$ is an ultrafilter, one of the sets $X_0= \{i \in X : S_i \Vdash \varphi \}$ or $X_1= \{i \in X : S_i \Vdash \neg \varphi \}$ must belong to $\U$. Let
$S = \bigcup_{i \in X^\ast} S_i$, where $X^\ast$ is either $X_0$ or $X_1$, depending on which one of them belongs to $\mathcal{U}$. Clearly, $S\leq T$ and $S$ decides $\varphi$ (specifically if $X_0 \in \mathcal{U}$, we have $S \Vdash \varphi$ and otherwise $S \Vdash \neg \varphi$).
\end{proof}

\begin{defi}[Generalized Laver property]
Let $h \in \kappa^\kappa$ with $\lim_{\alpha \to \kappa} h(\alpha) = \kappa$. 
A forcing notion $\mathbb{P}$ has the {\em generalized $h$-Laver property} if for every condition $p \in \mathbb{P}$, every $g \in \kappa^\kappa$, and every $\mathbb{P}$-name $\dot{f}$ such that $p \forces \dot f (\alpha) < g(\alpha)$ for all $\alpha < \kappa$, there are a condition $q \leq p$ and an $h$-slalom $\varphi \in \Loc_h$ such that $q \Vdash \dot{f}(\alpha) \in \varphi (\alpha)$ for all $\alpha < \kappa$. If $h = \id$, we say $\PP$ has the {\em generalized Laver property}.
\end{defi}

The generalized Laver property is closely related to the generalized Sacks property: Say that a forcing notion $\PP$ is {\em $\kappa^\kappa$-bounding} if for all $\PP$-names $\dot f \in \kappa^\kappa$ and all $p \in \PP$, there are $g \in \kappa^\kappa$ and $q \leq p$ such that $q \forces \dot f (\alpha) < g (\alpha)$ for all $\alpha < \kappa$. Then $\PP$ has the generalized $h$-Sacks property if and only if $\PP$ is $\kappa^\kappa$-bounding and has the generalized $h$-Laver property.

In analogy to the countable case we have:

\begin{prop}
Let $h \in \kappa^\kappa$ with $\lim_{\alpha \to \kappa} h(\alpha) = \kappa$.
If $\mathbb{P}$ has the generalized $h$-Laver property, then $\mathbb{P}$ does not add $\kappa$-Cohen functions. 
\end{prop}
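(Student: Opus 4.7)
Suppose for contradiction that $\mathbb{P}$ has the generalized $h$-Laver property and that some condition $p \in \mathbb{P}$ forces $\dot c \in 2^\kappa$ to be $\kappa$-Cohen over $V$. The idea is to use the $h$-Laver property to squeeze $\dot c$ into a $\kappa$-meager set coded in $V$, which contradicts $\kappa$-Cohen genericity (since a $\kappa$-Cohen function avoids every ground-model $\kappa$-meager set).

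Working in $V$, choose an interval partition $(I_\alpha = [i_\alpha,i_{\alpha+1}) : \alpha < \kappa)$ of $\kappa$ with $2^{|I_\alpha|} > h(\alpha)$ for every $\alpha$; since $\kappa$ is strongly inaccessible and $h(\alpha) < \kappa$, taking $|I_\alpha|$ to be, say, $h(\alpha)+1$ works and keeps $2^{|I_\alpha|} < \kappa$. Fix bijections $\sigma_\alpha : 2^{|I_\alpha|} \to 2^{I_\alpha}$ in $V$, set $g(\alpha) = 2^{|I_\alpha|}$, and define the $\mathbb{P}$-name $\dot F \in \kappa^\kappa$ by $\dot F(\alpha) = \sigma_\alpha^{-1}(\dot c \restriction I_\alpha)$, so that $p \forces \dot F(\alpha) < g(\alpha)$ for all $\alpha$. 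Apply the $h$-Laver property to $\dot F$ below $p$ to obtain $q \leq p$ and an $h$-slalom $\varphi \in \Loc_h$ with $q \forces \dot F(\alpha) \in \varphi(\alpha)$ for every $\alpha$. Translating back, let $\Psi(\alpha) = \sigma_\alpha[\varphi(\alpha) \cap g(\alpha)] \subseteq 2^{I_\alpha}$; then $|\Psi(\alpha)| \leq h(\alpha)$ and $q \forces \dot c \restriction I_\alpha \in \Psi(\alpha)$ for all $\alpha < \kappa$.

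Now, in $V$, for each $\beta < \kappa$ put
\[ A_\beta = \{ x \in 2^\kappa : \forall \alpha \geq \beta,\ x \restriction I_\alpha \in \Psi(\alpha) \}. \]
Each $A_\beta$ is nowhere dense: given $s \in 2^{<\kappa}$, extend $s$ to some $s'$ with $\dom(s') = i_\alpha$ for an $\alpha \geq \beta$ satisfying $i_\alpha \geq \dom(s)$. By the crucial inequality $|\Psi(\alpha)| \leq h(\alpha) < 2^{|I_\alpha|} = |2^{I_\alpha}|$, there exists $t \in 2^{I_\alpha} \setminus \Psi(\alpha)$; then $s' \ha t \supseteq s$ and $[s' \ha t] \cap A_\beta = \emptyset$. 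Consequently $A = \bigcup_{\beta < \kappa} A_\beta \in \M_\kappa$. But $q \forces \dot c \in A$, contradicting the assumption that $\dot c$ is $\kappa$-Cohen over $V$.

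The single point on which the argument pivots is the nowhere-density of $A_\beta$, which depends on the careful choice of the partition $(I_\alpha)$ so that the bound $h(\alpha)$ on $|\Psi(\alpha)|$ falls strictly below the number $2^{|I_\alpha|}$ of possible values of $x \restriction I_\alpha$. Strong inaccessibility of $\kappa$ is used implicitly here to guarantee that $g(\alpha) = 2^{|I_\alpha|} < \kappa$, so that $\dot F$ is genuinely a name for an element of $\kappa^\kappa$ bounded by a ground-model $g$ and the $h$-Laver property is applicable.
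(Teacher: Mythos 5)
Your argument is correct and follows essentially the same route as the paper's proof: chop $\kappa$ into intervals of size governed by $h$, apply the generalized $h$-Laver property to the sequence of restrictions of the name to these intervals, and conclude that the function is forced into a ground-model nowhere dense (hence meager) set. The only differences are cosmetic — you make the coding of $2^{I_\alpha}$ into ordinals below $g(\alpha)=2^{|I_\alpha|}$ explicit, and you package the trap as a union $\bigcup_\beta A_\beta$ where the single nowhere dense set $A_0$ already suffices.
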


\begin{proof} 
Let $\dot{f}$ be a $\mathbb{P}$-name for a function in $2^\kappa$, and consider a partition of $\kappa$ into $\kappa$ many intervals $(I_\alpha: \alpha< \kappa)$ such that $ \lvert I_\alpha \lvert =  |h(\alpha)|$. Now define a name for a function $\dot g \in (\Fn (\kappa,2,\kappa))^\kappa$ by $\dot{g}(\alpha)= \dot{f} \restriction I_\alpha$. 

Note that $\dot{g}$ does not belong to the ground model $V$, but it is possible to find a function $G \in V$ that is forced to bound it.
Indeed, since the possible values of $\dot{g}(\alpha)$ are bounded by the number of functions from the interval $I_\alpha$ to $2$, that is, by $2^{\lvert I_\alpha \lvert}= 2^{|h(\alpha) |} < \ka$ (where the latter holds by the strong inaccessibility of $\ka$), we can define $G \in ((\Fn (\kappa,2,\kappa))^{<\kappa})^\kappa$ by $G(\alpha)= 2^{I_\alpha}$. 

Thus we can use the generalized Laver property: given a condition $p \in \mathbb{P}$ we can find a condition $q \leq p$ and an $h$-slalom $\varphi : \kappa \to (\Fn (\kappa,2,\kappa))^{<\kappa}$ (that is, $\lvert \varphi (\alpha)\lvert \leq |h(\alpha)| < 2^{|h(\alpha)|} = | G (\alpha )|$ for all $\alpha$) such $q \Vdash \dot{g}(\alpha) \in \varphi (\alpha)$ for all $\alpha < \kappa$. Thus, if we define the set $A$ by
\[ A = \{ x \in 2^\kappa : \forall \alpha <\kappa \; (x \restriction I_\alpha \in \varphi (\alpha))\}= \bigcap_{\alpha < \kappa} \{ x \in 2^\kappa : x \restriction I_\alpha \in \varphi (\alpha)\},\] 
we see that $A$ is nowhere dense and that $q \forces \dot{f} \in A$. This implies that $\dot{f}$ is forced not to be $\kappa$-Cohen.
\end{proof}

\begin{prop} 
Let $h$ be the power set function.
If $\U$ is a $\kappa$-complete normal ultrafilter on $\ka$ then Miller forcing $\mathbb{MI}_\kappa^{\mathcal{U}}$ with $\U$ has the generalized $h$-Laver property. 
\end{prop}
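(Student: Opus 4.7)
The strategy is a fusion argument that uses the pure decision property (Proposition~\ref{pure-dec}) at each successor stage. Given $T \in \mathbb{MI}_\kappa^{\mathcal{U}}$, $g \in \kappa^\kappa$, and a $\mathbb{MI}_\kappa^{\mathcal{U}}$-name $\dot f$ with $T \forces \dot f(\alpha) < g(\alpha)$ for all $\alpha < \kappa$, I will construct a fusion sequence $(T_\alpha : \alpha < \kappa)$ with $T_0 = T$, together with an $h$-slalom $\varphi$ satisfying $|\varphi(\alpha)| \leq 2^{|\alpha|} = h(\alpha)$, such that $T_{\alpha+1} \leq_\alpha T_\alpha$ and $T_{\alpha+1} \forces \dot f(\alpha) \in \varphi(\alpha)$ for every $\alpha < \kappa$. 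The fusion $q = \bigcap_{\alpha < \kappa} T_\alpha$ is a condition below $T$ by the Fusion Lemma~\cite[Lemma 2.3]{FZ10}, and since $q \leq T_{\alpha+1}$ for every $\alpha$, we have $q \forces \dot f(\alpha) \in \varphi(\alpha)$ for all $\alpha$, witnessing the generalized $h$-Laver property.

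\textbf{Successor step.}
Given $T_\alpha$, I process the nodes of $\Split_\alpha(T_\alpha)$ in tree order, using that $\Split_\alpha(T_\alpha)$ is downward-closed in $\Split(T_\alpha)$ and has cardinality at most $|(\alpha+1)^{\alpha+1}| = 2^{|\alpha|}$ for infinite $\alpha$. For each maximal $u \in \Split_\alpha(T_\alpha)$, apply Proposition~\ref{pure-dec} to the stem-$u$ condition $(T_\alpha)_u$ to obtain $S_u \leq (T_\alpha)_u$ with $\stem(S_u) = u$ deciding $\dot f(\alpha) = v_u$ for some $v_u < g(\alpha)$. For each non-maximal $v \in \Split_\alpha(T_\alpha)$, the $\alpha+1$ successors of $v$ at positions $0, 1, \ldots, \alpha$ in $\succ_{T_\alpha}(v)$ lead to its $\Split_\alpha$-children and will be preserved. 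For each remaining ``high'' successor $i \in \succ_{T_\alpha}(v)$ (position $> \alpha$), apply pure decision to $(T_\alpha)_{v \ha i}$ to get an extension forcing $\dot f(\alpha) = w_{v,i} < g(\alpha)$; since $g(\alpha) < \kappa$ and $\mathcal{U}$ is $\kappa$-complete, a $\mathcal{U}$-measure-one set $X_v$ of high $i$ share a common value $w_v$. Define $T_{\alpha+1}$ by restricting, at each non-maximal $v \in \Split_\alpha(T_\alpha)$, its successor set to the first $\alpha+1$-many together with $X_v$ and keeping the chosen extension above each $v \ha i$ with $i \in X_v$, while above each maximal $u$ replacing $(T_\alpha)_u$ with $S_u$. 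Set $\varphi(\alpha) = \{ v_u : u \text{ maximal in } \Split_\alpha(T_\alpha) \} \cup \{ w_v : v \text{ non-maximal in } \Split_\alpha(T_\alpha) \}$, so $|\varphi(\alpha)| \leq 2 \cdot 2^{|\alpha|} = 2^{|\alpha|}$ for infinite $\alpha$ (and adjust finite stages trivially). At limit $\delta < \kappa$, let $T_\delta = \bigcap_{\alpha < \delta} T_\alpha$.

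\textbf{Main obstacle.}
The principal technical difficulty is verifying $T_{\alpha+1} \leq_\alpha T_\alpha$, i.e., $\Split_\alpha(T_{\alpha+1}) = \Split_\alpha(T_\alpha)$ exactly. The inclusion $\supseteq$ is immediate from preservation of the $\alpha+1$-many low successors at each $v \in \Split_\alpha(T_\alpha)$ and the $\mathcal{U}$-largeness of the resulting successor set $(\text{first } \alpha+1) \cup X_v$. For $\subseteq$, the crucial design point is keeping $\alpha+1$-many (rather than $\alpha$-many) low successors: this ensures that any element of $X_v$ sits at position $\geq \alpha+1$ in $\succ_{T_{\alpha+1}}(v)$, so a node $u$ with $u(\ell(v)) \in X_v$ fails the successor-order-type condition ``$\leq \alpha$'' in $T_{\alpha+1}$ and cannot enter $\Split_\alpha(T_{\alpha+1})$. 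Combined with the observation that pure decision restricts $\succ$-sets only within $\mathcal{U}$ and hence preserves splitting at every splitting node of $T_\alpha$, one verifies that splitting predecessors of any node in $T_{\alpha+1}$ coincide with its splitting predecessors in $T_\alpha$, ruling out promotion of rank. Once this is established, $T_{\alpha+1} \forces \dot f(\alpha) \in \varphi(\alpha)$ because every branch of $T_{\alpha+1}$ either climbs through $\Split_\alpha$-children until it reaches a maximal $u$ and enters $S_u$ (forcing $\dot f(\alpha) = v_u$) or departs at some non-maximal $v \in \Split_\alpha(T_\alpha)$ through a high successor $v \ha i$ with $i \in X_v$ (forcing $\dot f(\alpha) = w_v$). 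The Fusion Lemma then delivers the required $q$, and normality of $\mathcal{U}$ is used only there, at the limit stages of the fusion.
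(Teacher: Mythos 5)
Your proof is correct and follows essentially the same route as the paper's: a fusion sequence in which, at stage $\alpha$, pure decision is applied above the ($\leq 2^{|\alpha|}$ many) nodes of $\Split_\alpha$ and $\kappa$-completeness of $\U$ is used to stabilize the decided value of $\dot f(\alpha)$ on a $\U$-large set of successors, with normality entering only through the Fusion Lemma. Your explicit bookkeeping -- separating maximal from non-maximal nodes of $\Split_\alpha$ and modifying the tree only above maximal nodes and above ``high'' successors so that $\Split_\alpha(T_{\alpha+1})=\Split_\alpha(T_\alpha)$ is visibly preserved -- is a slightly more careful rendering of the union the paper writes down, but it is the same argument.
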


\begin{proof}
Let $T \in \mathbb{MI}_\kappa^{\mathcal{U}}$ and $g \in \kappa^\kappa $, and let $\dot{f}$ be a $\mathbb{MI}_\kappa^{\mathcal{U}}$-name for an element in $\kappa^\kappa$ such that $T \Vdash \dot{f} (\alpha) < g (\alpha)$ for all $\alpha < \kappa$. 

Recursively we construct a fusion sequence $(S_\alpha : \alpha < \kappa)$ and sets $B_\alpha \sub \kappa$ such that:
\begin{itemize}
\item $S_{\alpha +1} \Vdash \dot{f}(\alpha) \in B_\alpha$,
\item $\lvert B_\alpha \lvert\leq 2^{\lvert \alpha \lvert}$ for infinite $\alpha$.
\end{itemize}
In the basic step, let $S_0 = T$. 

In the successor step, suppose we have already $S_\alpha$. Fix $t \in \Split_\alpha (S_\alpha)$.
Using the pure decision property (Proposition~\ref{pure-dec}) together with the $\kappa$-completeness of the ultrafilter
and the fact that the possible values of $\dot f (\alpha)$ are bounded by $g(\alpha)$, we see that there are $\gamma_t < g (\alpha)$
$D_t \sub \succ_{S_\alpha} (t)$ belonging to $\U$ and trees $(S_{\alpha + 1})_{t \ha \beta} \leq (S_\alpha)_{t \ha \beta}$ 
for $\beta \in D_t$ such that $(S_{\alpha + 1})_{t \ha \beta} \forces \dot f (\alpha) = \gamma_t$.

(More explicitly, for each $\beta \in  \succ_{S_\alpha} (t)$, choose $\gamma_t^\beta < g (\alpha)$
and $(S_{\alpha + 1})_{t \ha \beta} \leq (S_\alpha)_{t \ha \beta}$ such that 
$(S_{\alpha + 1})_{t \ha \beta} \forces \dot f (\alpha) = \gamma_t^\beta$. Then find $\gamma_t$
and $D_t \in \U$ such that $\gamma_t^\beta = \gamma_t$ for all $\beta \in D_t$.
This is possible because $|\{ \gamma_t^\beta : \beta \in  \succ_{S_\alpha} (t) \}| \leq | g(\alpha) | < \kappa$.)

At the end let $S_{\alpha + 1} = \bigcup_{t \in \Split_\alpha (S_\alpha) } \bigcup_{\beta \in D_t} (S_{\alpha + 1})_{t \ha \beta}$
and $B_\alpha = \{ \gamma_t : t \in \Split_\alpha (S_\alpha) \}$. Since $| \Split_\alpha (S_\alpha) | \leq |(\alpha + 1) ^{
\alpha + 1} |$, $|B_\alpha | \leq 2^{|\alpha|}$ for infinite $\alpha$ follows. $S_{\alpha +1} \Vdash \dot{f}(\alpha) \in B_\alpha$
is obvious by construction.

In the limit step, let $S_\alpha= \bigcap_{\beta < \alpha} S_\beta$. 

Finally, take the fusion $S$ of the sequence $(S_\alpha : \alpha < \kappa)$. 
Then $S \leq_{\alpha} S_\alpha$ for all $\alpha$ which implies that for all $\al < \ka$, $S \Vdash \dot{f}(\al) \in B_\al$. Hence, if we 
adjust the construction for finite ordinals and define $\varphi (\al)= B_\al$, then $\lvert \varphi (\al)\lvert \leq 2^{\lvert \al \lvert}$ and $S \Vdash \dot{f}(\al) \in \varphi(\al)$ for all $\alpha < \kappa$, and we have the generalized $h$-Laver property.
\end{proof}

Our reason for investigating the generalized Laver property for forcings of type $\MM_\kappa^\U$ was the hope
to obtain an alternative proof for the consistency of $\cov (\M_\kappa) < \dd_\kappa$, originally obtained
by Shelah~\cite{Shta} (see~\cite[7.3.E]{BJ95} for $\cov (\M) < \dd$ in the Miller model). 
For this, however, one would need preservation of the generalized Laver property in
iterations with support of size $\kappa$.

\begin{ques}
Is the generalized Laver property preserved under $\kappa$-support iterations?
\end{ques}

Since there are no preservation theorems for such iterations, this seems to be out of reach. More specifically
we may ask:

\begin{ques}   \label{Miller-Laver}
Let $h$ be the power set function. Assume $\kappa$ is an indestructible supercompact cardinal.
Does the $\kappa$-support iteration of forcings of type $\MI_\kappa^\U$ have the generalized
$h$-Laver property?
\end{ques}

The reason for considering indestructible supercompact $\kappa$ here is that we need that the
intermediate extensions still contain $\kappa$-complete normal ultrafilters on $\kappa$. 

Showing directly that the whole iteration has the generalized $h$-Laver property by a generalized 
fusion argument seems more feasible. This can indeed be done for generalized Sacks forcing but uses
that we can decide the splitting levels -- something which is not possible for generalized Miller
forcing where we would have to work with names. It also can be done for any forcing with reasonable
fusion properties in case $\kappa = \omega$ -- but this hinges on the fact that the sets $F_\alpha$
arising in the generalized fusion in this case are finite. In fact, this was the original approach taken
in work of Laver~\cite{La76} and Baumgartner~\cite{Ba78} before Shelah~\cite{Sh98} developed the theory of
preservation theorems.

It is known (see~\cite[Theorem 2.9]{FZ10} and~\cite[Theorem 2.9]{FHZ13}) that generalized fusion
for the $\kappa$-support iteration of generalized Miller forcing shows the preservation of $\kappa^+$
(and if $2^\kappa = \kappa^+$ and the length of the iteration is at most $\kappa^{++}$, then the $\kappa^{++}$-cc still holds so that
all cardinals are preserved). This type of fusion argument however does not help for the generalized 
Laver property. 

For $\kappa = \omega$, the Laver property of Laver or Mathias forcing is used to show the consistency of 
$\add (\M) < \bb$~\cite[7.3.D and 7.4.A]{BJ95}. Since $\add (\M_\kappa) < \bb_\kappa$ implies $\cov (\M_\kappa) < \dd_\kappa$ 
by Corollary~\ref{Truss-cor} (for any $\kappa$), the former consistency seems to be even harder for
strongly inaccessible $\kappa$. In fact, Laguzzi~\cite{Lata} proved that any reasonable generalization of
Mathias forcing to $\kappa$ adds $\kappa$-Cohen functions. Also, for generalized Laver forcing
it is unclear whether any form of the generalized Laver property can hold, and in the case that 
$\kappa$ is measurable and $\U$ is a normal $\kappa$-complete ultrafilter on $\kappa$, it is shown in~\cite[Lemma 18]{BFFM16}
that the Mathias and Laver forcings are equivalent. Thus the following problem
seems to be out of reach.

\begin{ques}   \label{add-b-ques}
Is it consistent for strongly inaccessible (or even supercompact) $\kappa$ that $\add (\M_\kappa) < \bb_\kappa$?
That $\dd_\kappa < \cof (\M_\kappa)$?
\end{ques}


\subsubsection{The product}

We close our work with some results on products of generalized Miller forcing. 
Since the classical Miller forcing $\MI$ has the Laver property, it follows that it does not add Cohen reals. Spinas~\cite{Sp01}  proved  that the product of two copies of Miller forcing $\MI^2$ does not add Cohen reals. However by a result of Veli\v ckovi\'c and Woodin~\cite{VW98}, the product of three copies of Miller forcing $\MI^3$ adds a Cohen real. For strongly inaccessible $\kappa$, two $\kappa$-Miller functions are sufficient to
get a $\kappa$-Cohen function. Roughly speaking, the reason for this is that given trees $T, S \in \MI_\kappa^\F$, the set of places where
both $T$ and $S$ split contains a club.

\begin{thm}   \label{prod-Miller-Cohen}
Let $\F$ be a $\ka$-complete normal filter on $\kappa$. Then the product $\QQ^\ast = \mathbb{MI}_\kappa^{\mathcal{F}} \times \mathbb{MI}_\kappa^{\mathcal{F}}$ adds a $\kappa$-Cohen function. 
\end{thm}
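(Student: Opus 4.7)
The plan is to exhibit a $\kappa$-Cohen name $\dot g \in 2^\kappa$ derived pointwise from the two generic Miller functions $\dot m_1, \dot m_2$: set $\dot g(\alpha) = 0$ if $\dot m_1(\alpha) < \dot m_2(\alpha)$ and $\dot g(\alpha) = 1$ otherwise. The argument rests on the fact hinted at in the theorem statement, namely that for every $T \in \MI_\kappa^\F$ the set $C(T) = \{\alpha < \kappa : \mbox{every } u \in T \cap \kappa^\alpha \mbox{ is a splitting node of } T\}$ contains a club; consequently $C := C(T_1) \cap C(T_2)$ is a club for any condition $(T_1, T_2) \in \QQ^\ast$. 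I would verify this by a rank argument: for $u \in T$ let $k_T(u)$ be the level of the first splitting node reached by following the unique-successor path above $u$, well-defined by clause~(1) of the Miller definition. Strong inaccessibility gives $|T \cap \kappa^{<\alpha}| < \kappa$ for each $\alpha < \kappa$, so $k_T$ is bounded on $T \cap \kappa^{<\alpha}$; the set of $\alpha$ closed under $k_T$ is thus a club, and at any limit point $\alpha$ of this club every $u \in T \cap \kappa^\alpha$ has splitting initial segments cofinally often below $\alpha$, whence clause~(3) forces $u$ to split.

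The crucial single-bit step is: for every $\alpha \in C$, if the stems of a condition have been extended so that they both have length exactly $\alpha$, then both stems are splitting nodes and their successor sets lie in $\F$, which is unbounded in $\kappa$ (any $\kappa$-complete normal uniform filter contains $\C$, so every member of $\F$ is stationary and hence cofinal). Therefore, for either target bit $b \in \{0,1\}$, one can choose $\beta_i \in \succ_{T_i}(\sigma_i)$ with $\beta_1 < \beta_2$ when $b = 0$ and $\beta_1 > \beta_2$ when $b = 1$, and pass to $\bigl((T_1)_{\sigma_1 \ha \beta_1}, (T_2)_{\sigma_2 \ha \beta_2}\bigr)$, thereby forcing $\dot g(\alpha) = b$.

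To conclude that $\dot g$ is $\kappa$-Cohen generic over $V$, given $V$-dense open $D \sub \CC_\kappa$ and $(T_1, T_2) \in \QQ^\ast$, the plan is first to extend so both stems reach a common length $\alpha_0 \in C$, determining the initial segment $v = \dot g \restriction \alpha_0$ of $\dot g$; then by density pick $s_0 \in D$ extending $v$; and finally build a $<\kappa$-fusion through a cofinal increasing sequence $\alpha_0 < \alpha_1 < \cdots$ of common splitting levels in $C$, at each successor stage invoking the single-bit step to force the next required bit of $s_0$ at $\alpha_\xi$. The main obstacle will be coordinating the values of $\dot g$ at intermediate levels outside $C$: between consecutive $\alpha_\xi < \alpha_{\xi+1}$ the bits of $\dot g$ on $(\alpha_\xi, \alpha_{\xi+1}) \setminus C$ are locally pinned down by the choice of $(\beta_1^\xi, \beta_2^\xi)$ and the forced linear path up to $\alpha_{\xi+1}$, so they must be made to cohere with a chosen $s \in D$. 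The intended resolution is to interleave the fusion with extensions of $s_0$ using openness of $D$: after each construction step the locally forced non-$C$-bits of $\dot g$ are appended to $s_0$, producing a longer $s$ that still lies in $D$ and is consistent with the condition. Because the non-$C$-levels within any bounded interval below $\kappa$ are non-stationary while the freedom at the $C$-levels is ample, this bookkeeping can be carried through to yield a final extension $(S_1, S_2)$ forcing $s \subseteq \dot g$ for some $s \in D$, establishing $\kappa$-Cohen genericity of $\dot g$.
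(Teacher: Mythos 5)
Your proposal has a genuine gap, in fact two. First, the foundational claim that $C(T) = \{\alpha < \kappa : \mbox{every } u \in T \cap \kappa^\alpha \mbox{ splits in } T\}$ contains a club is false. Your rank argument rests on $|T \cap \kappa^{<\alpha}| < \kappa$, but a splitting node $u$ has $\succ_T(u) \in \F$, and every member of a normal $\kappa$-complete filter is unbounded, so already the level just above $\stem(T)$ has $\kappa$ many nodes; the supremum of $k_T$ over a level can therefore be $\kappa$. Concretely, one can build a condition $T$ with $\stem(T) = \la\ra$ splitting such that for each $\beta \in \succ_T(\la\ra)$ the node $\la\beta\ra$ is followed by a unique non-splitting path of length about $\beta$: clauses (1)--(3) are satisfied, yet every level $\alpha \geq 1$ contains a non-splitting node (take $\beta > \alpha$), so $C(T)$ is empty above the stem. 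Second, even granting a supply of controllable levels, the pointwise name $\dot g(\alpha) = [\dot m_1(\alpha) \geq \dot m_2(\alpha)]$ cannot be shown $\kappa$-Cohen generic by your scheme: once you fix $s_0 \in D$ extending the decided part $v$, the bits $s_0(\alpha)$ at levels $\alpha$ lying on the unique non-splitting stretches between splitting nodes are prescribed by $s_0$ but \emph{determined} by the trees, and there is no reason they agree. Your proposed repair --- appending the ``locally forced non-$C$-bits'' to $s_0$ --- cannot work, because those bits sit at coordinates that $s_0$ already occupies; density of $D$ gives you no control over the length of $s_0$ or its values there, so there is nowhere to append them.

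The paper's proof is designed precisely to dodge both obstructions. It does not read a bit of the Cohen function at every $\alpha$; instead it reads a bit only at the points of the club $\dot C$ of \emph{limits of oscillation points} of $\dot m_0$ and $\dot m_1$ --- a club defined canonically from the pair of generics, not from any condition. A condition whose two stems have equal length $\delta$ (with $\delta$ a limit of oscillation points of the stems) decides exactly $\dot C \cap \delta$ and hence exactly an initial segment $v$ of $\dot c$, with no stray determined coordinates left over. Given $w \supseteq v$ in $D$, the stems are then alternately extended so that precisely $\theta = \mathrm{otp}(\dom(w) \sem \dom(v))$ new limit-of-oscillation points are created, and the local oscillation pattern just below the $\eta$-th new one encodes the bit $w(\epsilon + \eta)$. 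If you want to salvage your approach, you must similarly replace the pointwise comparison by a name for a club of ``reading points'' that is determined by the generics alone and whose trace below the current stems is fully decided by the condition; the oscillation device is one way (and, absent a ground-model club of universal splitting levels, apparently a necessary kind of way) to achieve this.
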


\begin{proof}
Let $m_0$ and $m_1$ be the generically added $\kappa$-Miller functions. We say that $\al < \ka$ is an {\em oscillation point} of $m_0$ and $m_1$ if there is $\ga< \al$ such that 
\begin{itemize}
\item {\em either} $m_0(\al) > m_1(\al)$ and $m_0(\be)< m_1(\be)$ for all $\beta$ with $\ga \leq \be < \al$ 
\item {\em or} $m_1(\al) > m_0(\al)$ and $m_1(\be)< m_0(\be)$ for all $\beta$ with $\ga \leq \be < \al$. 
\end{itemize}
Let $A$ denote  the set of oscillation points of $m_0$ and $m_1$ and $C$, the set of limit points of $A$. 
It is easy to see that $A$ is unbounded in $\kappa$ and, thus, $C$ is club in $\kappa$.

(Indeed, let $\alpha_0 < \kappa$ and $(S,T) \in \QQ^*$. Put $\sigma = \stem (S)$ and $\tau = \stem (T)$.
Without loss of generality we may assume $\alpha_0 < \ell (\sigma) < \ell (\tau)$. First extend $S$ to $S'$ such that
the stem $\sigma '$ of $S'$ is longer than $\ell (\tau)$ and $\sigma ' (\be) > \tau (\be)$ for all
$\be$ with $\ell (\sigma ) \leq \be < \ell (\tau)$. Next extend $T$ to $T'$ such that
the stem $\tau '$ of $T'$ properly extends $\tau$ and $\tau ' (\ell (\tau)) > \sigma ' (\ell (\tau))$.
Then $(S',T')$ forces that $\ell (\tau)$ belongs to $\dot A$.)

Now, let $\{\gamma_\al : \al < \ka\}$ be the continuous increasing enumeration of $C$ and define $c: \ka \to 2$ by:
$$c(\al) = \left\{
\begin{array}{c l}
 0 & \text{ if } m_0(\gamma_\al) \leq m_1(\gamma_\al)\\
 1 & \text{ if } m_0(\gamma_\al) > m_1(\gamma_\al).
\end{array}
\right.
$$
We argue that $c$ is $\ka$-Cohen generic. Let $(S,T) \in \QQ^*$ and let $D \subseteq \CC_\ka$ be dense. We shall find $(S', T') \leq (S,T)$ and $w \in D$ such that $(S', T') \Vdash w \subseteq \dot{c}$. Let $\sigma= \stem(S)$ and $\tau=\stem(T)$.
By an easy pruning argument we see that we may assume without loss of generality that the stems have the same length and
let $\delta = \ell(\sigma)= \ell(\tau)$. Then $(S,T)\Vdash \dot{m}_0 \restriction \delta = \sigma$ and $\dot{m}_1 \restriction \delta = \tau$.
We may also suppose that $\delta$ is a limit of oscillation points of $\sigma$ and $\tau$. This means that there are $\epsilon < \ka$
and a continuous increasing sequence $(\gamma_\alpha : \alpha \leq \epsilon)$ such that
\[ (S,T) \forces `` \dot C \cap \delta = \{ \dot\ga_\al : \al < \epsilon \} = \{ \ga_\al : \al < \epsilon \} \text{ and } \dot \ga_\epsilon = \gamma_\epsilon=
\delta \in \dot C". \]
Now define the  function $v \in 2^{<\kappa}$ with domain $\dom(v) = \epsilon$ and
$$v(\al) = \left\{
\begin{array}{c l}
 0 & \text{ if } \sigma(\gamma_\al) \leq \tau(\gamma_\al)\\
 1 & \text{ otherwise }
\end{array}
\right.
$$
for $\alpha < \epsilon$. Clearly $(S,T) \forces v \sub \dot c$ and no further values of $\dot c$ are decided by $(S,T)$.

Using the density of $D$ take $w \in D$ such that $w \supseteq v$. Let $\theta$ be the order type of
$\dom (w)  - \dom (v)$, that is, $\epsilon + \theta = \dom (w)$. Let $\{ \xi_\eta : \eta < \theta \}$
enumerate the limit ordinals in $\omega \times \theta$, that is, $\xi_0 = 0, \xi_1 = \omega, ...$.
Now recursively construct $\sigma_\xi$ and $\tau_\xi$, $\xi \in \omega \times \theta$,
such that
\begin{itemize}
\item $\sigma_0 = \sigma$ and $\tau_0 = \tau$,
\item the $\sigma_\xi$ form a strictly increasing sequence of splitting nodes of $S$ and the $\tau_\xi$ form a 
   strictly increasing sequence of splitting nodes of $T$,
\item if $\xi$ is a limit ordinal, then $\sigma_\xi = \bigcup_{\zeta < \xi} \sigma_\zeta$, $\tau_\xi = \bigcup_{\zeta < \xi}
   \tau_\zeta$, and $\ell (\sigma_\xi) = \ell (\tau_\xi)$,
\item if $\xi$ is a successor ordinal, then $\ell (\sigma_\xi) < \ell (\tau_\xi) < \ell (\sigma_{\xi + 1})$,
   for $\be$ with $\ell (\sigma_\xi) \leq \be < \ell (\tau_\xi)$, $\tau_\xi (\be) < \sigma_{\xi + 1} (\be)$,
   and for $\be$ with $\ell (\tau_\xi) \leq \be < \ell (\sigma_{\xi+1})$, $\sigma_{\xi+1} (\be) < \tau_{\xi + 1} (\be)$,
\item if $\xi_\eta$ is $0$ or a limit ordinal, then: 
\begin{itemize}
   \item if $w ( \epsilon + \eta) = 0$, $\sigma_{\xi_\eta +1} ( \be ) 
   < \tau_{\xi_\eta +1} ( \be)$ for all $\be$ with $\ell (\sigma_{\xi_\eta}) = \ell (\tau_{\xi_\eta}) \leq \be < \ell (\sigma_{\xi_\eta + 1})$,
   \item if $w ( \epsilon + \eta) = 1$, there is $\delta$ with $\ell (\sigma_{\xi_\eta}) = \ell (\tau_{\xi_\eta}) < \delta <
   \ell (\sigma_{\xi_\eta + 1})$ such that $\tau_{\xi_\eta +1} ( \be ) 
   < \sigma_{\xi_\eta +1} ( \be)$ for all $\be$ with $\ell (\sigma_{\xi_\eta}) = \ell (\tau_{\xi_\eta}) \leq \be < \delta$ and
   $\sigma_{\xi_\eta +1} ( \be ) < \tau_{\xi_\eta +1} ( \be)$ for all $\be$ with $\be$ with $\delta \leq \be < \ell (\sigma_{\xi_\eta + 1})$.
\end{itemize}
\end{itemize}
By alternately extending the splitting nodes in $S$ and in $T$, respectively, it is easy to see that
this construction can be carried out.

Now let $\sigma ' = \bigcup_{\xi \in\omega\times\theta} \sigma_\xi$ and $\tau' = \bigcup_{\xi \in\omega\times\theta} \tau_\xi$.
Clearly $\ell (\sigma') = \ell (\tau ')$.
Let $S' = S_{\sigma '}$ and $T' = T_{\tau '}$. Note that, by construction, the only new ordinals which are limits
of oscillation points of $\sigma '$ and $\tau'$ are the $\ell (\sigma_{\xi_\eta}) = \ell (\tau_{\xi_\eta})$, $\eta < \theta$.
This means that
\[  (S' , T' ) \forces `` \dot C \cap \ell (\sigma ') = \{ \dot\ga_\al : \al < \epsilon + \theta \} = \{ \ga_\al : \al < \epsilon \}
\cup \{ \ell (\sigma_{\xi_\eta}) : \eta < \theta \} ". \]
Furthermore, by the last item of the construction, we see that $(S',T') \forces w \sub \dot c$.
\end{proof}

It is well-known that the full product of countably many Cohen reals collapses the continuum to $\omega$ and thus,
by~\cite{VW98}, the same is true for the full product of countably many Miller reals. For strongly inaccessible $\kappa$, the
situation is different, the $\kappa$-support product of $\kappa$-Cohen forcing preserves $\kappa^+$~\cite[Proposition 24]{Frta} and therefore,
under GCH, all cardinals. We shall see below (Proposition~\ref{prod-Miller-pres}) the same is true for
$\kappa$-Miller forcing so that we may actually consider the product.

Let  $\calF$ be a $\ka$-complete normal filter on $\ka$.

\begin{defi}
For a  set $A$ of ordinals, $\mathbb{MI}^\calF_{\ka, A}$ is the {\em $\ka$-support product} of $\mathbb{MI}^\calF_\ka$ 
with index set $A$, that is, $\mathbb{MI}^\calF_{\ka, A}$ consists of all functions $p: A \to \mathbb{MI}^\calF_\ka$ such that 
$\supp(p)=\{\be \in A: p(\be) \neq \ka^{<\ka}\}$
has size at most $\ka$. $\mathbb{MI}^\calF_{\ka, A}$ is ordered coordinatewise: $q \leq p$ if $q(\be) \leq p(\be)$ for all $\be \in A$.
\end{defi}

As in the $\ka$-Sacks case, given $\be \in A$, $\mathbb{MI}^\calF_{\ka, A}$ adds a $\mathbb{MI}^\calF_\ka$-generic function $m_\beta$ 
over the ground model. This forcing notion is $<\ka$-closed and, assuming $2^\kappa = \kappa^+$, has the $\ka^{++}$-cc. 
For every $F \subseteq A$ of size $<\ka$ and $\alpha < \kappa$, we define the 
\emph{fusion ordering} $\leq_{F,\alpha}$ as follows: $q \leq_{F,\alpha} p$ if  $q \leq p$ and for every $\be \in F$, $q(\be) \leq_\al p(\be)$.

\begin{defi}[Generalized Miller fusion]
$(p_\alpha, F_\al : \alpha < \kappa)$ is a {\em generalized fusion sequence} if $p_\al \in \mathbb{MI}^\calF_{\ka, A}$, $F_\al \in [A]^{<\ka}$, and
\begin{enumerate}
 \item $p_{\alpha +1} \leq_{F_\alpha, \alpha} p_\alpha$ and $p_\delta = \bigwedge_{\alpha < \delta} p_\alpha$ when $\delta$ is a limit ordinal $< \kappa$,
 \item $F_\alpha \subseteq F_{\alpha+1}$, $F_\delta = \bigcup_{\alpha< \delta} F_\alpha$ for limit $\delta< \kappa$ and $\bigcup_{\alpha< \kappa} F_\alpha = $ \linebreak $\bigcup_{\alpha< \kappa} \supp (p_\alpha)$.
\end{enumerate}
\end{defi}

By the analogue of the Generalized Fusion Lemma from Kanamori~\cite{Ka80} we see that given such a generalized fusion sequence 
$(p_\alpha, F_\al : \alpha < \kappa)$ the {\em fusion} $p = \bigwedge_{\alpha < \kappa} p_\alpha$ is a condition in $\mathbb{MI}^\calF_{\ka, A}$. 
This allows us to ensure the preservation of $\ka^+$. 

\begin{prop}  \label{prod-Miller-pres}
$\MI_{\kappa,A}^\F$ preserves $\kappa^+$.
\end{prop}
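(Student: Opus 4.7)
The plan is to imitate the proof of Theorem 5.2 in Kanamori~\cite{Ka80} for generalized Sacks forcing (and, more concretely, the argument already carried out in Main Lemma~\ref{prod-Sacksprop}), using the Generalized Miller Fusion Lemma stated just before the proposition. Suppose toward a contradiction that $\kappa^+$ is collapsed: then there are $p \in \MI_{\kappa,A}^\F$ and a name $\dot f$ with $p \forces \dot f : \kappa \to \kappa^+$ cofinal. I will build $q \leq p$ forcing $\mathrm{ran}(\dot f)$ into a ground-model set of size $\kappa$, a contradiction.

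To build $q$, I construct a generalized fusion sequence $(p_\alpha, F_\alpha : \alpha < \kappa)$ with $p_0 \leq p$ together with sets $B_\alpha \in V$ with $|B_\alpha| < \kappa$ such that $p_{\alpha+1} \forces \dot f(\alpha) \in B_\alpha$. At the successor step, given $p_\alpha$ and $F_\alpha$, I consider the set of ``tags''
\[ \bar U_\alpha = \Bigl\{ \bar u = (u_\beta : \beta \in F_\alpha) : u_\beta \in \Split_\alpha(p_\alpha(\beta)) \text{ for all } \beta \in F_\alpha \Bigr\}. \]
Since $|\Split_\alpha(p_\alpha(\beta))| \leq |(\alpha+1)^{\alpha+1}|$ and $|F_\alpha| < \kappa$, the strong inaccessibility of $\kappa$ gives $|\bar U_\alpha| < \kappa$. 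For each $\bar u \in \bar U_\alpha$, define $p_{\bar u}$ by replacing $p_\alpha(\beta)$ with $(p_\alpha(\beta))_{u_\beta}$ for $\beta \in F_\alpha$. I then enumerate $\bar U_\alpha$ and recursively produce a decreasing chain of conditions $(q^{\bar u} : \bar u \in \bar U_\alpha)$ (modifying only the coordinates in $F_\alpha$ \emph{above} level $\alpha$, and the coordinates outside $F_\alpha$ arbitrarily) such that each $(q^{\bar u})_{\bar u} \leq q^{\bar u}_{\bar u}$ decides a value $\gamma_{\bar u}^\alpha < \kappa^+$ for $\dot f(\alpha)$. Amalgamating in the usual way (cf.\ the successor step of Main Lemma~\ref{prod-Sacksprop}) produces $p_{\alpha+1} \leq_{F_\alpha,\alpha} p_\alpha$ below which the set $B_\alpha = \{\gamma_{\bar u}^\alpha : \bar u \in \bar U_\alpha\}$ is predense in the sense that every extension of $p_{\alpha+1}$ has a further extension forcing $\dot f(\alpha) \in B_\alpha$; hence $p_{\alpha+1} \forces \dot f(\alpha) \in B_\alpha$. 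At limits I set $p_\delta = \bigwedge_{\alpha<\delta} p_\alpha$ and $F_\delta = \bigcup_{\alpha<\delta}F_\alpha$, and I choose $F_{\alpha+1} \supseteq F_\alpha$ by a standard bookkeeping so that $\bigcup_\alpha F_\alpha = \bigcup_\alpha \supp(p_\alpha)$.

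Let $q = \bigwedge_{\alpha<\kappa} p_\alpha$, a condition by the Generalized Fusion Lemma. Since $q \leq_{F_\alpha,\alpha} p_\alpha$ for all $\alpha$, we have $q \forces \dot f(\alpha) \in B_\alpha$ for every $\alpha < \kappa$. Therefore
\[ q \forces \mathrm{ran}(\dot f) \subseteq \bigcup_{\alpha < \kappa} B_\alpha, \]
which is a ground-model set of size at most $\kappa$, contradicting that $\dot f$ was forced to be cofinal in $\kappa^+$.

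The main obstacle is the successor step: unlike in the generalized Sacks case, the splitting structure of $\MI_\kappa^\F$ conditions is richer (each split has an $\F$-large set of successors rather than just $2$), so one has to be careful in recursively going through all $\bar u \in \bar U_\alpha$ and amalgamating the resulting trees while preserving $\leq_{F_\alpha,\alpha}$. This is precisely where one uses $\kappa$-completeness and normality of $\F$ (exactly as in the Fusion Lemma itself) to ensure that the branching sets one preserves remain in $\F$, so that the amalgam is still a legitimate condition in $\MI_{\kappa,A}^\F$.
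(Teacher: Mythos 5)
There is a genuine gap at the successor step, and it is not a repairable technicality: the step as you state it is impossible. You claim to produce $p_{\alpha+1}\leq_{F_\alpha,\alpha}p_\alpha$ and a set $B_\alpha$ with $|B_\alpha|\leq|\bar U_\alpha|<\kappa$ such that $p_{\alpha+1}\forces \dot f(\alpha)\in B_\alpha$. If this could always be done, the fusion $q$ would confine every name for a function $\kappa\to\kappa$ into a ground-model slalom $\alpha\mapsto B_\alpha$ with $|B_\alpha|<\kappa$, hence below a ground-model function $g(\alpha)=\sup B_\alpha+1$; applied to $\dot f=\dot m_\beta$ this contradicts the fact (stated in the paper) that the generic $\kappa$-Miller function is unbounded over $V$. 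The concrete failure is in your predensity claim: unlike in the Sacks case, a splitting node $u_\beta$ of a Miller condition has $\kappa$ many immediate successors, so the ``address'' of an extension $r\leq p_{\alpha+1}$ is not determined by $\bar u\in\bar U_\alpha$ alone but also by which successor of each $u_\beta$ the stem of $r(\beta)$ enters, and there are too many such choices to enumerate at stage $\alpha$. The deciding extension $q'$ of $(q^{\bar u})_{\bar u}$ controls only an $\F$-large set of successors of $u_\beta$ (and a pruned tree above them), while the amalgam must retain the remaining branches in order to keep $\leq_{F_\alpha,\alpha}$; an $r$ living entirely in those retained branches is incompatible with $q'$ and its value of $\dot f(\alpha)$ is uncontrolled.

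The paper's proof is organized around exactly this obstacle and proves something weaker per stage but sufficient globally. It introduces the thinned restrictions $T_u^\alpha$ (keeping only the successors of $u$ beyond the first $\alpha+1$ many, so that replacing $(T)^\alpha_u$ does not disturb $\Split_\alpha$), and at stage $\alpha$ it handles all $\zeta<\alpha$ (not $\zeta=\alpha$) with a conditional decision: \emph{if} some $q\leq_{F_\alpha,0}(p_{\alpha+1})^\alpha_{\bar u}$ decides $\dot f(\zeta)$, then $(p_{\alpha+1})^\alpha_{\bar u}$ itself is arranged to force that value into $B_\alpha$. No claim of the form $p_{\alpha+1}\forces\dot f(\zeta)\in B_\alpha$ is made. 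The conclusion is only that $q\forces\ran(\dot f)\sub\bigcup_{\alpha<\kappa}B_\alpha$, a ground-model set of size $\kappa$, verified by a density argument: given $r\leq q$ deciding $\dot f(\zeta)$, one chooses $\alpha>\zeta$ so large that the stems of $r(\beta)$ for $\beta\in F_\alpha$ lie in $\Split_\alpha(q(\beta))$, whereupon $r$ witnesses the hypothesis of the conditional decision and is compatible with $(p_{\alpha+1})^\alpha_{\bar u}$. That weaker conclusion still rules out a cofinal map $\kappa\to\kappa^+$, which is all that is needed. You should rework your successor step along these lines; the direct imitation of Main Lemma~\ref{prod-Sacksprop} cannot work for $\MI^\F_{\kappa,A}$.
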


\begin{proof}
We first fix some notation. Let $\alpha < \kappa$, $T \in \MI^\F_\kappa$ and $u \in \Split_\alpha (T)$. 
Define $T_u^\alpha$ as follows: if $u$ is a final element of $\Split_\alpha (T)$, i.e., $\{ v \in \Split (T) : v \subsetneq u \}$ has order type
exactly $\alpha$, then $T_u^\alpha = T_u$; otherwise $T_u^\alpha = \{ t \in T_u : t \sub u$ or $t(\ell (u)) \cap \succ_T (u)$
has order type $> \alpha \}$. Next, for $q \in \MI^\F_{\kappa,A}$, $F \sub A$, and $\bar u = (u_\beta : \beta \in F)$, define $q_{\bar u}^\alpha$
by letting
\[ q_{\bar u }^\alpha (\beta)=
\begin{cases}
(q (\beta))_{ u_\beta }^\alpha & \text{if } \beta \in F \\
q  (\beta)  & \text{otherwise.}
\end{cases}
\] 

Now assume $p \in \MI^\F_{\kappa,A}$ and $\dot f $ is an $\MI^\F_{\kappa,A}$-name for a function from $\kappa$
to the ordinals. We recursively construct a generalized fusion sequence $(p_\alpha, F_\alpha : \alpha < \kappa )$ and sets
$B_\alpha$ of ordinals such that 
\begin{itemize}
\item $|F_\alpha| \leq |\alpha|$,
\item $|B_\alpha| \leq \max ( 2^{|\alpha|}, \omega)$,
\item for all $\zeta < \alpha$ and all sequences $\bar u = (u_\beta : \beta \in F_\alpha)$ with all $u_\beta \in \Split_\alpha (p_\alpha (\beta))$,
   if there is $q \leq_{F_\alpha,0} (p_{\alpha + 1})_{\bar u}^\alpha$ forcing a value to $\dot f (\zeta)$, then for some $\xi \in B_\alpha$,
   $(p_{\alpha + 1})^\alpha_{\bar u} \forces \dot f (\zeta) = \xi$.
\end{itemize}
For the basic step, let $p_0 = p$ and $B_0 = \emptyset$.

For the successor step, suppose that $p_\alpha$ and $F_\alpha$ have already been constructed. Let $\bar U = \{ \bar u =
(u_\beta : \beta \in F_\alpha) : u_\beta \in \Split_\alpha (p_\alpha (\beta))$ for all $\beta \in F_\alpha \}$. Since 
$|\Split_\alpha (p_\alpha (\beta))| \leq | (\alpha + 1)^{\alpha + 1} |$ for all $\beta$ and $|F_\alpha| \leq |\alpha|$, we see that
$|\bar U| = 2^{|\alpha|}$ for infinite $\alpha$. Let $((\bar u_\gamma , \zeta_\gamma) : \gamma < \lambda_\alpha )$
enumerate all pairs $(\bar u, \zeta) \in \bar U \times \alpha$ where $\lambda_\alpha = 
| \bar U \times \alpha | \leq \max (2^{|\alpha|},\omega)$. Recursively construct a decreasing chain $(q^\gamma_\alpha :
\gamma < \lambda_\alpha)$ of conditions and ordinals $(\xi_\alpha^\gamma : \gamma < \lambda_\alpha )$ such that
\begin{itemize}
\item $q_\alpha^0 = p_\alpha$, $q_\alpha^\delta \leq_{F_\alpha ,\alpha} q_\alpha^\gamma$ for all $\gamma \leq \delta$,
\item if there is $q \leq_{F_\alpha, 0} ( q_\alpha^\gamma)_{\bar u_\gamma}^\alpha$ forcing a value to $\dot f (\zeta_\gamma)$, then
   $(q_\alpha^{\gamma + 1})_{\bar u_\gamma }^\alpha \forces \dot f (\zeta_\gamma) = \xi^\gamma_\alpha$,
\item $q_\alpha^\delta = \bigwedge_{\gamma <\delta} q_\alpha^\gamma$ for limit ordinals $\delta$.
\end{itemize}
Since the basic step and the limit step are straightforward, it suffices to do the successor step of this recursion. 
Assume $q_\alpha^\gamma$ has been produced. If no $q$ as in the second clause exists, let $\xi_\alpha^\gamma = 0$ and
$q_\alpha^{\gamma + 1} = q_\alpha^\gamma$. If such a $q$ exists, say $q \forces \dot f (\zeta_\gamma) = \xi$, then
let $\xi_\alpha^\gamma = \xi$ and 
\[ q_\alpha^{\gamma + 1} (\beta) =
\begin{cases}
q (\beta) \cup \bigcup \{ (q_\alpha^\gamma (\beta) )_{u} : u \in \Split_\alpha (p_\alpha (\beta)) \sem \{ ( u_\gamma)_\beta  \} \} 
     & \text{if } \beta \in F_\alpha \\
q (\beta)  & \text{otherwise.}
\end{cases}
\] 
Clearly $q(\beta) = (q_\alpha^{\gamma + 1} (\beta) )^\alpha_{( u_\gamma)_\beta}$ for $\beta \in F_\alpha$ and
$q_\alpha^{\gamma + 1} \leq_{F_\alpha ,\alpha} q_\alpha^\gamma$.
This completes the recursive construction. Let $p_{\alpha + 1} = \bigwedge_{\gamma < \lambda_\alpha } q_\alpha^\gamma$
and $B_\alpha = \{ \xi_\alpha^\gamma : \gamma < \lambda_\alpha \}$. Clearly,  $p_{\alpha+1} \leq_{F_\alpha, \alpha} p_\alpha$ and $B_{\alpha}$ 
has size at most $\lambda_\alpha$. Finally define $F_{\alpha + 1}$ by adding
a single element to $F_\alpha$ and by guaranteeing via a book-keeping argument that the union of the $F_\alpha$
will agree with the union of the $\supp (p_\alpha)$.

For the limit step, suppose we already have constructed $(p_\alpha , F_\alpha: \alpha < \delta)$ where $\delta$ is a limit ordinal. We let $p_\delta = \bigwedge_{\alpha< \delta} p_\alpha$ and $F_\delta = \bigcup_{\alpha < \delta} F_\alpha$.

Take the fusion $q = \bigwedge_{\alpha < \kappa} p_\alpha$ of the sequence $(p_\alpha , F_\alpha : \alpha < \kappa)$. Then $q \leq p_\alpha$ 
for all $\alpha$. We claim that $q$ forces $\ran (\dot f ) \sub B$ where $B = \bigcup_{\alpha < \kappa} B_\alpha$.

To see this, let $r \leq q$ and assume $r \forces \dot f (\zeta) = \eta$ for some $\zeta < \kappa$ and some $\eta$. Choose $\alpha > \zeta$ such that
$u_\beta : = \stem (r (\beta)) \in \Split_\alpha (q (\beta)) = \Split_\alpha (p_\alpha (\beta))$ for all $\beta \in F_\alpha$. (This is clearly
possible because the sequence $(F_\alpha : \alpha < \kappa)$ is continuous.) Let $\bar u = (u_\beta : \beta \in F_\alpha )$. By
construction we must have $(p_{\alpha + 1})^\alpha_{\bar u} \forces \dot f (\zeta) = \xi$ for some $\xi \in B_\alpha$.
Since $r \leq_{F_\alpha , 0} q_{\bar u} \leq_{F_\alpha, 0} (p_{\alpha + 1}) _{\bar u}$, $r$ and $(p_{\alpha + 1} )^\alpha_{\bar u}$
are compatible and therefore $\eta = \xi \in B_\alpha \sub B$, as required.
\end{proof}

If we consider the product when $A=\lambda$ we obtain a model in which the cardinal invariants assume the same
values as in the $\kappa$-Cohen extension (see Proposition~\ref{Cohen-model}).

\begin{prop}
Assume $2^\kappa = \kappa^+$ and let $\lambda> \ka^+$ be a cardinal with $\lambda^\ka = \lambda$. Then, in the $\mathbb{MI}^\calF_{\ka, \lambda}$-generic extension,  $\non(\M_\ka)=\ka^+$ and $\cov(\M_\ka)=2^\ka=\lambda$ holds.
\end{prop}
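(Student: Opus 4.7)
The plan is to adapt the proof of Proposition~\ref{Cohen-model} on the $\kappa$-Cohen model, with each $\kappa$-Cohen function $c_\beta$ replaced by the $\kappa$-Cohen function $c_{\beta,\gamma}$ extracted from a pair of $\kappa$-Miller generics $m_\beta, m_\gamma$ via Theorem~\ref{prod-Miller-Cohen}. Two structural facts underlie the argument: the $\kappa^{++}$-cc of $\MI^\calF_{\kappa,\lambda}$, provable via a $\Delta$-system argument parallel to~\cite[Theorem~5.3]{Ka80} using $2^{<\kappa}=\kappa$ and the ground-model assumption $2^\kappa=\kappa^+$; and the commutativity of $\kappa$-support products, giving $\MI^\calF_{\kappa,\lambda} \cong \MI^\calF_{\kappa,A} \times \MI^\calF_{\kappa,\lambda\sem A}$ for any $A \sub \lambda$. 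The equality $2^\kappa = \lambda$ in the extension then follows from nice-name counting, $\lambda^\kappa = \lambda$, and genericity.

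For $\cov(\M_\kappa) \geq \lambda$, list $\mu<\lambda$ names $\dot M_\delta$ for meager sets and write each as $\dot M_\delta = \bigcup_{\gamma<\kappa} A_{\dot f_{\delta,\gamma}}$, where $\dot f_{\delta,\gamma} : 2^{<\kappa} \to 2^{<\kappa}$ codes a nowhere dense set. By the $\kappa^{++}$-cc each $\dot f_{\delta,\gamma}$ is a name in the subforcing $\MI^\calF_{\kappa,B_{\delta,\gamma}}$ for some $B_{\delta,\gamma}\sub\lambda$ of size at most $\kappa^+$; the union $B = \bigcup_{\delta,\gamma} B_{\delta,\gamma}$ has size at most $\mu \cdot \kappa \cdot \kappa^+ < \lambda$. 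Choose distinct $\beta,\gamma \in \lambda \sem B$. By commutativity, the pair $(m_\beta,m_\gamma)$ is $(\MI^\calF_\kappa)^2$-generic over $V[G\re B]$, so Theorem~\ref{prod-Miller-Cohen} yields a $\kappa$-Cohen function $c_{\beta,\gamma}$ over $V[G\re B]$; this function lies outside every meager set coded in $V[G\re B]$, in particular outside each $M_\delta$.

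For $\non(\M_\kappa) \leq \kappa^+$ the natural candidate witness is $X = \{c_{\alpha,\beta} : \alpha<\beta<\kappa^+\}$, of size $\kappa^+$; given $\kappa$ many nowhere dense codes $\dot f_\gamma$ one would seek a pair $(\alpha,\beta)\in[\kappa^+]^2$ disjoint from $B = \bigcup_\gamma B_{\dot f_\gamma}$ so that $c_{\alpha,\beta}\in X$ is Cohen over $V[G\re B]$ and avoids the cover. The principal difficulty is that the $\kappa^{++}$-cc only bounds each $|B_{\dot f_\gamma}|$ by $\kappa^+$, so $|B|$ may reach $\kappa^+$ and exhaust the indices below $\kappa^+$; overcoming this is the main obstacle and requires establishing that nowhere dense set codes in $\MI^\calF_{\kappa,\lambda}$ admit, below any given condition, equivalent names of support size $\kappa$. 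I expect this refined support bound to follow from a generalized-fusion argument in the style of Proposition~\ref{prod-Miller-pres}, applied coordinatewise over the $\kappa$ inputs from $2^{<\kappa}$ so as to prune each value-deciding antichain (at $\sigma \in 2^{<\kappa}$) to a single condition along the fusion, producing a name supported on only $\kappa$ many product coordinates. Once this lemma is in hand, the argument closes exactly as in the $\kappa$-Cohen model.
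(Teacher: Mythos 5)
Your proposal is correct and follows essentially the same route as the paper: Cohen functions $c_{\gamma,\delta}$ extracted from pairs of Miller generics via Theorem~\ref{prod-Miller-Cohen}, the $\kappa^{++}$-cc for $\cov(\M_\kappa)\geq\lambda$, and — for $\non(\M_\kappa)\leq\kappa^+$ — exactly the refinement you flag as the main obstacle, namely that below any condition the $\kappa$ many nowhere dense codes can be captured by a subproduct on only $\kappa$ coordinates, which the paper obtains from the generalized fusion argument in the proof of Proposition~\ref{prod-Miller-pres}, just as you predict.
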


\begin{proof}
The equality $2^\ka= \lambda$ follows from $\lambda^\ka = \lambda$. Using Proposition~\ref{prod-Miller-Cohen} 
we know that the product $\mathbb{MI}^\calF_{\ka, \lambda}$ adds $\lambda$-many $\ka$-Cohen functions.

For $\gamma < \delta < \lambda$, let $\dot c_{\gamma,\delta}$ be the $\ka$-Cohen function constructed from the $\ka$-Miller functions
$\dot m_\gamma$ and $\dot m_\delta$. Let $\dot{f}$ be a $\mathbb{MI}^\calF_{\ka, \lambda}$-name for a function $\dot{f}:2^{<\ka} \to 2^{<\ka}$ with $\sigma \subseteq \dot{f}(\sigma)$, for all $\sigma \in 2^{<\ka} $. Assume that a condition $p$ forces that $\dot f$ is
already added by the subforcing $\mathbb{MI}^\calF_{\ka, B}$ for some $B \sub \lambda$. Also assume $\gamma, \delta \notin B$. 
Then, as in the proof of Proposition~\ref{Cohen-model}, we can show that $p \forces \dot{c}_{\gamma,\delta} \notin A_{\dot{f}}$ where
$A_{\dot f}$ is defined as in Subsection 4.1.

Let $\mu < \lambda$ and let $(\dot{f}_\beta : \beta < \mu)$ be $\mathbb{MI}^\calF_{\ka, \lambda}$-names for functions from $2^{<\ka}$ to $2^{<\ka}$ with $\sigma \subseteq \dot{f}_\beta(\sigma)$ for all $\sigma \in 2^{<\ka}$ and $\beta < \mu$. By the $\kappa^{++}$-cc we can find sets $B_\beta \subseteq \lambda$, for all $\beta< \mu$, of size at most $\ka^+$ such that $\dot{f}_\beta$ is added by the subforcing $\mathbb{MI}^\calF_{\ka,  B_\beta}$. Hence, if $(\gamma, \delta) \notin \bigcup_{\beta < \mu} B_\beta$, we have $\forces \dot{c}_{\gamma,\delta} \notin \bigcup_{\beta < \mu} A_{\dot{f}_\beta}$, and $\cov(\M_\ka) \geq \lambda$ follows. 

For $\non(\M_\ka)\leq \ka^+$ it is enough to see that the set of the first $\ka^+$ many $\ka$-Cohen functions $\{ \dot c_{\gamma,\delta} :
\gamma < \delta < \kappa^+ \}$ is non-meager in the generic extension. Fix $\dot f_\beta$, $\beta < \kappa$, as before. 
Also let $p \in \mathbb{MI}^\calF_{\ka, \lambda}$ be arbitrary. By the proof of the previous proposition, we can find $q \leq p$ and
$B \sub \lambda$ of size $\leq \kappa$ such that $q$ forces all $\dot f_\beta$ are already added by the subforcing $\mathbb{MI}^\calF_{\ka, B}$.
Thus $q$ forces that $\dot c_{\gamma,\delta}$ is not contained in the union of the $A_{\dot f_\beta}$, as required.
\end{proof}



\end{document}